\documentclass[11pt]{amsart}
\usepackage{amsmath, amssymb, latexsym, graphicx, mathrsfs, enumerate, amsmath, amsthm, textcomp, url, tikz-cd,bbm}
\usepackage[T1]{fontenc}
\usepackage{mathdots}
\usepackage{comment}
\usepackage[toc,page]{appendix}
\usepackage{hyperref}
\usepackage{tikz-cd}
\hypersetup{
    colorlinks=true,
    linkcolor=blue,
    filecolor=magenta,      
    urlcolor=cyan,
}

\usepackage{color}

\input xy
\xyoption{all}

\allowdisplaybreaks
\allowdisplaybreaks

\numberwithin{equation}{section}
\newtheorem{theorem}{Theorem}[section]
\newtheorem{proposition}[theorem]{Proposition}
\newtheorem{corollary}[theorem]{Corollary}
\newtheorem{lemma}[theorem]{Lemma}

\newtheorem{definition}[theorem]{Definition}

\newtheorem*{remark*}{Remark}
\newtheorem{strategy}[theorem]{Strategy}

\newtheorem{example}[theorem]{Example}
\newtheorem{remark}[theorem]{Remark}

\newcommand{\mfo}{\mathfrak{o}}

\newcommand{\mff}{\mathfrak{f}}

\newcommand{\Mfo}{\mathcal{O}}

\newcommand{\cl}{\mathrm{Cl}}
\newcommand{\clb}{\overline{\mathrm{Cl}}}

\newcommand{\Z}{\mathbb{Z}}

\newcommand{\ord}{\mathrm{ord}}
\newcommand*{\Rom}[1]{\expandafter\@slowromancap\romannumeral #1@}
\begin{document}

\title[Ideal class monoids of cubic orders]
{Ideal class monoids of cubic orders}
\keywords{ideal class monoid, cubic order, overorder, Gorenstein order, Bhargava's $2\times 3\times 3$ cubes, prehomogeneous vector space}

\subjclass[2020]{Primary 11R65; Secondary 11R16, 11S45, 11S90}

\author[Sungmun Cho]{Sungmun Cho}
\author[Jungtaek Hong]{Jungtaek Hong}
\author[Yuchan Lee]{Yuchan Lee}
\thanks{The authors were supported by   Basic Science Research Institute Fund whose NRF grant number is RS-2021-NR060139 and the National Research Foundation of Korea(NRF) grant
funded by the Korea government(MSIT) (No. RS-2026-25508638).
}

\address{Sungmun Cho \\  Department of Mathematics, POSTECH, 77, Cheongam-ro, Nam-gu, Pohang-si, Gyeongsangbuk-do, 37673, KOREA}

\email{sungmuncho12@gmail.com}

\address{Jungtaek Hong \\ Department of Mathematics, POSTECH, 77, Cheongam-ro, Nam-gu, Pohang-si, Gyeongsangbuk-do, 37673, KOREA}

\email{jungtaekhong123@gmail.com}

\address{Yuchan Lee \\ Department of Mathematics, POSTECH, 77, Cheongam-ro, Nam-gu, Pohang-si, Gyeongsangbuk-do, 37673, KOREA}

\email{yuchanlee329@gmail.com}
\begin{abstract}
Let $R$ be an order in a number field, let $\overline{\mathrm{Cl}}(R)$ be its ideal class monoid, and let $\mathrm{Cl}(R)$ act on it by multiplication.  The local-global product formula identifies the orbit set $\mathrm{Cl}(R)\backslash\overline{\mathrm{Cl}}(R)$ with a product of local orbit sets; in this sense, it is the genus set of fractional $R$-ideals.  For a Gorenstein order $R$ in a cubic extension of number fields, we give a closed Euler product formula for the cardinality of this genus set.  The local factors come from an explicit classification of local cubic overorders: for arbitrary local cubic orders, we parametrize all overorders, determine their inclusion relations, and identify the Gorenstein ones.  As an application to Bhargava's parametrization of $2\times3\times3$ cubes, our formula gives the exact number of $\mathrm{Cl}(R)$-equivalence classes of integral $\mathrm{GL}_2(\mathbb Z)\times\mathrm{SL}_3(\mathbb Z)\times\mathrm{SL}_3(\mathbb Z)$-orbits whose associated cubic ring is the prescribed Gorenstein order $R$.
\end{abstract}

\maketitle

\setcounter{tocdepth}{1}
\tableofcontents

\section{Introduction}

Let \(R\) be an order in a number field \(E\).  The ideal class monoid of
\(R\), denoted \(\overline{\mathrm{Cl}}(R)\), is the monoid of equivalence
classes of fractional \(R\)-ideals, where \(I\sim aI\) for \(a\in E^\times\).
Its group of units is the ideal class group \(\mathrm{Cl}(R)\), so
\(\overline{\mathrm{Cl}}(R)\) refines \(\mathrm{Cl}(R)\) by retaining, in
addition to the invertible classes, the classes represented by non-invertible
fractional ideals.  The natural multiplication action of \(\mathrm{Cl}(R)\) on
\(\overline{\mathrm{Cl}}(R)\) gives the orbit set
\[
        \mathrm{Cl}(R)\backslash \overline{\mathrm{Cl}}(R),
\]
which is the main global object studied in this paper.

The product formula recalled in Proposition \ref{prop:global_local}, based on
\cite[Proposition 5.3 and Corollary 5.5.(2)]{CHL}, identifies this orbit set with
a product of local orbit sets.  We use the word \emph{genus} in this product
formula sense: two global classes lie in the same genus precisely when they
determine the same local orbit data.  Thus computing the quotient reduces to
the local orbit sets of fractional $R_v$-ideals at the finitely many primes where
$R_v$ is not maximal.  In the cubic case, the local count is then reduced to
enumerating the overorders of $R_v$, i.e. the intermediate orders between
$R_v$ and its maximal order, and identifying which of them are Gorenstein.

The contrast with the Bass case is already visible at the level of local
overorders.  For a local Bass order, the overorders form a chain.  For example,
if $E/F$ is a totally ramified quadratic extension of non-Archimedean local
fields and $R=\mathfrak{o}[\pi^{10}\pi_E]$, then the overorders are exactly
\[
\begin{tikzcd}[column sep=3em]
    \mathbf{R} \arrow[r, hook] &
    \mathbf{\mathfrak{o}[\pi^9\pi_E]} \arrow[r, hook] &
    \cdots \arrow[r, hook] &
    \mathbf{\mathfrak{o}[\pi\pi_E]} \arrow[r, hook] &
    \mathbf{\mathcal O_E}=\mathfrak{o}[\pi_E].
\end{tikzcd}
\]
Thus the local enumeration is a one-variable problem.  In contrast, cubic
overorders form a branched poset governed by several valuation parameters.  For
example, if $E/F$ is a totally ramified cubic extension with uniformizer
$\pi_E$, then the order $R=\mathfrak{o}[\pi^3\pi_E^2]$ has the same Serre
invariant, namely the length
$S(R)=\ell_{\mathfrak{o}}(\mathcal O_E/R)=10$, as the Bass example above, but
its overorder diagram in Example~\ref{ex:simpleram} already contains several
incomparable branches.  Sections \ref{sec:irred}--\ref{sec:split} make this
branching explicit.

The main global result is a closed Euler product for
\[
        \#\bigl(\mathrm{Cl}(R)\backslash \overline{\mathrm{Cl}}(R)\bigr)
\]
when $R$ is a Gorenstein cubic order.  The local results are more general: for
arbitrary local cubic orders we give explicit parametrizations of all overorders
and of the Gorenstein overorders among them.

\subsection{Main result}

To state the main theorem, let \(R\) be a Gorenstein order in a cubic extension
\(E/F\) of number fields, and let \(\mathfrak o\) be the ring of integers of
\(F\).  For a finite prime \(v\) of \(\mathfrak o\), write
\(R_v=R\otimes_{\mathfrak o}\mathfrak o_v\) and
\(E_v=E\otimes_F F_v\).  The local enumeration is organized by two numerical
invariants of \(R_v\).  The first is the Serre invariant
\[
        S(R_v)=\ell_{\mathfrak o_v}(\mathcal O_{E_v}/R_v),
\]
the length of the quotient of the maximal order by \(R_v\).  The second is the
conductor of \(R_v\) in \(\mathcal O_{E_v}\).  Globally, write the conductor of
\(R\) as
\[
        \mathfrak f(R)=
        \prod_{\mathfrak P\subset \mathcal O_E}
        \mathfrak P^{f_{\mathfrak P}} .
\]
For Gorenstein cubic orders, Proposition \ref{prop:gorenstein_cond} gives
linear relations between the Serre invariant and the local conductor exponents.
These relations reduce the local finite enumeration formulas to the rational
local factors appearing in the theorem below.

Let \(S_R\) be the finite set of nonzero primes \(v\) of \(\mathfrak o\)
dividing \(\mathfrak f(R)\).  For \(v\in S_R\), let
\[
        \mathbf f(v)=(f_{\mathfrak P})_{\mathfrak P\mid v}
\]
be the tuple of local conductor exponents, and let
\[
        \tau(v)\in\{\textit{unr},\ \textit{tr},\ (1\,2),\ (1\,1^2),\ (1\,1\,1)\}
\]
be the splitting type of the cubic \(F_v\)-algebra \(E\otimes_F F_v\), as used
in the local classification; see Sections \ref{sec:irred}--\ref{sec:split} and
Equation \eqref{eq:splittingtype}.

\begin{theorem}[Global Gorenstein product formula; proved as Theorem \ref{thm:globalGorensteinProduct}]
\label{thm:intro_globalGorensteinProduct}
With the notation above,
\begin{multline*}
\#(\mathrm{Cl}(R)\backslash\clb(R))
=
\prod_{\substack{v\in S_R\\ \tau(v)=\textit{unr}}}
G_{\textit{unr}}(q_v;\mathbf f(v))
\cdot
\prod_{\substack{v\in S_R\\ \tau(v)=\textit{tr}}}
G_{\textit{tr}}(q_v;\mathbf f(v))
\cdot
\prod_{\substack{v\in S_R\\ \tau(v)=(1\,2)}}
G_{(1\,2)}(q_v;\mathbf f(v))\\
\cdot
\prod_{\substack{v\in S_R\\ \tau(v)=(1\,1^2)}}
G_{(1\,1^2)}(q_v;\mathbf f(v))
\cdot
\prod_{\substack{v\in S_R\\ \tau(v)=(1\,1\,1)}}
G_{(1\,1\,1)}(q_v;\mathbf f(v)).
\end{multline*}
Here $q_v$ is the cardinality of the residue field of $\mfo$ at $v$, and for each admissible conductor tuple $\mathbf f(v)$ the local factor $G_\tau(q_v;\mathbf f(v))$ is an explicit polynomial in $q_v$; compact formulae for these polynomials are listed in Definition \ref{def:Gtau}.
\end{theorem}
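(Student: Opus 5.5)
The plan is to reduce the global count to local counts and then evaluate each local count through the overorder classification. First, apply the local--global product formula, Proposition \ref{prop:global_local} (based on \cite[Proposition 5.3 and Corollary 5.5.(2)]{CHL}). It identifies $\mathrm{Cl}(R)\backslash\clb(R)$ with $\prod_v R_v^\times\backslash\clb(R_v)$, where $\clb(R_v)$ denotes fractional $R_v$-ideals modulo $E_v^\times$. For $v\notin S_R$ the order $R_v$ is maximal. Every fractional ideal of a maximal order is then principal, so the local factor is $1$. This leaves a finite product over $v\in S_R$, and the whole proof reduces to computing $N(R_v):=\#\bigl(R_v^\times\backslash\{\text{fractional }R_v\text{-ideals}\}/E_v^\times\bigr)$ for a local Gorenstein cubic order $R_v$.

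Second, express $N(R_v)$ through overorders. Every fractional $R_v$-ideal $I$ has the multiplier ring $S=(I:I)$, which is an overorder of $R_v$. For Gorenstein $R_v$, I would use the local counting formula from the earlier sections (the local analogue in \cite{CHL}). It writes $N(R_v)$ as a sum over overorders $S\supseteq R_v$ of the number of $S^\times$-orbits, modulo $E_v^\times$, of ideals with exact multiplier ring $S$. For Gorenstein $S$, such ideals are invertible, hence principal locally, so each Gorenstein overorder contributes the index-type factor $[\,\mathcal O_{E_v}^\times\cap S\ldots]$. Non-Gorenstein overorders contribute through their own (Gorenstein) duals. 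I would use the paper's formula specialized to cubic orders, in which the count becomes $\#\{\text{overorders}\}$ weighted by explicit local unit indices. Concretely, I would aim to show $N(R_v)=\sum_{S}c(S)$, where $c(S)$ depends only on whether $S$ is Gorenstein and on its position in the overorder poset.

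Third, insert the explicit parametrization of overorders of local cubic orders from Sections \ref{sec:irred}--\ref{sec:split}, separately for each splitting type $\tau(v)$. For each type, the overorders are indexed by a few valuation parameters, and the Gorenstein criterion singles out a sublattice of these parameters. Summing the weights $c(S)$ over these parameter sets gives finite geometric-type sums in $q_v$. Proposition \ref{prop:gorenstein_cond} then expresses the Serre invariant $S(R_v)$ and the parameter ranges as linear functions of the conductor exponents $\mathbf f(v)$. This rewrites the sums as polynomials in $q_v$ that depend only on $(\tau(v),\mathbf f(v))$. Finally, I would check case by case that these polynomials coincide with $G_\tau(q_v;\mathbf f(v))$ of Definition \ref{def:Gtau}.

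The main obstacle is the third step. The overorder posets branch, as in Example~\ref{ex:simpleram}, and in the split types $(1\,2)$, $(1\,1^2)$ and $(1\,1\,1)$ there are several conductor exponents with parity and inequality constraints. The summation regions are therefore polygonal, and getting boundary terms right requires care. I would first settle the simplest case $\tau=\textit{unr}$, where $\mathbf f(v)$ is a single exponent and the overorders should be $\mfo+\pi^k\mathcal O_{E_v}$. I would then treat the other types by splitting the parameter region into chambers on which the weight $c(S)$ is monomial, and verify small conductor values against direct enumeration.
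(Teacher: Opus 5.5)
Your outer architecture matches the paper's: the product formula, reduction to the local numbers $\#\clb(R_v)$, insertion of the overorder classification of Sections \ref{sec:irred}--\ref{sec:split}, and simplification via Proposition \ref{prop:gorenstein_cond}. But the decisive local identity in your second step is missing, and what you put in its place is wrong.

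Classes are taken modulo all of $E_v^\times$; your extra quotient by $R_v^\times$ acts trivially, and $\mathrm{Cl}(R_v)$ is trivial. So an overorder $\Mfo'$ contributes exactly $\#\mathrm{cl}(\Mfo')$, the number of classes $[I]$ with $(I:I)=\Mfo'$.
\begin{itemize}
\item For Gorenstein $\Mfo'$ this is $1$: your own ``invertible, hence principal'' remark shows the only class is $[\Mfo']$. No unit index $[\Mfo_{E_v}^\times\cap\Mfo'\ldots]$ appears. Such indices arise when counting lattices or $(\Mfo')^\times$-orbits, not classes modulo $E_v^\times$.
\item For non-Gorenstein $\Mfo'$ the contribution is exactly $2$, and this is where the real work lies. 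A non-local overorder of a cubic order has local components of rank at most $2$, hence is Gorenstein. A local cubic order has Cohen--Macaulay type at most $3-1=2$. By \cite[Corollary 6.5]{Ma24}, a local order of type $2$ has exactly two classes with multiplier ring $\Mfo'$: $[\Mfo']$ and the class of its non-invertible trace dual. So the second class is not a ``Gorenstein dual''.
\end{itemize}
This is Lemma \ref{lem:overorderwhengoren}. It yields $\#\clb(R_v)=2\cdot\#\{\text{overorders}\}-\#\{\text{Gorenstein overorders}\}$ (Proposition \ref{prop:orderidealcounting}). Without it your weights $c(\Mfo')$ are undetermined. With the unit-index weights you describe, the sums in your third step would give the wrong local factors.

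Your proposed starting point for $\tau=\textit{unr}$ is also false: the overorders are not the chain $\mfo+\pi^k\Mfo_{E_v}$. That order has Serre invariant $2k$ and conductor exponent $k$, hence is non-Gorenstein for $k>0$. By contrast, $R_v=\mfo[\pi^nx]$ has $S(R_v)=3n$ and conductor exponent $2n$, so it is not of that form. The actual overorders are the two families $\Mfo^1_{s,f,c}$ and $\Mfo^2_{s,f,c}$ of Proposition \ref{prop:overorderform} and Corollary \ref{cor:criterionoverorder}. There are powers of $q$ many choices of $c$, and the poset genuinely branches already for $n=2$ (Example \ref{ex:simple-nontrivial}). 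Counting these with weights $2$ and $1$ is what produces $G_{\textit{unr}}$ in Corollary \ref{cor:gorenstein_calculation_unr}. Numerical checks at small conductors can confirm such formulas but cannot replace this classification.
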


The formula is an exact genus count for the fractional ideals of the fixed
Gorenstein cubic order \(R\).  Each local factor is obtained from the local
ideal class monoid count of \(R_v\), computed through the overorder structure of
\(R_v\).

\subsection{Key ingredients: local-global reduction and explicit local enumeration}
The proof is purely algebraic.  It has two main inputs.

\subsubsection{Local-global reduction and the counting formula}
First, the local-global product formula recalled in Proposition
\ref{prop:global_local}, based on \cite[Proposition 5.3 and Corollary
5.5.(2)]{CHL}, identifies
\(\mathrm{Cl}(R)\backslash\overline{\mathrm{Cl}}(R)\) with the product of the
local ideal class monoids \(\overline{\mathrm{Cl}}(R_v)\).  Thus the global
genus count reduces to computing the finite local numbers
\(\#\overline{\mathrm{Cl}}(R_v)\) at the primes where \(R_v\) is not maximal.
The local count is converted into overorder counting by Proposition
\ref{prop:orderidealcounting}:
\[
        \#\overline{\mathrm{Cl}}(R_v)
        =
        2\cdot\#\{\text{overorders of }R_v\}
        -
        \#\{\text{Gorenstein overorders of }R_v\}.
\]
Thus the entire global theorem rests on an explicit enumeration of two finite
sets attached to each local cubic order: all overorders and the Gorenstein
overorders among them.

\subsubsection{Explicit local enumeration}
Sections \ref{sec:irred} and \ref{sec:split} carry out this enumeration
case-by-case according to the local cubic algebra $E\otimes_F F_v$.  In the
irreducible cases we treat the unramified and totally ramified cubic field
extensions.  In the split cases we treat the splitting types $(1\,2)$,
$(1\,1^2)$, and $(1\,1\,1)$. 

For each splitting type we introduce normal forms for local orders, impose the
conditions for such a normal form to contain the fixed order $R_v$, record the
inclusion relations among the resulting overorders, and identify precisely which
of them are Gorenstein.

For arbitrary local cubic orders this produces the summation formulas in
Theorems \ref{thm:unrfor}, \ref{thm:totramfor},
\ref{thm:splitunroverorders}, \ref{thm:splitramorders}, and
\ref{thm:totsplitformula}.  Under the Gorenstein hypothesis on $R$, the
conductor constraints simplify these summations to the polynomial local factors
$G_\tau(q_v;\mathbf f(v))$ computed in Corollaries
\ref{cor:gorenstein_calculation_unr}, \ref{cor:gorenstein_calculation_tr},
\ref{cor:gorenstein_calculation12}, \ref{cor:gorenstein_calculation112}, and
\ref{cor:gorenstein_calculation111}.

\subsection{Application to exact orbit counting for Bhargava's $2 \times 3 \times 3$ cubes}
The representation theory of prehomogeneous vector spaces enters arithmetic
statistics through asymptotic counting, for instance in the work of
Sato--Shintani \cite{SatoShintani74} and Bhargava \cite{Bhargava05}.  The
present paper addresses a different question: fix the associated cubic order
$R$ and count the exact number of $\mathrm{Cl}(R)$-equivalence classes, or
genera in the product formula sense, of integral orbits lying over that fixed
order.

Bhargava's parametrization of the $\Gamma$-space
$\mathbb Z^2\otimes\mathbb Z^3\otimes\mathbb Z^3$, where
\(\Gamma=\mathrm{GL}_2(\mathbb Z)\times\mathrm{SL}_3(\mathbb Z)\times
\mathrm{SL}_3(\mathbb Z)\), relates integral $\Gamma$-orbits to balanced pairs
of fractional ideals of cubic rings \cite{Bha04}.  When $R$ is Gorenstein, the
associated orbits with cubic ring $R$ are parameterized by
$\overline{\mathrm{Cl}}(R)$.  Taking the quotient by $\mathrm{Cl}(R)$ is the
passage to genera, using the terminology justified by the product formula
above.  Consequently, combining the closed formula of Theorem
\ref{thm:intro_globalGorensteinProduct} with the fixed-ring orbit
identification proved in Theorem \ref{thm:bhargava_orbit_count}, we obtain an
exact formula for the number of $\mathrm{Cl}(R)$-equivalence classes of
integral $\Gamma$-orbits with prescribed Gorenstein cubic ring $R$.

This application illustrates that the overorder enumeration is useful beyond intrinsic ideal theory.  
The same local formulas also provide arithmetic input for automorphic and geometric questions, as explained next.

\subsection{Applications and further directions}
The same local enumeration has two further roles.  First, it has already been
used in an automorphic application to Beyond Endoscopy.  Second, it supplies a
finite local algebraic dictionary---normal forms, inclusion criteria,
Gorenstein criteria, and finite counting formulas---for geometric and
stack-theoretic developments beyond the present counting theorem.  The following
paragraphs place the paper in this broader context.

\subsubsection{Beyond Endoscopy for $\mathrm{GL}_3(\mathbb{Q})$}
The first application concerns Langlands' Beyond Endoscopy program.  
Following Altu\u{g}'s implementation of this strategy for $\mathrm{GL}_2(\mathbb{Q})$ via the trace formula and Poisson summation \cite{Alt1,Alt2,Alt3}, Deng and Espinosa developed a $\mathrm{GL}_3(\mathbb{Q})$ analogue \cite{DE}. 
Their approach is conditional on a conjectural factorization formula, whose role is to establish the functional equation for the completed $L$-function attached to a cubic order.
In \cite{Lee26}, one of the authors proves this functional equation unconditionally for Gorenstein cubic orders, using the explicit local overorder enumeration of the present paper.  
Consequently, the corresponding isolation of the trivial representation in \cite{DE} becomes unconditional.

The link with the present paper is direct.  The local factors of the relevant
$L$-function are expressed as sums over overorders; each overorder contributes a
factor depending on whether it is Gorenstein, together with a local zeta factor
and an index term.  The explicit overorder enumeration developed in Sections
\ref{sec:irred}--\ref{sec:split} is the main ingredient that rewrites each
local factor as a polynomial in $p^{-s}$.  This polynomial formulation plays a
key role in verifying the local functional equations case by case according to
the splitting type.

\subsubsection{Local input for Hitchin-theoretic extensions}
A companion paper \cite{CH_Hitchin} treats the Bass case from a geometric point
of view: it places the local smoothening method inside the
$\mathrm{GL}_n$-Hitchin fibration and relates the resulting strata to the
compactified Jacobian.  In that work, the enumeration of local overorders yields
a stratification of the compactified Jacobian and, under additional assumptions,
a closed formula for its cohomology.  As in the Bass case, the classifications
of overorders in Sections \ref{sec:irred}--\ref{sec:split} provide the finite
local input needed to formulate Hitchin-theoretic questions in the cubic case.
We plan to pursue this geometric extension in subsequent work.

\subsubsection{Non-Gorenstein local formulas and stack-theoretic comparison}\label{subsubsec:derived}
The closed Euler product in Theorem \ref{thm:globalGorensteinProduct} uses the
Gorenstein hypothesis, but the local analysis leading to it is not confined to
the Gorenstein case.  Theorems \ref{thm:unrfor}, \ref{thm:totramfor},
\ref{thm:splitunroverorders}, \ref{thm:splitramorders}, and
\ref{thm:totsplitformula} give finite explicit summation formulas for
arbitrary local cubic orders.

These formulas identify the source of the added local complexity.  Outside the
Gorenstein locus, the numbers of overorders and Gorenstein overorders remain
finite and explicitly computable.  However, they no longer collapse to the
one-parameter chain structure familiar from Bass orders, nor do they admit the
conductor-controlled polynomial local factors available in the Gorenstein cubic
case.

These finite formulas provide the arithmetic benchmark for any geometric or
categorical interpretation of the non-Gorenstein contribution to the fixed-ring
orbit problem.  A sequel in preparation studies this comparison through the
Artin stack of Wood balanced pairs, its groupoid-volume interpretation, and the
derived homological structure of the non-Gorenstein balancing condition.

\vspace{0.5em}\noindent\textbf{Organization.}  In Section \ref{sec:global_strategy}, we explain the global strategy and the reduction to local overorder counting. Section \ref{sec:local_notations} establishes local notations. Sections \ref{sec:irred}-\ref{sec:split} enumerate all local overorders and Gorenstein overorders in the irreducible and split cases. Section \ref{sec:globalGorenstein} combines these results to prove the global closed formula (Theorem \ref{thm:globalGorensteinProduct}). Section \ref{sec:bhargava} applies the formula to exact orbit genus counting in Bhargava's prehomogeneous vector space. Appendix \ref{app:formula} gives Python algorithms for evaluating the split-case summations.
          
\vspace{0.5em}\noindent\textbf{Acknowledgments.} We sincerely thank Stefano Marseglia for helpful comments.

\section{Global strategy and ideal classes}\label{sec:global_strategy}

 This is taken from \cite[Notations, Part 2]{CHL}.
\begin{itemize}
\item For a ring $A$, the set of maximal ideals is denoted by $|A|$.
\item If $A$ is a local ring, then the maximal ideal is denoted by $\mathfrak{m}_A$, and the residue field is denoted by $\kappa_A$. 
If $K$ is  a non-Archimedean local field, then  we sometimes use $\kappa_K$ to denote the residue field of the ring of integers in $K$, if there is no confusion. 

\item For $a\in A$ or $\psi(x) \in A[x]$ with a local ring $A$, $\overline{a}\in \kappa_A$ or $\overline{\psi(x)}\in \kappa_A[x]$ is the reduction of $a$ or $\psi(x)$ modulo $\mathfrak{m}_A$, respectively.

\item Let $F$ be a number field with $\mfo$ its ring of integers.
\item For $v\in |\mfo|$, 
let $F_v$ be the $v$-adic completion of $F$ with $\mfo_v$  the ring of integers of $F_v$, $\pi_v$  a uniformizer in $\mfo_v$, and $\kappa_v$ its residue field. Let $q_v=\#\kappa_v$.

\item Let $E$ be a cubic field extension of $F$ and let $R$ be an order of $E$ in the sense of Definition \ref{def:setup}.(1), which will be stated below.




\end{itemize}
 We will define the notion of order,  fractional ideal, conductor, ideal quotient, ideal class group, and ideal class monoid in a general situation especially containing \'etale algebras and number fields, following \cite{Ma24}. 

\begin{definition}[{\cite[Definition A]{CHL}}]\label{def:setup}
Let $Z$ be a Dedekind domain with field of fractions $Q$. 
Let $K$ be an \'etale $Q$-algebra.
Note that $Z$ is always  $\mfo$ or $\mfo_v$ in this paper.

\begin{enumerate}
   
\item{\cite[the second paragraph of Section 2.2]{Ma24}}  An order of $K$ is a subring $\Mfo$ of $K$ such that $\Mfo$ is a finitely generated $Z$-module containing $Z$ and such that $\Mfo\otimes_ZQ\cong K$. 
     An order $\Mfo'$ of $K$ is called an overorder of $\Mfo$ if $\Mfo \subset \Mfo'$.

\item{\cite[the first paragraph of Section 2.3]{Ma24}}    
 A fractional $\Mfo$-ideal $I$ is a finitely generated $\Mfo$-submodule of $K$ such that $I\otimes_ZQ\cong K$. The set of fractional $\Mfo$-ideals is closed under multiplication and thus forms a monoid.
Loc. cit. explains the following criterion of being  a fractional ideal.
\begin{enumerate}
    \item If $I$ is a finitely generated $\Mfo$-submodule of $K$ (e.g.  an ideal of $\Mfo$), then $I$ is a fractional $\Mfo$-ideal if and only if $I$ contains a non-zero divisor of $K$. 

\item If $\Mfo$ is a Noetherian domain, then the above criterion yields that a finitely generated  $\Mfo$-submodule $I (\neq 0)$ of $K$ is a fractional $\Mfo$-ideal. 
In particular, if $xI\subset \Mfo$ for an $\Mfo$-submodule $I (\neq 0)$ of $K$ with   $x\neq 0$ in  $\Mfo$, then $I$ is a fractional $\Mfo$-ideal.
\end{enumerate}
\item The maximal order of $K$ is denoted by $\Mfo_K$. 
Here unique existence of $\Mfo_K$ is explained in \cite[the first paragraph of Section 2]{Ma20} or \cite[the third paragraph of Section 2.2]{Ma24}.

\item The conductor $\mathfrak{f}(\Mfo)$ of an order $\Mfo$ is the biggest ideal of $\Mfo_K$ which is contained in $\Mfo$. In other words, $\mathfrak{f}(\Mfo)=\{a\in \Mfo_K\mid a\Mfo_K\subset \Mfo\}$.
Note that $\mathfrak{f}(\Mfo)$ is also an ideal of $\Mfo$.

\item For an order $\Mfo$ of $K$ and for an ideal $I$ of $\Mfo_K$, 
$\langle \Mfo, I\rangle$ is the subring of $\Mfo_K$ generated by elements of $\Mfo$ and $I$.

\item    The ideal quotient $(I:J)$ for two fractional $\Mfo$-ideals $I$ and $J$ is defined to be 
$    (I:J)=\{x\in K \mid xJ\subset I\}$.
Then $(I:J)$ is also a fractional $\Mfo$-ideal. 
Here we emphasize that $(I:I)$ is the biggest order over which $I$ is a fractional ideal. 
We refer to \cite[Sections 2.1-2.3]{Ma24} for detailed explanations. 

\item A fractional $\Mfo$-ideal $I$ is called invertible if there exists a fractional $\Mfo$-ideal $J$ such that $IJ=\Mfo$. If it exists, then it is uniquely characterized by $J=\left(\Mfo:I\right)$.
The set of invertible ideals is closed under multiplication and inverse, so as to form a group.

\item The ideal class group $\mathrm{Cl}(\Mfo)$ of $\Mfo$ is defined to be the group of equivalence classes of invertible $\Mfo$-ideals up to multiplication by an element of $K^{\times}$. 

\item The ideal class monoid $\overline{\mathrm{Cl}}(\Mfo)$
of $\Mfo$ is defined to be the monoid of equivalence classes of fractional $\Mfo$-ideals up to multiplication by an element of $K^{\times}$. 
\end{enumerate}
\end{definition}

\begin{definition}[{\cite[Definition 2.1]{CHL}, \cite[Proposition 3.4]{Ma24}}]\label{def:invariantofO}
For an order $\Mfo$ of $K$, 
 \begin{enumerate}
    \item the Serre invariant $S(\Mfo):=[\Mfo_{K}:\Mfo]_Z$ of $\Mfo$ is the length of $\Mfo_K/\Mfo$ as a $Z$-module.
    
    \item An order $\Mfo$ is called Gorenstein if every fractional $\Mfo$-ideal $I$ with $\Mfo=(I:I)$ is invertible.
\end{enumerate}
\end{definition}

For the global setting with $F$ and $E$, we establish the local completions as follows.
For $v\in |\mfo|$, $w\in |R|$, and $\mathfrak{P}\in |\Mfo_E|$, let
\[
\left\{
\begin{array}{l}
\textit{$R_v\cong R\otimes_\mfo \mfo_v$ be the $v$-adic completion of $R$};\\
\textit{$E_v\cong R_v\otimes_{\mfo_v} F_v$ be the ring of total fractions of $R_v$};\\
\textit{$X_{R_v}$ be the set of fractional $R_v$-ideals so that $\overline{\cl}(R_v)=E_v^\times\backslash X_{R_v}$},
\end{array} \right.
\]
\[
\left\{
\begin{array}{l}
\textit{$R_w$ be the $w$-adic completion of $R$};\\
\textit{$E_w\cong E\otimes_R R_w$ be the ring of total fractions of $R_w$};\\
\textit{$X_{R_w}$ be the set of fractional $R_w$-ideals  so that $\overline{\cl}(R_w)=E_w^\times\backslash X_{R_w}$}.
\end{array} \right.
\]
\[
\left\{
\begin{array}{l}
    \textit{$\Mfo_{E_{\mathfrak{P}}}$ be the $\mathfrak{P}$-adic completion of $\Mfo_E$};\\
    \textit{$E_{\mathfrak{P}}$ be the field of fractions of $\Mfo_{E_{\mathfrak{P}}}$}.
\end{array}\right.
\]
Note that $R_w$ is a local ring (possibly non-integral domain). 

\begin{remark}[{\cite[Remark B]{CHL}}]\label{rmk:descriptionofRv}
 By \cite[(4) in the proof of Lemma 2.16]{Ma24},  
$R_v\cong \bigoplus\limits_{w|v,~ w\in |R|}R_w$.
Applying $(-)\otimes_{\mfo_v} F_v$ yields
 \[ E_v\cong \bigoplus\limits_{w|v,~ w\in |R|}E_w ~~~ \textit{ and }   ~~~ \Mfo_{E_v}\cong \bigoplus\limits_{w\mid v, ~ w\in |R|} \Mfo_{E_w}. \]
 Note that $E_w$ may not be a field.
 \end{remark}

\begin{definition}[{\cite[Definition 5.1]{CHL}}]\label{def:globalclcl}
Let $\Mfo$ be an order in the sense of Definition \ref{def:setup}.
For an overorder $\Mfo'$ of $\Mfo$, we define the following sets:
\[\left\{
\begin{array}{l}
     \mathrm{cl}(\Mfo'):=\{ [I] \in \overline{\mathrm{Cl}}(\Mfo') \mid (I:I)=\Mfo' \}=\{ [I] \in \overline{\mathrm{Cl}}(\Mfo) \mid (I:I)=\Mfo' \};  \\
     \overline{\mathrm{cl}(\Mfo')}:=\mathrm{Cl}(\Mfo)\backslash \mathrm{cl}(\Mfo').
\end{array}\right.
\]    
Here we consider $\overline{\mathrm{Cl}}(\Mfo')$ as a subset of $\overline{\mathrm{Cl}}(\Mfo)$.
The set  $\overline{\mathrm{cl}(\Mfo')}$ is well-defined since $(JI:JI)=JJ^{-1}(I:I)=(I:I)$ for $J\in \mathrm{Cl}(\Mfo)$.
Note that $\mathrm{cl}(\Mfo')$ is denoted by $\mathrm{ICM}_{\Mfo'}(\Mfo)$ in \cite{Ma24}.

\end{definition}


\begin{proposition}[{\cite[Proposition 5.2]{CHL}}]\label{prop:stratforglobal}
For an order $\Mfo$ in the sense of Definition \ref{def:setup},
we have the following results:  
\[
\#\overline{\mathrm{Cl}}(\Mfo)=\sum_{\Mfo\subset \Mfo'\subset \Mfo_K}\#\mathrm{cl}(\Mfo')
~~~~~~~~~\textit{ and } ~~~~~~~~~~~  
\mathrm{Cl}(\Mfo)\backslash \overline{\mathrm{Cl}}(\Mfo)
=\bigsqcup_{\Mfo\subset \Mfo'\subset \Mfo_K}\overline{\mathrm{cl}(\Mfo')}.
\]  
\end{proposition}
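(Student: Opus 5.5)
The plan is to stratify fractional ideals by their multiplier ring. Given a fractional $\Mfo$-ideal $I$, set $\Mfo'=(I:I)$. By Definition \ref{def:setup}.(6), this is an order of $K$, and it contains $\Mfo$ because $\Mfo I\subset I$. It is contained in $\Mfo_K$ because every order lies in the maximal order. Thus $\Mfo'$ is one of the finitely many overorders of $\Mfo$; finiteness holds because $\Mfo_K/\Mfo$ has finite length $S(\Mfo)$.

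Next I check that $(I:I)$ depends only on the class of $I$. For $a\in K^\times$ we have $(aI:aI)=(I:I)$, since $x\,aI\subset aI$ if and only if $xI\subset I$. So the map $[I]\mapsto (I:I)$ is well defined on $\overline{\mathrm{Cl}}(\Mfo)$. Its fibres are by definition the sets $\mathrm{cl}(\Mfo')$, and they are pairwise disjoint and cover $\overline{\mathrm{Cl}}(\Mfo)$.

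I also need the identification of $\mathrm{cl}(\Mfo')$ as a subset of either $\overline{\mathrm{Cl}}(\Mfo')$ or $\overline{\mathrm{Cl}}(\Mfo)$. If $(I:I)=\Mfo'$, then $I$ is an $\Mfo'$-module, and it is finitely generated over $Z$ since it is a fractional $\Mfo$-ideal, so it is a fractional $\Mfo'$-ideal. Conversely, a fractional $\Mfo'$-ideal is finitely generated over $Z$ and hence over $\Mfo$, so it is a fractional $\Mfo$-ideal. Equivalence under $K^\times$ is the same relation in both monoids. Hence the two descriptions of $\mathrm{cl}(\Mfo')$ in Definition \ref{def:globalclcl} agree. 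Summing cardinalities over the disjoint fibres then gives the first formula, with each $\#\mathrm{cl}(\Mfo')$ finite since $\overline{\mathrm{Cl}}(\Mfo)$ is finite by the cited theory.

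For the second formula, I use that $\mathrm{Cl}(\Mfo)$ acts on $\overline{\mathrm{Cl}}(\Mfo)$ and preserves each fibre. For invertible $J$, the computation $(JI:JI)=(I:I)$ recorded in Definition \ref{def:globalclcl} does this; it can be checked directly: $xJI\subset JI$ implies $xI=J^{-1}xJI\subset J^{-1}JI=I$, and the reverse inclusion is clear. So the decomposition into fibres is $\mathrm{Cl}(\Mfo)$-stable, and passing to orbits gives the disjoint union of the $\overline{\mathrm{cl}(\Mfo')}$.

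The only step needing care is the identification of $\mathrm{cl}(\Mfo')$ across the two monoids, that is, checking that finite generation and the condition of containing a non-zero divisor transfer between $\Mfo$ and $\Mfo'$. This follows from Definition \ref{def:setup}.(2)(a), since both conditions are intrinsic to $I$ as a subset of $K$.
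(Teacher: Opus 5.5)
Your proof is correct and is the standard stratification of $\overline{\mathrm{Cl}}(\Mfo)$ by the multiplier ring $(I:I)$, together with the invariance $(JI:JI)=(I:I)$ under invertible $J$; this is the argument behind \cite[Proposition 5.2]{CHL}, which the paper simply quotes without reproducing a proof. One phrase is loose: finite length of $\Mfo_K/\Mfo$ gives finitely many overorders only because the residue fields are finite here. Alternatively, each overorder $\Mfo'$ lies in $\mathrm{cl}(\Mfo')$, so the number of overorders is bounded by $\#\overline{\mathrm{Cl}}(\Mfo)$; in any case this finiteness is not needed for the decomposition itself.
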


\begin{proposition}[Product formula]\cite[Proposition 5.3 and Corollary 5.5.(2)]{CHL}\label{prop:global_local} 
For an order $R$ of a number field $E$ with $\Mfo$ an overorder of $R$, the  following map is bijective:
\[     \mathrm{Cl}(R)\backslash \overline{\mathrm{Cl}}(R)\longrightarrow
    \prod\limits_{v\in |\mfo|}\overline{\cl}(R_v) ~~~  \textit{ and }  ~~~ \overline{\mathrm{cl}(\Mfo)}\cong\prod\limits_{v\in|\mfo|} \mathrm{cl}(\Mfo\otimes_R R_v), ~~~~~~~  \{I\}\mapsto \prod_{v\in |\mfo|} [I\otimes_R R_v].    \]
Note that both $\cl(R_w)$ and $\cl(R_v)$ are trivial by \cite[Lemma 2.17]{Ma24}. 
\end{proposition}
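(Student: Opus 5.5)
The plan is to prove the product formula directly, using the local--global correspondence for lattices in $E$. For every finite $v\in|\mfo|$ I would send a fractional $R$-ideal $I$ to $I_v=I\otimes_R R_v=I\otimes_{\mfo}\mfo_v$. Three standard facts will be used.

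\begin{enumerate}
\item Localization gives a bijection between $\mfo$-lattices in $E$ and families $(L_v)_v$ of $\mfo_v$-lattices in $E_v$ that agree with a fixed reference lattice for almost all $v$. The inverse is $(L_v)\mapsto E\cap\bigcap_v L_v$. This bijection respects products, quotients $(I:J)$, and the property of being an $R$-module.
\item For all but finitely many $v$ the ring $R_v$ is maximal. There $R_v$ is a finite product of DVRs, so every fractional $R_v$-ideal is principal. Hence $\overline{\cl}(R_v)$ is a single point and the product on the right-hand side is effectively finite.
\item Every invertible fractional ideal over a semilocal ring such as $R_v$ is principal, so $\cl(R_v)$ is trivial. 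This is the fact quoted from \cite[Lemma 2.17]{Ma24}.
\end{enumerate}

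\textbf{Well-definedness.} If $I'=aJI$ with $a\in E^\times$ and $J$ invertible, then $J_v$ is invertible over $R_v$, hence $J_v=b_vR_v$ by (3). Therefore $I'_v=ab_vI_v$ has the same class in $\overline{\cl}(R_v)$.

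\textbf{Surjectivity.} Given classes $[M_v]\in\overline{\cl}(R_v)$, use (2) to choose representatives with $M_v=R_v$ for almost all $v$. By (1) there is an $\mfo$-lattice $I=E\cap\bigcap_v M_v$ with $I_v=M_v$ for every $v$. It is an $R$-module because each $M_v$ is an $R_v$-module, so $I$ is a fractional $R$-ideal mapping to $([M_v])_v$.

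\textbf{Injectivity.} Suppose $I_v=a_vJ_v$ with $a_v\in E_v^\times$ for every $v$. For almost all $v$ we have $I_v=J_v$, and there we may take $a_v=1$. By (1) there is a fractional $R$-ideal $L$ with $L_v=a_vR_v$ for all $v$. It is invertible, since invertibility can be checked locally and each $L_v$ is principal. Then $(LJ)_v=a_vJ_v=I_v$ for all $v$, so $I=LJ$ by (1), and $I,J$ lie in the same $\mathrm{Cl}(R)$-orbit.

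\textbf{Second statement.} Since quotients localize, $(I:I)_v=(I_v:I_v)$, and by (1) an order is determined by its localizations. Hence $(I:I)=\Mfo$ holds if and only if $(I_v:I_v)=\Mfo_v$ for all $v$. The bijection above therefore restricts to a bijection $\overline{\mathrm{cl}(\Mfo)}\cong\prod_v\mathrm{cl}(\Mfo\otimes_RR_v)$. Together with Proposition~\ref{prop:stratforglobal}, this stratum-by-stratum form is compatible with the global decomposition.

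\textbf{Main obstacle.} The point needing care is the injectivity step: the local scalars $a_v$ must glue to an invertible ideal over $R$ itself, not merely over the larger ring $(J:J)$. Taking $L$ with $L_v=a_vR_v$, rather than $a_v(J_v:J_v)$, is what makes this work. In addition, the semilocal principality of invertible ideals in (3) must be justified when $R_v$ is not a domain. I would obtain it from \cite[Lemma 2.17]{Ma24}, or directly via the decomposition $R_v\cong\bigoplus_wR_w$ of Remark~\ref{rmk:descriptionofRv}, which reduces it to the local rings $R_w$.
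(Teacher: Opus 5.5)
Your proof is correct. It differs from the paper mainly in being self-contained. The paper does not reprove the statement. It quotes \cite[Proposition 5.3, Corollary 5.5.(2)]{CHL} and uses Remark~\ref{rmk:descriptionofRv}, namely $R_v\cong\bigoplus_{w\mid v}R_w$, to pass between the completions $R_w$ at maximal ideals of $R$ and the completions $R_v$ at primes of $\mfo$. Under this decomposition one has $\overline{\mathrm{Cl}}(R_v)\cong\prod_{w\mid v}\overline{\mathrm{Cl}}(R_w)$, and the strata regroup in the same way.

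You instead work directly at the primes $v$ of $\mfo$. You glue $\mfo$-lattices from their $v$-adic completions, and the only structural input about $R_v$ is that invertible ideals over a semilocal ring are principal. The decomposition into the $R_w$ enters your argument only as one possible way to justify that fact.

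Your route shows exactly where each ingredient is used:
\begin{itemize}
\item well-definedness needs $\mathrm{Cl}(R_v)=1$;
\item surjectivity needs $\overline{\mathrm{Cl}}(R_v)$ to be a point for almost all $v$, so that the local representatives glue to a lattice;
\item injectivity needs you to glue $a_vR_v$, rather than $a_v(J_v:J_v)$, so that the gluing yields an element of $\mathrm{Cl}(R)$ itself.
\end{itemize}
The paper's route buys brevity and consistency with the framework of \cite{CHL}; its only additional input is the regrouping over $w\mid v$.

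One small step in your stratum statement should be made explicit. You need $(a_vM_v:a_vM_v)=(M_v:M_v)$, so that membership in $\mathrm{cl}(\Mfo\otimes_RR_v)$ depends only on the class $[M_v]$. This is what makes the restricted map surjective onto $\prod_v\mathrm{cl}(\Mfo\otimes_RR_v)$, and not merely injective into it.
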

\begin{proof}
    This follows directly from \cite[Proposition 5.3, Corollary 5.5.(2)]{CHL} and Remark \ref{rmk:descriptionofRv}.
\end{proof}

\begin{lemma}\label{lem:overorderwhengoren}
    Let $R$ be an order of a cubic field $E$. Let $\Mfo'$ be an overorder of $R_v$. Then $1\leq\#\mathrm{cl}(\Mfo')\leq 2$, and $\#\mathrm{cl}(\Mfo')=1$ if and only if $\Mfo'$ is Gorenstein.
\end{lemma}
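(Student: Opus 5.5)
We want to show that for an overorder $\Mfo'$ of $R_v$, the set $\mathrm{cl}(\Mfo')=\{[I]\in\overline{\mathrm{Cl}}(\Mfo') : (I:I)=\Mfo'\}$ has one or two elements, and exactly one precisely when $\Mfo'$ is Gorenstein. The plan is to reduce to the local components of $\Mfo'$ and then use the fact that a cubic order is generated by at most three elements, via duality.

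\emph{Reduction to local components.} By Remark \ref{rmk:descriptionofRv}, $\Mfo'$ is a finite product of local orders $\Mfo'_w$, one for each maximal ideal of $\Mfo'$, and $\mathrm{cl}(\Mfo')=\prod_w \mathrm{cl}(\Mfo'_w)$. Being Gorenstein is also a componentwise condition. Each $\Mfo'_w$ is a local order in an \'etale algebra of rank $n_w\le 3$ over $F_v$. Hence it suffices to show:
\begin{enumerate}
\item at most one component $\Mfo'_w$ has $\#\mathrm{cl}(\Mfo'_w)>1$;
\item such a component has $\#\mathrm{cl}(\Mfo'_w)=2$.
\end{enumerate}

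\emph{Base cases.} Since $\mathrm{Cl}$ of a semilocal ring is trivial, the class of $\Mfo'_w$ itself always lies in $\mathrm{cl}(\Mfo'_w)$. This gives $\#\mathrm{cl}\ge 1$, and $\#\mathrm{cl}=1$ exactly when every $I$ with $(I:I)=\Mfo'_w$ is principal, i.e.\ invertible. That is the Gorenstein condition of Definition \ref{def:invariantofO}. If $n_w\le 2$, the order is monogenic, hence Gorenstein (Bass), so $\#\mathrm{cl}=1$. Consequently only a component with $n_w=3$, that is, a local order in a cubic algebra, can contribute, and there is at most one such component.

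\emph{The cubic local case.} Now let $\Mfo=\Mfo'_w$ be local of rank $3$ and not Gorenstein. Then the dual module $\Mfo^\vee=\mathrm{Hom}_{\mfo_v}(\Mfo,\mfo_v)$, identified with $(\Mfo:\,$inverse different$)$ via the trace form, is a fractional ideal with $(\Mfo^\vee:\Mfo^\vee)=\Mfo$ that is not invertible. Together with $[\Mfo]$ this gives $\#\mathrm{cl}\ge 2$. The main step is to show that every $I$ with $(I:I)=\Mfo$ is isomorphic either to $\Mfo$ or to $\Mfo^\vee$.

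For this I would use the minimal number of generators $\mu(I)=\dim_\kappa I/\mathfrak m I\le 3$.
\begin{itemize}
\item If $\mu(I)=1$, then $I\cong \Mfo$.
\item If $\mu(I)=3$, then $I/\mathfrak m I$ is $3$-dimensional, so $\mathfrak m I=\pi_v I$. Hence $\pi_v^{-1}\mathfrak m\subset (I:I)=\Mfo$, which forces $\mathfrak m=\pi_v\Mfo$ and $\Mfo$ maximal-like (a DVR-type or \'etale local order). That case is Gorenstein, contradicting the assumption.
\item If $\mu(I)=2$, pass to the dual: $\mu(I^\vee)$ equals the type of $I$. The goal is to show that the reflexive $\Mfo$-modules of rank one with ring of multipliers $\Mfo$ are $\Mfo$ and $\Mfo^\vee$. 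Concretely, when $\Mfo$ is non-Gorenstein of rank $3$, its canonical module needs exactly two generators and $\Mfo$ has embedding dimension $3$, so $\mathfrak m=\pi_v\Mfo^{\mathrm{max}}$-type, i.e.\ $\mathfrak m=\pi_v(\mathfrak m:\mathfrak m)$ (the ``$\mathfrak m$ is a multiple of an overorder'' situation). In that case $I$ with $\mu(I)=2$ and $(I:I)=\Mfo$ can be normalized as $\Mfo+\Mfo x$, and a direct lattice computation in the rank-$3$ setting identifies it with $\Mfo^\vee$ up to scaling.
\end{itemize}

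I expect this last identification to be the main obstacle: classifying $2$-generated ideals with multiplier ring exactly $\Mfo$. I would first try the cleaner route via the known result for orders whose embedding dimension is $n=3$ (Marseglia's $\mathrm{ICM}$ of orders with $\mathfrak m=\pi(\mathfrak m:\mathfrak m)$), or reduce by Bhargava-type normal forms of cubic rings to a finite check.
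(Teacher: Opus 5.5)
Your skeleton matches the paper's. You split $\Mfo'$ into local components and note that components of rank at most $2$ are Gorenstein, so only one local cubic component can matter. In the non-Gorenstein case you get $\#\mathrm{cl}\geq 2$ from $[\Mfo]$ and the trace dual $\Mfo^\vee$. Your cases $\mu(I)=1$ and $\mu(I)=3$ are also fine. Applying the $\mu=3$ argument to $I=\Mfo^\vee$ is exactly what gives Cohen--Macaulay type $\leq 2$, the bound the paper takes from \cite[Proposition 4.9]{Ma24}.

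The gap is the upper bound $\#\mathrm{cl}(\Mfo)\leq 2$ for a non-Gorenstein local cubic $\Mfo$, i.e.\ your case $\mu(I)=2$, which is the only non-formal content of the lemma. You never show that a $2$-generated $I$ with $(I:I)=\Mfo$ is isomorphic to $\Mfo^\vee$. The remark that $\mu(I^\vee)$ equals the type of $I$ does not by itself exclude $\mu(I)=\mu(I^\vee)=2$. Nor is the ``direct lattice computation'' routine.

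Write $\mathfrak m=\pi_v\Mfo''$ with $\Mfo''=(\mathfrak m:\mathfrak m)$; this is true, but also unproved in your sketch. Then such an $I$ amounts to a hyperplane $V$ in the $3$-dimensional $\Mfo''/\pi_v\Mfo''$-module $\Mfo''I/\pi_v\Mfo''I$ that generates it and has stabilizer exactly $\kappa$. The natural orbit argument for such hyperplanes works when $\Mfo''I$ is invertible over $\Mfo''$ and $\Mfo''/\pi_v\Mfo''$ is Frobenius. Neither holds in general. For example, in the unramified case $\Mfo=\mfo+\pi^2\Mfo_E$ gives $\Mfo''=\mfo+\pi\Mfo_E$ with $\Mfo''/\pi\Mfo''\cong\kappa[s,t]/(s,t)^2$, which is not Frobenius.

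The paper closes exactly this step by citing \cite[Corollary 6.5]{Ma24}: a local order of type $2$ has $\#\mathrm{cl}=2$. If you fall back on that citation, as you suggest at the end, your argument becomes essentially the paper's and the $\mu$-case analysis is unnecessary. As written, you have proved that at most one component contributes, that $\#\mathrm{cl}=1$ exactly in the Gorenstein case, and that $\#\mathrm{cl}\geq 2$ otherwise, but not $\#\mathrm{cl}\leq 2$.
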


\begin{proof}
    By Remark \ref{rmk:descriptionofRv}, $\Mfo'$ decomposes into the direct product of its local components: $\Mfo'\cong \bigoplus_{w\mid v, w\in |\Mfo'|}\Mfo'_w$.
    We use the \textit{Cohen-Macaulay type} of $\Mfo'$, as defined in \cite[Definition 3.2]{Ma24}.

    First, suppose that $\Mfo'$ is not a local ring. 
    Then each local component $\Mfo'_w$ must have rank at most $2$ over $\mfo_v$. Because any order of rank at most $2$ over a discrete valuation ring is Gorenstein, each $\Mfo'_w$ is Gorenstein. Consequently, $\Mfo'$ is Gorenstein, and its type is $1$ by \cite[Proposition 3.4]{Ma24}.

    Next, suppose that $\Mfo'$ is a local ring. Its Cohen-Macaulay type is bounded by its rank minus $1$ (cf. \cite[Proposition 4.9]{Ma24}), which is $3-1=2$. 

 In all cases, the type of $\Mfo'$ is at most $2$. 
    By \cite[Proposition 3.4]{Ma24}, the type of $\Mfo'$ is $1$ if and only if $\Mfo'$ is Gorenstein if and only if $\#\mathrm{cl}(\Mfo')=1$.    
    If the type of $\Mfo'$ is $2$, then $\Mfo'$ is not Gorenstein. As shown above, this implies that $\Mfo'$ must be a local ring. For a local order of type $2$, \cite[Corollary 6.5]{Ma24} yields $\#\mathrm{cl}(\Mfo')=2$.
This completes the proof. 
\end{proof} 

\begin{strategy}\label{strategy}
    By the product formula in Proposition \ref{prop:global_local}, our strategy is to find a formula for $\#\overline{\mathrm{Cl}}(R_v)$ for all $v\in |\mfo|$.
By Proposition \ref{prop:stratforglobal},  $\#\overline{\mathrm{Cl}}(R_v)=\sum_{\Mfo'}\#\mathrm{cl}(\Mfo')$, where $\Mfo'$ runs over all overorders of $R_v$.
By Lemma \ref{lem:overorderwhengoren}, it suffices to count the number of Gorenstein overorders of $R_v$ and that of non-Gorenstein overorders of $R_v$.
\end{strategy}

\section{Local notations and preliminaries}\label{sec:local_notations}
      In this section and the following Sections \ref{sec:irred}-\ref{sec:split}, we shift our focus to the local setting. To ease the burden of notation, we drop the subscript
          $v$ and simply let $F$ denote a non-Archimedean local field, and let $\mfo$ be its ring of integers.
          The goal is to find a formula for $\#\overline{\mathrm{Cl}}(R)$ (which plays the role of $R_v$ in Strategy \ref{strategy}).
      We reset the following notations.

\begin{itemize}
\item Let $F$ be a  non-Archimedean local field  of any characteristic with $\mfo$  its ring of integers and $\kappa$  its residue field.
Let $\pi$ be a uniformizer in $\mfo$.
Let $q$ be the cardinality of the finite field $\kappa$.

\item
For a finite field extension $F'$ of $F$, we denote by 
$\pi_{F'}$  a uniformizer of $F'$,  by  $\Mfo_{F'}$ the ring of integers of $F'$, and  by  $\kappa_{F'}$ the residue field of $F'$.

\item For an element $x\in F'$, $\ord_{F'}(x)$ is the exponential valuation with respect to $\pi_{F'}$. 
If $F'=F$, then we sometimes use $\ord(x)$, instead of $\ord_{F}(x)$.
We set $\ord(0)=\infty$.

\item Let $E$ be an \'etale $F$-algebra of degree $3$.
The maximal order $\Mfo_E$ of $E$, an order $\Mfo$ of $E$, and the conductor $\mathfrak{f}(\Mfo)$ of $\Mfo$ are as described in Definition \ref{def:setup}. Note that $\Mfo$ is a semilocal ring.

\item We often use $R$ to stand for an order of $E$ and $\Mfo$ to stand for an overorder of $R$. 

\item For $a\in A$ or $\psi(x) \in A[x]$ with a flat $\mfo$-algebra $A$, $\overline{a}\in A \otimes_{\mfo} \kappa$ or $\overline{\psi(x)}\in A\otimes_{\mfo} \kappa[x]$ is the reduction of $a$ or $\psi(x)$ modulo $\pi$, respectively.

\end{itemize}




\begin{proposition}\label{prop:gorenstein_cond}
An order $\Mfo$ of $E$ is Gorenstein if and only if $S(\Mfo)=[\Mfo:\mathfrak{f}(\Mfo)]_\mfo$.
Furthermore, any simple extension of $\mfo$ is Gorenstein. Conversely, a Gorenstein order $\Mfo$ of $E$ is a simple extension of $\mfo$ unless $q=2$ and $\Mfo \cong \mfo \times \mfo \times \mfo$.
\end{proposition}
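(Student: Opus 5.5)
The plan has three parts: the index criterion via the dual (trace) module, the easy direction for simple extensions, and the converse via a counting argument over the residue field.

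\emph{Gorenstein criterion.} Let $\Mfo^\vee=\{x\in E\mid \mathrm{Tr}_{E/F}(x\Mfo)\subset\mfo\}$ be the trace dual. The standard fact is that $\Mfo$ is Gorenstein if and only if $\Mfo^\vee$ is an invertible fractional $\Mfo$-ideal. Since $\Mfo$ is semilocal, this happens exactly when $\Mfo^\vee=\Mfo\alpha$ for some unit $\alpha$; this is the type-one condition of \cite[Proposition 3.4]{Ma24}. Dualizing over $\mfo$ gives two facts:
\[
(\Mfo:\Mfo_E)=\mathfrak f(\Mfo)=\Mfo_E^\vee\cdot(\Mfo^\vee)^{-1}\quad\text{when }\Mfo^\vee \text{ is invertible},
\]
and $[\Mfo^\vee:\Mfo_E^\vee]_\mfo=[\Mfo_E:\Mfo]_\mfo=S(\Mfo)$. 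Assume $\Mfo$ is Gorenstein and write $\Mfo^\vee=\alpha\Mfo$. Then
\[
\mathfrak f(\Mfo)=\alpha^{-1}\Mfo_E^\vee\subset\Mfo ,
\]
and multiplication by $\alpha^{-1}$ preserves $\mfo$-lengths of quotients. Hence $[\Mfo:\mathfrak f(\Mfo)]=[\Mfo^\vee:\Mfo_E^\vee]=S(\Mfo)$.

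For the converse, in general $\mathfrak f(\Mfo)=(\Mfo:\Mfo_E)$ is the $\mfo$-dual of $\Mfo^\vee\Mfo_E$ relative to the trace form, so
\[
[\Mfo_E:\mathfrak f(\Mfo)]=[\Mfo^\vee\Mfo_E:\Mfo_E^\vee].
\]
Therefore $[\Mfo:\mathfrak f]=S(\Mfo)$ is equivalent to $[\Mfo_E:\mathfrak f]=2S(\Mfo)$, which in turn is equivalent to $\Mfo^\vee\Mfo_E$ having the same colength over $\Mfo_E^\vee$ as $\Mfo^\vee$. If $\Mfo^\vee$ is not principal, then $\Mfo^\vee\mfo_E\supsetneq\Mfo^\vee$ should force a strict inequality. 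I would verify this via the type: the inequality $[\Mfo:\mathfrak f]\le S(\Mfo)$ always holds, with equality exactly for type one. This is the classical Gorenstein conductor-index equality $\ell(\Mfo_E/\mathfrak f)=2\ell(\Mfo_E/\Mfo)$, which can alternatively be cited from Bass/Serre. I expect this converse inequality to be the main technical point. I would prove it by localizing at each maximal ideal of $\Mfo$ and applying the component decomposition of Remark \ref{rmk:descriptionofRv} together with the local statement in \cite{Ma24}.

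\emph{Simple extensions.} If $\Mfo=\mfo[x]\cong\mfo[X]/(f)$ with $f$ monic, then by Euler's lemma the dual is $\Mfo^\vee=f'(x)^{-1}\Mfo$. This is principal, so $\Mfo$ is Gorenstein (alternatively, $\Mfo$ is a complete intersection).

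\emph{Converse.} Let $\Mfo$ be Gorenstein. Then $\Mfo\otimes\kappa=\Mfo/\pi\Mfo$ is a $3$-dimensional Gorenstein $\kappa$-algebra. By Nakayama, $\Mfo$ is monogenic over $\mfo$ as soon as $\Mfo/\pi\Mfo$ is generated by one element over $\kappa$, since any lift $x$ then gives $\mfo[x]=\Mfo$. So it suffices to classify the finite Gorenstein $\kappa$-algebras $A$ of dimension $3$. Write $A$ as a product of local Artinian factors. A local factor of dimension $\le2$ is monogenic, being $\kappa'$ or $\kappa'[t]/(t^2)$. A local factor of dimension $3$ with residue field $\kappa$ and Gorenstein has a one-dimensional socle, which excludes $\kappa[s,t]/(s,t)^2$ and leaves $\kappa[t]/(t^3)$.

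The question then reduces to whether $A$ is generated by a single element $t$. This happens exactly when the images of $t$ in the factors have pairwise distinct "residue data", that is, roots in distinct factors of the reduction of the minimal polynomial. This can be arranged as long as enough distinct elements of the residue fields are available. The only failure is when $A$ has three factors all equal to $\kappa$ and $q=2$: one needs three distinct elements of $\kappa$, but $\#\kappa=2$. Any other configuration has at most two factors with residue field $\kappa$ (or nilpotent parts), and these can be separated using $0$ and $1$ or elements of extensions. Finally, $A\cong\kappa^3$ forces $\Mfo$ to be étale over $\mfo$, so $\Mfo\cong\mfo\times\mfo\times\mfo$. This gives exactly the stated exception.
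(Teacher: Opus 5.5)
Your proposal is correct. For the last two claims it takes a genuinely different route from the paper.

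\textbf{How the paper argues.} For the first equivalence the paper localizes via \cite{Jen15} and quotes \cite[Corollary 3.7]{JKE}. It then handles the remaining two claims together through the binary cubic form $f$ attached to $\Mfo$:
Gorenstein means $f$ is primitive, a simple extension means $f$ represents a unit, and a nonzero $\bar f$ has at most three zeros among the $q+1$ points of $\mathbb{P}^1(\kappa)$.

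\textbf{How you argue.} You prove Gorenstein $\Rightarrow$ $[\Mfo:\mathfrak f(\Mfo)]=S(\Mfo)$ directly from trace duality, via $\mathfrak f=(\Mfo_E^\vee:\Mfo^\vee)=\alpha^{-1}\Mfo_E^\vee$. You get simple $\Rightarrow$ Gorenstein from Euler's lemma $\mfo[x]^\vee=\mu'(x)^{-1}\mfo[x]$, with $\mu$ the minimal polynomial. For the converse you use Nakayama to reduce monogenicity of $\Mfo$ to that of $A=\Mfo/\pi\Mfo$. You then classify $3$-dimensional Gorenstein $\kappa$-algebras and apply the Chinese remainder theorem: the local factors of $A$ need pairwise distinct monic irreducibles of the right degrees, which fails only for three linear ones over $\mathbb{F}_2$.

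\textbf{What each route buys.} The paper's argument is shorter but rests on the rank-$3$ parametrization and Wood's primitivity criterion. Yours is intrinsic, and the Nakayama reduction works in every rank. It isolates exactly where rank $3$ enters (a $3$-dimensional Gorenstein local algebra with residue field $\kappa$ is $\kappa[t]/(t^3)$) and where $q=2$ enters. Both arguments come down to the same count over $\kappa$. One small omission: your list of local factors leaves out the cubic field extension of $\kappa$, which is trivially monogenic.

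\textbf{Caveat on the converse of the first equivalence.} The heuristic you sketch does not work as stated. Your own identities give $[\Mfo:\mathfrak f(\Mfo)]=[\Mfo^\vee\Mfo_E:\Mfo^\vee]$. So what must be shown is $[\Mfo^\vee\Mfo_E:\Mfo^\vee]\le[\Mfo_E:\Mfo]$, with equality only if $\Mfo^\vee$ is invertible. The strict containment $\Mfo^\vee\Mfo_E\supsetneq\Mfo^\vee$ holds for every $\Mfo\neq\Mfo_E$, Gorenstein or not, so it detects nothing. Likewise, ``same colength'' should read $[\Mfo^\vee\Mfo_E:\Mfo^\vee]=[\Mfo^\vee:\Mfo_E^\vee]$.

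Falling back on the classical Gorenstein--Bass--Serre criterion is legitimate, and it is exactly what the paper does. Your framework can also finish the argument directly:
\begin{enumerate}
\item Write $\Mfo^\vee\Mfo_E=y\Mfo_E$ and set $J=y^{-1}\Mfo^\vee$, so $J\Mfo_E=\Mfo_E$.
\item For each of the at most three maximal ideals $\mathfrak P$ of $\Mfo_E$, the submodule $J\cap\mathfrak P$ contains $\pi J$ and is proper in $J$.
\item A vector space over $\mathbb{F}_q$ is not a union of at most $q$ proper subspaces. So for $q\ge 3$, $J$ contains a unit $u$ of $\Mfo_E$. For $q=2$, first make an unramified base change; this preserves lengths, trace duals, conductors and invertibility.
\item Then $yu\Mfo\subseteq\Mfo^\vee\subseteq y\Mfo_E$, hence $[\Mfo^\vee\Mfo_E:\Mfo^\vee]\le[y\Mfo_E:yu\Mfo]=S(\Mfo)$.
\item Equality holds if and only if $\Mfo^\vee=yu\Mfo$ is principal, that is, if and only if $\Mfo$ is Gorenstein.
\end{enumerate}
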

\begin{proof}
    By \cite[2.5]{Jen15}, $\Mfo$ is Gorenstein if and only if each localization $\Mfo_i$ of $\Mfo$ is Gorenstein.
    For each local ring $\Mfo_i$, \cite[Corollary 3.7]{JKE} states that $\Mfo_i$ is Gorenstein if and only if
    $S(\Mfo_i)=[\Mfo_i:\mathfrak{f}(\Mfo_i)]_\mfo$. The additivity of lengths proves the first equivalence, since $\Mfo$ is a semilocal ring.

    For the second part, we use the parametrization of cubic rings via binary cubic forms (see \cite[Theorem 1.2 and Section 4]{Woo11} or \cite[Section 3]{Bha04}). An order $\Mfo$ corresponds to a $GL_2(\mfo)$-equivalence class of a binary cubic form $f(x,y)$ over $\mfo$. Under this bijection, $\Mfo$ is Gorenstein if and only if $f$ is primitive (\cite[Corollary 2.3]{Woo11}), and $\Mfo$ is a simple extension of $\mfo$ if and only if $f$ represents a unit in $\mfo$ (\cite[Proposition 5]{Bha04} or \cite[Section 2.1]{Woo11}).
    
    If $\Mfo$ is a simple extension, then $f$ represents a unit, so that its coefficients generate the unit ideal $\mfo$. Thus $f$ is primitive and $\Mfo$ is Gorenstein.
    Conversely, let $\Mfo$ be Gorenstein, so that its associated form $f$ is primitive. Let $\bar{f}$ be the reduction of $f$ over the residue field $\kappa$.
    Then $\Mfo$ is a simple extension if and only if $\bar{f}$ does not vanish identically on the projective line $\mathbb{P}^1(\kappa)$.
    Since $\bar{f}$ is a non-zero cubic form, it has at most 3 roots in $\mathbb{P}^1(\kappa)$.
    If $q \ge 3$, then $\#\mathbb{P}^1(\kappa)=q+1 > 3$, and thus $\bar{f}$ always has a non-vanishing point in $\mathbb{P}^1(\kappa)$. 
    If $\Mfo$ is not completely split, then $\bar{f}$ has at most 2 roots, so even for $q=2$ it has a non-vanishing point in $\mathbb{P}^1(\kappa)$. 
    In these cases $\Mfo$ is a simple extension. The only remaining case is $q=2$ where $\bar{f}$ has 3 distinct roots, which occurs if and only if $\bar{f}(x,y) = xy(x+y)$ up to $GL_2(\kappa)$-equivalence. This corresponds to the completely split case $\Mfo \cong \mfo \times \mfo \times \mfo$.
\end{proof}
\begin{remark}
    The failure of monogenicity for $q=2$ corresponds to the unique primitive binary cubic form $xy(x+y)$ over $\mathbb{F}_2$ vanishing on all points of $\mathbb{P}^1(\mathbb{F}_2)$. More generally, any simple extension over a DVR is a complete intersection and thus Gorenstein in any degree. However, for degree $\ge 4$, there exist Gorenstein rings that are not complete intersections, so the equivalence between Gorenstein orders and simple extensions is specific to the quadratic and cubic cases.
\end{remark}

\begin{proposition}\label{prop:orderidealcounting}
    $\#\clb(R)=2\cdot\#\{\text{overorders of $R$}\}-\#\{\text{Gorenstein overorders of $R$}\}$.
\end{proposition}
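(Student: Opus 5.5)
The plan is to combine the stratification of Proposition \ref{prop:stratforglobal} with the dichotomy of Lemma \ref{lem:overorderwhengoren}, applied in the local setting.

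First, apply Proposition \ref{prop:stratforglobal} with $Z=\mfo$, $K=E$ and $\Mfo=R$. It is stated for orders in the generality of Definition \ref{def:setup}, so it applies to the local order $R$ of the étale algebra $E$. It gives
\[
\#\clb(R)=\sum_{R\subset \Mfo\subset \Mfo_E}\#\mathrm{cl}(\Mfo).
\]
The sum is finite, since every overorder lies between $R$ and $\Mfo_E$ and $\Mfo_E/R$ has finite length over $\mfo$ with finite residue field.

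Next, apply Lemma \ref{lem:overorderwhengoren} to each overorder $\Mfo$. This gives $\#\mathrm{cl}(\Mfo)=1$ if $\Mfo$ is Gorenstein and $\#\mathrm{cl}(\Mfo)=2$ otherwise. Splitting the sum into Gorenstein and non-Gorenstein overorders, we get
\[
\#\clb(R)=\#\{\text{Gor.}\}+2\bigl(\#\{\text{all}\}-\#\{\text{Gor.}\}\bigr)=2\cdot\#\{\text{all}\}-\#\{\text{Gor.}\},
\]
which is the claim.

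The main point needing care is that Lemma \ref{lem:overorderwhengoren} is stated for an overorder of $R_v$, where $R$ is a global order in a cubic field. In the present local setting, $E$ is an arbitrary cubic étale $F$-algebra, not necessarily a field. I would check that its proof uses only local inputs, so that it transfers verbatim:
\begin{itemize}
\item the decomposition of $\Mfo$ into local components, from Remark \ref{rmk:descriptionofRv};
\item the fact that orders of rank at most $2$ over a DVR are Gorenstein;
\item the bound on Cohen--Macaulay type by rank minus one;
\item Marseglia's count $\#\mathrm{cl}(\Mfo)=2$ for local orders of type $2$.
\end{itemize}
Each of these holds for orders in any étale algebra of degree $3$ over $F$. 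Moreover, every such local étale algebra is a completion $E\otimes_F F_v$ of some global cubic situation. Beyond this verification, the argument is a direct bookkeeping combination of the two earlier results.
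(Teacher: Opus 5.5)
Your proposal is correct and follows the paper's proof. Both combine the stratification $\#\clb(R)=\sum_{R\subset\Mfo\subset\Mfo_E}\#\mathrm{cl}(\Mfo)$ with the dichotomy of Lemma \ref{lem:overorderwhengoren}; the only difference is that the paper cites \cite[Proposition 2.6]{CHL} and remarks that its domain hypothesis is unnecessary, whereas you invoke Proposition \ref{prop:stratforglobal}, which is already stated for \'etale algebras. Your check that the lemma's proof uses only local inputs is the justification the paper leaves implicit; the closing remark about realizing every local cubic \'etale algebra as a completion of a global cubic field is unnecessary, and would not by itself cover the positive-characteristic local fields allowed in this section.
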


\begin{proof}
\cite[Proposition 2.6]{CHL} yields 
$\#\overline{\mathrm{Cl}}(R)=\sum_{R\subset \Mfo \subset \Mfo_{E}}\#\mathrm{cl}(\Mfo)$.     
Here loc. cit. assumed $R$ to be an integral domain, but the proof works for general $R$. The claim  follows from Lemma \ref{lem:overorderwhengoren}.
\end{proof}


\section{Local theory: irreducible case}\label{sec:irred}
In this section, we treat the case where $E/F$ is  a cubic field extension.
Let $e$ be the ramification index and let $d=[\kappa_{E}:\kappa]$ so that $3=ed$.
In this case, an order $\Mfo$ is a local domain.  

For an overorder $\Mfo$ of $R$ so that  $R\subset \Mfo\subset \Mfo_{E}$, we define the integer $f(\Mfo)\in \Z_{\geq 0}$ such that $\mathfrak{f}(\Mfo)=\pi_E^{f(\Mfo)}\Mfo_E \left(\subset \Mfo\right)$.
We sometimes call $f(\Mfo)$ the conductor of $\Mfo$, if it does not cause confusion. 
\begin{equation}\label{eq:inequfs}
\textit{Note that } ~~~~~~~~~~    f(\Mfo)\leq  e\cdot S(\Mfo) \leq 3\cdot f(\Mfo).
\end{equation}

\subsection{The unramified case}\label{subsec:unramified} 
Suppose that $E/F$ is unramified with $x\in \Mfo_E^\times$ such that $\Mfo_E=\mfo[x]$ and  $\kappa_E=\kappa[\overline{x}]$. 
Since any order $R$ is an overorder of $\mathfrak{o}[\pi^{f(R)} x]$ by equality $\ord_E(\pi^{f(R)}x)= f(R)$,  we will work with $R=\mfo[\pi^nx]$ until Corollary \ref{cor:criterionoverorder}, and we will treat general $R$ in Theorem \ref{thm:unrfor}.

Let $X^3+c_2X^2+c_1X+c_0$ be the minimal polynomial for $x$ over $\mfo$.
Note that $\overline{X^3+c_2X^2+c_1X+c_0}$ is irreducible over $\kappa$.
We introduce the following notation:
\begin{equation}\label{eq:bracketnotation}
    (u \pi^ax,v \pi^bx^2)_\mfo:=\mfo\langle 1, u \pi^ax, v \pi^bx^2 \rangle ~~~~  \textit{ with $a,b\in \mathbb{Z}_{\geq 0}$ and $u, v\in \mfo^\times$,  as an $\mfo$-module}.
\end{equation}

\begin{lemma}\label{lem:overorder}
    Let $c\in \mfo$.
    \begin{enumerate}
    \item The $\mfo$-module $\Mfo:=(\pi^a(x+cx^2),\pi^bx^2)_\mfo$ is an overorder of $R=\mfo[\pi^nx]$ if and only if $a\leq 2b$, $b\leq 2a$, $a\leq n$, and $b\leq \ord(c)+n$.
    \item The $\mfo$-module $\Mfo':=(\pi^ax,\pi^b(x^2+cx))_\mfo$ is an overorder of $R=\mfo[\pi^nx]$ if and only if $a\leq 2b$, $b\leq 2a$, and $a\leq n$.
    \end{enumerate}
\end{lemma}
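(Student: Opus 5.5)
The plan is to treat $\Mfo$ as an $\mfo$-lattice with an explicit basis and test two things directly: closure under multiplication, and containment of $\pi^nx$. Put $y:=x+cx^2$. Then $\{1,y,x^2\}$ is obtained from $\{1,x,x^2\}$ by a unimodular triangular change of basis, so it is again an $\mfo$-basis of $\Mfo_E=\mfo[x]$. Hence $\Mfo=\mfo\langle 1,\pi^ay,\pi^bx^2\rangle$ is a full-rank sublattice of $\Mfo_E$ containing $1$. It is an order exactly when it is closed under multiplication, that is, when the three products $\pi^{2a}y^2$, $\pi^{a+b}yx^2$ and $\pi^{2b}x^4$ lie in $\Mfo$. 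A general element $\alpha+\beta y+\gamma x^2\in\Mfo_E$ lies in $\Mfo$ if and only if $\pi^a\mid\beta$ and $\pi^b\mid\gamma$.

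\textbf{Containment of $R$.} Write $\pi^nx=\pi^ny-\pi^ncx^2$. This lies in $\Mfo$ if and only if $a\le n$ and $n+\ord(c)\ge b$. If $\Mfo$ is a ring, it then contains $\mfo[\pi^nx]=R$. This accounts for the last two conditions in (1).

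\textbf{Unit coefficients from the unramified hypothesis.} This is the step I expect to carry the content. Since $E/F$ is unramified of degree $3$, any $z\in\Mfo_E$ with $\overline z\notin\kappa$ satisfies $\kappa_E=\kappa[\overline z]$, and so $\mfo[z]=\Mfo_E$ by Nakayama. In particular $\{1,z,z^2\}$ is an $\mfo$-basis of $\Mfo_E$.
\begin{enumerate}
\item Take $z=y$. We have $\overline y=\overline x+\overline c\,\overline x^2\notin\kappa$ because $1,\overline x,\overline x^2$ are $\kappa$-independent. Write $y^2=\alpha+\beta y+\gamma x^2$. The transition matrix between the bases $\{1,y,y^2\}$ and $\{1,y,x^2\}$ has determinant $\gamma$, so $\gamma\in\mfo^\times$.
\item Take $z=x^2$. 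We have $\overline x^2\notin\kappa$, since otherwise $\overline x$ would have degree at most $2$ over $\kappa$. Write $x^4=\alpha'+\beta'y+\gamma'x^2$. Comparing the bases $\{1,x^2,x^4\}$ and $\{1,y,x^2\}$ gives $\beta'\in\mfo^\times$.
\end{enumerate}

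\textbf{Necessity.} If $\Mfo$ is a ring, then $\pi^{2a}y^2\in\Mfo$. Its $x^2$-coordinate is $\pi^{2a}\gamma$ with $\gamma$ a unit, so $b\le 2a$. Likewise $\pi^{2b}x^4\in\Mfo$ has $y$-coordinate $\pi^{2b}\beta'$ with $\beta'$ a unit, so $a\le 2b$.

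\textbf{Sufficiency.} Assume $a\le 2b$ and $b\le 2a$.
\begin{enumerate}
\item $\pi^{2a}y^2$ has $y$-coordinate divisible by $\pi^{2a}$, and $2a\ge a$. Its $x^2$-coordinate is divisible by $\pi^{2a}$, and $2a\ge b$.
\item $\pi^{a+b}yx^2$ has both coordinates divisible by $\pi^{a+b}$, and $a+b\ge\max(a,b)$.
\item $\pi^{2b}x^4$ has $y$-coordinate divisible by $\pi^{2b}$, and $2b\ge a$. Its $x^2$-coordinate is divisible by $\pi^{2b}$, and $2b\ge b$.
\end{enumerate}
All coefficients are integral because everything lies in $\Mfo_E$, so $\Mfo$ is closed under multiplication and is an order.

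\textbf{Part (2).} Use the basis $\{1,x,w\}$ with $w=x^2+cx$, again unimodular over $\{1,x,x^2\}$. For $\pi^{2a}x^2$, take $z=x$: the $w$-coordinate of $x^2$ in $\{1,x,w\}$ is a unit (indeed $x^2=w-cx$, coordinate $1$), giving $b\le 2a$. For $\pi^{2b}w^2$, take $z=w$: we have $\overline w\notin\kappa$, and the $x$-coordinate of $w^2$ is a unit, giving $a\le 2b$. Sufficiency follows by the same exponent check. Finally, $\pi^nx\in\Mfo'$ if and only if $a\le n$, with no condition involving $c$, since $x$ is itself (up to the factor $\pi^a$) a basis vector.
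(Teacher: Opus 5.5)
Your proof is correct and follows essentially the same route as the paper. Both arguments test closure on the three basis products and show that the one critical coordinate in each case is a unit because $\kappa_E/\kappa$ is cubic; these are the $x^2$-coordinate of $y^2$, the $y$-coordinate of $x^4$, and their analogues in (2). Both then reduce containment of $R$ to $\pi^nx\in\Mfo$.

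The only difference is how the unit coefficients are certified. You do it uniformly, via $\mfo[z]=\Mfo_E$ for $\overline{z}\notin\kappa$ and a change-of-basis determinant. The paper instead expands the products explicitly in $c_0,c_1,c_2$ and argues case by case; for example, the $y$-coordinate of $x^4$ is $c_2c_1-c_0$, a unit since otherwise $-\overline{c_2}$ would be a root of the reduced minimal polynomial. Your version is cleaner and avoids that bookkeeping.
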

\begin{proof}
\begin{enumerate}
\item
    First, we characterize the condition when $\Mfo$ is closed under multiplication.
    We have 
    \begin{gather*}
    (\pi^a(x+cx^2))^2=
    (-2cc_0+c^2c_2c_0)\pi^{2a}+
    (c^2c_2c_1-c^2c_0-2cc_1)\pi^{a}\cdot\pi^a(x+cx^2)\\+(1+c^2c_2^2+c^2c_1-2cc_2-c^3c_2c_1+c^3c_0)\pi^{2a}x^2.
    \end{gather*}
    Suppose that $\ord(1+c^2c_2^2+c^2c_1-2cc_2-c^3c_2c_1+c^3c_0)>0$.
    Then $\overline{x+cx^2}\in \kappa_E$ satisfies a quadratic equation \[ \overline{(x+cx^2)^2}=\overline{(-2cc_0+c^2c_2c_0)+(c^2c_2c_1-c^2c_0-2cc_1)(x+cx^2)}, \]
    which is absurd since $\overline{x+cx^2}\notin \kappa$, so that its minimal polynomial over $\kappa$ is cubic. Thus $\ord(1+c^2c_2^2+c^2c_1-2cc_2-c^3c_2c_1+c^3c_0)=0$, so that $(\pi^a(x+cx^2))^2\in \Mfo$ if and only if $b\leq 2a$.
    Next, 
    \begin{gather*}
    \pi^a(x+cx^2)\cdot \pi^bx^2= (cc_2c_0-c_0)\pi^{a+b}+(cc_2c_1-cc_0-c_1)\pi^{b}\cdot (\pi^a(x+cx^2)) \\
    +(cc_2^2-c_2-c^2c_2c_1+c^2c_0)\pi^a\cdot\pi^bx^2,
    \end{gather*}
    so that it is contained in $\Mfo$ without any conditions.
    Lastly, we have
    \begin{gather*}
    (\pi^bx^2)^2=
    c_2c_0\pi^{2b}+(c_2c_1-c_0)\pi^{2b-a}\cdot \pi^a(x+cx^2)+(c_2^2-c_1-cc_2c_1+cc_0)\pi^{2b}x^2.
    \end{gather*}
    If $\ord(c_2c_1-c_0)>0$ so that $\overline{c_2c_1}=\overline{c_0}$, then the polynomial $\overline{X^3+c_2X^2+c_1X+c_0}$ has a zero $\overline{-c_2}$ in $\kappa$, which is a contradiction to the irreducibility of $\overline{X^3+c_2X^2+c_1X+c_0}$.
    Therefore, $(\pi^bx^2)^2\in \Mfo$ if and only if $2b-a\geq 0$, in other words, $a\leq 2b$.

    Now, we characterize the condition when $\Mfo$ contains $R$.
    First, we see that $\pi^{2n}x^2\in \Mfo$ if and only if $b\leq 2n$.
    Next, we have 
    $\pi^nx=\pi^{n-a}\cdot \pi^a(x+cx^2)-c\pi^nx^2$.
    Therefore, $\pi^nx\in \Mfo$ if and only if $n-a\geq 0$ and $\ord(c)+n\geq b$.

    \item The proof is identical to that of (1).
    We have 
    \begin{gather*}
            (\pi^b(x^2+cx))^2=    (c_2c_0-2cc_0)\pi^{2b}+(c^2-2cc_2+c_2^2-c_1)\cdot \pi^{b}(\pi^b(x^2+cx))\\
            +(2c^2c_2-c^3-cc_2^2-cc_1+c_2c_1-c_0)\pi^{2b}x.
    \end{gather*}
Then $\ord(2c^2c_2-c^3-cc_2^2-cc_1+c_2c_1-c_0)=0$, since otherwise $\overline{x^2+cx}\in \kappa_E\setminus \kappa$ is a root of a quadratic equation over $\kappa$, which is a contradiction.
    Thus, $(\pi^b(x^2+cx))^2\in \Mfo'$ if and only if $a\leq 2b$.
    Next,
    \begin{gather*}
    \pi^ax\cdot \pi^b(x^2+cx)=
    -c_0\pi^{a+b}+(c-c_2)\pi^a\cdot \pi^b(x^2+cx)+(c_2c-c^2-c_1)\pi^{a+b}x,
    \end{gather*}
    which is contained in $\Mfo'$ without any conditions.
    Lastly, we have \[ (\pi^ax)^2=\pi^{2a}x^2=\pi^{2a-b}\cdot \pi^b(x^2+cx)-\pi^{2a}cx, \] which is contained in $\Mfo'$ if and only if $b\leq 2a$.

    It is immediate that $\pi^nx\in \Mfo'$ if and only if $a\leq n$.
    Furthermore, we have $\pi^{2n}x^2=\pi^{2n-b}\cdot \pi^b(x^2+cx)-c\pi^{2n}x$, which is contained in $\Mfo'$ if and only if $b\leq 2n$ and $a\leq 2n+\ord(c)$.
    The latter inequality $a\leq 2n+\ord(c)$ follows if $a\leq n$, and thus $\Mfo'$ contains $R$ if and only if $a\leq n$ and $b\leq 2n$.
    The inequality $b\leq 2n$ is implied from $a\leq n$ if $b\leq 2a$.\qedhere
    \end{enumerate} 
\end{proof}
\begin{proposition}\label{prop:overorderform}
Let $\Mfo$ be an overorder of $R=\mfo[\pi^nx]$ with $s:=S(\Mfo)$ and $f:=f(\Mfo)$. 
Then $0\leq s-f\leq \min(n,f)$. 
$\Mfo$ takes one of the following forms
\[
\Mfo_{s,f,c}^1:=(\pi^{s-f}(x+c\pi^{f-\min(n,f)}x^2),\pi^fx^2 )_\mfo ~~~~  \textit{  or  } ~~~~ \Mfo_{s,f,c}^2:=(\pi^fx,\pi^{s-f}(c\pi x+x^2))_\mfo,
\]
where $c\in \mfo$ and $f>s-f$ for $\Mfo_{s,f,c}^2$. 
When $s$ and $f$ are fixed, $\Mfo_{s,f,c}^1$ (resp. $\Mfo_{s,f,c}^2$) is completely determined by the reduction of $c$ modulo $(\pi^{\min(n,f)-(s-f))})$ (resp. modulo $(\pi^{2f-s-1})$).






\end{proposition}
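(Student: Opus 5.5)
The approach is to describe $\Mfo$ through the rank-$2$ lattice $L:=\Mfo\cap(\mfo x+\mfo x^2)$. Since $\Mfo_E=\mfo[x]=\mfo\oplus\mfo x\oplus\mfo x^2$ and $\mfo\subset\Mfo$, we have $\Mfo=\mfo\oplus L$. Moreover $S(\Mfo)$ is the length of $(\mfo x\oplus\mfo x^2)/L$. I would put $L$ into Hermite normal form and then read off $s$ and $f$ from its invariants. The overorder conditions of Lemma \ref{lem:overorder} then supply the numerical constraints.

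\emph{Normal form.} Let $\alpha$ be the minimal valuation of the $x$-coefficient of elements of $L$. Let $\pi^\beta\mfo=\{t\in\mfo : tx^2\in L\}$. Rescaling by a unit, $L$ has an $\mfo$-basis $\pi^\alpha(x+c x^2)$, $\pi^\beta x^2$ for some $c\in\mfo$, so $s=\alpha+\beta$. Since $\mathfrak f(\Mfo)=\pi^f\Mfo_E$ in the unramified case, $f$ is the least integer with $\pi^fx,\pi^fx^2\in L$. We have $\pi^f x^2\in L$ iff $f\ge\beta$. Writing $\pi^fx=\pi^{f-\alpha}\cdot\pi^\alpha(x+cx^2)-c\pi^fx^2$ shows $\pi^fx\in L$ iff $f\ge\alpha$ and $f+\ord(c)\ge\beta$. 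Hence $f=\max(\alpha,\beta-\ord(c))$ when $\beta\le\alpha$, and $f=\beta$ when $\beta\ge\alpha$.

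\emph{Case split.} Suppose first that $f=\beta$, i.e. $\pi^{\beta}x\in L$. Then $\alpha=s-f$, and $\Mfo$ has the shape of Lemma \ref{lem:overorder}(1). That lemma gives $a\le n$, i.e. $s-f\le n$, and $a\le b$ here gives $s-f\le f$. It also gives $\ord(c)\ge\beta-n=f-n$, so $c\in\pi^{f-\min(n,f)}\mfo$; this is exactly $\Mfo^1_{s,f,c}$. Otherwise $f>\beta$. In that case I would redo the Hermite form with the roles of $x$ and $x^2$ exchanged. Here $\pi^fx$ generates $L\cap\mfo x$, and the other generator is $\pi^{\beta}(x^2+c'x)$. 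The key point is that $c'\in\pi\mfo$: if $c'$ were a unit, then $\pi^\beta x\in L$ up to a multiple of $\pi^\beta x^2$, and we would be back in the first case. Writing $c'=c\pi$ gives $\Mfo^2_{s,f,c}$ with $s-f=\beta<f$. Lemma \ref{lem:overorder}(2) gives $f=a\le n$, hence $s-f<f\le n$. In both cases we obtain $0\le s-f\le\min(n,f)$; nonnegativity is clear.

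\emph{Dependence on $c$.} Fix $s$ and $f$. For $\Mfo^1_{s,f,c}$, two values of $c$ give the same lattice iff their $x$-generators differ by an element of $\pi^f\mfo x^2$. That is, $\pi^{s-f}\pi^{f-\min(n,f)}(c-\tilde c)\in\pi^f\mfo$, i.e. $c\equiv\tilde c \pmod{\pi^{\min(n,f)-(s-f)}}$. For $\Mfo^2_{s,f,c}$, the generators $\pi^{s-f}(c\pi x+x^2)$ may be changed by multiples of $\pi^f x$. So $c$ matters only modulo $\pi^{f-(s-f)-1}=\pi^{2f-s-1}$.

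I expect the main obstacle to be the case split: showing that the two families are exhaustive, and that the second family genuinely needs $f>s-f$ and $c\in\pi\mfo$, without double counting. In particular, I must check that the second case really has $f>\beta$ with $x$-coefficient valuation exactly $f$. To handle this, I would first compute $f$ directly from each normal form in both directions, and confirm that $S=s$ and $f(\Mfo)=f$ for the stated forms.
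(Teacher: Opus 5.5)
There is a genuine gap at the very start, even though your overall plan is close to the paper's: Hermite normal form for $L$, a split according to whether $\beta=f$, then Lemma \ref{lem:overorder}. The gap is your claim that, after rescaling, $L$ has a basis $\pi^\alpha(x+cx^2),\ \pi^\beta x^2$ with $c\in\mfo$. Hermite form only gives a basis $\pi^\alpha x+\gamma x^2,\ \pi^\beta x^2$ with $\gamma$ defined modulo $\pi^\beta$, and $\gamma\in\pi^\alpha\mfo$ fails in general.

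For a counterexample, take $n\ge 4$, $u\in\mfo^\times$, and $\Mfo=(\pi^4x,\pi^2(u\pi x+x^2))_\mfo$, which is the paper's order $\Mfo^2_{6,4,u}$. By Lemma \ref{lem:overorder}(2) it is an overorder of $\mfo[\pi^nx]$. Here $L=\mfo\pi^4x+\mfo(u\pi^3x+\pi^2x^2)$, so $\alpha=\beta=3$, $s=6$ and $f=4$. Every element of $L$ with $x$-coefficient of valuation $3$ has $x^2$-coefficient of valuation $2$, so no basis of your shape exists.

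The false normalization then propagates. Your formula ``$f=\beta$ when $\beta\ge\alpha$'' predicts $f=3$ here, but $f=4$. In your second case, the claims that the second generator is $\pi^\beta(x^2+c'x)$ and that $s-f=\beta$ are also false: here $s-f=2\neq 3=\beta$. These claims were forced by the bad normalization, which makes $f=\alpha$ in that case.

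The missing idea is to \emph{derive} the divisibility of the off-diagonal Hermite entry from the conductor, rather than assume it.
\begin{itemize}
\item In your first case, $\pi^\beta x\in L$. Writing $\pi^\beta x=\pi^{\beta-\alpha}(\pi^\alpha x+\gamma x^2)-\pi^{\beta-\alpha}\gamma x^2$ forces $\beta\ge\alpha$ and $\gamma\in\pi^\alpha\mfo$. This is exactly the step $\ord(a_2)\ge a$ in the paper's proof. Only after it may you apply Lemma \ref{lem:overorder}(1).
\item In your second case ($f>\beta$), use the $x^2$-first basis $\pi^fx,\ \pi^{s-f}x^2+\delta x$. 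The first generator is right because $f>\beta$ forces $L\cap\mfo x=\pi^f\mfo x$. The exponent $s-f$ (not $\beta$) comes from the index.
\item Still in the second case, $\pi^fx^2\in L$ gives $f\ge s-f$ and $\delta\in\pi^{s-f}\mfo$. Suppose $\delta\pi^{-(s-f)}$ were a unit, or $s-f=f$. Then a direct computation gives $L\cap\mfo x^2=\pi^f\mfo x^2$, i.e.\ $\beta=f$, a contradiction. Hence $\delta=c\pi^{s-f+1}$ with $c\in\mfo$ and $f>s-f$.
\end{itemize}

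With these repairs, your disjoint split ($\beta=f$ versus $\beta<f$) is a slightly cleaner version of the paper's overlapping split ($\pi^{f-1}x^2\notin\Mfo$ versus $\pi^{f-1}x\notin\Mfo$). The bound $0\le s-f\le\min(n,f)$ and the dependence on $c$ then go through as you wrote them.
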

\begin{proof}
Put $b:=f$ and $a:=s-f$. Then $0\leq a$ by \eqref{eq:inequfs},  $a\leq b$ since $(\pi^bx,\pi^bx^2)_\mfo\subseteq \Mfo$ so that $s\leq 2b$, and $a\leq n$ since $\pi^n x, \pi^b x^2\in \Mfo$ so that $s\leq n+b$.
This completes the desired inequalities.
If $b=0$ then $\Mfo=\Mfo_E$. 
Suppose that $b>0$. Then $\pi^{b-1}x\notin \Mfo$ or $\pi^{b-1}x^2\notin \Mfo$ since $b=f(\Mfo)$. 
We treat them separately in the following. 

Suppose that $\pi^{b-1}x^2\notin \Mfo$. 
We claim that $\Mfo=(\pi^a(x+c\pi^{b-\min(n,b)}x^2),\pi^bx^2 )_\mfo$ for some $c\in \mfo$.
To prove the claim, we observe that $\Mfo/\mfo\langle1,\pi^bx^2\rangle$ is torsion-free as an $\mfo$-module, since $\pi^{b-1}x^2\notin \Mfo$.
Then \cite[Corollary 2.9]{Con14} yields that the set $\{1,\pi^bx^2\}$ is extended to a basis for $\Mfo$ as a free $\mfo$-module by attaching one more element of $\Mfo$, say $a_0+a_1x+a_2x^2$ with $a_i\in \mfo$.
We may and do assume $a_0=0$.

Consider the $\mfo$-submodule of $\Mfo_E$ generated by the set $\{1,a_1x+a_2x^2,x^2\}$, denoted by $\Mfo'$. Then $[\Mfo':\Mfo]=b$.
On the other hand, $\Mfo'$ is also generated by the set $\{1,a_1x,x^2\}$ and thus $[\Mfo_E:\Mfo']=\ord(a_1)=a$. 
By multiplying a unit in $\mfo$, we additionally assume that $a_1=\pi^a$.
Then, since $b=f(\Mfo)$, we have $\pi^bx=\pi^{b-a}(\pi^ax+a_2x^2)-a_2\pi^{b-a}x^2\in \Mfo$  so that $a_2\pi^{b-a}x^2\in \Mfo$. 
Since $\pi^{b-1}x^2\notin \Mfo$, 
 $\ord(a_2)+b-a\geq b$ and thus $\ord(a_2)\geq a$. 
Furthermore, by Lemma \ref{lem:overorder}, we have $b\leq \ord(a_2)+n-a$.
These two inequalities yield the desired form of $\Mfo$. 

If $c-c' \in (\pi^{\min(n,b)-a})$, then $\pi^a(x+c\pi^{b-\min(n,b)}x^2)-\pi^a(x+c'\pi^{b-\min(n,b)}x^2)\in \mfo\langle\pi^bx^2\rangle$, so that $(\pi^a(x+c\pi^{b-\min(n,b)}x^2),\pi^bx^2)_\mfo=(\pi^a(x+c'\pi^{b-\min(n,b)}x^2),\pi^bx^2)_\mfo$.
Suppose that $c-c'\notin (\pi^{\min(n,b)-a})$ and that $\Mfo=(\pi^a(x+c\pi^{b-\min(n,b)}x^2),\pi^bx^2)_\mfo=(\pi^a(x+c'\pi^{b-\min(n,b)}x^2),\pi^bx^2)_\mfo$.
Then $\pi^a(x+c\pi^{b-\min(n,b)}x^2)-\pi^a(x+c'\pi^{b-\min(n,b)}x^2)=\pi^{a+b-\min(n,b)}(c-c')x^2\in \Mfo$, which is a contradiction since $\pi^{b-1}x^2\notin \Mfo$.
This completes the uniqueness of $c$ up to congruence modulo $(\pi^{\min(n,b)-a)})$.

Now, suppose that $\pi^{b-1}x\notin \Mfo$.
By the same argument as above, we obtain $\Mfo=(\pi^bx,\pi^a(\widetilde{c}x+x^2))_\mfo$ for some $\widetilde{c}\in \mfo$, and $\Mfo$ is completely determined by the reduction of $\widetilde{c}$ modulo $(\pi^{b-a})$.
If $b=a$, then $\Mfo=(\pi^bx,\pi^ax^2)_\mfo$, which reduces to the first case. Thus, we consider the case where $b>a$.
If $\widetilde{c}\in \mfo^\times$, then we observe that $(\pi^bx,\pi^a(x^2+\widetilde{c}x))_\mfo=(\pi^a(x+\widetilde{c}^{-1}x^2),\pi^bx^2)_\mfo$. 
This also reduces to the first case since $n\geq b$ by Lemma \ref{lem:overorder}.
Thus, we may and do write $\widetilde{c}=c\pi$ for $c\in \mfo$, so that $\Mfo=(\pi^bx,\pi^a(c\pi x+x^2))_\mfo$.
\end{proof}

\begin{corollary}\label{cor:criterionoverorder}
Let $\Mfo_{s,f,c}^i$ be as in Proposition \ref{prop:overorderform}. 
Then

$\left\{\begin{array}{l}
\textit{$\Mfo_{s,f,c}^1$: overorder of $R$ if and only if } \frac{1}{2}s\leq f\leq \frac{2}{3}s,~ s\leq f+n;\\
\textit{$\Mfo_{s,f,c}^2$: overorder of $R$ if and only if } \frac{1}{2}s< f\leq \frac{2}{3}s,~ f\leq n.
\end{array}\right. ~~~~~~   \textit{  Here}$    

\begin{enumerate}
    \item $S(\Mfo_{s,f,c}^i)=s$ and $f(\Mfo_{s,f,c}^i)=f$  for $i=1,2$;
    \item 
   $\left\{\begin{array}{l}
\Mfo_{s,f,c}^1=\Mfo_{s',f',c'}^1 \textit{  if and only if }
 (s,f)= (s',f') \textit{ and } c-c'\in (\pi^{\min(n,f)-(s-f))});\\ 
\Mfo_{s,f,c}^2=\Mfo_{s',f',c'}^2 \textit{  if and only if }  (s,f)= (s',f') \textit{ and } c-c'\in (\pi^{2f-s-1});
\end{array}\right.$ 
    
    \item $\Mfo_{s,f,c}^1\neq \Mfo_{s',f',c'}^2$ for $s'<2f'$;

    \item  $\Mfo_{s,f,c}^i$ is Gorenstein if and only if  $\frac{3}{2}f=s$ for $i=1,2$.
\end{enumerate}

These give an explicit classification of all overorders of $R=\mfo[\pi^nx]$. 
\end{corollary}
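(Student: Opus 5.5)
The plan is to reduce everything to Lemma \ref{lem:overorder} and Proposition \ref{prop:overorderform}, using the substitution $a=s-f$, $b=f$.

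\textbf{Overorder criteria.} For $\Mfo^1_{s,f,c}$ we apply Lemma \ref{lem:overorder}(1) with $a=s-f$, $b=f$ and coefficient $c\pi^{f-\min(n,f)}$. The condition $b\le 2a$ becomes $f\le \tfrac23 s$. The condition $a\le 2b$ becomes $s\le 3f$; this is weaker than $\tfrac12 s\le f$, which comes from $a\le b$. We need $a\le b$ anyway for $\Mfo^1_{s,f,c}$ to have invariants $(s,f)$. The condition $a\le n$ becomes $s\le f+n$. The last condition $b\le \ord(c)+f-\min(n,f)+n$ holds automatically, since $f-\min(n,f)+n\ge f$.

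For $\Mfo^2_{s,f,c}$ we use Lemma \ref{lem:overorder}(2) with $(a,b)=(f,s-f)$ and coefficient $c\pi$. This gives $f\le 2(s-f)$, i.e.\ $f\le\tfrac23 s$, together with $f\le n$. The strict inequality $f>s-f$ is built into the normal form.

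Strictly speaking, the biconditional should be read as "for parameters satisfying these inequalities the module is an overorder with invariants $(s,f)$". So I would state the range in which $a\le b$ (resp.\ $b>a$) is imposed, and then verify (1).

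\textbf{Part (1).} For the index, the change of basis $x\mapsto x+cx^2$ is unimodular. Hence $[\Mfo_E:\Mfo^i]=a+b=s$, computed as the determinant valuation of the $3\times3$ coordinate matrix.

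For the conductor, we check that $\pi^f\Mfo_E\subset\Mfo^i$. For $\Mfo^1$, $\pi^fx^2\in\Mfo^1$, and $\pi^f x$ equals $\pi^{f-a}$ times the generator minus a multiple of $\pi^f x^2$; this uses $a\le f$. For $\Mfo^2$, the same argument applies using $\pi^f\cdot c\pi x\in\mfo\pi^f x$.

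We also check that $\pi^{f-1}\Mfo_E\not\subset\Mfo^i$. For $\Mfo^1$, $\pi^{f-1}x^2\notin\Mfo^1$, by comparing $x^2$-coordinates: the $x$-coordinate forces the multiple of the first generator to be $0$. For $\Mfo^2$, $\pi^{f-1}x\notin\Mfo^2$, because a combination with $x$-coefficient of valuation $f-1<f$ must involve the second generator, whose $x^2$-term would then be nonzero. This argument requires $s-f<f$ only in the degenerate comparison.

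\textbf{Parts (2) and (3).} Part (2) follows from (1), since $(s,f)$ is determined by the order, combined with the uniqueness statement in Proposition \ref{prop:overorderform}.

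For (3), suppose $s'<2f'$. Then $\pi^{f'-1}x\notin\Mfo^2$ by the argument above. We show that $\pi^{f-1}x\in\Mfo^1$ whenever $s-f<f$: write it as $\pi^{f-1-a}$ times the first generator minus a multiple of $\pi^{f-1-a+a+f-\min(n,f)}x^2$. That multiple has exponent $\ge f$ exactly when $f-\min(n,f)+f-1\ge f$, i.e.\ when $f>\min(n,f)$, i.e.\ $n<f$. This is where care is needed. If $n\ge f$, then the coefficient $c$ may be a unit, and $\pi^{f-1}x\in\Mfo^1$ could fail. In that case I would instead compare the $x^2$ direction: $\pi^{f-1}x^2\notin\Mfo^1$, whereas $\pi^{f-1}x^2\in\Mfo^2$. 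The latter holds because $\pi^{s-f}(c\pi x+x^2)$ scaled by $\pi^{2f-s-1}\ge\pi^0$ gives $\pi^{f-1}x^2$ plus $c\pi^{f}x\in\Mfo^2$. This $x^2$-direction comparison works uniformly and is the cleaner route, so I would use it in all cases.

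\textbf{Part (4).} By Proposition \ref{prop:gorenstein_cond}, $\Mfo$ is Gorenstein iff $S(\Mfo)=[\Mfo:\mathfrak f(\Mfo)]$. Since $E/F$ is unramified, $[\Mfo_E:\pi^f\Mfo_E]=3f$. Hence $[\Mfo:\mathfrak f]=3f-s$, and the condition becomes $s=3f-s$, i.e.\ $s=\tfrac32 f$.

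Here the bookkeeping must be checked: the classification of orders needs $s\le\tfrac32 f$ (equivalently $f\ge \tfrac23 s$), not $f\le\tfrac23 s$. In the overorder criterion above, $b\le 2a$ reads $f\le 2(s-f)$, giving $3f\le 2s$, so the criterion is consistent there, and Gorensteinness is the extreme $3f=2s$. This matches the statement "$\tfrac32 f=s$" only if the stated equation is read as $\tfrac32 f= s$, i.e.\ $3f=2s$. The main obstacle I anticipate is reconciling this inequality bookkeeping together with the conductor computation in (1) for the $\Mfo^2$ family.

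Finally, "explicit classification" follows from Proposition \ref{prop:overorderform} together with (1)--(3).
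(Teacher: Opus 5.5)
Your proposal is correct and follows the paper's proof essentially step for step. The overorder criteria come from Lemma \ref{lem:overorder}, with $s-f\le f$ (resp.\ $s-f<f$) inherited from the normal form of Proposition \ref{prop:overorderform}, as you rightly point out. Parts (1), (2) and (4) go through direct membership checks, uniqueness in Proposition \ref{prop:overorderform}, and Proposition \ref{prop:gorenstein_cond}. For (3) you only change the witness element: the paper shows the generator $\pi^{s-f}(x+c\pi^{f-\min(n,f)}x^2)$ is not in $\Mfo^2_{s,f,c'}$, while you use $\pi^{f-1}x^2$, after the reduction to $(s,f)=(s',f')$ via (1) that you leave implicit.

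The worry at the end of (4) is unfounded. The claim that the classification needs $s\le\frac32 f$ is backwards: $b\le 2a$ gives $3f\le 2s$, so the Gorenstein orders are exactly the extreme case $s=\frac32 f$, and there is nothing left to reconcile.
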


\begin{proof}
The necessary and sufficient conditions for $\Mfo^i_{s,f,c}$ to be an overorder of $R$ are a direct consequence of Lemma \ref{lem:overorder}.
The claim (1)  follows from the inequalities $f\geq s-f$,  
$s-f\leq \ord(c)-\min(n,f)+s$ for $\Mfo_{s,f,c}^1$, and $s-f+\ord(c)+1> s-f$ for $\Mfo_{s,f,c}^2$.
The claim (2) follows from the claim (1) and Proposition \ref{prop:overorderform}. 
    The claim (4) follows from Proposition \ref{prop:gorenstein_cond}.

For the claim (3), $\Mfo_{s,f,c}^1\neq \Mfo_{s',f',c'}^2$ if $(s,f)\neq (s',f')$ by the claim (2). 
When $(s',f')=(s,f)$, $\pi^{s-f}(x+c\pi^{f-\min(n,f)}x^2)\notin (\pi^fx,\pi^{s-f}(c\pi x+x^2))_\mfo$ since $f>s-f$. This completes the proof. 
\end{proof}

We now derive a formula for $\#\clb(R)$ of a general (i.e., not necessarily simple) cubic order $R$.
\begin{lemma}\label{lem:generaloverorder}
    \begin{enumerate}
        \item An order $(\pi^{a'}(x+c'x^2),\pi^{b'}x^2)_\mfo$ contains an order $(\pi^{a}(x+cx^2),\pi^bx^2)_\mfo$ if and only if $b'\leq b, a'\leq a$, and $b'\leq \ord(c-c')+a$.
        \item An order $(\pi^{b'}x,\pi^{a'}(x^2+c'x))$ contains an order $(\pi^{a}(x+cx^2),\pi^bx^2)$ if and only if $b'\leq \ord(c')+b$, $a'\leq b$, $a'\leq\ord(c)+a$, and $b'\leq a+\ord(1-cc')$.
    \end{enumerate}
    The lemma holds as well when we exchange the places of $x$ and $x^2$.
\end{lemma}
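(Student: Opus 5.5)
The statement is purely about containment of free $\mfo$-lattices, so the ring structure plays no role beyond the hypothesis that both modules are orders. Since both contain $1$ and $\{1,x,x^2\}$ is an $\mfo$-basis of $\Mfo_E=\mfo[x]$, I would reduce everything to linear algebra on the $x,x^2$-coordinates. Concretely, the larger module contains the smaller one if and only if it contains each of the two non-trivial basis vectors of the smaller one. So the plan is to give, for each larger module, an explicit membership criterion for a general element $\alpha x+\beta x^2$ with $\alpha,\beta\in\mfo$, and then plug in the two generators.

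For (1), let $\Mfo'=(\pi^{a'}(x+c'x^2),\pi^{b'}x^2)_\mfo$. An element $\gamma+\alpha x+\beta x^2$ lies in $\Mfo'$ if and only if it equals $\gamma+t\pi^{a'}(x+c'x^2)+u\pi^{b'}x^2$ for some $t,u\in\mfo$. Comparing $x$-coefficients forces $t=\alpha\pi^{-a'}$, so we need $\ord(\alpha)\ge a'$. Comparing $x^2$-coefficients then requires $\beta-\alpha c'\in(\pi^{b'})$. I would then apply this criterion to the two generators.
\begin{itemize}
\item For $\pi^bx^2$ we have $\alpha=0$ and $\beta=\pi^b$, which gives $b'\le b$.
\item For $\pi^a(x+cx^2)$ we have $\alpha=\pi^a$ and $\beta=\pi^ac$. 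This gives $a'\le a$ and $\pi^a(c-c')\in(\pi^{b'})$, that is, $b'\le a+\ord(c-c')$.
\end{itemize}
These three conditions are exactly those in (1).

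For (2), let $\Mfo'=(\pi^{b'}x,\pi^{a'}(x^2+c'x))_\mfo$ and argue in the same way, now solving first via the $x^2$-coordinate. The element $\alpha x+\beta x^2$ lies in $\Mfo'$ if and only if $\ord(\beta)\ge a'$ and $\alpha-\beta c'\in(\pi^{b'})$.
\begin{itemize}
\item For $\pi^bx^2$ this gives $a'\le b$ and $b'\le b+\ord(c')$.
\item For $\pi^a(x+cx^2)$ this gives $a'\le a+\ord(c)$ and $\pi^a(1-cc')\in(\pi^{b'})$, that is, $b'\le a+\ord(1-cc')$.
\end{itemize}
These four conditions are exactly those in (2).

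The final assertion, exchanging the roles of $x$ and $x^2$, follows by the identical computation with the two coordinates swapped. The argument only uses that $\{1,x,x^2\}$ is an $\mfo$-basis of $\Mfo_E$; it never uses the minimal polynomial of $x$. There is no real obstacle. The only points needing care are that the constant coordinate may be ignored because $1$ lies in both modules, and the convention $\ord(0)=\infty$, which covers the cases $c=c'$, $c'=0$ and $cc'=1$.
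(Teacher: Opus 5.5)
Your proof is correct and follows the same route as the paper. The paper writes each generator of the smaller lattice in terms of the generators of the larger one, e.g.\ $\pi^{a}(x+cx^2)=\pi^{a-a'}\cdot\pi^{a'}(x+c'x^2)+\pi^{a}(c-c')x^2$ and $\pi^{a}(x+cx^2)=c\pi^{a-a'}\cdot\pi^{a'}(x^2+c'x)+(1-cc')\pi^{a}x$, and reads off the conditions; your coordinate-comparison criterion is the same computation, with the ``only if'' direction (uniqueness of coordinates with respect to $\{1,x,x^2\}$) spelled out rather than left implicit.
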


\begin{proof}
The first claim follows from equations:
\[ \pi^bx^2=\pi^{b-b'}\cdot \pi^{b'}x^2, ~~~~~~ \pi^{a}(x+cx^2)=\pi^{a-a'}\cdot\pi^{a'}(x+c'x^2)+\pi^{a}(c-c')x^2. \]
 The second claim follows from equations:
        \[ \pi^bx^2=\pi^{b-a'}\cdot \pi^{a'}(x^2+c'x)-c'\pi^bx, ~\pi^{a}(x+cx^2)=c\pi^{a-a'}\cdot \pi^{a'}(x^2+c'x)+(1-cc'
        )\pi^{a}x. \]
    The last assertion is valid by symmetry.
\end{proof}
\begin{remark}\label{rmk:lem29}
    Lemma \ref{lem:generaloverorder} is proved using only module theory over $\mfo$, and thus holds for a totally ramified case in Section \ref{subsec:totram} as well. 
    This will be used in the proof of Theorem \ref{thm:totramfor}.
\end{remark}
\begin{theorem}\label{thm:unrfor}
    Let $R$ be an order that is not necessarily simple.
    Let $s=S(R)$ and $f=f(R)$.
    Then $\#\clb(R)$ is expressed as follows, only depending on $s$ and $f$:
    \[ \#\clb(R)=\begin{cases}
    \displaystyle\frac{q^{\frac{s}{2}+2}+7q^{\frac{s}{2}+1}+4q^{\frac{s}{2}}+(2s-2)q^{\frac{f}{2}}(1-q)-8q^{\frac{f}{2}+1}-3q^{s-f+1}-q^{s-f}}{q^{\frac{f}{2}}(q-1)^2} &\text{if } s,f\text{ even};\\[1.5em]
    \displaystyle\frac{4q^{\frac{s}{2}+1}+7q^{\frac{s}{2}}+q^{\frac{s}{2}-1}+(2s-2)q^{\frac{f-1}{2}}(1-q)-8q^{\frac{f+1}{2}}-q^{s-f+1}-3q^{s-f}}{q^{\frac{f-1}{2}}(q-1)^2}&\text{if } s \text{ even, }f \text{ odd};\\[1.5em]
    \displaystyle\frac{4q^{\frac{s+3}{2}}+7q^{\frac{s+1}{2}}+q^{\frac{s-1}{2}}+(2s-2)q^{\frac{f}{2}}(1-q)-8q^{\frac{f}{2}+1}-3q^{s-f+1}-q^{s-f}}{q^\frac{f}{2}(q-1)^2}& \text{if }s \text{ odd, }f\text{ even};\\[1.5em]
    \displaystyle\frac{q^{\frac{s+3}{2}}+7q^{\frac{s+1}{2}}+4q^{\frac{s-1}{2}}+(2s-2)q^{\frac{f-1}{2}}(1-q)-8q^{\frac{f+1}{2}}-q^{s-f+1}-3q^{s-f}}{q^{\frac{f-1}{2}}(q-1)^2}&\text{if }s,f \text{ odd}.
    \end{cases}. \]
\end{theorem}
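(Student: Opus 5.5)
Since $R$ is an overorder of $\mfo[\pi^{f}x]$ with $f=f(R)$, the overorders of $R$ are exactly the overorders $\Mfo^i_{s',f',c}$ of $\mfo[\pi^f x]$ (classified in Proposition \ref{prop:overorderform} and Corollary \ref{cor:criterionoverorder} with $n=f$) that contain $R$. We combine this with Proposition \ref{prop:orderidealcounting}: it suffices to count the overorders of $R$ and, among them, those with $3f'=2s'$.

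First step: put $R$ itself in normal form. By Proposition \ref{prop:overorderform} with $n=f$, $R=\Mfo^1_{s,f,c_0}$ or $R=\Mfo^2_{s,f,c_0}$. Using the last sentence of Lemma \ref{lem:generaloverorder}, i.e.\ the symmetry $x\leftrightarrow x^2$ in the unramified case (after replacing the generator $x$ suitably), reduce to $R=(\pi^{s-f}(x+c_0x^2),\pi^f x^2)_\mfo$. Then apply Lemma \ref{lem:generaloverorder}(1) and (2) to each candidate $\Mfo^1_{s',f',c}$ and $\Mfo^2_{s',f',c}$. The containment conditions become inequalities of the form $f'\le f$, $s'-f'\le s-f$, together with a congruence on $c$ relative to $c_0$ (namely $f'\le \ord(\cdot)+s-f$). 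So the number of admissible $c$ modulo the modulus from Corollary \ref{cor:criterionoverorder}(2) is a power of $q$, equal to $q^{\max(0,\ \text{modulus}-\text{constraint})}$, and it does not depend on $c_0$. This is why the answer depends only on $(s,f)$.

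Second step: sum. The total count of overorders is
\[
\sum_{(s',f')} N^1(s',f')+N^2(s',f'),
\]
where $(s',f')$ ranges over the region $\tfrac12 s'\le f'\le\tfrac23 s'$, $f'\le f$, $s'-f'\le s-f$ (strict $f'>s'/2$ for type 2). Here $N^i$ are the explicit powers of $q$, and the Gorenstein count is the sub-sum over $2s'=3f'$. Reparametrize by $a=s'-f'$ and $b=f'$, so the region is $a\le b\le 2a$, $b\le f$, $a\le s-f$. The summand is then $q^{\min(\cdot)}$ with a piecewise-linear exponent in $a,b$. Split the region along the lines where the $\min$/$\max$ switches, and evaluate each piece as a double geometric series. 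The Gorenstein line $b=2a$ contributes a single geometric series; these give the $(q-1)^2$ denominators and the linear term $(2s-2)$ coming from the boundary strip counts.

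Main obstacle: the bookkeeping in the second step. The type-2 modulus $2f'-s'-1$ combined with containment yields exponents like $\min(2b-a-1,\ldots)$, and the region's corners $b=2a$, $a=s-f$, $b=f$ depend on whether $2(s-f)\lessgtr f$ (note $s\le \tfrac32 f$ by \eqref{eq:inequfs}, which should make the corner configuration uniform). Floors of $f/2$ and $s/2$ appear through the constraint $2a\ge b$, producing the four parity cases. I would first carry out the summation with generic parities via $\lfloor\cdot\rfloor$, check it against small cases (e.g.\ $s=f=0$ giving $1$, and $R=\mfo[\pi x]$ with $s=3,f=2$), and only then specialize to the four displayed closed forms.
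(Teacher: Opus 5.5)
Your strategy is the paper's: view $R$ as an overorder of $\mfo[\pi^f x]$, put it in normal form, enumerate its overorders $\Mfo^{i'}_{s',f',c'}$ with Lemma \ref{lem:generaloverorder} and Corollary \ref{cor:criterionoverorder}, and feed the counts into Proposition \ref{prop:orderidealcounting}. Your reduction of a type-$2$ order $R$ to type $1$ is a legitimate shortcut that the paper does not take; the paper redoes the whole count for $i=2$ and observes that the numbers coincide. However, the justification is not the last sentence of Lemma \ref{lem:generaloverorder}, which only says that the containment criteria are symmetric in $x$ and $x^2$. The argument is a change of generator. If $R=(\pi^fx,\pi^{s-f}(\gamma\pi x+x^2))_\mfo$, put $y=x^2+\gamma\pi x$. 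The computation in the proof of Lemma \ref{lem:overorder}(2) gives $y^2\in\mfo+\mfo y+\mfo^\times x$, hence $\Mfo_E=\mfo[y]$. Since $f>s-f$, one then gets $R=(\pi^{s-f}y,\pi^fy^2)_\mfo$. This also explains why the paper's two cases give the same formula.

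The gap is in the counting data of your second step, which is exactly what determines the answer.

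\emph{Type-$2$ region.} Take $R=(\pi^{s-f}(x+\gamma x^2),\pi^fx^2)_\mfo$ and a type-$2$ candidate $\Mfo^2_{s',f',c'}$. The last condition of Lemma \ref{lem:generaloverorder}(2) reads $f'\le (s-f)+\ord(1-\gamma c'\pi)=s-f$. This is a hard cutoff, not a congruence on $c'$. So type-$2$ overorders occur exactly for $\tfrac12 s'<f'\le\min(\tfrac23 s',\,s-f)$; there all $q^{2f'-s'-1}$ classes of $c'$ occur, and elsewhere there are none. Your region $f'\le f$, $s'-f'\le s-f$ is strictly larger and overcounts. For $R=\mfo[\pi^2x]$ it contains $(s',f')=(5,3)$, yet $\Mfo^2_{5,3,0}=(\pi^3x,\pi^2x^2)_\mfo$ does not contain $\pi^2x$.

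\emph{Type-$1$ count.} The number of admissible $c'$ must be $q^{\min(2f'-s',\,(s-f)-(s'-f'))}$. The bound $\ord(c'-\gamma)\ge f'-(s-f)$ is vacuous when $f'\le s-f$. Read literally, your $q^{\max(0,\text{modulus}-\text{constraint})}$ then exceeds the number of classes; for instance it counts $\Mfo_E$ $q^{s-f}$ times.

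\emph{Corner configuration.} The inequality you invoke is reversed. \eqref{eq:inequfs} only gives $f\le s\le 3f$, while Corollary \ref{cor:criterionoverorder} applied to $R$ gives $\tfrac32 f\le s\le 2f$, i.e.\ $f\le 2(s-f)\le 2f$, with $f=2(s-f)$ exactly when $R$ is Gorenstein. Hence the corner $(a,b)=(s-f,f)$ lies inside the cone $a\le b\le 2a$, and the cutoff $b\le f$ is a genuine constraint whenever $R$ is not Gorenstein. This produces the paper's three ranges: $\lfloor\tfrac23 s'\rfloor\le s-f$, then $s-f<\lfloor\tfrac23 s'\rfloor\le f$, then $f<\lfloor\tfrac23 s'\rfloor$.

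With these corrections, you can sum over $f'$ for fixed $s'$, as the paper does. The type-$1$ and type-$2$ exponents then interleave into a single sum $\sum_{i=0}^{N(s')}q^i$, which is far less bookkeeping than splitting a double geometric series along the lines where the $\min$ switches.
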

 \begin{proof}
Let $R':=\mfo[\pi^fx]$ so that $R'\subset R$ (cf. the beginning of Section \ref{subsec:unramified}). 
Then $R=\Mfo_{s,f,c}^i$ for $i=1 \textit{ or } 2$ by Corollary \ref{cor:criterionoverorder}.
    Our strategy is to count the number of overorders of $R(=\Mfo_{s,f,c}^i)$ with fixed Serre invariant $s'$ and that of  Gorenstein overorders.
 Then we use Proposition \ref{prop:orderidealcounting}. 

     \textbf{1. Case $R=\Mfo_{s,f,c}^1$.}
    We first treat the case when $i=1$. 
    Then $\frac{3}{2}f\leq s\leq 2f$ and $c\in \mfo$ is uniquely determined up to $(\pi^{2f-s})$ by Corollary \ref{cor:criterionoverorder}.
    By Lemma \ref{lem:generaloverorder} and Corollary \ref{cor:criterionoverorder}, an overorder of $R$ is of the form  $\Mfo_{s',f',c'}^{i'}$ for a unique $i', s',$ and $f'$ satisfying the following conditions:
\begin{enumerate}
    \item     When $i'=1$, $c'\in \mfo$ is  uniquely determined  up to    $(\pi^{2f'-s'})$ and 
\[
(i)~ \frac{3}{2}f'\leq s'\leq 2f', ~ (ii)~ f'\leq f,~ (iii)~ s'-f'\leq s-f,~ (iv)~ f'\leq \ord(c-c')+s-f.
\]
For a fixed $(s',f')$ satisfying the above $(i)$-$(iii)$, we count the number of $c'$, up to  $(\pi^{2f'-s'})$,  satisfying $(iv)$. 
By translation in $\mfo/(\pi^{2f'-s'})$, it is the same as the number of $c-c'$ in $(iv)$ up to $(\pi^{2f'-s'})$, which is  $q^{\min((s-f)-(s'-f'),~ 2f'-s')}$.
Next, for a fixed $s'$, we have $\max(\lceil \frac{s'}{2}\rceil,s'-s+f)\leq f'\leq \min(\lfloor\frac{2}{3}s'\rfloor,f)$ by $(i)$-$(iii)$.

\item When $i'=2$, $c'\in \mfo$ is uniquely determined up to $(\pi^{2f'-s'-1})$ and 

\[
(i)~ \frac{3}{2}f'\leq s'< 2f', (ii)~  f'\leq1+ \ord(c')+f,
(iii)~ s'-f'\leq f, (iv)~ s'-f'\leq \ord(c)+s-f, (v)~ f'\leq s-f
\]
Note that $s'-f'<s-f$ by a combination of the inequality $s'-f'<f'$ from $(i)$ and $(v)$.
Thus $(iii)$ (combined with the inequality $s-f\leq f$) and $(iv)$ are redundant.  
    In addition, $(ii)$ is induced by $(v)$ since $s-f\leq f$.
    Therefore, for a fixed $(s',f')$ satisfying $(i)$ and $(v)$, there are $q^{2f'-s'-1}$ choices for $c'$, up to $(\pi^{2f'-s'-1})$.
    For a fixed $s'$, we have $\frac{s'}{2}< f'\leq \min(s-f,\lfloor\frac{2}{3}s'\rfloor)$ by $(i)$ and $(v)$.
    \end{enumerate}

    We sum up these with respect to $s'$.
The number of overorders of $R$ with Serre invariant $s'$ is  
\[ 
\begin{cases}
 \sum\limits_{f'={\lceil\frac{s'}{2}\rceil}}^{\lfloor\frac{2}{3}s'\rfloor}q^{2f'-s'}+\sum\limits_{f'=\lfloor\frac{s'}{2}\rfloor+1}^{\lfloor\frac{2}{3}s'\rfloor}q^{2f'-s'-1}=\sum\limits^{2\lfloor\frac{2s'}{3}\rfloor-s'}_{i=0}q^i  &\text{if } \lfloor\frac{2}{3}s'\rfloor\leq s-f;\\
    \sum\limits_{f'=\max({\lceil\frac{s'}{2}\rceil},s'-s+f)}^{\lfloor\frac{2}{3}s'\rfloor}q^{\min((s-f)-(s'-f'),2f'-s')} +\sum\limits_{f'=\lfloor\frac{s'}{2}\rfloor+1}^{s-f}q^{2f'-s'-1}=\sum\limits^{s-f-s'+\lfloor\frac{2}{3}s'\rfloor}_{i=0}q^i
  &\text{if } s-f< \lfloor\frac{2}{3}s'\rfloor\leq f;\\
 \sum\limits_{f'=\max({\lceil\frac{s'}{2}\rceil},s'-s+f)}^{f}q^{\min((s-f)-(s'-f'),2f'-s')}+\sum\limits_{f'=\lfloor\frac{s'}{2}\rfloor+1}^{s-f}q^{2f'-s'-1}=\sum\limits^{s-s'}_{i=0}q^i &\text{if } f<\lfloor\frac{2}{3}s'\rfloor.
\end{cases}
 \]

The number of Gorenstein overorders of $R$ with Serre invariant $s'$ is  
    \[ \begin{cases}
q^{\frac{s'}{3}}+q^{\frac{s'}{3}-1}  &\text{if } \frac{2}{3}s'\leq s-f, ~ 3\mid s',  \textit{ and } f'=\frac{2}{3}s';\\[1.5em]
q^{s-f-\frac{s'}{3}} &\text{if } s-f<\frac{2}{3}s'\leq f, ~  3\mid s', \textit{ and } f'=\frac{2}{3}s';\\
0 & \text{otherwise}.
    \end{cases}. \]
    
    

   
    We now plug all calculations into the equation in Proposition \ref{prop:orderidealcounting}. 
    \begin{multline*}
        \#\clb(R)=1+
        \sum\limits_{1\leq s',\lfloor\frac{2}{3}s'\rfloor\leq s-f}\frac{2(q^{2\lfloor\frac{2}{3}s'\rfloor-s'+1}-1)}{q-1}-
        \sum\limits_{0<\frac{2}{3}s'\leq s-f, 3\mid s'}(q^{\frac{s'}{3}}+q^{\frac{s'}{3}-1})\\+
        \sum\limits_{s-f<\lfloor\frac{2}{3}s'\rfloor\leq f}\frac{2(q^{s-f-s'+\lfloor\frac{2}{3}s'\rfloor+1}-1)}{q-1}-
        \sum\limits_{s-f<\frac{2}{3}s'\leq f,3\mid s'}q^{s-f-\frac{s'}{3}}
        +\sum\limits_{f<\lfloor\frac{2}{3}s'\rfloor,s'\leq s}\frac{2(q^{s-s'+1}-1)}{q-1}.
    \end{multline*}
    This sum is equal to the formula described in the statement.
    
     \textbf{2. Case $R=\Mfo_{s,f,c}^2$.}
Now let $i=2$. 
Then $\frac{3}{2}f\leq s< 2f$ and $c\in \mfo$ is uniquely determined up to $(\pi^{2f-s-1})$ by Corollary \ref{cor:criterionoverorder}.
By Lemma \ref{lem:generaloverorder} and Corollary \ref{cor:criterionoverorder}, an overorder of $R$ is of the form $\Mfo_{s',f',c'}^{i'}$ for a unique $i', s',$ and $f'$ satisfying the following conditions:
    \begin{enumerate}
    \item When $i'=1$, $c'\in \mfo$ is uniquely determined up to $(\pi^{2f'-s'})$ and
    \[
    (i)~\frac{3}{2}f'\leq s'\leq 2f',(ii)~f'\leq \ord(c')+f,(iii)~s'-f'\leq f,(iv)~s'-f'\leq \ord(c)+1+s-f,(v)~f'\leq s-f.
    \]
 By the same argument as in the case where $i=1$ and $i'=2$, the conditions $(ii), (iii)$, and $(iv)$ are redundant.
    For a fixed $(s',f')$ satisfying $(i)$ and $(v)$, there are $q^{2f'-s'}$ choices for $c'$, up to $(\pi^{2f'-s'})$.
    For a fixed $s'$, we have $\frac{s'}{2}\leq f'\leq \min(s-f,\lfloor\frac{2}{3}s'\rfloor)$ by $(i)$ and $(v)$.
    
    \item When $i'=2$, $c'\in \mfo$ is uniquely determined up to $(\pi^{2f'-s'-1})$ and 
    \[
    (i)~\frac{3}{2}f'\leq s'<2f',(ii)~f'\leq f,(iii)~s'-f'\leq s-f,(iv)~f'\leq \ord(c-c')+s-f+1.
    \]
 By the same argument as in the case where $i=i'=1$, the number of $c'$ satisfying $(iv)$ up to $(\pi^{2f'-s'-1})$, for a fixed $(s',f')$,  is  
    $q^{\min((s-f)-(s'-f'),2f'-s'-1)}$.
    For a fixed $s'$, 
   we have $\max(\lfloor\frac{s'}{2}\rfloor+1,s'-s+f)\leq f'\leq \min(\lfloor\frac{2}{3}s'\rfloor,f)$ by $(i)$-$(iii)$.
    \end{enumerate}
   We sum up these with respect to $s'$. The number of overorders of $R$ with Serre invariant $s'$ is
    \[ 
    \begin{cases}
    \sum\limits_{f'=\lfloor\frac{s'}{2}\rfloor+1}^{\lfloor\frac{2}{3}s'\rfloor}q^{2f'-s'-1}+\sum\limits_{f'={\lceil\frac{s'}{2}\rceil}}^{\lfloor\frac{2}{3}s'\rfloor}q^{2f'-s'}=\sum\limits^{2\lfloor\frac{2s'}{3}\rfloor-s'}_{i=0}q^i  &\text{if } \lfloor\frac{2}{3}s'\rfloor\leq s-f;\\
    \sum\limits_{f'=\max(\lfloor\frac{s'}{2}\rfloor+1,s'-s+f)}^{\lfloor\frac{2}{3}s'\rfloor}q^{\min((s-f)-(s'-f'),2f'-s'-1)}+
    \sum\limits_{f'=\lceil\frac{s'}{2}\rceil}^{s-f}q^{2f'-s'}=\sum\limits^{s-f-s'+\lfloor\frac{2}{3}s'\rfloor}_{i=0}q^i
      &\text{if } s-f< \lfloor\frac{2}{3}s'\rfloor\leq f;\\
     \sum\limits_{f'=\max(\lfloor\frac{s'}{2}\rfloor+1,s'-s+f)}^{f}q^{\min((s-f)-(s'-f'),2f'-s'-1)}+\sum\limits_{f'=\lceil\frac{s'}{2}\rceil}^{s-f}q^{2f'-s'}=\sum\limits^{s-s'}_{i=0}q^i &\text{if } f<\lfloor\frac{2}{3}s'\rfloor.
    \end{cases}
     \]
    The number of Gorenstein overorders of $R$ with Serre invariant $s'$ is  
    \[ \begin{cases}
    q^{\frac{s'}{3}}+q^{\frac{s'}{3}-1}  &\text{when } \frac{2}{3}s'\leq s-f, ~ 3\mid s',  \textit{ and } f'=\frac{2}{3}s';\\[1.5em]
    q^{s-f-\frac{s'}{3}} &\text{when } s-f<\frac{2}{3}s'\leq f, ~  3\mid s', \textit{ and } f'=\frac{2}{3}s';\\
    0 & \text{otherwise}.
    \end{cases}. \] 
The above formulas are exactly the same as those for the case when $i=1$.
    Therefore, the formula for $\#\clb(R)$ when $i=2$ is identical to that for the case when $i=1$. 
    \end{proof}

\begin{corollary}\label{cor:gorenstein_calculation_unr}  
If the order $R$ in Theorem \ref{thm:unrfor} is Gorenstein, then by Proposition \ref{prop:gorenstein_cond} it is a simple extension of $\mfo$, taking the form $R=\mfo[\pi^nx]$ such that $\Mfo_E=\mfo[x]$. In this case, we have $S(R)=3n$ and $\mathfrak{f}(R) = (\pi^{2n})$, so the conductor exponent is $f = 2n$. The formula is simplified as follows:

\[\#\clb(R)=\begin{cases}
         \displaystyle{\frac{4q^{\frac{n+3}{2}} + 7q^{\frac{n+1}{2}} + q^{\frac{n-1}{2}} -( 6n+9)q + (6n- 3)}{(q-1)^2}}&\text{if } n \text{ odd};\\[14pt]
       \displaystyle\frac{q^{\frac{n}{2}+2} + 7q^{\frac{n}{2}+1} + 4q^{\frac{n}{2}} -( 6n+9)q + (6n- 3)}{(q-1)^2}&\text{if } n \text{ even}.
    \end{cases}.\]
\end{corollary}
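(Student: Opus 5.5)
Since $R$ is Gorenstein and $E/F$ is an unramified cubic field extension (so $R$ is not $\mfo\times\mfo\times\mfo$), Proposition~\ref{prop:gorenstein_cond} says $R=\mfo[y]$ for some $y\in\Mfo_E$. The plan has three steps: normalize $y$, read off $S(R)$ and $f(R)$, and substitute into Theorem~\ref{thm:unrfor}.

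\textbf{Step 1: normalize the generator.} Write $y=a+\pi^n u$ with $a\in\mfo$, $n\ge 0$ and $u\in\Mfo_E\setminus(\mfo+\pi\Mfo_E)$. Such $n$ and $u$ exist because $y\notin\mfo$ and $\Mfo_E/\mfo$ is torsion-free. Then $\mfo[y]=\mfo[\pi^n u]$. The reduction $\overline{u}\in\kappa_E$ does not lie in $\kappa$, and $[\kappa_E:\kappa]=3$ is prime, so $\kappa_E=\kappa[\overline{u}]$. Hence $1,\overline u,\overline u^{2}$ span $\Mfo_E/\pi\Mfo_E$, and by Nakayama $\Mfo_E=\mfo[u]$. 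Renaming $u$ as $x$ puts us in the setting of Section~\ref{subsec:unramified} with $R=\mfo[\pi^n x]$.

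\textbf{Step 2: compute the invariants.} As an $\mfo$-module, $R=\mfo\langle 1,\pi^n x,\pi^{2n}x^2\rangle$, which is $(\pi^n x,\pi^{2n}x^2)_\mfo$ in the notation \eqref{eq:bracketnotation}. This gives $S(R)=n+2n=3n$.

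For the conductor, $\pi^{2n}\Mfo_E\subset R$ holds directly. On the other hand $\pi^{2n-1}x^2\notin R$, so $f(R)=2n$. Equivalently, $R=\Mfo^1_{3n,2n,0}$ in Corollary~\ref{cor:criterionoverorder}, whose part (1) yields the same values. As a consistency check, Corollary~\ref{cor:criterionoverorder}(4) confirms that $s=\tfrac32 f$, so $R$ is Gorenstein.

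\textbf{Step 3: substitute into Theorem~\ref{thm:unrfor}.} Here $f=2n$ is always even and $s=3n$ has the parity of $n$. So $n$ even falls under the case ``$s,f$ even'', and $n$ odd falls under ``$s$ odd, $f$ even''. In both cases the denominator is $q^{n}(q-1)^2$, and we divide each numerator term by $q^n$:
\begin{itemize}
\item $q^{3n/2+2}$ becomes $q^{n/2+2}$ (with the analogous shifts in the odd case);
\item $(2s-2)q^{f/2}(1-q)$ becomes $(6n-2)(1-q)$;
\item $-8q^{f/2+1}$ becomes $-8q$;
\item $-3q^{s-f+1}-q^{s-f}$ becomes $-3q-1$.
\end{itemize}
Collecting the constant and linear terms gives $(6n-2)-(6n-2)q-11q-1=(6n-3)-(6n+9)q$, which is the claimed formula.

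The only non-mechanical point is Step 1, namely that every simple order in this setting can be brought into the form $\mfo[\pi^n x]$ with $\Mfo_E=\mfo[x]$. This uses that the residue degree is the prime $3$. The remaining steps are bookkeeping with exponents.
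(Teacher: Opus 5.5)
Your proposal is correct and follows the same route as the paper: Proposition~\ref{prop:gorenstein_cond} gives monogenicity, you normalize to $R=\mfo[\pi^n x]$ with $\Mfo_E=\mfo[x]$, read off $S(R)=3n$ and $f(R)=2n$ (i.e.\ $R=\Mfo^1_{3n,2n,0}$), and substitute into the ``$s,f$ even'' and ``$s$ odd, $f$ even'' cases of Theorem~\ref{thm:unrfor}, where your arithmetic $(6n-2)(1-q)-11q-1=(6n-3)-(6n+9)q$ checks out. Your Step~1 (using $\overline{u}\notin\kappa$, the primality of $[\kappa_E:\kappa]=3$, and Nakayama) usefully supplies a justification of the normal form, which the paper only asserts.
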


\begin{example}\label{ex:simple-nontrivial}
We illustrate the classification of overorders and identify which are Gorenstein, using Corollary \ref{cor:criterionoverorder} and Lemma \ref{lem:generaloverorder}.
Let $\{c_1, \cdots, c_q\}$ be representatives for $\mfo/\pi \mfo$ with $c_1 = 0$ and $\ord(c_i)= 0$ for $2\leq i\leq q$. 
In the following diagrams, the Gorenstein overorders are highlighted in \textbf{bold}. 
When $R = \mathfrak{o}[\pi^2 x]$ with $x \in \Mfo_E^\times$ so that $S(R) =6$ and $f(R)=4$, we have the following:

\begin{center}
\begin{tikzcd}[row sep=0.1em, column sep=3.5em]
    & & & \mathbf{\Mfo^1_{3,2,c_1}} \arrow[dddr, hook, crossing over] & & \\
    & & & \vdots & & \\
    & & & \mathbf{\Mfo^1_{3,2,c_q}} \arrow[dr, hook, crossing over] & & \\
    \mathbf{R} \arrow[r, hook] & \Mfo^1_{5,3,0} \arrow[r, hook] & \Mfo^1_{4,2,0} \arrow[uuur, hook] \arrow[ur, hook] \arrow[r, hook] & \mathbf{\Mfo^2_{3,2,0}} \arrow[r, hook] & \Mfo^1_{2,1,0} \arrow[r, hook] & \mathbf{\Mfo_E}.
\end{tikzcd}
\end{center}

When $R = \mathfrak{o}[\pi^3 x]$ with $x \in \Mfo_E^\times$ so that $S(R) =9$ and $f(R)=6$, we have the following diagram:

\begin{center}
\begin{tikzcd}[row sep=0.1em, column sep=2.0em]
    & & & \mathbf{\Mfo^1_{6,4,c_1}} \arrow[r, hook] & \Mfo^1_{5,3,c_1} \arrow[dddr, hook, crossing over] & & \mathbf{\Mfo^1_{3,2,c_1}} \arrow[dddr, hook, crossing over] & & \\
    & & & \vdots & \vdots & & \vdots & & \\
    & & & \mathbf{\Mfo^1_{6,4,c_q}} \arrow[uur, hook] & \Mfo^1_{5,3,c_q} \arrow[dr, hook, crossing over] & & \mathbf{\Mfo^1_{3,2,c_q}} \arrow[dr, hook, crossing over] & & \\
    \mathbf{R} \arrow[r, hook] & \Mfo^1_{8,5,0} \arrow[r, hook] & \Mfo^1_{7,4,0} \arrow[uuur, hook] \arrow[ur, hook] \arrow[r, hook] & \Mfo^1_{6,3,0} \arrow[uuur, hook] \arrow[ur, hook] \arrow[r, hook] & \Mfo^2_{5,3,0} \arrow[r, hook] & \Mfo^1_{4,2,0} \arrow[uuur, hook] \arrow[ur, hook] \arrow[r, hook] & \mathbf{\Mfo^2_{3,2,0}} \arrow[r, hook] & \Mfo^1_{2,1,0} \arrow[r, hook] & \mathbf{\Mfo_E}.
\end{tikzcd}
\end{center}

Here $\Mfo_{6,4,c_1}^1=\mfo[\pi^2x]$ and thus the diagram for $\mfo[\pi^2x]$ is a part of that for $\mfo[\pi^3x]$. 
\end{example}

\subsection{The totally ramified case}\label{subsec:totram}
The structure of this subsection, including proofs, is parallel to that of Section \ref{subsec:unramified}. 
Suppose that $E/F$ is totally ramified with a uniformizer $\pi_E$ of $E$.
Since any order $R$ is an overorder of $\mathfrak{o}[\pi^{\lceil f(R)/3 \rceil} \pi_E]$ by inequality $\ord_E(\pi^{\lceil f(R)/3 \rceil} \pi_E)> f(R)$,  we will work with $R=\mfo[\pi^n\cdot \pi_E]$ until Corollary \ref{cor:totramoverordercriterion}, and we will treat general $R$ in Theorem \ref{thm:totramfor}.
To synchronize the notation with Section \ref{subsec:unramified}, we denote $\pi_E$ by $x$ so that $R=\mfo[\pi^n x]$.
We continue to use the notation $(u\pi^ax,v\pi^bx^2)_\mfo$ in (\ref{eq:bracketnotation}).
Let $X^3+c_2X^2+c_1X+c_0$ be the minimal polynomial for $x$ over $\mfo$ so that $\overline{X^3+c_2X^2+c_X+c_0}=X^3$ with $\ord(c_0)=1$.

\begin{lemma}[Analogue of Lemma \ref{lem:overorder}]\label{lem:totramoverorder}
    Let $c\in \mfo$.
    \begin{enumerate}
\item The $\mfo$-module $\Mfo:=(\pi^a(x+cx^2),\pi^bx^2)_\mfo$ 
        is an overorder of $R=\mfo[\pi^nx]$ if and only if $a\leq 2b+1$, $b\leq 2a$, $a\leq n$, and $\ord(c)+n\geq b$.
        
        \item The $\mfo$-module $\Mfo':=(\pi^ax,\pi^b(x^2+ c\pi x))_\mfo$ 
        is an overorder of $R=\mfo[\pi^nx]$ if and only if $a\leq 2b+1$, $b\leq 2a$, and $a\leq n$.
  \end{enumerate}

\end{lemma}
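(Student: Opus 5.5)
The plan is to follow the proof of Lemma \ref{lem:overorder} verbatim. The only change is the input on valuations. Here $x=\pi_E$ satisfies $x^3=-c_2x^2-c_1x-c_0$ with $\ord(c_0)=1$ and $\ord(c_1),\ord(c_2)\geq 1$ (Eisenstein), and $\{1,x,x^2\}$ is an $\mfo$-basis of $\Mfo_E=\mfo[x]$. In each case I will first find when the module is closed under multiplication, and then when it contains $R=\mfo[\pi^nx]=\mfo\langle 1,\pi^nx,\pi^{2n}x^2\rangle$.

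For (1), the key products are $(\pi^a(x+cx^2))^2$, $\pi^a(x+cx^2)\cdot\pi^bx^2$ and $(\pi^bx^2)^2$. I will expand each in the basis $1,\ x+cx^2,\ x^2$, using $x^3=-c_2x^2-c_1x-c_0$ and $x^4=-c_2x^3-c_1x^2-c_0x$.

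\begin{itemize}
\item $(\pi^bx^2)^2=\pi^{2b}x^4$. Its $x$-coefficient is $-c_0\pi^{2b}+(\text{terms divisible by }\pi^{2b+1})$, which has valuation exactly $2b+1$. Rewriting $x=(x+cx^2)-cx^2$, membership forces $\pi^{2b+1}\in\pi^a\mfo$, i.e.\ $a\leq 2b+1$. The remaining $x^2$ and constant coefficients are divisible by $\pi^{2b}$, hence lie in $\mfo\langle 1,\pi^bx^2\rangle$.
\item $(\pi^a(x+cx^2))^2=\pi^{2a}(x^2+2cx^3+c^2x^4)$. Its $x^2$-coefficient is $\pi^{2a}(1+\pi(\cdots))$, a unit times $\pi^{2a}$. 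Its $x$-coefficient is divisible by $\pi^{2a+1}$, hence by $\pi^a$. So, after subtracting a multiple of $\pi^a(x+cx^2)$, the condition becomes exactly $b\leq 2a$.
\item The mixed product $\pi^{a+b}(x^3+cx^4)$ has $x$-coefficient divisible by $\pi^{a+b+1}$ and $x^2$-coefficient divisible by $\pi^{a+b}$. It therefore lies in the module with no extra condition.
\end{itemize}

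For containment of $R$, I write $\pi^nx=\pi^{n-a}\cdot\pi^a(x+cx^2)-c\pi^nx^2$. This gives $a\leq n$ and $\ord(c)+n\geq b$. Then $\pi^{2n}x^2\in\Mfo$ iff $b\leq 2n$, and this already follows from $b\leq 2a\leq 2n$.

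For (2) the same computation applies with the basis $1,\ \pi^ax,\ \pi^b(x^2+c\pi x)$.

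\begin{itemize}
\item $(\pi^ax)^2=\pi^{2a}x^2=\pi^{2a-b}\cdot\pi^b(x^2+c\pi x)-c\pi^{2a+1}x$. This gives exactly $b\leq 2a$, because the leftover term $c\pi^{2a+1}x$ is a multiple of $\pi^ax$.
\item $(\pi^b(x^2+c\pi x))^2$ has $x$-coefficient $-c_0\pi^{2b}+(\text{higher})$, of valuation exactly $2b+1$. It also has unit $x^2$-coefficient times $\pi^{2b}$, which is fine. This gives $a\leq 2b+1$.
\item The mixed product is again automatically in the module.
\end{itemize}

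For containment, $\pi^nx\in\Mfo'$ iff $a\leq n$. Also $\pi^{2n}x^2=\pi^{2n-b}\pi^b(x^2+c\pi x)-c\pi^{2n+1}x$ is implied, as in the unramified proof.

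The main obstacle will be the bookkeeping in the square of the $x^2$-generator. In each case it has to be checked that the leading $x$-coefficient has valuation \emph{exactly} $2b+1$, so that the condition is an equivalence and not merely sufficient. This is where the Eisenstein condition $\ord(c_0)=1$ replaces the irreducibility-mod-$\pi$ arguments used in Lemma \ref{lem:overorder}. In case (1) one must also verify that the correction term $c\cdot(\ldots)x^2$, produced by rewriting $x$ through $x+cx^2$, stays inside $\mfo\pi^bx^2$. This holds because that term carries at least $\pi^{2b+1}$ as well.
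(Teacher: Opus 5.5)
Your strategy is the paper's: expand the three products in the given $\mfo$-basis, use $\ord(c_0)=1$ and $\ord(c_1),\ord(c_2)\geq 1$ in place of the irreducibility arguments of Lemma \ref{lem:overorder}, and read off the conditions. Part (1) and the containment of $R$ are handled correctly. There is, however, a concrete error in part (2), at exactly the step you single out as the main obstacle.

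The $x^2$-coefficient of $(\pi^b(x^2+c\pi x))^2$ is \emph{not} $\pi^{2b}$ times a unit. Since $x^3=-c_2x^2-c_1x-c_0\in\pi\Mfo_E$, we have $(x^2+c\pi x)^2=x^4+2c\pi x^3+c^2\pi^2x^2\in\pi\Mfo_E$. Explicitly, this coefficient is $\gamma=\pi^{2b}(c_2^2-c_1-2cc_2\pi+c^2\pi^2)\in\pi^{2b+1}\mfo$.

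This matters because membership of $\alpha+\beta x+\gamma x^2$ in $\Mfo'$ is equivalent to $\gamma\in\pi^b\mfo$ and $\beta-c\pi\gamma\in\pi^a\mfo$. Indeed, rewriting $\gamma x^2=\gamma\pi^{-b}\cdot\pi^b(x^2+c\pi x)-c\pi\gamma x$ feeds a correction back into the $x$-coefficient. In (1) the correction lands in the $x^2$-coefficient instead, which is why your check there suffices.

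Under your stated claim, $\gamma$ would be $\pi^{2b}$ times a unit. Then for $c\in\mfo^\times$ the term $c\pi\gamma$ would have valuation $2b+1$ and could cancel $-c_0\pi^{2b}$. So the valuation of the raw $x$-coefficient alone does not give the necessity of $a\leq 2b+1$. You flagged precisely this correction-term issue for (1) but skipped it in (2).

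The fix is immediate once you use the correct fact $\gamma\in\pi^{2b+1}\mfo$. Then $c\pi\gamma\in\pi^{2b+2}\mfo$, and the corrected coefficient is $\pi^{2b}(c_2c_1-c_0-cc_1\pi-cc_2^2\pi+2c^2c_2\pi^2-c^3\pi^3)$. This has valuation exactly $2b+1$, and it is the coefficient the paper writes down.

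A smaller wording point in (1): to conclude that $\pi^{2b}(c_2c_1-c_0)$ has valuation exactly $2b+1$, you need the remaining term to be divisible by $\pi^{2b+2}$, not merely by $\pi^{2b+1}$. This does hold, since $\ord(c_1c_2)\geq 2$.
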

\begin{proof}
    \begin{enumerate}
    \item First, we characterize the condition when $\Mfo$ is closed under multiplication.
    We have 
     \begin{gather*}
     (\pi^a(x+cx^2))^2=
    (-2cc_0+c^2c_2c_0)\pi^{2a}+
    (c^2c_2c_1-c^2c_0-2cc_1)\pi^{a}\cdot\pi^a(x+cx^2)\\+(1+c^2c_2^2+c^2c_1-2cc_2-c^3c_2c_1+c^3c_0)\pi^{2a}x^2.
    \end{gather*}
    Since $\ord(1+c^2c_2^2+c^2c_1-2cc_2-c^3c_2c_1+c^3c_0)=0$, $\Mfo$ contains $(\pi^a(x+cx^2))^2$ if and only if $b\leq 2a$.
    Next, we have
     \begin{gather*}
    \pi^a(x+cx^2)\cdot \pi^bx^2= (cc_2c_0-c_0)\pi^{a+b}+(cc_2c_1-cc_0-c_1)\pi^{b}\cdot (\pi^a(x+cx^2)) \\
    +(cc_2^2-c_2-c^2c_2c_1+c^2c_0)\pi^a\cdot\pi^bx^2,
    \end{gather*}
    so that it is contained in $\Mfo$ without any conditions.
     Lastly, we have
    \begin{gather*}
    (\pi^bx^2)^2=
    c_2c_0\pi^{2b}+(c_2c_1-c_0)\pi^{2b-a}\cdot \pi^a(x+cx^2)+(c_2^2-c_1-cc_2c_1+cc_0)\pi^{2b}x^2.
    \end{gather*}
    Since $\ord(c_2c_1-c_0)=1$, $(\pi^bx^2)^2$ is contained in $\Mfo$ if and only if $a\leq 2b+1$.

    Now, we characterize the condition when $\Mfo$ contains $R$.
    First, we see that $\pi^{2n}x^2\in \Mfo$ if and only if $b\leq 2n$.
    Next, we have 
    $\pi^nx=\pi^{n-a}\cdot \pi^a(x+cx^2)-c\pi^nx^2$.
    Therefore, $\pi^nx\in \Mfo$ if and only if $n-a\geq 0$ and $\ord(c)+n\geq b$.

    \item The proof is identical to that of (1).
    We have 
    \begin{gather*}
            (\pi^b(x^2+ c\pi x))^2=    (c_2c_0-2\pi cc_0)\pi^{2b}+(c^2\pi^2-2 c c_2\pi+c_2^2-c_1)\cdot \pi^{b}(\pi^b(x^2+ c\pi x))\\
            +(2c^2c_2\pi^2-c^3\pi^3- cc_2^2\pi- cc_1\pi+c_2c_1-c_0)\pi^{2b}x.
    \end{gather*}
    Since $\ord(2c^2c_2\pi^2-c^3\pi^3- cc_2^2\pi- cc_1\pi+c_2c_1-c_0)=1$, $(\pi^b(x^2+c\pi x))^2$ is contained in $\Mfo'$ if and only if $a\leq 2b+1$.
    Next, we have 
    \begin{gather*}
    \pi^ax\cdot \pi^b(x^2+ c\pi x)=
    -c_0\pi^{a+b}+( c\pi-c_2)\pi^a\cdot \pi^b(x^2+ c\pi x)+( c_2c\pi-c^2\pi^2-c_1)\pi^{a+b}x,
    \end{gather*}
    which is contained in $\Mfo'$ without any conditions.
    Lastly, we have \[ (\pi^ax)^2=\pi^{2a}x^2=\pi^{2a-b}\cdot \pi^b(x^2+ c\pi x)- c\pi^{2a+1}x, \] which is contained in $\Mfo'$ if and only if $b\leq 2a$.
    
     It is immediate that $\pi^nx\in \Mfo'$ if and only if $a\leq n$.
    Furthermore, we have $\pi^{2n}x^2=\pi^{2n-b}\cdot \pi^b(x^2+ c\pi x)-c\pi^{2n+1}x$, which is contained in $\Mfo'$ if and only if $b\leq 2n$ and $a\leq 2n+\ord(c)+1$.
    The latter inequality $a\leq 2n+\ord(c)+1$ follows if $a\leq n$, and thus $\Mfo'$ contains $R$ if and only if $a\leq n$ and $b\leq 2n$.
 The inequality $b\leq 2n$ is implied from $a\leq n$ if $b\leq 2a$. 
    \end{enumerate}    
\end{proof}
\begin{proposition}[Analogue of Proposition \ref{prop:overorderform}]\label{prop:totramoverorderform}
    Let $\Mfo$ be an overorder of $R=\mfo[\pi^nx]$ with $s:=S(\Mfo)$ and $f:=f(\Mfo)$.
$\Mfo$ takes one of the following forms
\[
\left\{\begin{array}{l}
\Mfo_{s,f,c}^1:=(\pi^{s-\frac{f}{3}}(x+c\pi^{\frac{f}{3}-\min(n,\frac{f}{3})}x^2),\pi^{\frac{f}{3}}x^2)_\mfo  \textit{ where } 3\mid f \textit{ and } 0\leq s-\frac{f}{3}\leq \frac{f}{3};\\
\Mfo_{s,f,c}^2:=(\pi^\frac{f+1}{3}x,\pi^{s-\frac{f+1}{3}}(c\pi x+x^2))_\mfo \textit{ where } 3\mid (f+1) \textit{ and } 0\leq s-\frac{f+1}{3}<\frac{f+1}{3},
\end{array}\right.
\]
where $c\in \mfo$. 
When $s$ and $f$ are fixed, $\Mfo_{s,f,c}^1$ (resp. $\Mfo_{s,f,c}^2$) is completely determined by the reduction of $c$ modulo $(\pi^{\min(n,\frac{f}{3})-(s-\frac{f}{3})})$ (resp. modulo $(\pi^{\frac{2(f+1)}{3}-s-1})$). 
\end{proposition}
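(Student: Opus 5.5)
The plan is to follow the proof of Proposition \ref{prop:overorderform}, with one change: the conductor is now read off from $E$-valuations. With $x=\pi_E$ we have
\[
\ord_E(\pi^k)=3k,\qquad \ord_E(\pi^kx)=3k+1,\qquad \ord_E(\pi^kx^2)=3k+2 .
\]
Since $\{1,x,x^2\}$ is an $\mfo$-basis of $\Mfo_E$, the ideal $\pi_E^{f}\Mfo_E$ is spanned by those $\pi^k x^j$ with $3k+j\ge f$. So $f=f(\Mfo)$ is the least integer such that every monomial $\pi^kx^j$ of $E$-valuation $\ge f$ lies in $\Mfo$. If $\Mfo=\Mfo_E$ there is nothing to prove. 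Otherwise, $\pi_E^{f-1}\Mfo_E\not\subset\Mfo$, so the monomial of valuation exactly $f-1$ is missing from $\Mfo$. Powers of $\pi$ always lie in $\Mfo$, so $f-1\not\equiv 0 \pmod 3$. This leaves two cases.

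\emph{Case $f-1\equiv 2\pmod 3$.} Put $b=f/3$, so $\pi^bx^2\in\Mfo$, $\pi^{b-1}x^2\notin\Mfo$, and $\pi^bx\in\Mfo$. Then $\Mfo/\mfo\langle 1,\pi^bx^2\rangle$ is torsion-free, and \cite[Corollary 2.9]{Con14} extends $\{1,\pi^bx^2\}$ to an $\mfo$-basis of $\Mfo$ by a single element $a_1x+a_2x^2$. Comparing indices gives $\ord(a_1)=s-b$, so we normalize $a_1=\pi^{a}$ with $a:=s-\frac f3$. Since $\pi^bx\in\Mfo$, we get $a\le b$, hence $0\le s-\frac f3\le\frac f3$, and also $a_2\pi^{b-a}x^2\in\Mfo$. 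Because $\pi^{b-1}x^2\notin\Mfo$, this forces $\ord(a_2)\ge a$. Lemma \ref{lem:totramoverorder}(1) gives $\ord(a_2/\pi^a)+n\ge b$. Together these put $\Mfo$ in the form $\Mfo^1_{s,f,c}$. For uniqueness of $c$, two choices of $c$ define the same module exactly when their difference times $\pi^{a+b-\min(n,b)}x^2$ lies in $\mfo\pi^bx^2$. Again using $\pi^{b-1}x^2\notin\Mfo$, this is exactly $c-c'\in(\pi^{\min(n,b)-a})$.

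\emph{Case $f-1\equiv 1\pmod 3$.} Put $a'=(f+1)/3$, so $\pi^{a'-1}x\notin\Mfo$, while $\pi^{a'}x\in\Mfo$ and $\pi^{a'-1}x^2\in\Mfo$. The same basis-extension argument, with the roles of $x$ and $x^2$ swapped, gives
\[
\Mfo=(\pi^{a'}x,\ \pi^{b'}(x^2+\widetilde c x))_\mfo,\qquad b'=s-\tfrac{f+1}{3},
\]
with $\widetilde c$ determined modulo $(\pi^{a'-b'})$. The membership $\pi^{a'-1}x^2\in\Mfo$ forces $b'\le a'-1$, i.e.\ $0\le s-\frac{f+1}3<\frac{f+1}3$.

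The remaining point, which I expect to be the only genuinely new step, is to show that $\widetilde c$ is not a unit, so that $\widetilde c=c\pi$. Suppose $\widetilde c\in\mfo^\times$. Since $a'>b'$, we have
\[
\pi^{a'-b'}\cdot\pi^{b'}(x+\widetilde c^{-1}x^2)-\pi^{a'}x=\widetilde c^{-1}\pi^{a'}x^2 .
\]
From this one checks that
\[
\Mfo=(\pi^{b'}(x+\widetilde c^{-1}x^2),\ \pi^{a'}x^2)_\mfo .
\]
By the torsion-freeness argument of the first case, this presentation gives $\pi^{a'-1}x^2\notin\Mfo$, which contradicts $\pi^{a'-1}x^2\in\Mfo$. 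Hence $\widetilde c=c\pi$ and $\Mfo=\Mfo^2_{s,f,c}$. Since $\widetilde c$ matters only modulo $(\pi^{a'-b'})$, the parameter $c$ is determined modulo $(\pi^{a'-b'-1})=(\pi^{\frac{2(f+1)}3-s-1})$. The converse direction, that two $c$'s not congruent modulo this ideal give different modules, follows as in the first case: their difference produces an element forcing $\pi^{a'-1}x\in\Mfo$, which is excluded.

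The main care is needed in this last case: tracking the valuation bookkeeping $3k+j$ correctly, and making sure the unit-$\widetilde c$ reduction really lands in the first case with the conductor exponent contradicting the one assumed.
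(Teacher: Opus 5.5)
Your proof is correct and follows essentially the paper's argument: torsion-freeness plus \cite[Corollary 2.9]{Con14} to extend a basis, index comparison to get $\ord(a_1)$, Lemma \ref{lem:totramoverorder} for the bound involving $n$, and a case split on $f \bmod 3$ (your valuation count $3k+j$ replaces the paper's separate exclusion of $f=3b-2$). The only cosmetic difference is in the case $3\mid(f+1)$. The paper obtains both $b'<a'$ and $\widetilde c\in\pi\mfo$ in one step by expanding $\pi^{a'-1}x^2\in\Mfo$ in the basis $\{1,\pi^{a'}x,\pi^{b'}x^2+a_2x\}$, while you rule out a unit $\widetilde c$ by contradiction after rewriting $\Mfo$ in the first form; both rest on the same fact that $\pi^{a'-1}x^2$ has $E$-valuation exactly $f$.
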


\begin{proof}
    Firstly, we assume that $3\mid f$ so that $f=3b$.
    If $b=0$, then $R=\Mfo_E$.
Suppose that  $b>0$. Put $a:=s-b\geq 0$ by \eqref{eq:inequfs}.
Note that $a\leq b$, since $(\pi^bx,\pi^{b}x^2)_\mfo\subseteq \Mfo$ so that $s\leq 2b$.
    We claim that $\Mfo=(\pi^a(x+c\pi^{b-\min(n,b)}x^2),\pi^bx^2)_\mfo$ for some $c\in \mfo$.

We have $\pi^{b-1}x^2\notin \Mfo$, since $f=3b$. Thus, by the same argument as in the proof of Proposition \ref{prop:overorderform}, the set $\{1,\pi^bx^2\}$ is extended to a basis for $\Mfo$ as a free $\mfo$-module by attaching one more element of $\Mfo$, say $a_0+a_1x+a_2x^2$ with $a_i\in \mfo$. 
    We may and do assume that $a_0=0$.


    Consider the $\mfo$-submodule of $\Mfo_E$ generated by the set $\{1,a_1x+a_2x^2, x^2\}$, denoted by $\Mfo'$.
    Then $[\Mfo':\Mfo]=b$.
    On the other hand, $\Mfo'$ is also generated by the set $\{1,a_1x,x^2\}$ and thus $[\Mfo_E:\Mfo']=\ord(a_1)=a$.
    By multiplying a unit of $\mfo$, we additionally assume that $a_1=\pi^a$.
    Then, since $f(\Mfo)=3b$,  we have $\pi^bx=\pi^{b-a}(\pi^ax+a_2x^2)-a_2\pi^{b-a}x^2\in \Mfo$  so that $a_2\pi^{b-a}x^2\in \Mfo$. 
    Since $\pi^{b-1}x^2\notin \Mfo$, $\ord(a_2)+b-a\geq b$ and thus $\ord(a_2)\geq a$. 
    Furthermore, by Lemma \ref{lem:totramoverorder}, we have $b\leq \ord(a_2)+n-a$.
    These two inequalities yield the desired form of $\Mfo$.

    If $c- c'\in  (\pi^{\min(n,b)-a})$, then $\pi^a(x+c\pi^{b-\min(n,b)}x^2)-\pi^a(x+c'\pi^{b-\min(n,b)}x^2)\in \mfo\langle\pi^bx^2\rangle$, so that $(\pi^a(x+c\pi^{b-\min(n,b)}x^2),\pi^bx^2)_\mfo=(\pi^a(x+c'\pi^{b-\min(n,b)}x^2),\pi^bx^2)_\mfo$.
    Suppose that $c- c'\notin (\pi^{\min(n,b)-a})$ and that $\Mfo=(\pi^a(x+c\pi^{b-\min(n,b)}x^2),\pi^bx^2)_\mfo=(\pi^a(x+c'\pi^{b-\min(n,b)}x^2),\pi^bx^2)_\mfo$.
    Then $\pi^a(x+c\pi^{b-\min(n,b)}x^2)-\pi^a(x+c'\pi^{b-\min(n,b)}x^2)=\pi^{a+b-\min(n,b)}(c-c')x^2\in \Mfo$, which is a contradiction since $\pi^{b-1}x^2\notin \Mfo$.
    This completes the uniqueness of $c$ up to congruence modulo $(\pi^{\min(n,b)-a)})$.

    Now, suppose that $f=3b-1$, and again put $a:=s-b$. 
    Here $0\leq a$ by \eqref{eq:inequfs}.
    We claim that $\Mfo=(\pi^bx,\pi^a(x^2+c \pi x))_\mfo$ for some $c\in \mfo$ with $a<b$.

        Since $f=3b-1$, $\pi^{b-1}x\notin \Mfo$.
By the same argument as above,  $\Mfo$ is generated by $\{1,\pi^bx,a_1x^2+a_2x\}$ as an $\mfo$-module for  $a_i\in \mfo$.
    Consider the $\mfo$-submodule $\Mfo'$ of $\Mfo_E$ generated by $\{1,x,a_1x^2+a_2x\}$ (or equivalently by $\{1,x,a_1x^2\}$).
    Then  $[\Mfo':\Mfo]=b$ and $[\Mfo_E:\Mfo']=\ord(a_1)=s-b=a$.
    By multiplying a unit of $\mfo$, we additionally assume that $a_1=\pi^a$.
    Using $f(\Mfo)=3b-1$, we have $\pi^{b-1}x^2=\pi^{b-a-1}\cdot (\pi^ax^2+a_2x)-a_2\pi^{b-a-1}x\in \Mfo$ so that $b-a-1\geq 0$ and $\ord(a_2)+b-a-1\geq b$, since $\{1,\pi^bx,\pi^ax^2+a_2x\}$  is an $\mfo$-basis for   $\Mfo$.
    This yields the desired form of $\Mfo$.
    The uniqueness of $c$ modulo $(\pi^{b-a-1})$ is obtained by the parallel argument to the $f=3b$ case.

Lastly, if $f(\Mfo)=3b-2$, then $\pi_E^{3b-3}\Mfo_E=\pi^{b-1}\Mfo_E\subseteq \Mfo$ so that $f(\Mfo)=3b-2\leq 3b-3$, which is a contradiction.
    Therefore, there exists no overorder $\Mfo$ with $f=3b-2$.
\end{proof}

\begin{corollary}[Analogue of Corollary \ref{cor:criterionoverorder}]\label{cor:totramoverordercriterion}
Let $\Mfo_{s,f,c}^i$ be as in Proposition \ref{prop:totramoverorderform}. Then

$\left\{\begin{array}{l}
\textit{$\Mfo_{s,f,c}^1$: overorder of $R$ if and only if } 3\mid f,~ \frac{3}{2}s\leq f\leq 2s,~ 
s \leq \frac{f}{3}+n;\\
\textit{$\Mfo_{s,f,c}^2$: overorder of $R$ if and only if } 3\mid (f+1),~ \frac{3}{2}s+\frac{1}{2}\leq f\leq 2s,~   f\leq 3n-1.
\end{array}\right. ~~~\textit{  Here
}$

\begin{enumerate}
    \item $S(\Mfo_{s,f,c}^i)=s$ and $f(\Mfo_{s,f,c}^i)=f$  for $i=1,2$;
    \item 
   $\left\{\begin{array}{l}
\Mfo_{s,f,c}^1=\Mfo_{s',f',c'}^1 \textit{  if and only if }
 (s,f)= (s',f') \textit{ and } c-c'\in (\pi^{\min(n,\frac{f}{3})-(s-\frac{f}{3})});\\ 
\Mfo_{s,f,c}^2=\Mfo_{s',f',c'}^2 \textit{  if and only if }  (s,f)= (s',f') \textit{ and } c-c'\in (\pi^{\frac{2(f+1)}{3}-s-1});
\end{array}\right.$ 

\item $\Mfo_{s,f,c}^1\neq \Mfo_{s',f',c'}^2$;

\item  $\Mfo_{s,f,c}^i$ is Gorenstein if and only if $f=2s$
for $i=1,2$.    
\end{enumerate}
These give an explicit classification of all overorders of $R$. 
\end{corollary}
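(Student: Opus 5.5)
The plan is to follow the proof of Corollary \ref{cor:criterionoverorder} closely, now using Lemma \ref{lem:totramoverorder} and Proposition \ref{prop:totramoverorderform}. The one new ingredient is the change of dictionary: the conductor is now measured in $\ord_E$. For $\Mfo^1_{s,f,c}$ the relevant parameters are $b=f/3$ and $a=s-f/3$. For $\Mfo^2_{s,f,c}$ they are $a'=(f+1)/3$, the exponent attached to $x$, and $b'=s-(f+1)/3$, the exponent attached to $x^2+c\pi x$.

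\textbf{Step 1: the overorder criterion.} I will substitute these parameters into Lemma \ref{lem:totramoverorder}.

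For $\Mfo^1$, Lemma \ref{lem:totramoverorder}(1) requires:
\begin{itemize}
\item $a\le 2b+1$, which follows from $a\le b$;
\item $b\le 2a$, which is $f/3\le 2s-2f/3$, i.e. $f\le 2s$;
\item $a\le n$, which is $s\le f/3+n$;
\item $\ord(c\pi^{b-\min(n,b)})+n\ge b$, which holds automatically.
\end{itemize}
The inequality $a\le b$ reads $s\le 2f/3$, i.e. $\frac32 s\le f$.

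For $\Mfo^2$, Lemma \ref{lem:totramoverorder}(2) requires:
\begin{itemize}
\item $a'\le 2b'+1$, which gives $(f+1)/3\le 2s-2(f+1)/3+1$, i.e. $f\le 2s$;
\item $b'\le 2a'$, which follows from $b'<a'$;
\item $a'\le n$, which is $f\le 3n-1$.
\end{itemize}
The inequality $b'<a'$ gives $s<2(f+1)/3$, i.e. $\frac32 s+\frac12\le f$ after using integrality and $3\mid f+1$. Conversely, Proposition \ref{prop:totramoverorderform} already gives the divisibility conditions and the ranges, so the criterion is an equivalence.

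\textbf{Step 2: claim (1).} I will read off $S$ and $f$ from the $\mfo$-basis.

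For $S$: in both forms the change-of-basis matrix from $\{1,x,x^2\}$ is triangular, with diagonal $\pi^a,\pi^b$ (resp. $\pi^{a'},\pi^{b'}$). Hence $S=a+b=s$.

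For $f$ in $\Mfo^1$: the order contains $\pi^bx^2$, and also $\pi^bx$, obtained by subtracting $c\pi^{a+b-\min(n,b)}x^2$ from $\pi^{b-a}\cdot\pi^a(x+\dots)$. It therefore contains $\pi^b\Mfo_E=\pi_E^{3b}\Mfo_E$. On the other hand $\pi^{b-1}x^2\notin\Mfo^1$, because the coefficient of $x^2$ in any element is divisible by $\pi^{\min}$ of the relevant exponents, exactly as in the unramified proof. The ideal $\pi_E^{3b-1}\Mfo_E$ contains $\pi^{b-1}x^2$, so the conductor is exactly $3b$.

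For $f$ in $\Mfo^2$: the order contains $\pi^{a'}x$ and $\pi^{a'-1}x^2$, the latter because $b'\le a'-1$ and the correction term $c\pi^{a'}x$ lies in the order. Thus $\pi_E^{3a'-1}\Mfo_E=\mfo\langle\pi^{a'}, \pi^{a'}x,\pi^{a'-1}x^2\rangle\subset\Mfo^2$, while $\pi^{a'-1}x\notin\Mfo^2$. Hence $f=3a'-1$.

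\textbf{Step 3: claims (2)--(4).}
\begin{itemize}
\item Claim (2) follows from (1) together with the uniqueness statements in Proposition \ref{prop:totramoverorderform}.
\item Claim (3) is immediate from (1): $3\mid f$ and $3\mid f+1$ cannot both hold, so the conductors of $\Mfo^1$ and $\Mfo^2$ always differ.
\item For claim (4), Proposition \ref{prop:gorenstein_cond} says $\Mfo$ is Gorenstein iff $S(\Mfo)=[\Mfo:\mathfrak f(\Mfo)]$. Since $[\Mfo_E:\pi_E^f\Mfo_E]=f$ (as $d=1$), we have $[\Mfo:\mathfrak f]=f-s$. So the condition becomes $s=f-s$, i.e. $f=2s$.
\end{itemize}

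The main obstacle is the conductor computation in Step 2, specifically the exclusion $\pi^{b-1}x^2\notin\Mfo^1$ (resp. $\pi^{a'-1}x\notin\Mfo^2$). The exponents interact with $\min(n,b)$, so this requires a careful coefficient comparison in the basis.
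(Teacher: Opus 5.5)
Your proposal is correct and follows the paper's route. You derive the criterion by substituting the parameters into Lemma \ref{lem:totramoverorder}, with $\frac32 s\le f$ (resp.\ $\frac32 s+\frac12\le f$) coming from the defining ranges in Proposition \ref{prop:totramoverorderform}. You get (1) by reading off the index and checking which $\pi_E^{k}\Mfo_E$ lie in the order, (2) from the uniqueness in that proposition, (3) from $f \bmod 3$, and (4) from Proposition \ref{prop:gorenstein_cond} via $[\Mfo:\mathfrak f(\Mfo)]_\mfo=f-s$.

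There are two small fixes. First, the correction term when producing $\pi^bx$ is $c\pi^{2b-\min(n,b)}x^2\in\mfo\,\pi^bx^2$, not $c\pi^{a+b-\min(n,b)}x^2$. Second, the exclusions $\pi^{b-1}x^2\notin\Mfo^1_{s,f,c}$ and $\pi^{a'-1}x\notin\Mfo^2_{s,f,c}$ (with $a'=\frac{f+1}{3}$) are immediate rather than the main obstacle: the $x$-coefficient (resp.\ $x^2$-coefficient) forces the coefficient of the other nontrivial basis generator to vanish.
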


\begin{proof}
The necessary and sufficient conditions for $\Mfo^i_{s,f,c}$ to be an overorder of $R$ are a direct consequence of Lemma \ref{lem:totramoverorder}. 
The claim (1) follows from the inequalities $s-\frac{f}{3}\leq \frac{f}{3}$, $s-\frac{f}{3}\leq \ord(c)-\min(n,\frac{f}{3})+n$ for $\Mfo^1_{s,f,c}$, and $s-\frac{f+1}{3}\leq \frac{f+1}{3}$, $s-\frac{f+1}{3}\leq s-\frac{f+1}{3}+\ord(c)+1$ for $\Mfo^2_{s,f,c}$.
The claim (2) follows from the claim (1) and Proposition \ref{prop:totramoverorderform}. 
The claim (3) follows from the fact that $3\mid f(\Mfo^1_{s,f,c})$ and $3 \nmid f(\Mfo^2_{s',f',c'})$.
The claim (4) follows from Proposition \ref{prop:gorenstein_cond}.    
\end{proof}

\begin{theorem}[Analogue of Theorem \ref{thm:unrfor}]\label{thm:totramfor}
    Let $R$ be an order in $E$ that is not necessarily simple. 
    Let $s=S(R)$ and $f=f(R)$.
    Then $\#\clb(R)$ is expressed as follows, only depending on $s$ and $f$:
    \[
    \#\clb(R)=
    \begin{cases}
    \displaystyle\frac{q^{\frac{s}{2}+2}+9q^{\frac{s}{2}+1}+2q^{\frac{s}{2}}+2sq^{\frac{f}{6}}(1-q)-8q^{\frac{f}{6}+1}-3q^{s-\frac{f}{3}+1}-q^{s-\frac{f}{3}}}{q^{\frac{f}{6}}(q-1)^2} &\text{if } s,\frac{f}{3} \text{ even}; \\[1.5em]
     \displaystyle\frac{5q^{\frac{s}{2}+1}+7q^{\frac{s}{2}}+2sq^{\frac{f-3}{6}}(1-q)-8q^{\frac{f+3}{6}}-q^{s-\frac{f}{3}+1}-3q^{s-\frac{f}{3}}}{q^{\frac{f-3}{6}}(q-1)^2}&\text{if } s \text{ even, } \frac{f}{3} \text{ odd};\\[1.5em]
    \displaystyle\frac{5q^{\frac{s+3}{2}}+7q^{\frac{s+1}{2}}+2sq^{\frac{f}{6}}(1-q)-8q^{\frac{f}{6}+1}-3q^{s-\frac{f}{3}+1}-q^{s-\frac{f}{3}}}{q^{\frac{f}{6}}(q-1)^2}& \text{if } s \text{ odd, } \frac{f}{3} \text{ even}; \\[1.5em]
    \displaystyle\frac{q^{\frac{s+3}{2}}+9q^{\frac{s+1}{2}}+2q^{\frac{s-1}{2}}+2sq^{\frac{f-3}{6}}(1-q)-8q^{\frac{f+3}{6}}-q^{s-\frac{f}{3}+1}-3q^{s-\frac{f}{3}}}{q^{\frac{f-3}{6}}(q-1)^2}&\text{if } s,\frac{f}{3}\text{ odd} 
\end{cases}
    \]
    when $3\mid f$, and
    \[
    \#\clb(R)=
    \begin{cases}
    \displaystyle\frac{2q^{\frac{s}{2}+2}+9q^{\frac{s}{2}+1}+q^{\frac{s}{2}}+2sq^{\frac{f+1}{6}}(1-q)-8q^{\frac{f+1}{6}+1}-q^{s-\frac{f+1}{3}+2}-3q^{s-\frac{f+1}{3}+1}}{q^{\frac{f+1}{6}}(q-1)^2} &\text{if } s,\frac{f+1}{3} \text{ even}; \\[1.5em]
    \displaystyle\frac{7q^{\frac{s}{2}+1}+5q^{\frac{s}{2}}+2sq^{\frac{f-2}{6}}(1-q)-8q^{\frac{f+4}{6}}-3q^{s-\frac{f+1}{3}+1}-q^{s-\frac{f+1}{3}}}{q^{\frac{f-2}{6}}(q-1)^2}&\text{if } s \text{ even, } \frac{f+1}{3} \text{ odd};\\[1.5em]
    \displaystyle\frac{7q^{\frac{s+3}{2}}+5q^{\frac{s+1}{2}}+2sq^{\frac{f+1}{6}}(1-q)-8q^{\frac{f+1}{6}+1}-q^{s-\frac{f+1}{3}+2}-3q^{s-\frac{f+1}{3}+1}}{q^{\frac{f+1}{6}}(q-1)^2}& \text{if } s \text{ odd, } \frac{f+1}{3} \text{ even}; \\[1.5em]
    \displaystyle\frac{2q^{\frac{s+3}{2}}+9q^{\frac{s+1}{2}}+q^{\frac{s-1}{2}}+2sq^{\frac{f-2}{6}}(1-q)-8q^{\frac{f+4}{6}}-3q^{s-\frac{f+1}{3}+1}-q^{s-\frac{f+1}{3}}}{q^{\frac{f-2}{6}}(q-1)^2}&\text{if } s,\frac{f+1}{3}\text{ odd}
    \end{cases}
    \]
    when $3 \mid (f+1)$.
\end{theorem}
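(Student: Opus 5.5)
We follow the proof of Theorem \ref{thm:unrfor} step for step, replacing the unramified inputs by their totally ramified analogues. First we embed $R$ into the classification. Put $n:=\lceil f/3\rceil$ and $R':=\mfo[\pi^n x]$; since $\ord_E(\pi^n x)>f$ we have $R'\subset R$, so by Proposition \ref{prop:totramoverorderform} and Corollary \ref{cor:totramoverordercriterion}, $R=\Mfo^1_{s,f,c}$ if $3\mid f$ and $R=\Mfo^2_{s,f,c}$ if $3\mid(f+1)$. The case $3\mid(f-1)$ does not occur, by the last paragraph of Proposition \ref{prop:totramoverorderform}. Write $b=f/3$, resp. $b=(f+1)/3$, and $a=s-b$. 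Then $R=(\pi^a(x+c'x^2),\pi^b x^2)_\mfo$ in the first case and $R=(\pi^b x,\pi^a(x^2+c\pi x))_\mfo$ in the second. Every overorder of $R$ is an overorder of $R'$, so it has the form $\Mfo^{i'}_{s',f',c'}$ with $(s',f')$ in the ranges of Corollary \ref{cor:totramoverordercriterion}.

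Next we decide which of these contain $R$. We apply Lemma \ref{lem:generaloverorder}, which is valid here by Remark \ref{rmk:lem29}, in its two versions (with $x$ and $x^2$ in either order), for the four combinations $(i,i')\in\{1,2\}^2$. Writing $a'=s'-b'$ with $b'=f'/3$ or $(f'+1)/3$, this gives conditions of the shape $b'\le b$, $a'\le a$, and $b'\le \ord(c-c')+a$ (plus a shift by $1$ coming from the factor $\pi$ in type 2). For the mixed cases it gives $b'\le a$ or a similar bound; as in the unramified proof, this makes the remaining inequalities redundant. For fixed $(s',f')$ we then count the admissible $c'$ modulo the modulus given in Corollary \ref{cor:totramoverordercriterion}(2). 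This count is a single power $q^{\min(a-a',\,\text{modulus exponent})}$ in the same-type cases and the full $q^{\text{modulus exponent}}$ in the mixed cases. Summing over $f'$ with $s'$ fixed, the two types $f'\equiv 0$ and $f'\equiv 2\pmod 3$ should telescope into a geometric sum $\sum_{i=0}^{N(s')}q^i$. Here $N(s')$ is piecewise linear in $s'$, with breakpoints governed by comparing the largest admissible $b'$, namely $\lfloor s'/2\rfloor$, with $a$ and $b$. This mirrors the three-regime formula of Theorem \ref{thm:unrfor}.

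By Corollary \ref{cor:totramoverordercriterion}(4), the Gorenstein overorders are those with $f'=2s'$. Such an overorder is of type 1 if $s'$ is even ($b'=2s'/3$ requires $3\mid s'$ as well, so in fact $f'=2s'$ with $3\mid f'$) and of type 2 if $f'+1=2s'+1\equiv 0\pmod 3$. For each $s'$ there is therefore exactly one admissible $f'$, and the number of Gorenstein overorders is one explicit power of $q$, again split into regimes according to how $s'/2$ compares with $a$ and $b$. We then substitute both counts into Proposition \ref{prop:orderidealcounting}, $\#\clb(R)=2\#\{\text{overorders}\}-\#\{\text{Gorenstein overorders}\}$. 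Summing the geometric series over $s'$ in each regime produces expressions with denominator $(q-1)^2$. The numerator terms $q^{s/2}$ and $q^{(s\pm1)/2}$ come from the top of the middle regime, the terms $q^{a}$ and $q^{a+1}$ come from the last regime (where $N(s')=s-s'$), and the term $2s\,q^{b/2}(1-q)$ collects the $-1$'s of the geometric series. The parity split of $s$ and of $b$ arises from the floors in $\lfloor s'/2\rfloor$ and in the regime endpoints.

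We expect the main obstacle to be the bookkeeping. Specifically, one must check that the mixed-type containment conditions really collapse as in the unramified case, and that the type-2 modulus $\frac{2(f'+1)}{3}-s'-1$ merges cleanly with the type-1 modulus into a single geometric progression for each $s'$. The final simplification of the four-parity case split into closed form is routine but error-prone, so we would verify it on small examples, such as the order $\mfo[\pi^3\pi_E^2]$ of Example \ref{ex:simpleram}. As in Theorem \ref{thm:unrfor}, we expect the counts for $i=2$ to differ from those for $i=1$ only through the shift $b=(f+1)/3$. This shift would explain why the second family of formulas has the same shape as the first with $f/3$ replaced by $(f+1)/3$ and slightly altered coefficients.
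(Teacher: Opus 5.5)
Your route is the paper's: write $R=\Mfo^i_{s,f,c}$ using $R'=\mfo[\pi^{\lceil f/3\rceil}x]$, test containment of each $\Mfo^{i'}_{s',f',c'}$ via Lemma \ref{lem:generaloverorder}, count the $c'$, sum over $f'$ for fixed $s'$ into $\sum_{i=0}^{N(s')}q^i$, count Gorenstein overorders, and apply Proposition \ref{prop:orderidealcounting}. However, the theorem consists of nothing but this bookkeeping, and several of the specific data you commit to are wrong in ways that would produce an incorrect formula.

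\textbf{Range of $b'$ and the regimes.} For fixed $s'$ the admissible $b'$ satisfy:
\begin{itemize}
\item type 1: $\lceil s'/2\rceil\le b'\le\lfloor 2s'/3\rfloor$, from $a'\le b'\le 2a'$;
\item type 2: $\lceil (s'+1)/2\rceil\le b'\le\lfloor(2s'+1)/3\rfloor$, from $a'<b'\le 2a'+1$.
\end{itemize}
So $\lfloor s'/2\rfloor$ is at most the smallest $b'$, not the largest. The regimes are cut out by comparing $\lfloor 2s'/3\rfloor$ and $\lfloor(2s'+1)/3\rfloor$ with $a$ and $b$. For $i=1$ the correct exponent is:
\begin{itemize}
\item $N(s')=\lfloor s'/3\rfloor$ if $\lfloor(2s'+1)/3\rfloor\le a$;
\item $N(s')=a-s'+\lfloor 2s'/3\rfloor$ if $a<\lfloor(2s'+1)/3\rfloor$ and $\lfloor 2s'/3\rfloor\le b$;
\item $N(s')=s-s'$ if $b<\lfloor 2s'/3\rfloor$.
\end{itemize}
With your breakpoint this goes wrong. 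For $R=\mfo[\pi^3\pi_E]$ (so $a=3$, $b=6$), the value $s'=6$ would fall in the unconstrained first regime, giving $1+q+q^2$ overorders. The correct count is $1+q$, as in Example \ref{ex:simpleram}.

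The case $i=2$ is also not merely a shift of $b$. There the mixed constraint $b'\le a$ falls on type-1 overorders instead. Consequently $\lfloor 2s'/3\rfloor$ and $\lfloor(2s'+1)/3\rfloor$ exchange roles in the regime conditions, and this is what changes the coefficients in the second family of formulas.

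\textbf{Gorenstein count.} This is also incorrect as stated. An overorder with $f'=2s'$ is of type 1 iff $3\mid s'$; parity is irrelevant (e.g.\ the Gorenstein $\Mfo^2_{4,8,c}$ have $s'=4$ and type 2). It is of type 2 iff $s'\equiv 1\pmod 3$. For $s'\equiv 2\pmod 3$ it would need $f'\equiv 1\pmod 3$, which never occurs. So there is not exactly one admissible $f'$ for every $s'$, and the subtracted Gorenstein sum runs only over $s'\not\equiv 2\pmod 3$.

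Moreover, existence depends on the regime. For $i=1$:
\begin{itemize}
\item first regime: $q^{\lfloor s'/3\rfloor}$;
\item middle regime: $q^{a-s'/3}$, only for $3\mid s'$, since the type-2 ones violate $b'\le a$;
\item last regime: nothing.
\end{itemize}
For $i=2$ the middle-regime contribution is instead $q^{a-(s'-1)/3}$ for $s'\equiv 1\pmod 3$.

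With these corrections your plan becomes the paper's proof.
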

\begin{proof}
     Let $R' := \mathfrak{o}[\pi^{\lceil f/3 \rceil} x]$ so that $R' \subset R$. 
     Thus Corollary \ref{cor:totramoverordercriterion} yields that $R = \Mfo_{s,f,c}^i$ for some $i$ and $c$. 
     Our strategy is to count the number of overorders of $R (=\Mfo_{s,f,c}^i)$ with fixed Serre invariant $s'$ and that of Gorenstein overorders.
        Then we use Proposition \ref{prop:orderidealcounting} to compute $\#\clb(R)$.
     
     \textbf{1. Case $R=\Mfo_{s,f,c}^1$.}
     We first treat the case when $i=1$.
    Then $\frac{3}{2}s\leq f\leq 2s$  with $3\mid f$ and $c\in \mfo$ is uniquely determined up to $(\pi^{\frac{2}{3}f-s})$, by Corollary \ref{cor:totramoverordercriterion}.(2).
     By Lemma \ref{lem:generaloverorder} (cf. Remark \ref{rmk:lem29}) and Corollary \ref{cor:totramoverordercriterion}, an overorder of $R$ is of the form $\Mfo_{s',f',c'}^{i'}$ for a unique $i',s',$ and $f'$ satisfying the following conditions:
     \begin{enumerate}

    \item When $i'=1$, $c'\in \mfo$ is uniquely determined up to $(\pi^{\frac{2}{3}f'-s'})$ and 
    \[(i)~\frac{3}{2}s'\le f' \le 2s' \text{ and }3\mid f',~(ii)\frac{f'}{3}\leq \frac{f}{3},~(iii)s'-\frac{f'}{3}\leq s-\frac{f}{3},~(iv)\frac{f'}{3}\leq \ord(c-c')+s-\frac{f}{3}.\]
    For a fixed $(s', f')$ satisfying the above $(i)$-$(iii)$, we count the number of $c'$, up to $(\pi^{\frac{2}{3}f'-s'})$, satisfying $(iv)$. 
    By translation in $\mfo/(\pi^{\frac{2}{3}f'-s'})$, it is the same as the number of $c-c'$ in $(iv)$ up to $(\pi^{2f'-s'})$, which is  $q^{\min((s-\frac{f}{3})-(s'-\frac{f'}{3}),~ 2\frac{f'}{3}-s')}$.
    Next, for a fixed $s'$, we have $ \max(\lceil\frac{s'}{2}\rceil, s' - s + \frac{f}{3}) \le \frac{f'}{3} \le  \min(\lfloor\frac{2s'}{3}\rfloor, \frac{f}{3})$ by $(i)$-$(iii)$.
Let  $j = \frac{f'}{3}$ to simplify notation.

    \item
    When $i'=2$, $c'\in \mfo$ is uniquely determined up to $(\pi^{\frac{2(f'+1)}{3}-s'-1})$ and
    \begin{align*}
    (i)~ \frac{3}{2}s'+\frac{1}{2}\leq f'\leq 2s' \text{ and } 3\mid(f'+1), (ii)~\frac{f'+1}{3}\leq 1+\ord(c')+\frac{f}{3}, (iii)~s'-\frac{f'+1}{3}\leq \frac{f}{3}, \\(iv)~s'-\frac{f'+1}{3}\leq \ord(c)+s-\frac{f}{3}, (v)~\frac{f'+1}{3}\leq s-\frac{f}{3}.
    \end{align*}
    Note that $s'-\frac{f'+1}{3}<s-\frac{f}{3}$ by a combination of the inequality $s'-\frac{f'+1}{3}<\frac{f'+1}{3}$ from $(i)$ and $(v)$.
    Thus $(iii)$ (combined with the inequality $s-\frac{f}{3}\leq \frac{f}{3}$) and $(iv)$ are redundant.
    In addition, $(ii)$ is induced by $(v)$ since $s-\frac{f}{3}\leq \frac{f}{3}$.
    Therefore, for a fixed $(s', f')$ satisfying $(i)$ and $(v)$, there are $q^{\frac{2(f'+1)}{3} - s' - 1}$ choices of $c'$.
    For a fixed $s'$, we have $\frac{s'+1}{2} \leq\ \frac{f'+1}{3}  \le \min(s - \frac{f}{3} , \lfloor \frac{2s'+1}{3} \rfloor)$ by $(i)$ and $(v)$.
Let  $k = \frac{f'+1}{3}$  to simplify notation.
   \end{enumerate}
    We sum up these with respect to $s'$.
    The number of overorders of $R$ with Serre invariant $s'$ is
\[ 
\begin{cases}
\sum\limits_{j=\lceil\frac{s'}{2}\rceil}^{\lfloor\frac{2s'}{3}\rfloor} q^{2j-s'} + \sum\limits_{k=\lceil\frac{s'+1}{2}\rceil}^{\lfloor\frac{2s'+1}{3}\rfloor} q^{2k-s'-1} = \sum\limits_{i=0}^{\lfloor\frac{s'}{3}\rfloor} q^i & \text{if } \lfloor\frac{2s'+1}{3}\rfloor \le s - \frac{f}{3}; \\[15pt]
\sum\limits_{j=\max(\lceil\frac{s'}{2}\rceil, s'-s + \frac{f}{3})}^{\lfloor\frac{2s'}{3}\rfloor} q^{\min(s - \frac{f}{3}-s'+j, 2j-s')} + \sum\limits_{k=\lceil\frac{s'+1}{2}\rceil}^{s - \frac{f}{3}} q^{2k-s'-1} = \sum\limits_{i=0}^{s - \frac{f}{3}-s'+\lfloor\frac{2s'}{3}\rfloor} q^i & \text{if } 
\substack{s - \frac{f}{3} < \lfloor\frac{2s'+1}{3}\rfloor,\\ \lfloor\frac{2s'}{3}\rfloor \le \frac{f}{3};} \\[15pt]
\sum\limits_{j=\max(\lceil\frac{s'}{2}\rceil, s'-s + \frac{f}{3})}^{\frac{f}{3}} q^{\min(s - \frac{f}{3}-s'+j, 2j-s')} + \sum\limits_{k=\lceil\frac{s'+1}{2}\rceil}^{s - \frac{f}{3}} q^{2k-s'-1} = \sum\limits_{i=0}^{s-s'} q^i & \text{if } \frac{f}{3} < \lfloor\frac{2s'}{3}\rfloor.
\end{cases}
 \]

    The number of Gorenstein overorders of $R$ with Serre invariant $s'$ is 
     \[ 
    \begin{cases}
        q^{\frac{s'}{3}}&\text{when } \lfloor\frac{2s'+1}{3}\rfloor\leq s-\frac{f}{3}, f'=2s', \text{ and } 3\mid f', s';\\
        q^{\frac{s'-1}{3}}&\text{when } \lfloor\frac{2s'+1}{3}\rfloor\leq s-\frac{f}{3}, f'=2s', \text{ and } 3\mid (f'+1);\\
        q^{s-\frac{f}{3}-\frac{s'}{3}} &\text{when }s-\frac{f}{3}< \frac{2}{3}s'\leq \frac{f}{3},  f'=2s',\text{ and } 3\mid f', s';\\
        0 &\text{otherwise}.
    \end{cases}
     \]

    Here the first two are combined to be $q^{\lfloor\frac{s'}{3}\rfloor}$ when $\lfloor \frac{2s'+1}{3} \rfloor \le s - \frac{f}{3}$, $f'=2s'$, and $s'\not\equiv 2(\mathrm{mod} \text{ 3})$. 
    We now plug all calculations into the equation in Proposition \ref{prop:orderidealcounting}.
\begin{align*}
    \#\overline{\mathrm{Cl}}(R) &= 1 
    + \sum_{\substack{1 \le s' \\ \lfloor \frac{2s'+1}{3} \rfloor \le s - \frac{f}{3}}} \frac{2(q^{\lfloor\frac{s'}{3}\rfloor+1} - 1)}{q - 1} 
    - \sum_{\substack{\lfloor \frac{2s'+1}{3} \rfloor \le s - \frac{f}{3} \\ 0<s'\not\equiv 2 \pmod{3}}} q^{\lfloor\frac{s'}{3}\rfloor} \\
    &\quad + \sum_{\substack{s - \frac{f}{3} < \lfloor \frac{2s'+1}{3} \rfloor \\ \lfloor\frac{2}{3}s'\rfloor \le \frac{f}{3}}} \frac{2(q^{s - \frac{f}{3} - s' + \lfloor \frac{2}{3}s' \rfloor + 1} - 1)}{q - 1} 
    - \sum_{\substack{s - \frac{f}{3} < \frac{2}{3}s' \le \frac{f}{3} \\ 3 \mid s'}} q^{s - \frac{f}{3} - \frac{s'}{3}} 
    + \sum_{\substack{\frac{f}{3} < \lfloor \frac{2}{3}s' \rfloor \\ s' \le s}} \frac{2(q^{s - s' + 1} - 1)}{q - 1}.
\end{align*}
    This sum is equal to the formula when $3\mid f$ described in the statement.
    
        \textbf{2. Case $R=\Mfo_{s,f,c}^2$.}
    Now, let $i=2$. 
   Then $\frac{3}{2}s+\frac{1}{2}\leq f\leq 2s$  with $3\mid (f+1)$ and $c\in \mfo$ is uniquely determined up to $(\pi^{\frac{2(f+1)}{3}-s-1})$, by Corollary \ref{cor:totramoverordercriterion}.(2).
     By Lemma \ref{lem:generaloverorder} (cf. Remark \ref{rmk:lem29}) and Corollary \ref{cor:totramoverordercriterion}, an overorder of $R$ is of the form $\Mfo_{s',f',c'}^{i'}$ for a unique $i',s',$ and $f'$ satisfying the following conditions:

    \begin{enumerate}
        \item When $i'=1$, $c'\in \mfo$ is uniquely determined up to $(\pi^{\frac{2}{3}f'-s'})$ and 
        \begin{align*}
        (i)~\frac{3}{2}s'\leq f'\leq 2s' \text{ and }3\mid f',(ii)~\frac{f'}{3}\leq \ord(c')+\frac{f+1}{3},(iii)~s'-\frac{f'}{3}\leq \frac{f+1}{3},\\
        (iv)~s'-\frac{f'}{3}\leq\ord(c)+s-\frac{f+1}{3}+1,(v)~\frac{f'}{3}\leq s-\frac{f+1}{3}.
        \end{align*}

 By the same argument as in the case where $i=1$ and $i'=2$, conditions $(ii), (iii)$ and $(iv)$ are redundant.
        For a fixed $(s',f')$ satisfying $(i)$ and $(v)$, there are $q^{\frac{2}{3}f'-s'}$ choices for $c'$, up to $(\pi^{\frac{2}{3}f'-s'})$.
        For a fixed $s'$, we have $\frac{s'}{2}\leq \frac{f'}{3}\leq \min(s-\frac{f+1}{3},\lfloor\frac{2}{3}s'\rfloor)$. Let  $j = \frac{f'}{3}$ to simplify notation.

        \item When $i'=2$, $c'\in \mfo$ is uniquely determined up to $(\pi^{\frac{2(f'+1)}{3}-s'-1})$ and 
        \begin{align*}
        (i)~\frac{3}{2}s'+\frac{1}{2}\leq f'\leq 2s' \text{ and }3\mid(f'+1), (ii)~\frac{f'+1}{3}\leq \frac{f+1}{3}, (iii)~s'-\frac{f'+1}{3}\leq s-\frac{f+1}{3},\\ (iv)~\frac{f'+1}{3}\leq \ord(c-c')+s-\frac{f+1}{3}+1.
        \end{align*}
 By the same argument as in the case where $i=i'=1$, 
 the number of $c'$ satisfying $(iv)$ up to $(\pi^{\frac{2(f'+1)}{3}-s'-1})$, for a fixed $(s', f')$, is $q^{\min((s-\frac{f+1}{3})-(s'-\frac{f'+1}{3}),\frac{2(f'+1)}{3}-s'-1)}$.
        For a fixed $s'$, we have $\max(\lceil\frac{s'+1}{2}\rceil,s'-s+\frac{f+1}{3})\leq \frac{f'+1}{3}\leq \min(\lfloor\frac{2s'+1}{3}\rfloor,\frac{f+1}{3})$ by $(i)$-$(iii)$.
        Let  $k = \frac{f'+1}{3}$  to simplify notation.
    \end{enumerate}
     We sum up these with respect to $s'$.  The number of overorders of $R$ with Serre invariant $s'$ is 
    \[ 
    \begin{cases}
        \sum\limits_{j=\lceil\frac{s'}{2}\rceil}^{\lfloor\frac{2s'}{3}\rfloor}q^{2j-s'}+\sum\limits_{k=\lceil \frac{s'+1}{2}\rceil}^{\lfloor\frac{2s'+1}{3}\rfloor}q^{2k-s'-1}=\sum\limits_{i=0}^{\lfloor\frac{s'}{3}\rfloor}q^i &\text{if } \lfloor\frac{2s'}{3}\rfloor\leq s-\frac{f+1}{3};\\
        \sum\limits_{j=\lceil\frac{s'}{2}\rceil}^{s-\frac{f+1}{3}}q^{2j-s'}+\sum\limits_{k=\max(\lceil \frac{s'+1}{2}\rceil,s'-s+\frac{f+1}{3})}^{\lfloor\frac{2s'+1}{3}\rfloor}q^{\min((s-\frac{f+1}{3})-(s'-k),2k-s'-1)}=
        \sum\limits_{i=0}^{s-\frac{f+1}{3}-s'+\lfloor\frac{2s'+1}{3}\rfloor}q^i &\text{if } \substack{ s-\frac{f+1}{3}< \lfloor\frac{2s'}{3}\rfloor, \\ \lfloor\frac{2s'+1}{3}\rfloor\leq \frac{f+1}{3};}\\
        \sum\limits_{j=\lceil\frac{s'}{2}\rceil}^{s-\frac{f+1}{3}}q^{2j-s'}+\sum\limits_{k=\max(\lceil \frac{s'+1}{2}\rceil,s'-s+\frac{f+1}{3})}^{\frac{f+1}{3}}q^{\min((s-\frac{f+1}{3})-(s'-k),2k-s'-1)}=
        \sum\limits_{i=0}^{s-s'}q^i &\text{if }\frac{f+1}{3}<\lfloor\frac{2s'+1}{3}\rfloor.
    \end{cases}
     \]
    The number of Gorenstein overorders of $R$ with Serre invariant $s'$ is 
    \[ 
    \begin{cases}
        q^{\lfloor\frac{s'}{3}\rfloor}&\text{when } \lfloor\frac{2s'}{3}\rfloor\leq s-\frac{f+1}{3}, f'=2s', \text{ and } s'\not\equiv2(\mathrm{mod}\text{ }3);\\
        q^{s-\frac{f+1}{3}-\frac{s'-1}{3}} &\text{when }s-\frac{f+1}{3}+1< \frac{2s'+1}{3}\leq \frac{f+1}{3}, f'=2s', \text{ and } s'\equiv1(\mathrm{mod}\text{ }3);\\
        0 &\text{otherwise}.
    \end{cases}
     \]
    We now plug all calculations into the equation in Proposition \ref{prop:orderidealcounting}.
    \begin{align*}
    &\#\overline{\mathrm{Cl}}(R) = 1 + 
    \sum_{\substack{1 \le s', \\ \lfloor \frac{2s'}{3} \rfloor \le s -  \frac{f+1}{3}}} \frac{2(q^{\lfloor\frac{s'}{3}\rfloor+1} - 1)}{q - 1} -
    \sum_{\substack{\lfloor \frac{2s'}{3} \rfloor \le s -  \frac{f+1}{3}, \\ 0<s'\not\equiv 2(\mathrm{mod} \text{ 3})}} q^{\lfloor\frac{s'}{3}\rfloor} ~ +
    \\&\quad\sum_{\substack{s -  \frac{f+1}{3}  < \lfloor \frac{2}{3}s' \rfloor, \\ \lfloor\frac{2s'+1}{3}\rfloor\le  \frac{f+1}{3}}} \frac{2(q^{s - \frac{f+1}{3}  - s' + \lfloor \frac{2s'+1}{3} \rfloor + 1} - 1)}{q - 1} 
    - \sum_{\substack{s-\frac{f+1}{3}+1< \frac{2s'+1}{3}  \le  \frac{f+1}{3},\\ s'\equiv 1(\mathrm{mod}\text{ }3)}} q^{s -  \frac{f+1}{3} \ - \frac{s'-1}{3}} 
    + \sum_{\substack{\frac{f+1}{3}  < \lfloor \frac{2s'+1}{3} \rfloor,\\ s' \le s}} \frac{2(q^{s - s' + 1} - 1)}{q - 1}.
    \end{align*}
    Again, this sum is equal to the formula when $3\mid (f+1)$ described in the statement.
\end{proof}

\begin{corollary}\label{cor:gorenstein_calculation_tr}
If the order $R$ in Theorem \ref{thm:totramfor} is Gorenstein, then by Proposition \ref{prop:gorenstein_cond} it is a simple extension of $\mfo$, taking the form $R^1=\mfo[\pi^n \pi_E]$ or $R^2=\mfo[\pi^n \pi_E^2]$. In this case, we have $S(R^1)=3n$ and $\mathfrak{f}(R^1)=(\pi_E^{6n})$ so the conductor exponent is $f=6n$, whereas $S(R^2)=3n+1$ and $\mathfrak{f}(R^2)=(\pi_E^{6n+2})$ so the conductor exponent is $f=6n+2$. The formula is simplified as follows:

  \begin{align*}
    \#\overline{\mathrm{Cl}}(R^1) = &
\begin{cases} 
\displaystyle \frac{5q^{\frac{n+3}{2}} + 7q^{\frac{n+1}{2}} - (6n+11)q + (6n-1)}{(q-1)^2} & \text{if } n \text{ is odd;} \\[14pt]
\displaystyle \frac{q^{\frac{n}{2}+2} + 9q^{\frac{n}{2}+1} + 2q^{\frac{n}{2}} - (6n+11)q + (6n-1)}{(q-1)^2} & \text{if } n \text{ is even.}
\end{cases}\\
 \#\overline{\mathrm{Cl}}(R^2) = &
\begin{cases}
\displaystyle \frac{7q^{\frac{3n+3}{2}} + 5q^{\frac{3n+1}{2}} - (6n+13)q^{n+1} + (6n+1)q^n}{q^n(q-1)^2} & \text{if } n \text{ is odd;} \\[14pt]
\displaystyle \frac{2q^{\frac{3n}{2}+2} + 9q^{\frac{3n}{2}+1} + q^{\frac{3n}{2}} - (6n+13)q^{n+1} + (6n+1)q^n}{q^n(q-1)^2} & \text{if } n \text{ is even.}
\end{cases}
    \end{align*}
\end{corollary}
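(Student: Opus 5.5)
The plan is to verify the two structural claims, namely that Gorenstein means simple extension and the values of $S$ and $\mathfrak f$, and then substitute into Theorem \ref{thm:totramfor}. Since $E/F$ is totally ramified, $\mathcal O_E\cong\mathcal O_E\otimes\kappa$ is local and not completely split. So Proposition \ref{prop:gorenstein_cond} gives that $R$ is Gorenstein if and only if $R=\mfo[\alpha]$ for some $\alpha\in\mathcal O_E$.

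\textbf{Normalising the generator.} Subtracting an element of $\mfo$, I may assume $\alpha\in\mathfrak m_E$. Write $\ord_E(\alpha)=3n+r$ with $r\in\{1,2\}$; the case $r=0$ is removed by subtracting a further element of $\mfo$ and dividing out. I will then replace $\pi_E$ by the uniformizer $\pi^{-n}\alpha$ when $r=1$, and by a uniformizer $\pi_E$ with $\pi^{-n}\alpha=u\pi_E^2$, $u\in\mathcal O_E^\times$, when $r=2$. I expect this normalisation to be the delicate point. What I will try first is to show that $\mfo[\pi^n u\pi_E^2]=\mfo[\pi^n\pi_E'^2]$ for a suitable uniformizer $\pi_E'$, or more simply that $S$ and $f$ depend only on $\ord_E(\alpha)$. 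The theorem only needs $s$ and $f$, so the latter is enough.

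\textbf{Computing $S$ and $f$.} For $R^1=\mfo[\pi^n x]$ with $x=\pi_E$, the $\mfo$-basis is $1,\ \pi^n x,\ \pi^{2n}x^2$, so $S(R^1)=n+2n=3n$. The order contains $\pi^{2n}\mathcal O_E=\pi_E^{6n}\mathcal O_E$. It does not contain $\pi_E^{6n-1}$, because the elements of valuation $\equiv 2 \pmod 3$ in $R^1$ have $\ord_E\geq 6n+2$; hence $f=6n$. For $R^2=\mfo[\pi^n x^2]$, write $x^3=-c_2x^2-c_1x-c_0$ with $\ord(c_0)=1$. Then $(\pi^nx^2)^2=\pi^{2n}x\cdot x^3$, which equals $-c_0\pi^{2n}x$ modulo $\pi^{2n}\mathfrak m_E$-terms. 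From this one obtains the $\mfo$-basis $1,\ \pi^nx^2,\ \pi^{2n+1}x$ (up to triangular changes), so $S(R^2)=n+2n+1=3n+1$. Similarly $\pi_E^{6n+2}\mathcal O_E\subset R^2$ but $\pi_E^{6n+1}\notin R^2$, giving $f=6n+2$. Both results agree with $f=2s$ in Corollary \ref{cor:totramoverordercriterion}(4).

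\textbf{Substitution.} For $R^1$ we have $3\mid f$ and $f/3=2n$ is even, while $s=3n$ has the parity of $n$. So I use the first case of Theorem \ref{thm:totramfor} when $n$ is even and the third case when $n$ is odd. Here $q^{f/6}=q^n$ and $q^{s-f/3}=q^n$. The terms $6nq^n(1-q)-8q^{n+1}-3q^{n+1}-q^n$ collect to $q^n\bigl(-(6n+11)q+(6n-1)\bigr)$. Dividing numerator and denominator by $q^n$ then gives the stated formulas.

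For $R^2$ we have $3\mid(f+1)$ and $(f+1)/3=2n+1$ is odd, while $s=3n+1$ has the opposite parity to $n$. So I use the second listed case of the $3\mid(f+1)$ formulas when $n$ is odd, and the fourth when $n$ is even. Here $q^{(f-2)/6}=q^n$ and $q^{s-(f+1)/3}=q^n$. The linear terms collect to $-(6n+13)q^{n+1}+(6n+1)q^n$, and the leading terms match $q^{(s)/2}$ or $q^{(s\pm1)/2}$ with $s=3n+1$, which yields the displayed expressions. This part is routine bookkeeping. The main obstacle is the normalisation of the generator in the $r=2$ case, together with the basis computation for $R^2$.
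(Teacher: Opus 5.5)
Your proposal is correct and follows the route the paper intends, since the corollary is a direct specialisation with no separate proof: Gorenstein $\Leftrightarrow$ monogenic by Proposition \ref{prop:gorenstein_cond}, then $(s,f)=(3n,6n)$ or $(3n+1,6n+2)$, then substitution into the matching parity cases of Theorem \ref{thm:totramfor}, and your bookkeeping checks out in all four cases. The loose end you flag, $\ord_E(\alpha)=3n+2$ with a unit $u$, is closed by your second option using the same mod-$3$ valuation argument you give for $R^1$: the valuations $0$, $3n+2$, $6n+4$ of $1,\alpha,\alpha^2$ are distinct mod $3$, so exactly $3n+1$ nonnegative integers fail to occur as valuations in $\mfo[\alpha]$, the largest being $6n+1$, which gives $S=3n+1$ and $f=6n+2$ for every such $\alpha$; this is the right choice, because absorbing $u$ into a new uniformizer can fail in residue characteristic $2$ (e.g.\ for $F=\mathbb{Q}_2(\sqrt{2})$, $\pi_E^3=\pi=\sqrt{2}$, the element $\alpha=\pi^2(\pi_E^2+\pi^2\pi_E)$ generates a Gorenstein order not of the form $\mfo[\pi^2\varpi^2]$ for any uniformizer $\varpi$).
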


\begin{example}[Analogue of Example \ref{ex:simple-nontrivial}]\label{ex:simpleram}
We illustrate the classification of overorders and identify which are Gorenstein, using Corollary \ref{cor:totramoverordercriterion} and Lemma \ref{lem:generaloverorder} (applicable by Remark \ref{rmk:lem29}). 
Let $\{c_1, \cdots, c_q\}$ be representatives for $\mathfrak{o}/\pi\mathfrak{o}$ with $c_1=0$ and $\mathrm{ord}(c_i)=0$ for $2 \le i \le q$. 
In the following diagrams, the Gorenstein overorders are highlighted in \textbf{bold}. 

      \textbf{1. Case $\mathfrak{o}[\pi^2 \pi_E]$ and $\mathfrak{o}[\pi^3 \pi_E]$.}
When $R = \mathfrak{o}[\pi^2 \pi_E]$ with $S(R) = 6$ and $f(R) = 12$, we have the following diagram:

\begin{center}
\begin{tikzcd}[row sep=0.1em, column sep=2.5em]
        & & & \boldsymbol{\Mfo^1_{3,6,c_1}} \arrow[dddr, hook, crossing over] & & & \\
        & & & \vdots & & & \\
        & & & \boldsymbol{\Mfo^1_{3,6,c_q}} \arrow[dr, hook, crossing over] & & & \\
        \mathbf{R} \arrow[r, hook] & \Mfo^1_{5,9,0} \arrow[r, hook] & \Mfo^1_{4,6,0} \arrow[uuur, hook] \arrow[ur, hook] \arrow[r, hook] & \Mfo^2_{3,5,0} \arrow[r, hook] & \Mfo^1_{2,3,0} \arrow[r, hook] & \boldsymbol{\Mfo^2_{1,2,0}} \arrow[r, hook] & \boldsymbol{\Mfo_E}.
    \end{tikzcd}
\end{center}

When $R = \mathfrak{o}[\pi^3 \pi_E]$ so that $S(R) = 9$ and $f(R) = 18$, we have the following diagram:

\begin{center}
\begin{tikzcd}[row sep=0.1em, column sep=1.2em]
    & & & \boldsymbol{\Mfo^1_{6,12,c_1}} \arrow[r, hook] & \Mfo^1_{5,9,c_1} \arrow[dddr, hook, crossing over] & \boldsymbol{\Mfo^2_{4,8,c_1}} \arrow[dddr, hook, crossing over] & \boldsymbol{\Mfo^1_{3,6,c_1}} \arrow[dddr, hook, crossing over] & & & \\        
    & & & \vdots & \vdots & \vdots & \vdots & & & \\
    & & & \boldsymbol{\Mfo^1_{6,12,c_q}} \arrow[uur, hook, crossing over] & \Mfo^1_{5,9,c_q} \arrow[dr, hook, crossing over] & \boldsymbol{\Mfo^2_{4,8,c_q}} \arrow[dr, hook, crossing over] & \boldsymbol{\Mfo^1_{3,6,c_q}} \arrow[dr, hook, crossing over] & & & \\
    \mathbf{R} \arrow[r, hook] & \Mfo^1_{8,15,0} \arrow[r, hook] & \Mfo^1_{7,12,0} \arrow[uuur, hook] \arrow[ur, hook] \arrow[r, hook] & \Mfo^1_{6,9,0} \arrow[uuur, hook, crossing over] \arrow[ur, hook] \arrow[r, hook] & \Mfo^2_{5,8,0} \arrow[uuur, hook, crossing over] \arrow[ur, hook] \arrow[r, hook] & \Mfo^1_{4,6,0} \arrow[uuur, hook, crossing over] \arrow[ur, hook] \arrow[r, hook] & \Mfo^2_{3,5,0} \arrow[r, hook] & \Mfo^1_{2,3,0} \arrow[r, hook] & \boldsymbol{\Mfo^2_{1,2,0}} \arrow[r, hook] & \boldsymbol{\Mfo_E}.
\end{tikzcd}
\end{center}

Here $\mathbf{\Mfo^1_{6,12,c_1}} = \mathfrak{o}[\pi^2 \pi_E]$ and thus the diagram for $\mathfrak{o}[\pi^2 \pi_E]$ is a part of that for $\mathfrak{o}[\pi^3 \pi_E]$.

\textbf{2. Case $\mathfrak{o}[\pi^2 \pi^2_E]$ and $\mathfrak{o}[\pi^3 \pi^2_E]$.}
When $R = \mathfrak{o}[\pi^2 \pi^2_E]$ with $S(R) = 7$ and $f(R) = 14$, we have the following diagram:

\begin{center}
\begin{tikzcd}[row sep=0.1em, column sep=2.8em]
    & & & \boldsymbol{\Mfo^2_{4,8,c_1}} \arrow[dddr, hook, crossing over] & \boldsymbol{\Mfo^1_{3,6,c_1}} \arrow[dddr, hook, crossing over] & & & \\
    & & & \vdots & \vdots & & & \\
    & & & \boldsymbol{\Mfo^2_{4,8,c_q}} \arrow[dr, hook, crossing over] & \boldsymbol{\Mfo^1_{3,6,c_q}} \arrow[dr, hook, crossing over] & & & \\
    \mathbf{R} \arrow[r, hook] & \Mfo^2_{6,11,0} \arrow[r, hook] & \Mfo^2_{5,8,0} \arrow[uuur, hook, crossing over] \arrow[ur, hook, crossing over] \arrow[r, hook] & \Mfo^1_{4,6,0} \arrow[uuur, hook, crossing over] \arrow[ur, hook, crossing over] \arrow[r, hook] & \Mfo^2_{3,5,0} \arrow[r, hook] & \Mfo^1_{2,3,0} \arrow[r, hook] & \boldsymbol{\Mfo^2_{1,2,0}} \arrow[r, hook] & \boldsymbol{\Mfo_E}.
\end{tikzcd}
\end{center}

When $R = \mathfrak{o}[\pi^3 \pi^2_E]$ with $S(R) = 10$ and $f(R) = 20$, we have the following diagram:

\begin{center}
\begin{tikzcd}[row sep=0.1em, column sep=0.8em]
    & & & \boldsymbol{\Mfo^2_{7,14,\pi c_1}} \arrow[r, hook] & \Mfo^2_{6,11,c_1} \arrow[dddr, hook, crossing over] & \Mfo^1_{5,9,c_1} \arrow[dddr, hook, crossing over] & \boldsymbol{\Mfo^2_{4,8,c_1}} \arrow[dddr, hook, crossing over] & \boldsymbol{\Mfo^1_{3,6,c_1}} \arrow[dddr, hook, crossing over] & & & \\
    & & & \vdots & \vdots & \vdots & \vdots & \vdots & & & \\
    & & & \boldsymbol{\Mfo^2_{7,14,\pi c_q}} \arrow[uur, hook, crossing over] & \Mfo^2_{6,11,c_q} \arrow[dr, hook, crossing over] & \Mfo^1_{5,9,c_q} \arrow[dr, hook, crossing over] & \boldsymbol{\Mfo^2_{4,8,c_q}} \arrow[dr, hook, crossing over] & \boldsymbol{\Mfo^1_{3,6,c_q}} \arrow[dr, hook, crossing over] & & & \\
    \mathbf{R} \arrow[r, hook] & \Mfo^2_{9,17,0} \arrow[r, hook] & \Mfo^2_{8,14,0} \arrow[uuur, hook, crossing over] \arrow[ur, hook, crossing over] \arrow[r, hook] & \Mfo^2_{7,11,0} \arrow[uuur, hook, crossing over] \arrow[ur, hook, crossing over] \arrow[r, hook] & \Mfo^1_{6,9,0} \arrow[uuur, hook, crossing over] \arrow[ur, hook, crossing over] \arrow[r, hook] & \Mfo^2_{5,8,0} \arrow[uuur, hook, crossing over] \arrow[ur, hook, crossing over] \arrow[r, hook] & \Mfo^1_{4,6,0} \arrow[uuur, hook, crossing over] \arrow[ur, hook, crossing over] \arrow[r, hook] & \Mfo^2_{3,5,0} \arrow[r, hook] & \Mfo^1_{2,3,0} \arrow[r, hook] & \boldsymbol{\Mfo^2_{1,2,0}} \arrow[r, hook] & \boldsymbol{\Mfo_E},
\end{tikzcd}
\end{center}
where $\{\pi c_1(=0), \cdots, \pi c_q\}$ is the set of representatives for $\mathfrak{o}/\pi^2\mathfrak{o}$ with valuation $\geq 1$, which is used for $\boldsymbol{\Mfo^2_{7,14,\pi c_i}}$'s. 
Here $\boldsymbol{\Mfo^2_{7,14,0}} = \mathfrak{o}[\pi^2 \pi^2_E]$ so that the diagram for $\mathfrak{o}[\pi^2 \pi^2_E]$ is a part of that for $\mathfrak{o}[\pi^3 \pi^2_E]$.
\end{example}

\section{Local theory: split case}\label{sec:split}
In this section, we treat the case where $E/F$ is not a field extension.
The splitting type of $E$ is defined as follows:
\begin{equation}\label{eq:splittingtype}
\left\{
\begin{array}{l l}
(1\ 2) &\text{if $E\cong F\times E'$ with $E'/F$ an unramified quadratic field extension,  }\\
(1\ 1^2) &\text{if $E\cong F\times E'$ with $E'/F$ a ramified quadratic field extension, }\\  
(1\ 1\ 1) &\text{if $E\cong F\times F \times F$.}
\end{array}
\right.
\end{equation}

\subsection{The case when $E$ has a splitting type $(1\ 2)$}
We identify an order $\Mfo$ of $E$ with an order of $F\times E'$ under the isomorphism $E\cong F\times E'$.
Note that the maximal order $\Mfo_E$ of $E$ is identified with $\mfo\times \Mfo_{E'}$.  We choose $x\in \Mfo_E^\times$ such that $\Mfo_{E'}=\mfo[x]$. Let $X^2+c_1X+c_0$ be the minimal polynomial for $x$ over $\mfo$. Here $\overline{X^2+c_1X+c_0}$ is irreducible over $\kappa$.

To maintain consistency with the previous section (cf. (\ref{eq:bracketnotation})), we introduce the following notation:
\begin{equation}\label{eq:index_of_orders_splitcase}
(\pi^a, \pi^b x+c)_\mfo:=\mfo\langle (1,1), (0,\pi^a), (0, \pi^bx+c) \rangle \subset F\times E' ~~~~  \textit{ with $a,b\in \mathbb{Z}_{\geq 0}$ and $c\in \mfo$,  as an $\mfo$-module}.\end{equation}

We denote $(\pi^a, \pi^b x+c)_\mfo$  by $\Mfo_{a,b,c}$. Here we emphasize $a,b\geq 0$. 



\begin{proposition}[Analogues of Lemma \ref{lem:overorder} and Proposition \ref{prop:overorderform}]\label{prop:splitunram}
Let $c\in \mfo$.
\begin{enumerate}
    \item Any order is of the form $\Mfo_{a,b,c}$ where $a\leq 2 \min(b,\ord(c))$.
    Conversely, an $\mfo$-module $\Mfo_{a,b,c}$ with $a\leq 2 \min(b,\ord(c))$ is an order. 

    \item 
    An order $\Mfo_{a',b',c'}$ contains an order $\Mfo_{a,b,c}$ if and only if $a'\leq a,$ $b'\leq b$, and 
    \[
    \left\{
    \begin{array}{l l}
       c' \in (\pi^{\max(\lceil\frac{a'}{2}\rceil,a'-(b-b'))})  &\textit{if $\ord(c)\geq a'$};  \\
       c'-\pi^{-(b-b')}c\in (\pi^{a'-(b-b')})  & \textit{if $\ord(c)<a'$.}
    \end{array}
    \right.
    \] Here, in the second case, $\lceil \frac{a'}{2}\rceil\leq \ord(c)-(b-b')=\ord(c')$ (thus $a'-(b-b')>0$).

    \item $\Mfo_{a',b',c'}=\Mfo_{a,b,c}$ as an order if and only if $a=a'$, $b=b'$, and $c-c'\in (\pi^a)$. 

    \item
    The Serre invariant of $\Mfo_{a,b,c}$ is $S(\Mfo_{a,b,c}) = a+b$. The conductor of $\Mfo_{a,b,c}$ is given by $\mathfrak{f}(\Mfo_{a,b,c}) = (\pi^a) \times (\pi^{\max(a, b, a+b-\ord(c))})$ as an ideal of $\Mfo_E \cong \mfo \times \Mfo_{E'}$. 
    
    \item
    An order $\Mfo_{a,b,c}$ is Gorenstein if and only if 
    \[
       \left\{
       \begin{array}{l l}
        a=2b \textit{ or }a=0  & \textit{if }\ord(c)\geq a;\\
           a=2\min(b,\ord(c)) & \textit{if }\ord(c)<a.
       \end{array}\right.
       \]
\end{enumerate}
\end{proposition}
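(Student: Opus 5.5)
The approach is to work coordinatewise in $F\times E'$ and reduce everything to explicit $\mfo$-module computations with the basis $\{(1,1),(0,\pi^a),(0,\pi^bx+c)\}$, exactly as in Lemma \ref{lem:overorder} and Proposition \ref{prop:overorderform}.

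For (1), start with an order $\Mfo\subset \mfo\times\mfo[x]$ of rank $3$ containing $(1,1)$. The intersection $\Mfo\cap(0\times E')$ is an $\mfo$-module of rank $2$ inside $0\times\Mfo_{E'}$. Its projection to the $x$-coefficient is an ideal $(\pi^b)$, and its intersection with $0\times\mfo$ is $(\pi^a)$. Using the saturation argument \cite[Corollary 2.9]{Con14} as before, we get an $\mfo$-basis $(1,1),(0,\pi^a),(0,\pi^bx+c)$. The element $c$ is only defined modulo $\pi^a$, which also gives (3).

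Closure under multiplication then has to be checked on products of basis vectors. The product $(0,\pi^a)^2$ lies in $\Mfo$ automatically. The product $(0,\pi^a)(0,\pi^bx+c)=\pi^a(0,\pi^bx+c)$ is also automatic. The key product is
\[(0,\pi^bx+c)^2=(0,\pi^{2b}x^2+2c\pi^bx+c^2).\]
Rewriting $x^2=-c_1x-c_0$, this element must equal $\lambda(0,\pi^bx+c)+\mu(0,\pi^a)$. Since a diagonal element $(t,t)$ cannot occur, the first coordinate forces its coefficient to vanish. Comparing $x$-coefficients gives $\lambda=2c-\pi^bc_1$. The remaining condition is $\pi^a\mid -\pi^{2b}c_0-2c^2+\pi^bc_1c+c^2$, that is,
\[\ord\bigl(c^2-c_1\pi^bc+c_0\pi^{2b}\bigr)\ge a.\]
This is the norm form of the irreducible residual quadratic, so its valuation equals $2\min(b,\ord c)$. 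This is where irreducibility of $\overline{X^2+c_1X+c_0}$ enters, and it yields (1).

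For (2), express the generators $(0,\pi^a)$ and $(0,\pi^bx+c)$ in the basis of $\Mfo_{a',b',c'}$. The first gives $a'\le a$. For the second, comparing $x$-coefficients gives $b'\le b$ and
\[(0,\pi^bx+c)=\pi^{b-b'}(0,\pi^{b'}x+c')+(0,\,c-\pi^{b-b'}c').\]
So the requirement is $c-\pi^{b-b'}c'\in(\pi^{a'})$. Combined with the order condition $\ord c'\ge\lceil a'/2\rceil$ from (1), we split into cases. If $\ord c\ge a'$, the requirement becomes $\ord c'\ge a'-(b-b')$, and together with $\lceil a'/2\rceil$ this gives the stated max. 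If $\ord c<a'$, it forces $\ord c'=\ord c-(b-b')$ together with the stated congruence.

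For (4), the Serre invariant $a+b$ is read off from the basis by comparing with $(1,1),(0,1),(0,x)$. For the conductor, take $(\pi^u)\times(\pi^v)\Mfo_{E'}$ inside $\Mfo$. The elements $(\pi^u,0)=\pi^u(1,1)-(0,\pi^u)$ force $u\ge a$. Membership of $(0,\pi^v)$ and $(0,\pi^vx)$ amounts to $v\ge a$ together with
\[(0,\pi^vx)=\pi^{v-b}(0,\pi^bx+c)-(0,\pi^{v-b}c),\]
which needs $v\ge b$ and $v-b+\ord c\ge a$. This gives $v=\max(a,b,a+b-\ord c)$.

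For (5), apply Proposition \ref{prop:gorenstein_cond}. We have $[\Mfo:\mathfrak f]=[\Mfo_E:\mathfrak f]-S=a+2v-(a+b)=2v-b$, and Gorenstein means $2v-b=a+b$, i.e. $v=(a+2b)/2$. Now take cases on which term attains the max, using $a\le2\min(b,\ord c)$:
\begin{itemize}
\item $v=a$ gives $a=2b$.
\item $v=b$ gives $a=0$.
\item $v=a+b-\ord c$ gives $a=2\ord c$.
\end{itemize}
When $\ord c\ge a$, the third case reduces to $a=0$. When $\ord c<a$, we need $a=2\min(b,\ord c)$; this needs checking against which term of the max dominates.

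The main obstacle I expect is the bookkeeping in (2), where the two regimes for $\ord c$ must be separated cleanly. I would handle it first by writing the containment as a single congruence, then splitting on $\ord c$.
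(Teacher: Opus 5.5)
Your proposal is correct and follows the paper's proof essentially step by step. It uses the same normalized basis, the same key product $(0,\pi^bx+c)^2$ whose remainder $c^2-c_1\pi^bc+c_0\pi^{2b}=N_{E'/F}(c+\pi^bx)$ has valuation $2\min(b,\ord(c))$ by residual irreducibility, the same single congruence $c-\pi^{b-b'}c'\in(\pi^{a'})$ for containment, the same coordinatewise conductor computation, and the Gorenstein criterion of Proposition \ref{prop:gorenstein_cond}.

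The check you left open in (5) closes immediately. The order condition $a\le 2\min(b,\ord(c))$ makes $\tfrac{a}{2}+b$ an upper bound for each of $a$, $b$, $a+b-\ord(c)$, so $v=\tfrac{a}{2}+b$ holds iff $a\in\{0,2b,2\ord(c)\}$. This reduces to ``$a=0$ or $a=2b$'' when $\ord(c)\ge a$, and to $a=2\min(b,\ord(c))$ when $\ord(c)<a$.
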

\begin{proof}
  \begin{enumerate}
       \item
       First, we claim that any order $\Mfo$ has a basis of the form $\{(1,1),(0,\pi^a),(0,c+\pi^bx)\}$.
       Let $\{(1,1), (v_1, v_2+v_3x), (w_1,w_2+w_3x)\}$ be a basis for $\Mfo$ as a $\mfo$-module.
       Then, $\{(1,1), (0,(v_2-v_1)+v_3x), (0,(w_2-w_1)+w_3x)\}$ also forms a basis for $\Mfo$, so that we may and do assume that $v_1=w_1=0$.
       Without loss of generality, we assume that $\ord(v_3)\geq \ord(w_3)$, and hence $\frac{v_3}{w_3}\in \mfo$.
       By replacing $(0,v_2+v_3x)$ with $(0,v_2+v_3x-\frac{v_3}{w_3}(w_2+w_3x))=(0,v_2-\frac{v_3}{w_3}w_2)$, we have a basis $\{(1,1),(0,v_2-\frac{v_3}{w_3}w_2), (0,w_2+w_3x)\}$.
        We set $v_2-\frac{v_3}{w_3}w_2=u\pi^a$,  $w_3=u'\pi^b$, and $c=u'^{-1}w_2$ where $u,u'\in \mfo^\times$.
        Then, $\{(1,1),(0,\pi^a),(0,c+\pi^bx)\}=\{(1,1),u^{-1}(0,v_2-\frac{v_3}{w_3}w_2),u'^{-1}(0,w_2+w_3x)\}$ is the basis of the desired form.

 We next characterize the condition when $\Mfo=(\pi^a,\pi^bx+c)_\mfo$ is closed under multiplication. 
\[\textit{We have } ~~~~~~~~~~~ 
(0,\pi^bx+c)^2=(0,-c^2+\pi^bc_1c-\pi^{2b}c_0)+(2c-\pi^bc_1)\cdot(0,\pi^bx+c)
\textit{ in } F\times E'.
\]

      Here $c_1$ and $c_0$ are coefficients of the minimal polynomial for $x$, explained at the beginning of this section. 
        To prove the condition that $a\leq 2 \min(b,\ord(c))$, we claim that \[\ord(-c^2+\pi^bc_1c-\pi^{2b}c_0)=2\min(b,\ord(c)).\]
           Since $\ord(c_0)=0$, the equality fails only when $\ord(c)=b$ and $\overline{\frac{c}{\pi^b}}$ is a root of $\overline{X^2-c_1X+c_0}$.
        This contradicts the irreducibility of $\overline{X^2+c_1X+c_0}$ over $\kappa$.
        Therefore, the equality follows, and hence $(0,\pi^bx+c)^2\in \Mfo$ if and only if $a\leq 2\min(b,\ord(c))$.
    

       \item We have $\pi^a=\pi^{a-a'}\cdot \pi^{a'}$ and $\pi^bx+c=\pi^{b-b'}(\pi^{b'}x+c')+c-\pi^{b-b'}c'.$
       Thus, $(0,\pi^a)$ and $(0, \pi^{b}x+c)$ are contained in $(\pi^{a'},\pi^{b'}+c')_\mfo$ if and only if 
       \[
                  a'\leq a, b'\leq b, \text{ and }c-\pi^{b-b'}c'\in (\pi^{a'}).
       \]

If $\ord(c)\geq a'$, then 
 $c-\pi^{b-b'}c'\in (\pi^{a'})$ if and only if $\pi^{b-b'}c'\in (\pi^{a'})$.
       Combining this condition with the ring condition $a'\leq 2\ord(c')$ from (1) yields $c'\in (\pi^{\max(\lceil\frac{a'}{2}\rceil,a'-(b-b'))})$.

   If $\ord(c)<a'$, then $c-\pi^{b-b'}c'\in (\pi^{a'})$ if and only if $c'-\pi^{-(b-b')}c\in \pi^{a'-(b-b')}\mfo$.
       For $c'$ satisfying this condition, we have $\ord(c')=\ord(c)-(b-b')$.
       Therefore, the ring condition $a'\leq 2\ord(c')$ from (1) is the same as the inequality $\lceil\frac{a'}{2}\rceil\leq \ord(c)-(b-b')$.
       
        \item This is a direct consequence of (2).
       \item 
       The Serre invariant is $S(\Mfo_{a,b,c}) = [\Mfo_E:\Mfo_{a,b,c}]_\mfo$. 
       Consider the $\mfo$-submodule of $\Mfo_E$ generated by the set $\{(1,1), (0,1),(0,\pi^bx)\}$, denoted by $\Mfo'$.
       Then $[\Mfo_E:\Mfo']=b$, whereas $[\Mfo':\Mfo_{a,b,c}]=a$ since $\Mfo'$ is also generated by $\{(1,1),(0,1),(0,\pi^bx+c)\}$.
       Thus, $S(\Mfo_{a,b,c})=a+b$.
       
       The conductor $\mathfrak{f}(\Mfo_{a,b,c})$ is the largest ideal of $\Mfo_E$ contained in $\Mfo_{a,b,c}$. Any ideal of $\Mfo_E \cong \mfo \times \Mfo_{E'}$ is of the form $(\pi^{f_1}) \times (\pi^{f_2})$. This ideal is contained in $\Mfo_{a,b,c}$ if and only if $(\pi^{f_1},0) \in \Mfo_{a,b,c}$, $(0,\pi^{f_2}) \in \Mfo_{a,b,c}$, and $(0,\pi^{f_2}x) \in \Mfo_{a,b,c}$.
       The condition $(\pi^{f_1},0) \in \Mfo_{a,b,c}$  (equivalently $(0,\pi^{f_1})\in \Mfo_{a,b,c}$ since $(1,1)\in \Mfo_{a,b,c}$) holds if and only if $f_1 \geq a$. 
       Thus, we have the minimum $f_1 = a$.
       Similarly, $(0,\pi^{f_2}) \in \Mfo_{a,b,c}$ requires $f_2 \geq a$. For $(0,\pi^{f_2}x)$ to be in $\Mfo_{a,b,c}$, it must be of the form $y_1(0,\pi^a) + y_2(0,\pi^bx+c)$ for some $y_1, y_2 \in \mfo$. Thus $y_2 \pi^b = \pi^{f_2}$, which implies $y_2 = \pi^{f_2-b}$ and hence $f_2 \geq b$. Then $y_1 \pi^a + \pi^{f_2-b}c = 0$, meaning $\pi^{f_2-b}c \in (\pi^a)$, which gives $f_2 - b + \ord(c) \geq a$. Thus $f_2 \geq \max(a,b,a+b-\ord(c))$.
       Therefore, the minimal $f_2$ is exactly $\max(a,b,a+b-\ord(c))$.

       \item 
       By Proposition \ref{prop:gorenstein_cond}, the order
       $\Mfo:=\Mfo_{a,b,c}$ is Gorenstein if and only if 
       \begin{equation}\label{eq:cond12case}
       [\Mfo_E:\mathfrak{f}(\Mfo)]_\mfo=2S(\Mfo).
       \end{equation}
       By (4), we have $[\Mfo_E:\mathfrak{f}(\Mfo)]_\mfo = a+2\max(a, b, a+b-\ord(c))$ since $E'/F$ is an unramified quadratic extension and the length of $\Mfo_{E'}/\pi^{f_2}\Mfo_{E'}$ is $2f_2$.
       Also $2S(\Mfo) = 2(a+b)$.
       Plugging these into Equation \eqref{eq:cond12case}, we obtain
        $\left\{
       \begin{array}{l l}
                  a+2\max(a,b)=2(a+b)  & \textit{if }\ord(c)\geq a;\\
           a+2\max(a,a+b-\ord(c))=2(a+b) & \textit{if }\ord(c)< a.
       \end{array}\right.$
This is exactly the same condition as the desired one.  \qedhere
   \end{enumerate}
\end{proof}

\begin{theorem}\label{thm:splitunroverorders}
Let $(\pi^{f_1}) \times (\pi^{f_2}) \left(\subset \mfo \times \Mfo_{E'}\right)$ be the conductor of an order $R$ in $\Mfo_E$. 
Then $\#\overline{\mathrm{Cl}}(R)$ is given by the following formula:
\begin{align*}
    \#\overline{\mathrm{Cl}}(R) =&~ 2 \sum_{0 \leq b' \leq S(R)-f_1} \left( \sum_{\substack{0 \leq a' \leq f_1 \\ a' \leq 2b' \\ a' \leq S(R)-f_2}} q^{\min \left(\lfloor \frac{a'}{2} \rfloor, S(R)-f_1-b' \right)} + \sum_{\substack{S(R)-f_2 < a' \leq f_1 \\ a' \leq 2b' \\ \lceil \frac{a'}{2} \rceil - b' \leq f_1-f_2}} q^{S(R)-f_1-b'} \right) \\
    &-(S(R)-f_1+1) - \sum_{\substack{0 < a' \leq f_1 \\ a' \leq S(R)-f_2 \\ 2 \mid a'}} q^{\min \left( \frac{a'}{2}, S(R)-f_1-\frac{a'}{2} \right)} - \sum_{\substack{S(R)-f_2 < a' \leq f_1 \\ 2 \mid a' \\ S(R)-f_1 \leq S(R)-f_2}} q^{S(R)-f_1-\frac{a'}{2}} \\
    &- \sum_{0 < b' \leq S(R)-f_1} \sum_{\substack{0 < a' \leq f_1 \\ a' < 2b' \\ 2 \mid a' \\ a' \leq S(R)-f_2 \\ \frac{a'}{2} \leq S(R)-f_1-b'}} \left( q^{\frac{a'}{2}} - q^{\frac{a'}{2}-1} \right) - \sum_{\substack{S(R)-f_2 < a' \leq f_1 \\ 2 \mid a' \\ S(R)-f_2 < S(R)-f_1}} q^{S(R)-f_2-\frac{a'}{2}}.
\end{align*}
Note that this formula relies entirely on the invariant quantities $f_1, f_2,$ and $S(R)$. For an explicit closed-form evaluation of this multi-summation as a rational function in $q$, see Appendix \ref{app:formula}.
\end{theorem}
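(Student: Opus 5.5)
By Proposition \ref{prop:orderidealcounting}, $\#\clb(R)=2N-G$, where $N$ is the number of overorders of $R$ and $G$ the number of Gorenstein ones. The formula splits accordingly. The first line (with the factor $2$) should be $2N$. The remaining terms, $-(S(R)-f_1+1)$ and the three subtracted sums, should be $G$, organized according to which branch of Proposition \ref{prop:splitunram}(5) applies.

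First I put $R$ in normal form. By Proposition \ref{prop:splitunram}(1), $R=\Mfo_{a,b,c}$ with $a\le 2\min(b,\ord(c))$, and by (3) $c$ matters only modulo $(\pi^a)$. Proposition \ref{prop:splitunram}(4) gives $S(R)=a+b$, $f_1=a$ and $f_2=\max(a,b,a+b-\ord(c))$. Hence
\[
a=f_1,\qquad b=S(R)-f_1,\qquad S(R)-f_2=\min(b,\ ord(c),\ a)\ \text{adjusted},
\]
and more precisely $f_2>\max(a,b)$ exactly when $\ord(c)<\min(a,b)$, in which case $\ord(c)=S(R)-f_2$. In all cases the relevant data $(a,b,\min(\ord(c),a))$ are recovered from $(f_1,f_2,S(R))$; for instance $\ord(c)\ge a$ corresponds to $f_2=\max(f_1,S(R)-f_1)$. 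This is what makes the formula depend only on these three invariants.

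Next I count $N$. Overorders are the $\Mfo_{a',b',c'}$ with $a'\le a$, $b'\le b$ and $a'\le 2\min(b',\ord(c'))$, where $c'$ is taken modulo $\pi^{a'}$ and satisfies the containment condition of Proposition \ref{prop:splitunram}(2). For fixed $(a',b')$ there are two cases.
\begin{enumerate}
\item If $a'\le\ord(c)$, equivalently $a'\le S(R)-f_2$ (after checking this equivalence in the case $\ord(c)\ge a$), the admissible $c'$ form $(\pi^{\max(\lceil a'/2\rceil,\,a'-(b-b'))})/(\pi^{a'})$. This set has $q^{\min(\lfloor a'/2\rfloor,\,b-b')}$ elements, which is the first inner sum with $b-b'=S(R)-f_1-b'$.
\item If $a'>\ord(c)$, then $c'$ is determined modulo $\pi^{a'-(b-b')}$, giving $q^{b-b'}$ classes in $\mfo/\pi^{a'}$. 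These exist only when $\lceil a'/2\rceil\le \ord(c)-(b-b')$. Substituting $\ord(c)=S(R)-f_2$ turns this into $\lceil a'/2\rceil-b'\le f_1-f_2$, which gives the second inner sum.
\end{enumerate}

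Finally I count $G$ using Proposition \ref{prop:splitunram}(5), applied to each $\Mfo_{a',b',c'}$ with its own $\ord(c')$.
\begin{enumerate}
\item The terms with $a'=0$ contribute one each for $0\le b'\le b$, which gives the constant $S(R)-f_1+1$.
\item In the case $\ord(c')\ge a'$ with $a'=2b'>0$, I reuse the $c'$-counts from the $N$-computation restricted to $b'=a'/2$. This produces the first two subtracted sums; the second, with $a'>S(R)-f_2$, needs $b-b'\le \ord(c)-b'$.
\item In the case $\ord(c')<a'$, Gorenstein means $a'=2\ord(c')$ with $a'\le 2b'$. When $a'<2b'$, I count classes of $c'$ of exact valuation $a'/2$ among the admissible ones. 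In the regime $a'\le S(R)-f_2$ the admissible set is $(\pi^{\max(a'/2,\,a'-(b-b'))})$, and the count $q^{a'/2}-q^{a'/2-1}$ requires $a'/2\ge a'-(b-b')$, i.e.\ $a'/2\le b-b'$. In the regime $a'>S(R)-f_2$, $\ord(c')$ is forced to be $\ord(c)-(b-b')$, so $a'=2(\ord(c)-b+b')$; this fixes $b'$, and summing over it gives the last sum $q^{S(R)-f_2-a'/2}$.
\end{enumerate}

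The main obstacle is the bookkeeping in the Gorenstein count. The two branches of Proposition \ref{prop:splitunram}(5) overlap at $a'=2b'=2\ord(c')$, and the boundary cases $S(R)-f_2$ versus $S(R)-f_1$ must be assigned so that nothing is counted twice. The conditions ``$S(R)-f_1\le S(R)-f_2$'' and ``$S(R)-f_2<S(R)-f_1$'' attached to the last sums are what resolve this, so I would verify the partition carefully, for instance by checking small cases such as $R=\mfo[\pi x]$-type orders against the direct enumeration.
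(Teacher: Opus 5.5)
Your counts for fixed $(a',b')$ are the right ones. The gap is in translating them into the invariants $f_1,f_2,S(R)$, which rests on a false equivalence. Proposition \ref{prop:splitunram}(4) gives exactly $S(R)-f_2=\min(a,b,\ord(c))$. So once $b<\min(a,\ord(c))$, the invariants only see $b$, and neither $\min(\ord(c),a)$ nor your case boundary $a'\le\ord(c)$ is determined by $(f_1,f_2,S(R))$.

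For example, $\Mfo_{4,2,c}$ with $\ord(c)=3$ and with $\ord(c)\ge 4$ are both orders with $S(R)=6$ and $f_1=f_2=4$. In both, the overorder $\Mfo_{3,2,0}$ belongs to your case (1), although $a'=3>S(R)-f_2=2$. In the second, $R=\Mfo_{4,2,0}$ itself does too.
\begin{enumerate}
\item Thus $a'\le\ord(c)\Leftrightarrow a'\le S(R)-f_2$ fails, already when $\ord(c)\ge a>b$, so the check you postpone would not go through.
\item Likewise, $\ord(c)\ge a$ does not correspond to $f_2=\max(f_1,S(R)-f_1)$; that condition only means $\ord(c)\ge\min(a,b)$.
\item The substitution $\ord(c)=S(R)-f_2$ that you make in case (2) and in the Gorenstein count is valid only when $\ord(c)<\min(a,b)$.
\end{enumerate}

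The missing idea is the paper's invariance step. First write the count in terms of $(a,b,\ord(c))$. Then show that replacing $\ord(c)$ by $\min(a,b,\ord(c))$ leaves the total unchanged, i.e.\ that the terms with $b<a'\le\min(a,\ord(c))$ may be moved from the sums over $a'\le S(R)-f_2$ to those over $a'>S(R)-f_2$. Note that $f_1=f_2$ in this regime.
\begin{enumerate}
\item For such $a'$ with $a'\le 2b'$, one has $b-b'\le b-\lceil a'/2\rceil\le\lfloor a'/2\rfloor-1$. So the case-(1) summand $q^{\min(\lfloor a'/2\rfloor,b-b')}$ equals $q^{b-b'}$. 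The side condition $\lceil a'/2\rceil-b'\le f_1-f_2$ of the second sum then reads $a'\le 2b'$.
\item On the Gorenstein side, for even $a'>b$ the term $q^{\min(a'/2,\,b-a'/2)}$ equals $q^{b-a'/2}$. This matches the second subtracted sum, whose condition $S(R)-f_1\le S(R)-f_2$ holds since both sides equal $b$.
\item The third subtracted sum has no terms with $a'>b$, since $a'<2b'$ and $a'/2\le b-b'$ force $a'<b$.
\item The last subtracted sum is empty in this regime on both sides.
\end{enumerate}
With this regrouping your outline goes through. Without it, you have not shown that $\#\clb(R)$ depends only on $(f_1,f_2,S(R))$, which is the real content of the statement beyond the $(a,b,c)$-level count.

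The overlap at $a'=2b'$ that you single out as the main obstacle is harmless. Splitting into $a'=0$, $a'=2b'$ (where every admissible $c'$ is Gorenstein), and $0<a'<2b'$ settles it directly.
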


\begin{proof}
An overorder of $R(=\Mfo_{a,b,c})$ is of the form $\Mfo_{a',b',c'}$ with $a'\leq 2 \min(b',\ord(c'))$ satisfying the conditions of Proposition \ref{prop:splitunram}.(2).
Our strategy is to count the number of $\Mfo_{a',b',c'}$'s with fixed $a', b'$ and that of Gorenstein overorders. Then we use Proposition \ref{prop:orderidealcounting} to compute $\#\clb(R)$.

\textbf{1. Case $a'\leq \ord(c)$.}
In this case, $c'\in (\pi^{\max(\lceil\frac{a'}{2}\rceil,a'-(b-b'))})$, so that there exist $q^{\min(\lfloor\frac{a'}{2}\rfloor, b-b')}$ choices of $c'$ up to $(\pi^{a'})$ for fixed $a'(\leq a)$ and $b'(\leq b)$ such that $0\leq a'\leq 2b'$.
In the following, we compute the number of Gorenstein overorders among these overorders.

\begin{itemize}
    \item If $a'=0$, then $c'$ is unique up to $(\pi^{a'})$ and  $\Mfo_{a',b',c'}$ is Gorenstein by Proposition \ref{prop:splitunram} for each $0\leq b'\leq b$. 
    Thus, the number of Gorenstein overorders when $a'=0$ is $b+1$.
    \item If $a'=2b'$ and $a'>0$, then $c'\in (\pi^{b'})$ by Proposition \ref{prop:splitunram}.(2). Thus $\ord(c')\geq b'$ and $\Mfo_{a',b',c'}$ is Gorenstein by Proposition \ref{prop:splitunram}.(5). 
    Therefore, for fixed $a'$ and $b'$ in this case, there are $q^{\min(b',b-b')}$ Gorenstein overorders.
    \item If $0<a'<2b'$, then $\Mfo_{a',b',c'}$ is Gorenstein if and only if $a'=2\ord(c')$ by Proposition \ref{prop:splitunram}.(5).
    It happens only if $2|a'$ and $\frac{a'}{2}\geq a'-(b-b')$ since $c'\in (\pi^{\max(\lceil\frac{a'}{2}\rceil,a'-(b-b'))})$ and $\ord(c')=\frac{a'}{2}$.
    For fixed $a'$ and $b'$ satisfying these conditions, the number of Gorenstein overorders is $q^{\frac{a'}{2}}-q^{\frac{a'}{2}-1}$.
\end{itemize}

\textbf{2. Case $a'>\ord(c)$.}
In this case, $c'\in\pi^{-(b-b')}c+ (\pi^{a'-(b-b')})$ where $\lceil\frac{a'}{2}\rceil\leq \ord(c')=\ord(c)-(b-b')$ by Proposition \ref{prop:splitunram}.(2), so that there exist $q^{b-b'}$ choices of $c'$ up to $(\pi^{a'})$.

We now compute the number of Gorenstein overorders.
Since $\ord(c')=\ord(c)-(b-b')$, $\Mfo_{a',b',c'}$ is Gorenstein if and only if $a'=2\min(b',\ord(c)-(b-b'))$ by Proposition \ref{prop:splitunram}.(5).
\begin{itemize}
    \item If $a'=2b'$, then $\Mfo_{a',b',c'}$ is Gorenstein if and only if $b'\leq \ord(c) -(b-b')$, equivalently $b\leq \ord(c)$.
    \item If $0< a'<2b'$, then $\Mfo_{a',b',c'}$ is Gorenstein if and only if $a'=2\ord(c)-2(b-b')$ and $b> \ord(c)$.
\end{itemize}
In both cases, for fixed $a'$ and $b'$, there are $q^{b-b'}$ Gorenstein overorders.

Summing up these two cases using the equation in Proposition \ref{prop:orderidealcounting} yields the intermediate formula expressed purely in terms of the initial local parameters $a, b,$ and $c$:
\[
\begin{aligned}
&\#\overline{\mathrm{Cl}}(R) = 2 \sum_{0 \leq b' \leq b} \left( \sum_{\substack{0 \leq a' \leq a \\ a' \leq 2b' \\ a' \leq \ord(c)}} q^{\min \left(\lfloor \frac{a'}{2} \rfloor, b-b' \right)} + \sum_{\substack{\ord(c) < a' \leq a \\ a' \leq 2b' \\ \lceil \frac{a'}{2} \rceil - b' \leq \ord(c) - b}} q^{b-b'} \right) -(b+1) - \\
&\sum_{\substack{0 < a' \leq a \\ a' \leq \ord(c) \\ 2 \mid a'}} q^{\min \left( \frac{a'}{2}, b-\frac{a'}{2} \right)} - \sum_{\substack{\ord(c) < a' \leq a \\ 2 \mid a' \\ b \leq \ord(c)}} q^{b-\frac{a'}{2}}- \sum_{0 < b' \leq b} \sum_{\substack{0 < a' \leq a \\ a' < 2b' \\ 2 \mid a' \\ a' \leq \ord(c) \\ \frac{a'}{2} \leq b-b'}} \left( q^{\frac{a'}{2}} - q^{\frac{a'}{2}-1} \right) - \sum_{\substack{\ord(c) < a' \leq a \\ 2 \mid a' \\ \ord(c) < b}} q^{\ord(c)-\frac{a'}{2}}.   
\end{aligned}
\]
To express this formula intrinsically in terms of the invariant quantities $f_1, f_2$, and $S(R)$, we recall that $S(R) = a+b$ and $\mathfrak{f}(R) = (\pi^a) \times (\pi^{\max(a, b, a+b-\ord(c))})$ by Proposition \ref{prop:splitunram}.(4). 
The relationship between the local parameters and the invariants is given by:
\[
    f_1 = a, \qquad b = S(R) - f_1, \qquad \min(a, b, \ord(c)) = S(R) - f_2.
\]
By Proposition \ref{prop:splitunram}.(3), the order $\Mfo_{a,b,c}$ is uniquely determined by the reduction of $c$ modulo $(\pi^a)$. Thus, $\Mfo_{a,b,c}$ and $\#\clb(\Mfo_{a,b,c})$ do not depend on the exact value of $\ord(c)$ when $\ord(c) \ge a$. 
We now claim that replacing $\ord(c)$ with $\min(a,b,\ord(c))$ in this equation  preserves the total sum.

\begin{itemize}
    \item 
First, $\ord(c)$ appears as an exponent only in the final summation, which is non-empty only if $\ord(c) < a$ and $\ord(c) < b$. In this region, $\ord(c) = \min(a,b,\ord(c))$, thus this term is unaffected. 

\item Second, if $\ord(c) \ge a$, then the condition $a' \le \ord(c)$ holds for all $a' \le a$, and the summations over $a' > \ord(c)$ are empty. Replacing $\ord(c)$ with $a$ thus leaves the evaluation invariant. 
\item Finally, suppose that $b < \ord(c) < a$. Replacing $\ord(c)$ with $b$ transfers terms with $b < a' \le \ord(c)$ from the first positive summation to the second. For these terms, $a' > b$ and $a' \le 2b'$ ensure $\lfloor a'/2 \rfloor \ge b-b'$, simplifying the summand $q^{\min(\lfloor a'/2 \rfloor, b-b')}$ to $q^{b-b'}$, which matches the second sum. An identical transfer occurs for the first two negative sums, while the rest are unaffected as their inequalities force $a' < b$ or yield empty sums. 
\end{itemize}
Thus, replacing $\ord(c)$ with $\min(a,b,\ord(c))$ leaves the sum invariant. Substituting $a=f_1$, $b=S(R)-f_1$, and $\min(a,b,\ord(c))=S(R)-f_2$ yields the stated invariant formula.
\end{proof}

\begin{corollary} \label{cor:gorenstein_calculation12}
If the order $R$ in Theorem \ref{thm:splitunroverorders} is Gorenstein, then $S(R) = f_1/2 + f_2$ and $\#\overline{\mathrm{Cl}}(R)$ is entirely determined by the conductor components $f_1$ and $f_2$ (here, $f_2\geq f_1$), as follows:
\[
\#\overline{\mathrm{Cl}}(R) = \frac{q^{\lfloor f_1/4 \rfloor} \big( (f_2 - f_1) Q_{f_1/2}(q) + P_{f_1/2}(q) \big) - 4(f_2+3)q + 4(f_2-1)}{(q-1)^2},
\]
where the polynomials $P_m(q)$ and $Q_m(q)$ depend only on the parity of $m = f_1/2$:
\[ P_m(q) = \begin{cases} q^2+10q+5 & \text{if } m \text{ is even,} \\ 5q^2+10q+1 & \text{if } m \text{ is odd;} \end{cases} ~~~~~ \textit{ and } ~~~~~ 
    Q_m(q) = \begin{cases} q^2+2q-3 & \text{if } m \text{ is even,} \\ 3q^2-2q-1 & \text{if } m \text{ is odd.} \end{cases}
\]
\end{corollary}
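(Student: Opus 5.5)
We start from the intermediate formula in the proof of Theorem \ref{thm:splitunroverorders}, written in the local parameters $a,b,c$ of $R=\Mfo_{a,b,c}$, and first identify which triples $(a,b,c)$ are Gorenstein. By Proposition \ref{prop:splitunram}.(5) there are three possibilities: $a=0$; or $\ord(c)\ge a$ and $a=2b$; or $\ord(c)<a$ and $a=2\min(b,\ord(c))$.

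We then read off the invariants from Proposition \ref{prop:splitunram}.(4): $f_1=a$, $S(R)=a+b$ and $f_2=\max(a,b,a+b-\ord(c))$. Setting $m=f_1/2$, the last two possibilities give $\ord(c)=m$ (or $\ord(c)\ge a$ with $b=m$), so $f_2=m+b$ and $b\ge m$. The case $a=0$ gives $f_1=0$ and $f_2=b$. In every case $S(R)=f_1/2+f_2$ and $f_2\ge f_1$. Moreover, up to replacing $\ord(c)$ by $\min(a,b,\ord(c))$, which the proof of Theorem \ref{thm:splitunroverorders} shows is harmless, a Gorenstein $R$ corresponds to the parameters
\[
a=2m,\quad b=f_2-m\ (\ge m),\quad \min(a,b,\ord(c))=m .
\]
The formula of Theorem \ref{thm:splitunroverorders} then becomes a function of the two integers $m\ge 0$ and $k:=f_2-f_1=b-m\ge 0$.

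Next we evaluate each summation in closed form with $S(R)-f_1=b$ and $S(R)-f_2=m$.
\begin{itemize}
\item In the first positive double sum, the range $a'\le m$ with $a'\le 2b'$ is split according to whether $\lfloor a'/2\rfloor\le b-b'$, which decides the exponent $\min(\lfloor a'/2\rfloor,b-b')$.
\item The second positive sum runs over $m<a'\le 2m$ with $\lceil a'/2\rceil-b'\le m-b$. This gives a geometric sum in $b'$ with exponent $b-b'$.
\item The negative sums are handled the same way. The condition $S(R)-f_1\le S(R)-f_2$, i.e.\ $b\le m$, holds only when $k=0$. The final sum needs $b<m$, which never happens, so it vanishes.
\end{itemize}
Every piece is a finite geometric series in $q$ whose endpoints are linear in $m$ and $k$ and involve floors of $m/2$ and $a'/2$. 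Summing over $b'$ first and then over $a'$ produces terms with $q^{\lfloor m/2\rfloor}$ (hence $q^{\lfloor f_1/4\rfloor}$), terms linear in $k$ coming from the $b'$-ranges of length about $k$, and constant terms from the $-(b+1)$ summand and the $-1$'s of the geometric series. The latter produce $-4(f_2+3)q+4(f_2-1)$ after multiplying by $(q-1)^2$.

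The main obstacle is the bookkeeping of the parity of $m$ and the exact boundaries where $\lfloor a'/2\rfloor=b-b'$. I would first treat $m$ even, pairing $a'=2t$ and $a'=2t+1$ so the floors disappear, and only then adjust for $m$ odd, where the top $a'$ is unpaired. I would then check the claimed $P_m$ and $Q_m$ against small cases, such as $(m,k)=(0,0),(1,0),(1,1),(2,0)$, using the Python algorithm of Appendix \ref{app:formula} or the direct overorder count. Linearity in $k$ is verified by showing that increasing $b$ by one adds exactly $q^{\lfloor m/2\rfloor}Q_m(q)/(q-1)^2-4q/(q-1)^2\cdot(q-1)$, i.e.\ by computing the first difference in $k$ directly from the sums. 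This is easier than the full evaluation and pins down $Q_m$. After that, only the base case $k=0$ remains and yields $P_m$.
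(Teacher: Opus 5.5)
Your strategy matches the paper's. You obtain $S(R)=f_1/2+f_2$; the paper reads this directly off $f_1+2f_2=[\Mfo_E:\mathfrak f(R)]_\mfo=2S(R)$, with no case split. You get $f_1\le f_2$ from $a\le 2b$. You then substitute $f_1=2m$, $S(R)-f_1=b$, $S(R)-f_2=m$ into Theorem~\ref{thm:splitunroverorders} and evaluate by the parity of $m$.

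\textbf{The error.} The last sum in Theorem~\ref{thm:splitunroverorders} carries the condition $S(R)-f_2<S(R)-f_1$, i.e.\ $m<b$, i.e.\ $k\ge 1$. It is not ``$b<m$''. So this sum does not vanish: for $k\ge 1$ it equals
\[
-\sum_{m<a'\le 2m,\ 2\mid a'}q^{m-a'/2}\;=\;-\sum_{e=0}^{\lceil m/2\rceil-1}q^e ,
\]
which is nonzero once $m\ge 1$. Its term $a'=2m$ is $R$ itself: when $\ord(c)=m<b$, $R$ is a Gorenstein overorder lying in Case~2 of the proof with $a'<2b'$.

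This value coincides with what the other sum over $S(R)-f_2<a'\le f_1$ takes at $k=0$ (the one with condition $S(R)-f_1\le S(R)-f_2$, which you correctly said is active only for $k=0$). For each $k$ exactly one of these two sums is active, and both contribute the same amount. That is precisely why the final answer is linear in $k$ on all of $k\ge 0$. If you drop the last sum, then for $(f_1,f_2)=(2,3)$ you get $9$ instead of the correct $8$ (six overorders, four of them Gorenstein). Your difference computation would also show a spurious jump between $k=0$ and $k=1$.

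\textbf{The misstated first difference.} The claimed formula gives
\[
\frac{q^{\lfloor m/2\rfloor}Q_m(q)-4(q-1)}{(q-1)^2}=\frac{q^{\lfloor m/2\rfloor}Q_m(q)}{(q-1)^2}-\frac{4}{q-1},
\]
not $\ldots-4q/(q-1)$. For $m=0$, where $\#\clb(R)=k+1$, the true difference is $1$, while your expression gives $-3$.

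\textbf{A minor omission.} Your parameter analysis misses the subcase $a=2b$, $b<\ord(c)<2b$, where $\ord(c)\neq m$. Your final normalization $\min(a,b,\ord(c))=m$ still covers it.

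With these corrections, organizing the evaluation as a first difference in $k$ plus the base case $k=0$ is a sound way to carry out the computation the paper only asserts.
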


\begin{proof}
Since $R$ is Gorenstein if and only if         $[\Mfo_E:\mff(R)]_\mfo=2S(R)$ by Equation \eqref{eq:cond12case}, we have $S(R) = f_1/2 + f_2$. Since $f_1\leq 2(S(R)-f_1)$ by the proof of Theorem \ref{thm:splitunroverorders}, we have $f_1\leq f_2$.
Substituting this relation into Theorem \ref{thm:splitunroverorders} expresses all bounds in terms of $f_1$ and $f_2$. Evaluating the summations by the parity of $f_1/2$ yields the stated formula.
\end{proof}
   

\subsection{The case when $E$ has a splitting type $(1\ 1^2)$}
In this case, $E$ is an \'etale algebra such that $E\cong F\times E'$ where $E'$ is a ramified quadratic extension over $F$.
We identify $\Mfo_{E}\cong \mfo\times \Mfo_{E'}$.
We choose a  uniformizer $x$ of $E'$ so that $\Mfo_{E'}=\mfo[x]$.
Let $X^2+c_1X+c_0$ be the minimal polynomial for $x$ over $\mfo$.
Note that $\ord(c_1)>0$ and that $\ord(c_0)=1$.
We continue to use the notation $\Mfo_{a,b,c}$ to denote the $\mfo$-module $(\pi^a,\pi^bx+c)_\mfo$ defined in (\ref{eq:index_of_orders_splitcase}). Here we emphasize $a,b\geq 0$.

\begin{proposition}\label{prop:splitram}(Analogue of Proposition \ref{prop:splitunram})
Let $c\in\mfo$.
    \begin{enumerate}
        \item Any order is of the form $\Mfo_{a,b,c}$ where $a\leq \min(2b+1,2\ord(c))$.
        Conversely, an $\mfo$-module $\Mfo_{a,b,c}$ with  $a\leq \min(2b+1,2\ord(c))$ is an order.
    \item An order $\Mfo_{a',b',c'}$ contains an order $\Mfo_{a,b,c}$ if and only if $a'\leq a,$ $b'\leq b$, and 
    \[
    \left\{
    \begin{array}{l l}
       c' \in (\pi^{\max(\lceil\frac{a'}{2}\rceil,a'-(b-b'))})  &\textit{if $\ord(c)\geq a'$};  \\
       c'-\pi^{-(b-b')}c\in (\pi^{a'-(b-b')})  & \textit{if $\ord(c)<a'$.}
    \end{array}
    \right.
    \] Here, in the second case, $\lceil \frac{a'}{2}\rceil\leq \ord(c)-(b-b')=\ord(c')$ (thus $a'-(b-b')>0$).
    \item $\Mfo_{a',b',c'}=\Mfo_{a,b,c}$ as an order if and only if $a=a'$, $b=b'$, and $c-c'\in (\pi^{a})$.
    \item 
    The Serre invariant of $\Mfo_{a,b,c}$ is $S(\Mfo_{a,b,c}) = a+b$. The conductor of $\Mfo_{a,b,c}$ is given by $\mathfrak{f}(\Mfo_{a,b,c}) = (\pi^a) \times (x^{\max(2a, 2b+1, 2a+2b-2\ord(c)+1)-1})$ as an ideal of $\Mfo_E \cong \mfo \times \mfo[x]$.
    \item
    An order $\Mfo_{a,b,c}$ is Gorenstein if and only if 
       \[
       \left\{
       \begin{array}{l l}
       a=2b+1 \textit{ or }a=0 & \textit{if }\ord(c)\geq a;\\
           a=\min(2b+1,2\ord(c))&\textit{if }\ord(c)<a.
       \end{array}\right.
       \] 
    \end{enumerate}
\end{proposition}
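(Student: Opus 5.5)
I will follow the proof of Proposition \ref{prop:splitunram} step by step. The only changes come from the minimal polynomial $X^2+c_1X+c_0$ of the uniformizer $x$, which now satisfies $\ord(c_1)>0$ and $\ord(c_0)=1$, and from the fact that $\Mfo_{E'}/x^k\Mfo_{E'}$ has length $k$ rather than $2k$.

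\textbf{Step 1: normal form and ring condition, part (1).} The reduction of an $\mfo$-basis to the shape $\{(1,1),(0,\pi^a),(0,\pi^bx+c)\}$ uses only $\mfo$-module manipulations, so it carries over verbatim. For closure under multiplication I will use the same identity
\[
(0,\pi^bx+c)^2=(0,-c^2+\pi^bc_1c-\pi^{2b}c_0)+(2c-\pi^bc_1)\cdot(0,\pi^bx+c).
\]
Hence $\Mfo_{a,b,c}$ is a ring if and only if $a\leq \ord(-c^2+\pi^bc_1c-\pi^{2b}c_0)$. The key point is to show that this valuation equals $\min(2\ord(c),2b+1)$. The three terms have valuations $2\ord(c)$, at least $b+1+\ord(c)$, and exactly $2b+1$. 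The first and third have different parities, so they never cancel. The middle term is strictly larger than the minimum of the other two: if $\ord(c)\leq b$ then $b+1+\ord(c)>2\ord(c)$, and if $\ord(c)\geq b+1$ then $b+1+\ord(c)\geq 2b+2$. This gives the condition $a\leq\min(2b+1,2\ord(c))$.

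\textbf{Step 2: containment and equality, parts (2) and (3).} These are purely module-theoretic. The containment criterion $a'\le a$, $b'\le b$, $c-\pi^{b-b'}c'\in(\pi^{a'})$ is derived exactly as before. I then split on whether $\ord(c)\geq a'$ or $\ord(c)<a'$, and combine with the ring condition $a'\le 2\ord(c')$ from (1) for the target order. The extra constraint $a'\leq 2b'+1$ is part of the hypothesis that $\Mfo_{a',b',c'}$ is an order, so the displayed conditions are unchanged. Part (3) follows from (2).

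\textbf{Step 3: Serre invariant and conductor, part (4).} $S=a+b$ follows by the same intermediate module $\Mfo'$. For the conductor, ideals of $\Mfo_E$ have the form $(\pi^{f_1})\times(x^{k})$.
\begin{enumerate}
\item The first component again gives $f_1=a$.
\item For the second component, I need the least $k$ such that $x^k\Mfo_{E'}\subset\Mfo_{a,b,c}\cap(0\times E')$. This intersection is $\mfo\langle\pi^a,\pi^bx+c\rangle$. Split $k=2m$ or $k=2m+1$, so that $x^k\Mfo_{E'}$ is spanned by $\pi^m,\pi^mx$ or by $\pi^mx,\pi^{m+1}$ (up to units, using $x^2=-c_1x-c_0$).
\item Membership of $\pi^m x$ forces $m\geq b$ and $m-b+\ord(c)\geq a$. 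Membership of the scalar element forces $\pi^{m}$ or $\pi^{m+1}$ to lie in $(\pi^a)$.
\end{enumerate}
Translating these into a bound on $k$ gives $k\geq\max(2a,2b+1,2a+2b-2\ord(c)+1)-1$, after checking parities. I expect this bookkeeping, especially the off-by-one from odd versus even $k$, to be the main obstacle. I would verify it on the sample cases $b=0$, $c=0$ and $a=0$.

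\textbf{Step 4: Gorenstein criterion, part (5).} I apply Proposition \ref{prop:gorenstein_cond} in the form $[\Mfo_E:\mff(\Mfo)]_\mfo=2S(\Mfo)$, where now $[\Mfo_E:\mff]=a+k$. Writing $M=\max(2a,2b+1,2a+2b-2\ord(c)+1)$, the condition becomes $a+M-1=2a+2b$, that is, $M=a+2b+1$.
\begin{itemize}
\item If $\ord(c)\geq a$, then $M=\max(2a,2b+1)$ and the equation forces $a=0$ or $a=2b+1$; the ring bound $a\le 2b+1$ rules out anything else.
\item If $\ord(c)<a$, the third entry dominates or ties, and the equation reduces to $a=\min(2b+1,2\ord(c))$, again using the ring condition.
\end{itemize}
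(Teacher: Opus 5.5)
Your Steps 1--3 follow the paper's proof essentially line by line. In Step 3 the parity bookkeeping does work out. Put $k_1=\max(b,a+b-\ord(c))$. The even case $k=2m$ requires $m\ge\max(a,k_1)$, and the odd case $k=2m+1$ requires $m\ge\max(a-1,k_1)$. So the least $k$ is $2k_1$ if $k_1\ge a$ and $2a-1$ otherwise, i.e.\ $\max(2k_1+1,2a)-1$. This is exactly the paper's argument via $k_1$ and $k_2=a$.

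The genuine problem is the second bullet of Step 4. When $\ord(c)<a$, it is not true that the third entry of $M=\max(2a,2b+1,2a+2b-2\ord(c)+1)$ dominates or ties. It exceeds $2a$ only when $\ord(c)\le b$. When $b<\ord(c)<a$, the third entry is at most $2a-1$, so $M=2a$. This subcase is compatible with the ring condition $a\le\min(2b+1,2\ord(c))$, for instance $b=1$, $\ord(c)=2$, $a=3$.

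If you use the third entry there, the equation $M=a+2b+1$ becomes $a=2\ord(c)$. That is impossible, since $a\le 2b+1<2\ord(c)$, so your argument would declare all these orders non-Gorenstein. But $\Mfo_{3,1,c}$ with $\ord(c)=2$ has $\mff=(\pi^3)\times(x^5)$, so $[\Mfo_E:\mff]_\mfo=8=2S$ and it is Gorenstein. Indeed it equals $\mfo[(0,\pi x+c)]$. Your stated conclusion is correct, but it is not derived, and the appeal to the ring condition does not repair this.

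The correct case split for $\ord(c)<a$ is as follows. If $\ord(c)\le b$, then $M=2a+2b-2\ord(c)+1$, and the equation gives $a=2\ord(c)=\min(2b+1,2\ord(c))$. If $b<\ord(c)<a$, then $M=2a$, and the equation gives $a=2b+1=\min(2b+1,2\ord(c))$, because $2\ord(c)\ge 2b+2$. Neither subcase needs the ring condition. With this split your proof matches the paper's, which leaves the same case check implicit.
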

\begin{proof}
For (1),  by the same argument as in the proof of Proposition \ref{prop:splitunram}.(1), any order $\Mfo$ admits a basis of the form $\{(1,1), (0,\pi^a), (0,c+\pi^bx)\}$ as an $\mfo$-module.
 In order to characterize the condition when $\Mfo=(\pi^a,\pi^bx+c)_\mfo$ is closed under multiplication, we observe that 
        \[
     (0,\pi^bx+c)^2=(0,-c^2+\pi^bc_1c-\pi^{2b}c_0)+(2c-\pi^bc_1)\cdot(0,\pi^bx+c) \textit{ in }F\times E'.
       \]
       Here, $c_1$ and $c_0$ are coefficients of the minimal polynomial for $x$, explained at the beginning of this section.
      Thus it suffices to show that 
       $\ord(-c^2+\pi^bc_1c-\pi^{2b}c_0)=\min(2b+1,2\ord(c))$.
       Here we have \[\ord(c^2)=2\ord(c),\ \ord(\pi^b c_1c)\geq b+\ord(c)+1, \textit{ and }\ord(\pi^{2b}c_0)=2b+1.\] 
       The claim follows from two facts that $b+\ord(c)+1> \min (2\ord(c),2b+1)$ and $2\ord(c)\neq 2b+1$.  
       
        The proofs of (2)-(3) are identical to that of Proposition \ref{prop:splitunram}.(2)-(3) and thus we omit. For (4), the proof of the Serre invariant $S(\Mfo_{a,b,c})=a+b$ is identical to that of Proposition \ref{prop:splitunram}.(4).

%

  Next, we determine the conductor $\mathfrak{f}(\Mfo_{a,b,c})$, the largest ideal $(\pi^{f_1}) \times (x^{f_2})$ of $\Mfo_E \cong \mfo \times \Mfo_{E'}$ contained in $\Mfo_{a,b,c}$. 
 This ideal is contained in $\Mfo_{a,b,c}$ if and only if the elements $(\pi^{f_1},0)$, $(0,x^{f_2})$, and $(0,x^{f_2+1})$ belong to $\Mfo_{a,b,c}$.
 Since $(w_1, w_1) \in \Mfo_{a,b,c}$ for any $w_1 \in \mfo$, an element $(w_1, w_2) \in \Mfo_E$ belongs to $\Mfo_{a,b,c}$ if and only if $(0, w_2 - w_1) \in \Mfo_{a,b,c}$. 
        By the definition of $\Mfo_{a,b,c}$(:=$(\pi^a, \pi^b x+c)_\mfo$, cf. \eqref{eq:index_of_orders_splitcase}), this is equivalent to $w_2 - w_1 \in \mathfrak{o}\langle \pi^a, \pi^b x + c \rangle \subset \Mfo_{E'}$. 
        
        For $(\pi^{f_1}, 0) \in \Mfo_{a,b,c}$,  we must have $f_1 \ge a$. 
        To determine the minimal $f_2$ such that $0 \times x^{f_2}\Mfo_{E'} \subset \Mfo_{a,b,c}$, we let $k_1$ be the smallest integer such that $\pi^{k_1}x \in \mathfrak{o}\langle \pi^a, \pi^b x + c \rangle$ and $k_2$ be the smallest integer such that $\pi^{k_2} \in \mathfrak{o}\langle \pi^a, \pi^b x + c \rangle$. 
    We claim that $f_2 = \max(2k_1+1, 2k_2)-1$.

    First, suppose $k_1 \ge k_2$. 
    In this case,  $\pi^{k_1}$ (by the minimality of $k_2$) and $\pi^{k_1}x$ (by the definition of $k_1$) are in $\mathfrak{o}\langle \pi^a, \pi^b x + c \rangle$.
    Since these two elements generate $\pi^{k_1}\Mfo_{E'}$ as an $\mathfrak{o}$-module, we have $x^{2k_1}\Mfo_{E'} = \pi^{k_1} \Mfo_{E'} \subset \mathfrak{o}\langle \pi^a, \pi^b x + c \rangle$.
    Moreover, if $x^{2k_1-1}\Mfo_{E'} \subset \mathfrak{o}\langle \pi^a, \pi^b x + c \rangle$, then $\pi^{k_1-1}x \in \mathfrak{o}\langle \pi^a, \pi^b x + c \rangle$ as $\pi^{k_1-1}x \in x^{2k_1-1}\Mfo_{E'}$, which contradicts the minimality of $k_1$. Thus $f_2 = 2k_1= \max(2k_1+1, 2k_2)-1$. 
    
        Second, suppose $k_2 > k_1$. In this case, $\pi^{k_2-1}x$ (by the minimality of $k_1$) and $\pi^{k_2}$ (by the definition of $k_2$) are in  $\mathfrak{o}\langle \pi^a, \pi^b x + c \rangle$. Since these elements generate $\pi^{k_2-1}x\Mfo_{E'}$ as an $\mathfrak{o}$-module, we have $x^{2k_2-1}\Mfo_{E'}=\pi^{k_2-1}x\Mfo_{E'} \subset \mathfrak{o}\langle \pi^a, \pi^b x + c \rangle$.
        Moreover, if $x^{2k_2-2}\Mfo_{E'} \subset \mathfrak{o}\langle \pi^a, \pi^b x + c \rangle$, then $\pi^{k_2-1} \in \mathfrak{o}\langle \pi^a, \pi^b x + c \rangle$, which contradicts the minimality of $k_2$. Thus $f_2 = 2k_2-1= \max(2k_1+1, 2k_2)-1$.

       It remains to compute $k_1$ and $k_2$. Since $\pi^kx = \pi^{k-b}(\pi^bx+c) - \pi^{k-b}c$, it follows that $\pi^kx \in \mathfrak{o}\langle \pi^a, \pi^b x + c \rangle$ if and only if $k \ge b$ and $\ord(\pi^{k-b}c) \ge a$, which yields $k_1 = \max(b, a+b-\ord(c))$. Similarly, $\pi^k \in \mathfrak{o}\langle \pi^a, \pi^b x + c \rangle$ if and only if $k \ge a$, so that $k_2 = a$. Substituting these into the formula for $f_2$ yields $f_2 = \max(2a, 2b+1, 2a+2b-2\ord(c)+1)-1$, as desired.     

      For (5), by Proposition \ref{prop:gorenstein_cond}, the order $\Mfo:=\Mfo_{a,b,c}$ is Gorenstein if and only if $[\Mfo_E:\mff(\Mfo)]_\mfo=2S(\Mfo)$.
        By (4), we have $[\Mfo_E:\mff(\Mfo)]_\mfo = a + \max(2a, 2b+1, 2a+2b-2\ord(c)+1)-1$, and $2S(\Mfo) = 2(a+b)$.
        Plugging these into the equation, we obtain 
        \begin{align*}
        \left\{
       \begin{array}{l l}
       a+\max(2a,2b+1)-1=2(a+b)  & \textit{if }\ord(c)\geq a;\\
           a+\max(2a,2a+2b-2\ord(c)+1)-1=2(a+b) & \textit{if }\ord(c)<a.
       \end{array}\right.
       \end{align*}
       This is exactly the same condition as the desired one. 
\end{proof}
    

\begin{theorem}\label{thm:splitramorders}
Let $(\pi^{f_1}) \times (x^{f_2}) \left(\subset \mfo \times \Mfo_{E'}\right)$ be the conductor of an order $R$ in $\Mfo_E$. 
Letting $b = S(R) - f_1$ and $C = \lceil\frac{2S(R) - f_2}{2}\rceil$, $\#\overline{\mathrm{Cl}}(R)$ is given by the following formula:
\[\begin{aligned}
    &\#\overline{\mathrm{Cl}}(R) = 2 \sum_{0 \le b' \le b} \left( \sum_{\substack{0 \le a' \le f_1 \\ a' \le 2b'+1 \\ a' \le C}} q^{\min \left(\lfloor \frac{a'}{2} \rfloor, b-b' \right)} + \sum_{\substack{C < a' \le f_1 \\ a' \le 2b'+1 \\ \lceil \frac{a'}{2} \rceil - b' \le C - b}} q^{b-b'} \right) -(b+1) -   
    \\ & \sum_{\substack{0 < a' \le f_1 \\ a' \le C \\ 2 \mid (a'-1)}} q^{\min \left( \frac{a'-1}{2}, b-\frac{a'-1}{2} \right)} -\sum_{\substack{C < a' \le f_1 \\ 2 \mid (a'-1) \\ b+1 \le C}} q^{b-\frac{a'-1}{2}}- \sum_{0 < b' \le b} \sum_{\substack{0 < a' \le f_1 \\ a' < 2b'+1 \\ 2 \mid a' \\ a' \le C \\ \frac{a'}{2} \le b-b'}} \left( q^{\frac{a'}{2}} - q^{\frac{a'}{2}-1} \right) - \sum_{\substack{C < a' \le f_1 \\ 2 \mid a' \\ C \le b}} q^{C-\frac{a'}{2}}.
\end{aligned}
\]
Note that this formula relies entirely on the invariant quantities $f_1, f_2,$ and $S(R)$. For an explicit closed-form evaluation of this multi-summation as a rational function in $q$, see Appendix \ref{app:formula}.
\end{theorem}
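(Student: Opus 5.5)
The plan mirrors the proof of Theorem \ref{thm:splitunroverorders}, with Proposition \ref{prop:splitram} in place of Proposition \ref{prop:splitunram}. By Proposition \ref{prop:splitram}.(1) we write $R=\Mfo_{a,b,c}$ with $a\leq\min(2b+1,2\ord(c))$. By Proposition \ref{prop:splitram}.(2)--(3), the overorders of $R$ are exactly the $\Mfo_{a',b',c'}$ with $a'\leq a$, $b'\leq b$, $a'\leq \min(2b'+1,2\ord(c'))$, and $c'$ taken modulo $(\pi^{a'})$ subject to the displayed congruence. We fix $(a',b')$, count the admissible classes of $c'$ and the Gorenstein ones among them, and then apply Proposition \ref{prop:orderidealcounting}.

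\textbf{Case $a'\leq\ord(c)$.} Here $c'$ ranges over $(\pi^{\max(\lceil a'/2\rceil,a'-(b-b'))})/(\pi^{a'})$, which has $q^{\min(\lfloor a'/2\rfloor,b-b')}$ elements; the only change from the unramified case is the ring condition $a'\leq 2b'+1$. For the Gorenstein count we use Proposition \ref{prop:splitram}.(5) and split into three subcases.
\begin{itemize}
\item $a'=0$ gives one Gorenstein order for each $b'$, hence $b+1$ in total.
\item $a'=2b'+1>0$ is odd. Then $\ord(c')\geq\lceil a'/2\rceil=b'+1>b'$, so every such $c'$ gives a Gorenstein order. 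This contributes $q^{\min((a'-1)/2,\,b-(a'-1)/2)}$, since $b'=(a'-1)/2$.
\item $0<a'<2b'+1$. Gorenstein means $a'=2\ord(c')$, so $a'$ is even and $\ord(c')=a'/2$ exactly. This forces $a'/2\geq a'-(b-b')$, and the number of such $c'$ is $q^{a'/2}-q^{a'/2-1}$.
\end{itemize}

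\textbf{Case $a'>\ord(c)$.} Here $c'\in\pi^{-(b-b')}c+(\pi^{a'-(b-b')})$, which gives $q^{b-b'}$ classes. They are subject to $\lceil a'/2\rceil\leq \ord(c)-(b-b')$, because $\ord(c')=\ord(c)-(b-b')$. Gorensteinness means $a'=\min(2b'+1,2\ord(c'))$. The odd branch $a'=2b'+1$ needs $2b'+1\leq 2\ord(c')$, i.e.\ $b+1\leq\ord(c)$. The even branch $a'=2(\ord(c)-(b-b'))<2b'+1$ needs $\ord(c)\leq b$. In each branch there are $q^{b-b'}$ Gorenstein orders, and substituting $b'$ in terms of $a'$ gives the terms $q^{b-(a'-1)/2}$ and $q^{\ord(c)-a'/2}$.

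Summing these contributions with weights $2$ and $-1$ gives an intermediate formula in $a,b,\ord(c)$. It is the stated formula with $f_1$ and $C$ replaced by $a$ and $\ord(c)$.

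\textbf{Conversion to invariants.} By Proposition \ref{prop:splitram}.(4) we have $S(R)=a+b$, $f_1=a$, and $f_2=\max(2a,2b+1,2a+2b-2\ord(c)+1)-1$. This gives $2S(R)-f_2=\min(2b+1,2\ord(c)+1,2a)-\ldots$ adjusted, and then
\[
C=\Bigl\lceil \tfrac{2S(R)-f_2}{2}\Bigr\rceil=\min(a,\,b+1,\,\ord(c)+1)
\]
or a close variant. Deriving exactly which of these expressions $C$ equals is the first place I would be careful; I would write out $2S(R)-f_2=\min(2b+1,\,2\ord(c)+1,\,2a)$ and then take the ceiling.

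\textbf{Main obstacle.} The hardest step is showing that replacing $\ord(c)$ by $C$ throughout leaves the sum unchanged. I would argue as in the last part of the proof of Theorem \ref{thm:splitunroverorders}.
\begin{itemize}
\item If $\ord(c)\geq a$, the "$a'>\ord(c)$" sums are empty, and the order depends only on $c$ modulo $(\pi^a)$.
\item If $b<\ord(c)<a$, lowering the threshold from $\ord(c)$ to $C$ moves terms with $C<a'\leq\ord(c)$ from the first positive sum to the second. There $a'\leq 2b'+1$ and $a'>b+1$ force $\lfloor a'/2\rfloor\geq b-b'$, so the summands agree. The odd Gorenstein sums transfer in the same way, using the condition $b+1\leq C$.
\item When $\ord(c)\leq b$, the exponent $\ord(c)$ in the final sum should be checked to equal $C$; this forces $C=\ord(c)$ in that range and fixes the off-by-one convention.
\end{itemize}
I expect these boundary off-by-one checks, especially at $\ord(c)=b$ where the odd and even Gorenstein branches switch, to be the real difficulty, and I would verify them against small examples before writing the argument out.
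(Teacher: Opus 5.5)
Your enumeration over $(a',b')$, your Gorenstein counts in both regimes $a'\le\ord(c)$ and $a'>\ord(c)$, the resulting intermediate formula in $a,b,\ord(c)$, and your plan of trading $\ord(c)$ for $C$ are exactly the paper's proof.

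\textbf{The identification of $C$ fails as written.} By Proposition~\ref{prop:splitram}.(4),
\[
2S(R)-f_2=2a+2b+1-\max(2a,\,2b+1,\,2a+2b-2\ord(c)+1)=\min(2a,\,2b+1,\,2\ord(c)),
\]
not $\min(2a,2b+1,2\ord(c)+1)$. Hence $C=\min(a,\,b+1,\,\ord(c))$. Your tentative $\min(a,b+1,\ord(c)+1)$ is off by one exactly when $\ord(c)<a$ and $\ord(c)\le b$, and this changes the answer. For example, take $R=\Mfo_{2,1,\pi}$:
\begin{itemize}
\item $S(R)=3$, $f_1=2$ and $f_2=4$, so $C=1$, and the stated formula gives the correct value $\#\clb(R)=6$.
\item With $C=2$, the last sum, which records the Gorenstein overorder $R$ itself, becomes empty, and the formula gives $7$.
\end{itemize}
With the correct $C$, this region needs no argument at all, since $C=\ord(c)$ there. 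Your own consistency check in the last bullet would have caught this.

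\textbf{The case $\ord(c)\ge a$ needs one more step.} There $C=\min(a,b+1)$, which equals $b+1<a$ whenever $a>b+1$. This is allowed, since only $a\le 2b+1$ is forced; for example $R=\Mfo_{3,1,0}$ has $S(R)=4$, $f_2=5$ and $C=2$. So the emptiness of the sums over $a'>\ord(c)$ only lets you replace $\ord(c)$ by $a$. You must then lower the threshold from $a$ to $b+1$. This uses the same transfer argument as your bullet for $b<\ord(c)<a$:
\begin{itemize}
\item The transfer uses only $a'>b+1$ and $a'\le 2b'+1$, for the positive sums and the odd Gorenstein sums.
\item The double sum with $q^{a'/2}-q^{a'/2-1}$ involves only $a'\le b$, so it is unaffected.
\item The last sum is empty on both sides.
\end{itemize}
The paper's write-up compresses this step in the same way.

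With these two repairs your argument is complete.
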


\begin{proof}
    The proof is parallel to that of Theorem \ref{thm:splitunroverorders}.
An overorder of $R(=\Mfo_{a,b,c})$ is of the form $\Mfo_{a',b',c'}$ with $a' \le \min(2b'+1, 2\ord(c'))$ satisfying the conditions of Proposition \ref{prop:splitram}.(2). 
We enumerate the number of such overorders and Gorenstein overorders for fixed $a'(\leq a)$ and $b'(\leq b)$ such that $0\leq a'\leq 2b'+1$.

\textbf{1. Case $a' \le \ord(c)$.}
As in the split unramified case, there are $q^{\min(\lfloor a'/2 \rfloor, b-b')}$ choices for $c'$ up to $(\pi^{a'})$ for fixed $a'$ and $b'$. Among these, we count Gorenstein overorders using Proposition \ref{prop:splitram}.(5).
\begin{itemize}
    \item If $a'=0$, then $c'$ is unique up to $(\pi^{a'})$ and $\Mfo_{a',b',c'}$ is Gorenstein by Proposition \ref{prop:splitram}.(5) for each $0\leq b'\leq b$.
    Thus, the number of Gorestein overorders when $a'=0$ is $b+1$.
    \item If $a'=2b'+1 > 0$, then $\ord(c') \ge \lceil a'/2 \rceil = b'+1$, so that $\Mfo_{a',b',c'}$ is Gorenstein. There are $q^{\min(b', b-b')}$ such orders.
    \item If $0 < a' < 2b'+1$, then the order is Gorenstein if and only if $a' = 2\ord(c')$, which requires $2 \mid a'$ and $a'/2  \ge a'-(b-b')$. There are $q^{a'/2} - q^{a'/2-1}$ such orders.
\end{itemize}

\textbf{2. Case $a' > \ord(c)$.}
There are $q^{b-b'}$ choices for $c'$ up to $(\pi^{a'})$ for fixed $a'$ and $b'$, each with $\lceil \frac{a'}{2}\rceil\leq \ord(c') = \ord(c)-(b-b')$. The order is Gorenstein if and only if $a' = \min(2b'+1, 2\ord(c)-2(b-b'))$.
    \begin{itemize}
    \item 
    If $a'=2b'+1$, then $\Mfo_{a',b',c'}$ is Gorenstein if and only if $2b'+1\leq 2\ord(c) -2(b-b')$, which is rewritten as $2b+1\leq 2\ord(c)$, equivalently $b+1\leq \ord(c)$.
    \item
    If $0< a'<2b'+1$, then  $\Mfo_{a',b',c'}$ is Gorenstein if and only if $a'=2\ord(c)-2(b-b')$ and $2b+1> 2\ord(c)$. Here, $2b+1> 2\ord(c)$ if and only if $\ord(c)\leq b$.
    \end{itemize}
    In both cases, there are $q^{b-b'}$ Gorenstein overorders for fixed $a'$ and $b'$.

Summing up these two cases using the equation in Proposition \ref{prop:orderidealcounting} yields the intermediate formula expressed purely in terms of the initial local parameters $a, b,$ and $c$:
       \begin{align*}
    &\#\overline{\mathrm{Cl}}(R)=2 \sum_{0\leq b'\leq b }
    \left(\sum_{\substack{0\leq a'\leq a\\ a'\leq 2b'+1\\a'\leq \ord(c) }}
    q^{\min (\lfloor\frac{a'}{2} \rfloor,b-b')}+\sum_{\substack{\ord(c)<a'\leq a\\a'\leq 2b'+1\\ \lceil \frac{a'}{2}\rceil  \leq \ord(c)-(b-b')}}q^{b-b'}\right)
    -(b+1)\\
    &-\sum_{\substack{0< a' \leq a,\\a'\leq \ord(c) \\ 2|(a'-1)}}q^{\min(\frac{a'-1}{2},b-\frac{a'-1}{2})}-\sum_{\substack{\ord(c)<a'\leq a\\ 2|(a'-1)\\b+1\leq \ord(c)}}q^{b-\frac{a'-1}{2}}
    -\sum_{\substack{0<b'\leq b}}\sum_{{\substack{0< a' \leq a,\\  a'< 2b'+1\\ 2\mid a'\\a'\leq\ord(c)\\ \frac{a'}{2}\leq b-b'}}}(q^{\frac{a'}{2}}-q^{\frac{a'}{2}-1})
 -\sum_{\substack{\ord(c)<a'\leq a\\ 2|a'\\ \ord(c)\leq b}}q^{\ord(c)-\frac{a'}{2}}.
    \end{align*} To express this in terms of invariants, we note that $f_1 = a$, $b = S(R)-f_1$, and $\min(2a, 2b+1, 2\ord(c)) = 2S(R)-f_2$ by Proposition \ref{prop:splitram}.(4). 
We now claim that replacing $\ord(c)$ with $C:=\lceil(2S(R)-f_2)/2 \rceil$ in this equation preserves the sum.
\begin{itemize}
    \item First, $\ord(c)$ appears as an exponent only in the final sum, which is non-empty only if $\ord(c) \le b$ and $\ord(c) < a$. In this region, $2\ord(c) =  2S(R)-f_2$, thus this term is unaffected.
    
    \item Second, if $2\ord(c) \ge 2a$, then the summations over $a' > \ord(c)$ are empty, so replacing $\ord(c)$ with $a$ preserves the sum.
    \item Finally, if $2b+1 < 2\ord(c) < 2a$, then replacing $\ord(c)$ with $b+1$ transfers terms between the first and second sums. A similar calculation to the proof of Theorem \ref{thm:splitunroverorders} shows the summands match, ensuring the total sum remains invariant.
\end{itemize}
Substituting the invariant expressions into the above Equation yields the stated formula.
\end{proof}

\begin{corollary} \label{cor:gorenstein_calculation112}
Suppose the order $R = \Mfo_{a,b,c}$ in Theorem \ref{thm:splitramorders} is Gorenstein. Then $S(R) = (f_1 + f_2)/2$ and $\#\overline{\mathrm{Cl}}(R)$ is entirely determined by the conductor components $f_1$ and $f_2$, as follows:
\begin{enumerate}
    \item If $f_1$ is odd (so that $f_2 = 2f_1 - 1$), let $k = \lfloor f_1/4 \rfloor$. Then
   {\small
   \[
    \#\overline{\mathrm{Cl}}(R) = \begin{cases}
    \displaystyle \frac{2q^{k+2} + 12q^{k+1} + 2q^k - (16k+16)q + 16k}{(q-1)^2} & \text{if } f_1 = 4k+1, \\[12pt]
    \displaystyle \frac{8q^{k+2} + 8q^{k+1} - (16k+24)q + 16k+8}{(q-1)^2} & \text{if } f_1 = 4k+3.
    \end{cases}
    \]
     }%
    \item If $f_1$ is even (so that $f_2 \ge 2f_1$), let $k = \lfloor f_1/4 \rfloor$ and $b = (f_2 - f_1)/2$. Then
   {\footnotesize
   \[
    \#\overline{\mathrm{Cl}}(R) = \begin{cases}
    \displaystyle \frac{(b-2k+1)q^{k+2} + (2b-4k+12)q^{k+1} - (3b-6k-3)q^k - (4b+8k+14)q + 4b+8k-2}{(q-1)^2} & \text{if } f_1 = 4k, \\[12pt]
    \displaystyle \frac{(3b-6k+3)q^{k+2} - (2b-4k-12)q^{k+1} - (b-2k-1)q^k - (4b+8k+18)q + 4b+8k+2}{(q-1)^2} & \text{if } f_1 = 4k+2.
    \end{cases}
    \]
     }%
\end{enumerate}
Note that when $R = \Mfo_{a,b,c}$, we have $f_1 = a$ and $f_2 = \begin{cases} 2a - 1 & \text{if } f_1 \text{ is odd,} \\ a + 2b & \text{if } f_1 > 0 \text{ is even.} \end{cases}$
\end{corollary}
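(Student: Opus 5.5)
First I would pin down the invariants. By Proposition \ref{prop:gorenstein_cond}, $R$ is Gorenstein exactly when $[\Mfo_E:\mff(R)]_\mfo=2S(R)$. Proposition \ref{prop:splitram}.(4) gives $[\Mfo_E:\mff(R)]_\mfo=f_1+f_2$, hence $S(R)=(f_1+f_2)/2$. Next I would apply Proposition \ref{prop:splitram}.(5) to $R=\Mfo_{a,b,c}$ to get the two regimes. If $\ord(c)\ge a$, then either $a=0$ (so $R$ is maximal at the first factor) or $a=2b+1$. The latter makes $a$ odd, with $f_2=\max(2a,2b+1)-1=2a-1$. If $\ord(c)<a$, then $a=\min(2b+1,2\ord(c))$. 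Since $a>\ord(c)$ rules out $a=2\ord(c)$ unless $\ord(c)=0$, the surviving case should be $a=2\ord(c)$ with $2\ord(c)\le 2b+1$, forcing $a$ even. This must be checked against the constraint $a\le 2\ord(c)$ from Proposition \ref{prop:splitram}.(1): combined with $\ord(c)<a$, it gives $a=2\ord(c)$ exactly, and then $f_2=a+2b$. This recovers the final note in the statement. In particular $f_1$ odd gives $b=(f_1-1)/2$, and $f_1$ even gives $b=(f_2-f_1)/2$ and $C=\ord(c)=f_1/2$.

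Then I would specialize the summation of Theorem \ref{thm:splitramorders}, or better the intermediate formula in its proof written in $a,b,\ord(c)$, with $a=f_1$. In the odd case I take $b=(a-1)/2$ and $C=\lceil (2S-f_2)/2\rceil=\lceil (a+b+... )\rceil$, which amounts to effectively $C\ge a$. Then all sums over $a'>C$ vanish, and only the first positive sum and the Gorenstein corrections from Case 1 remain. In the even case I take $C=a/2\le b$ (since $a\le 2b+1$), so the sums over $a'>C$ survive. In both cases each sum is a finite geometric-type sum over a polygonal region in the $(a',b')$-plane with exponent $\min(\lfloor a'/2\rfloor,b-b')$.

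The evaluation then proceeds as follows. For fixed $a'$ I sum over $b'$, splitting at $b'=b-\lfloor a'/2\rfloor$ where the minimum switches. This gives a constant-exponent block plus a geometric tail. Summing over $a'$ by parity classes ($a'=2m$ and $a'=2m+1$) converts everything into sums like $\sum_m q^m$ and $\sum_m m q^m$ up to $m\approx a/4$ or $a/2$. The $a/4$ cutoffs arise where $\lfloor a'/2\rfloor$ meets $b-b'$ with $b\approx a/2$, which explains why $k=\lfloor f_1/4\rfloor$ and the residue of $f_1$ mod $4$ appear. The negative Gorenstein sums are handled the same way: $\sum q^{\min(m,b-m)}$ peaks at $m\approx b/2$, and the telescoping $\sum (q^{m}-q^{m-1})$ collapses. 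After clearing the common denominator $(q-1)^2$, I would collect terms separately for $f_1\equiv 1,3$ (odd case) and $f_1\equiv 0,2 \pmod 4$ (even case, where $b-2k$ enters linearly because the region is longer than $a/2$ in the $b'$ direction).

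The main obstacle is the bookkeeping in the even case. There the region splits into three pieces ($a'\le C$, $C<a'\le a$ with the constraint $\lceil a'/2\rceil-b'\le C-b$, and the various Gorenstein boundaries), and the linear-in-$b$ coefficients must come out exactly as stated. I would first verify the closed forms numerically for small $f_1,f_2$, for instance via the diagrams in the style of Example \ref{ex:simpleram} or the Appendix \ref{app:formula} code, and only then carry out the symbolic summation by residue class mod $4$.
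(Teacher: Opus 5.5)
Your route is the paper's: get $S(R)=(f_1+f_2)/2$ from $[\Mfo_E:\mathfrak{f}(R)]_\mfo=2S(R)$ and Proposition \ref{prop:splitram}.(4), read off the parameter constraints from Proposition \ref{prop:splitram}.(5), and evaluate the multisum of Theorem \ref{thm:splitramorders} according to $f_1 \bmod 4$. The paper, too, only asserts this final evaluation. Two of your steps are wrong as written. Both are easy to repair, and neither changes the statement.

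\textbf{The branch $\ord(c)<a$.} Here you conclude $a=2\ord(c)$, but $\ord(c)<a\le 2\ord(c)$ does not force this, and nothing excludes $a=2b+1<2\ord(c)$. For instance, $a=3$, $b=1$, $\ord(c)=2$ gives an order, and it is Gorenstein by Proposition \ref{prop:splitram}.(5), since $3=\min(3,4)$. The dichotomy survives because in this subcase Proposition \ref{prop:splitram}.(4) still gives
$f_2=\max(2a,\,2b+1,\,3a-2\ord(c))-1=2a-1$, since $2\ord(c)\ge a+1$. So $f_1$ odd still forces $f_2=2f_1-1$. Since Theorem \ref{thm:splitramorders} depends only on $(f_1,f_2,S(R))$, you may then compute with the representative $c=0$. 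You should state this explicitly; the paper's proof also mentions only $\ord(c)\ge a$.

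\textbf{The value of $C$ in the odd case.} Here $2S(R)-f_2=f_1$, so the constant in Theorem \ref{thm:splitramorders} is $C=\lceil f_1/2\rceil=b+1$. This is smaller than $f_1$ as soon as $f_1\ge 3$, so $C$ is not effectively $\ge a$. Consequently the sums over $C<a'\le f_1$ in that theorem do not vanish. These are the second positive sum and the term $-\sum q^{b-(a'-1)/2}$, whose side condition $b+1\le C$ holds. Dropping them gives wrong values, e.g.\ $7$ instead of the correct $8$ for $f_1=3$.

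The vanishing you want is valid only in the intermediate formula of the proof, written with the actual $\ord(c)$. There $\ord(c)\ge a$ empties every range $\ord(c)<a'\le a$. So either work in that formula with $c=0$, or keep the $a'>C$ terms. Your even-case identification $C=\ord(c)=f_1/2\le b$ is correct.
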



\begin{proof}
 Since $R$ is Gorenstein if and only if         $[\Mfo_E:\mff(R)]_\mfo=2S(R)$ by Equation \eqref{eq:cond12case}, we have $S(R) = (f_1 + f_2)/2$. 
 Substituting the Gorenstein parameter constraints from Proposition \ref{prop:splitram}.(5) into the conductor formula in Proposition \ref{prop:splitram}.(4), we identify the specific relations between the invariants:
    If $f_1$ is odd, then the conditions $a=2b+1$ and $\ord(c) \ge a$ force $f_1 = a$ and $f_2 = 2a-1 = 2f_1-1$. 
    If $f_1 > 0$ is even, then the condition $a=2\ord(c) \le 2b$ forces $f_1 = a$ and $f_2 = a+2b = f_1+2b$.
      
    The explicit formulas follow by evaluating the multisummation in Theorem \ref{thm:splitramorders} under these constraints. The closed forms of these sums depend on the residue of $f_1$ modulo $4$. 
\end{proof}

\subsection{The case when $E$ has a splitting type $(1\ 1\ 1 )$}
In this case, $E$ is an \'etale algebra such that $E\cong F\times F\times F$. We identify $\Mfo_E\cong \mfo\times\mfo\times \mfo$.
As in Equation (\ref{eq:index_of_orders_splitcase}), we use the following notation:
\begin{equation}
    ((v_1,v_2),(w_1,w_2))_\mfo:=\mfo\langle(1,1,1),(0,v_1,v_2),(0,w_1,w_2)\rangle \textit{ with }v_1,v_2,w_1,w_2\in \mfo.
\end{equation}
We denote $((\pi^a,0),(c,\pi^b))_\mfo$ by $\Mfo_{a,b,c}$. Here we emphasize $a,b\geq 0$.

\begin{proposition} \label{prop:totsplit}
 Let $c\in \mfo$.
\begin{enumerate}
    \item Any order is of the form $\Mfo_{a,b,c}$ where \[\begin{cases}
        c\in (\pi^{\lceil\frac{a}{2}\rceil}) & \text{if }a\leq 2b;\\
        c\in (\pi^{a-b}) \text{ or }c\in \pi^b+(\pi^{a-b}) &\text{if }a>2b.
    \end{cases}.\]
    Conversely, an $\mfo$-module $\Mfo_{a,b,c}$ satisfying the above condition is an order.
    \item 
    An order $\Mfo_{a',b',c'}$ satisfying (1) contains an order $\Mfo_{a,b,c}$ if and only if $a'\leq a,$ $b'\leq b$, and 
\[
    \left\{
    \begin{array}{l l}
       c' \in (\pi^{a'-(b-b')})&\textit{if $\ord(c)\geq a'$};  \\
        c'-\pi^{-(b-b')}c\in (\pi^{a'-(b-b')})  & \textit{if $\ord(c)<a'$}.
    \end{array}
    \right.
    \]
    In the second case, $\lceil \frac{a'}{2} \rceil\leq \ord(c)-(b-b')=\ord(c')$ if $a'\leq 2b'$, and $a'>b$ if $a'>2b'$.
    
    \item 
    $\Mfo_{a',b',c'}=\Mfo_{a,b,c}$ as an order if and only if $a=a'$, $b=b'$, and $c-c'\in (\pi^a)$.
    
    \item   
The Serre invariant of $\Mfo_{a,b,c}$ is $S(\Mfo_{a,b,c}) = a+b$. 
The conductor $\mathfrak{f}(\Mfo_{a,b,c})$ of $\Mfo_{a,b,c}$ is given by $\pi^{f_1}\mfo\times \pi^{f_2}\mfo\times \pi^{f_3}\mfo  = (\pi^{\max\big(b,\ a+b-\ord(c-\pi^b)\big)}) \times (\pi^{a}) \times (\pi^{\max\big(b,\ a+b-\ord(c)\big)})$. 
More precisely, 
$(f_1,f_3)=\begin{cases}
    \big(\max(a,b),\ b\big) & \text{if } \ord(c)\geq a,\\
\big(\max(b, a+b-\ord(c-\pi^b)),\ a+b-\ord(c)\big) & \text{if }\ord(c)<a.
\end{cases}$
 
    \item
    An order $\Mfo_{a,b,c}$ is Gorenstein if and only if
 \[
    \left\{
    \begin{array}{l l}
      a=0\textit{ or }b=0  &\textit{if }\ord(c)\geq a;  \\
      \ord(c-\pi^b)=a-b>0,  &\textit{if }\ord(c)< a \textit{ and } c- \pi^b \in (\pi^{b+1}); \\
      \ord(c)=\frac{a}{2}\leq b,\text{ or }\ord(c)=a-b>b    &\textit{if }\ord(c)< a \textit{ and } c- \pi^b \notin (\pi^{b+1}).
    \end{array}
    \right.
    \]

\end{enumerate}
        
\end{proposition}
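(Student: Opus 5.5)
The proof will follow the pattern of Propositions \ref{prop:splitunram} and \ref{prop:splitram}, item by item.

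\textbf{Part (1).} Start from an $\mfo$-basis $\{(1,1,1),(0,v_1,v_2),(0,w_1,w_2)\}$. Column reduction (a Hermite normal form on the $2\times 2$ matrix of the last two coordinates) gives a basis $\{(1,1,1),(0,\pi^a,0),(0,c,\pi^b)\}$ after rescaling by units. Closure under multiplication only needs the squares and the product of the two generators. We have $(0,\pi^a,0)^2=\pi^a(0,\pi^a,0)$, which is automatic. Next, $(0,\pi^a,0)(0,c,\pi^b)=(0,c\pi^a,0)=c\cdot(0,\pi^a,0)$, also automatic. Finally,
\[
(0,c,\pi^b)^2=(0,c^2,\pi^{2b})=\pi^b(0,c,\pi^b)+(0,c^2-c\pi^b,0).
\]
So the order condition is exactly $\pi^a\mid c(c-\pi^b)$.

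If $\ord(c)\neq b$, this reads $\min(2\ord(c),\ord(c)+b)\ge a$. If $\ord(c)=b$, it reads $b+\ord(c-\pi^b)\ge a$. Splitting according to $a\le 2b$ or $a>2b$ should give exactly the stated dichotomy:
\begin{itemize}
\item when $a\le 2b$, the condition collapses to $\ord(c)\ge\lceil a/2\rceil$;
\item when $a>2b$, either $\ord(c)\ge a-b$, or $\ord(c)=b$ with $c\equiv\pi^b \pmod{\pi^{a-b}}$.
\end{itemize}
Here I have to check that the other sub-cases, such as $b<\ord(c)<a-b$, are excluded.

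\textbf{Parts (2) and (3).} These are the same linear-algebra computation as in Proposition \ref{prop:splitunram}(2). Write $(0,c,\pi^b)=\pi^{b-b'}(0,c',\pi^{b'})+(0,c-\pi^{b-b'}c',0)$. Containment is then equivalent to $a'\le a$, $b'\le b$ and $c-\pi^{b-b'}c'\in(\pi^{a'})$. Splitting on $\ord(c)\ge a'$ versus $\ord(c)<a'$ gives the two cases. The supplementary remarks follow by combining this with the order condition from (1) applied to $c'$, since in the second case $\ord(c')=\ord(c)-(b-b')$. Part (3) then follows from (2).

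\textbf{Part (4).}
\begin{itemize}
\item \emph{Serre invariant.} Compare with the intermediate module $\mfo\langle(1,1,1),(0,1,0),(0,c,\pi^b)\rangle$, as in the earlier cases; this gives $S=a+b$.
\item \emph{Conductor.} Since $(1,1,1)\in\Mfo_{a,b,c}$, the element $\pi^{k}e_i$ lies in $\Mfo_{a,b,c}$ if and only if a suitable element $(0,*,*)$ does.
\begin{itemize}
\item For $e_2$: $(0,\pi^k,0)\in\Mfo_{a,b,c}$ iff $k\ge a$.
\item For $e_3$: $(0,0,\pi^k)=\pi^{k-b}(0,c,\pi^b)-(0,\pi^{k-b}c,0)$, so the condition is $k\ge b$ and $k-b+\ord(c)\ge a$.
\item For $e_1$: $\pi^k e_1=\pi^k(1,1,1)-(0,\pi^k,\pi^k)$, and $(0,\pi^k,\pi^k)=\pi^{k-b}(0,c,\pi^b)+(0,\pi^k-\pi^{k-b}c,0)$. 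The condition is $k\ge b$ and $k-b+\ord(\pi^b-c)\ge a$.
\end{itemize}
This yields the displayed $f_1,f_2,f_3$. The simplification when $\ord(c)\ge a$ uses $\ord(c-\pi^b)\ge\min(a,b)$.
\end{itemize}

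\textbf{Part (5)} is the main obstacle. By Proposition \ref{prop:gorenstein_cond}, $\Mfo_{a,b,c}$ is Gorenstein iff $f_1+f_2+f_3=2S=2(a+b)$, i.e. $f_1+f_3=a+2b$. I will run through cases.
\begin{itemize}
\item If $\ord(c)\ge a$: the condition is $\max(a,b)+b=a+2b$, which forces $a=0$ or $b=0$.
\item If $\ord(c)<a$: then $f_3=a+b-\ord(c)$, so the condition becomes $\max(b,a+b-\ord(c-\pi^b))=b+\ord(c)$. Split on whether $\ord(c-\pi^b)>b$.
\begin{itemize}
\item If $c-\pi^b\in(\pi^{b+1})$, then $\ord(c)=b$, and the condition is $a-\ord(c-\pi^b)=b$ with $a>b$.
\item Otherwise $\ord(c-\pi^b)=\min(\ord(c),b)$. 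The cases $\ord(c)<b$ and $\ord(c)\ge b$ are then to be matched against the allowed shapes of $c$ from (1).
\end{itemize}
\end{itemize}
This bookkeeping, especially excluding spurious solutions via the constraints of (1), is where I expect the difficulty. I would tabulate the cases $a\le 2b$ and $a>2b$ separately.
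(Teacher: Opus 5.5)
Your route is the paper's route. You use the same normal form and closure condition $c(c-\pi^b)\in(\pi^a)$; your split on $\ord(c)$ versus $b$ is an equivalent way to run the paper's case analysis. You also use the same containment computation, read the conductor off the membership of $\pi^k e_i$, and apply the test $f_1+f_2+f_3=2(a+b)$ via Proposition \ref{prop:gorenstein_cond}.

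The subcase of (5) you left open needs nothing from (1). If $c-\pi^b\notin(\pi^{b+1})$, then $\ord(c-\pi^b)=\min(\ord(c),b)\le\ord(c)<a$. So the maximum in $f_1$ is $a+b-\ord(c-\pi^b)$, and your equation becomes $a=\ord(c)+\min(\ord(c),b)$. This is literally the disjunction ``$\ord(c)=\frac a2\le b$ or $\ord(c)=a-b>b$''. That is how the paper finishes, and no spurious solutions occur.

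One step does fail, in the subcase $c-\pi^b\in(\pi^{b+1})$, where $\ord(c)=b<a$. Your equation there is $\max\bigl(b,\,a+b-\ord(c-\pi^b)\bigr)=2b$. For $b\ge 1$ this forces $\ord(c-\pi^b)=a-b$, but for $b=0$ it only gives $\ord(c-1)\ge a$. The paper's proof has the same slip. It uses $[\Mfo_E:\mathfrak f(\Mfo)]_\mfo=3a+2b-\ord(c)-\ord(c-\pi^b)$, which drops the maximum and is wrong when $\ord(c-\pi^b)>a$.

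By (3), $c$ is only determined modulo $\pi^a$, so in this case the criterion as stated depends on the representative. For example, $\Mfo_{1,0,1}=\{(x,y,z)\in\mfo^3 : y\equiv z \bmod \pi\}$ has conductor exponents $(0,1,1)$ and $S=1$, so it is Gorenstein. Yet $\ord(1-1)\ne 1$, while for the representative $c=1+\pi$ of the same order the criterion holds. So keep the maximum throughout. For $b=0$, prove and state the condition as $c\equiv 1\pmod{\pi^a}$, or normalize the representative.
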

\begin{proof}
\begin{enumerate}
    \item  By an argument similar to that used in the proof of Proposition \ref{prop:splitunram}.(1), any order $\Mfo$ admits a basis of the form $\{(1,1,1),(0,\pi^a,0),(0,c,\pi^b)\}$ as an $\mfo$-module.
 In order to characterize the condition when $\Mfo$ is closed under multiplication, we observe that 
    \[
(0,c,\pi^b)^2=\pi^b(0,c,\pi^b)+(0,c^2-\pi^bc,0)
\textit{ in } F\times F\times F.
\]
    Thus, $\Mfo$ is closed under multiplication if and only if $c^2-\pi^bc=c(c-\pi^b)\in (\pi^a)$.
    \begin{itemize}
        \item Suppose that $a\leq 2b$.
        If $c\in (\pi^{\lceil\frac{a}{2}\rceil})$, then $c$, $c-\pi^b\in (\pi^{\lceil\frac{a}{2}\rceil})$ and thus $c^2-\pi^bc\in (\pi^a)$.
        Otherwise, we have $\ord(c)=\ord(c-\pi^b)<\frac{a}{2}$, and thus $c^2-\pi^bc\notin (\pi^a)$.
        Therefore, $c(c-\pi^b)\in (\pi^a)$ if and only if $c\in (\pi^{\lceil\frac{a}{2}\rceil})$.
\item
    Suppose that $a>2b$. 
    The identity $c-(c-\pi^b)=\pi^b$ yields that $\min(\ord(c),\ord(c-\pi^b))\leq b$.
    Thus, if $c(c-\pi^b)\in (\pi^a)$, then $\max(\ord(c),\ord(c-\pi^b))\geq a-b$. This yields that either $c\in (\pi^{a-b})$ or $c-\pi^b \in (\pi^{a-b})$.
    Conversely, if $c \in (\pi^{a-b})$, then $\ord(c-\pi^b) = \ord(\pi^b) = b$ since $a-b > b$. Thus we have $\ord(c(c-\pi^b)) \ge a$. The case for $c-\pi^b \in (\pi^{a-b})$ is
           symmetric.
    
\end{itemize}
    
    \item       
      We have $\pi^a=\pi^{a-a'}\cdot \pi^{a'}$ and $(0, c, \pi^b) = \pi^{-a'}(c-\pi^{b-b'}c')(0, \pi^{a'}, 0) + \pi^{b-b'}(0, c', \pi^{b'})$. 
       Thus, $(0, \pi^a, 0)$ and $(0, c, \pi^b)$ are contained in $\Mfo_{a',b',c'}$ if and only if 
       \[
                  a'\leq a, b'\leq b, \text{ and }c-\pi^{b-b'}c'\in (\pi^{a'}).
       \]

       If $\ord(c)\geq a'$, then $c-\pi^{b-b'}c'\in (\pi^{a'})$ if and only if $c'\in (\pi^{a'-(b-b')})$.
       If $\ord(c)<a'$, then $c-\pi^{b-b'}c'\in (\pi^{a'})$ if and only if $c'\in \pi^{-(b-b')}c+(\pi^{a'-(b-b')})$.
 
       In the second case, 
if $a'\leq 2b'$, then $\ord(c')=\ord(c)-(b-b')\geq\lceil \frac{a'}{2}\rceil$ since $c'\in (\pi^{\lceil\frac{a'}{2}\rceil})$ by (1). 
       If $a' >2b'$, then either $c'\in (\pi^{a'-b'})$ or $c'\in \pi^{b'}+(\pi^{a'-b'})$ by (1), which yields $\ord(c')\geq b'$. 
       Thus $c\in(\pi^b)$ and $a'>b$, since  $c\in \pi^{b-b'}c'+(\pi^{a'})$ and $\ord(c)<a'$.
       
 
    \item
    This is a direct consequence of (2).

    \item  The proof of $S(\Mfo_{a,b,c})=a+b$ is identical to that of Proposition \ref{prop:splitunram}.(4).

     Next, we determine the conductor $\mathfrak{f}(\Mfo_{a,b,c})$, the largest ideal $(\pi^{f_1}) \times (\pi^{f_2}) \times (\pi^{f_3})$ of $\Mfo_E \cong \mfo \times \mfo
           \times \mfo$ contained in $\Mfo_{a,b,c}$.
      An element $(w_1, w_2, w_3) \in \Mfo_E$ belongs to $\Mfo_{a,b,c}$ if and only if $(0, w_2-w_1, w_3-w_1) \in \mfo \langle (0, \pi^a, 0), (0, c, \pi^b) \rangle$, equivalently $w_3-w_1\in (\pi^b)$ and $w_2-w_1-\pi^{-b}c(w_3-w_1)\in (\pi^a)$.
    We compute $f_1, f_2,$ and $f_3$ as follows:
     \begin{itemize}
         \item For $(0, \pi^{f_2}, 0) \in \Mfo_{a,b,c}$, we require $-\pi^{f_2}\in (\pi^a)$, yielding $f_2 = a$.
      \item For $(\pi^{f_1}, 0, 0) \in \Mfo_{a,b,c}$, we require $-\pi^{f_1} \in (\pi^b)$ (so $f_1 \ge b$) and 
      $\pi^{f_1-b}(c-\pi^b) \in (\pi^a)$. This forces $f_1 - b + \ord(c-\pi^b) \ge a$, yielding $f_1 = \max(b, a+b-\ord(c-\pi^b))$.
      \item For $(0, 0, \pi^{f_3}) \in \Mfo_{a,b,c}$, we require $-\pi^{f_3} \in(\pi^b)$ (so $f_3 \ge b$) and 
           $-\pi^{f_3-b}c \in (\pi^a)$. This forces $f_3 - b + \ord(c) \ge a$, yielding $f_3 = \max(b, a+b-\ord(c))$.
  \end{itemize}
  The piecewise evaluation of $(f_1, f_3)$ follows from these formulas.

    \item 
    The order $\Mfo:=\Mfo_{a,b,c}$ is Gorenstein 
    if and only if $[\Mfo_E:\mathfrak{f}(\Mfo)]_\mfo=2[\Mfo_E:\Mfo]_\mfo$ by Equation (\ref{eq:cond12case}).
    By (4), we have $[\Mfo_E:\mathfrak{f}(\Mfo)]_\mfo=\left\{\begin{array}{l l}
      a+b+\max(a,b)   & \textit{if }\ord(c)\geq a,\\
      3a+2b-\ord(c)-\ord(c-\pi^b)   & \textit{if }\ord(c)< a,
    \end{array}\right.$ and $2S(\Mfo)=2(a+b)$.
    Plugging these into the equation, we obtain
\[
\left\{\begin{array}{l l}
     \max(a,b) =a+b  & \textit{if }\ord(c)\geq a,\\
      a-\ord(c)-\ord(c-\pi^b)=0   & \textit{if }\ord(c)< a.
    \end{array}\right.
\]

If $\ord(c)\geq a$, then the equation is equivalent to $a=0$ or $b=0$. 
If $\ord(c)<a$, then we divide into two cases according to whether  $c-\pi^b\in (\pi^{b+1})$ or not.
If $c-\pi^b\in (\pi^{b+1})$, the condition reduces to $ a-b=\ord(c-\pi^b)$ with $a>b$.
Otherwise, it reduces to $a=\ord(c) +\min (\ord(c),b)$.
This is exactly the same condition as the desired one.
    \qedhere
\end{enumerate}    
\end{proof}

\begin{corollary}\label{cor:totallysplit:overorder_cond}
Combining Proposition \ref{prop:totsplit}.(1)-(2),  an $\mfo$-module $\Mfo_{a',b',c'}$ is an overorder of an order $\Mfo_{a,b,c}$ if and only if $a'\leq a,$ $b'\leq b$, and 
\begin{enumerate}
    \item 
if $a'\leq \ord(c)$, then $\left\{
    \begin{array}{l l}
       c' \in (\pi^{\max(\lceil\frac{a'}{2}\rceil,a'-(b-b'))})  &\textit{if } a'\leq 2b';  \\
       c'\in (\pi^{\max(a'-b',a'-(b-b'))}) &\textit{if }  \max(2b', b)< a';\\
       c'\in (\pi^{a'-b'}) \textit{ or }  c'-\pi^{b'}\in (\pi^{a'-b'})&\textit{if } 2b'< a'\leq b,
    \end{array}
    \right.$
\item if   $a'> \ord(c)$ (so that $\ord(c)-(b-b')=\ord(c')$), then $c'-\pi^{-(b-b')}c\in (\pi^{a'-(b-b')})$ and   \[ \left\{
    \begin{array}{l l}
   \lceil\frac{a'}{2}\rceil\leq \ord(c)-(b-b')  & \textit{if } a'\leq 2b';\\[8pt]
c'\in(\pi^{a'-b'})  \textit{ and }  a'>b   &\textit{if } 2b'<a' \textit{and} ~c\in (\pi^{a'+b-2b'});\\[8pt] 
   c'-\pi^{b'}\in (\pi^{a'-b'}) \textit{ and } a'>b
        &\textit{if } 2b'< a' \textit{and} ~c-\pi^{b}\in (\pi^{\min(a',a'+b-2b')}).\\
        \end{array}
    \right. \]
\end{enumerate}
\end{corollary}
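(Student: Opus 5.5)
The corollary simply merges Proposition \ref{prop:totsplit}.(1) and (2). $\Mfo_{a',b',c'}$ is an overorder of $\Mfo_{a,b,c}$ exactly when it is an order (condition (1) applied to $(a',b',c')$) and it contains $\Mfo_{a,b,c}$ (condition (2)). So we take the containment criterion $a'\leq a$, $b'\leq b$, $c-\pi^{b-b'}c'\in(\pi^{a'})$ from the proof of Proposition \ref{prop:totsplit}.(2). We then intersect it with the ring condition on $c'$, namely $c'\in(\pi^{\lceil a'/2\rceil})$ when $a'\leq 2b'$, and $c'\in(\pi^{a'-b'})\cup(\pi^{b'}+(\pi^{a'-b'}))$ when $a'>2b'$. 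The split is first by $\ord(c)$ versus $a'$, and then by $a'$ versus $2b'$.

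\emph{Case $a'\leq\ord(c)$.} Here containment means $c'\in(\pi^{a'-(b-b')})$.
\begin{enumerate}
\item If $a'\leq 2b'$, intersecting with $(\pi^{\lceil a'/2\rceil})$ gives the first line.
\item If $a'>2b'$, we intersect $(\pi^{a'-(b-b')})$ with $(\pi^{a'-b'})\cup(\pi^{b'}+(\pi^{a'-b'}))$. The key observation is whether $\pi^{b'}+(\pi^{a'-b'})$ meets $(\pi^{a'-(b-b')})$. Elements of that coset have valuation exactly $b'$, because $a'-b'>b'$. So they lie in $(\pi^{a'-(b-b')})$ iff $b'\geq a'-b+b'$, i.e. $a'\leq b$.
\begin{enumerate}
\item When $a'>b$ (hence $a'>\max(2b',b)$), only the branch $(\pi^{a'-b'})$ survives. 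It gives $c'\in(\pi^{\max(a'-b',a'-(b-b'))})$.
\item When $a'\leq b$, both exponents satisfy $a'-(b-b')\leq b'<a'-b'$. So both branches lie entirely in $(\pi^{a'-(b-b')})$, which gives the third line.
\end{enumerate}
\end{enumerate}

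\emph{Case $a'>\ord(c)$.} Containment means $c'\in\pi^{-(b-b')}c+(\pi^{a'-(b-b')})$. Since $a'-(b-b')>\ord(c)-(b-b')$, this forces $\ord(c')=\ord(c)-(b-b')$.
\begin{enumerate}
\item If $a'\leq 2b'$, the ring condition is exactly $\lceil a'/2\rceil\leq\ord(c')$.
\item If $a'>2b'$, Proposition \ref{prop:totsplit}.(2) already gives $a'>b$, and $c'$ must lie in one of the two branches.
\begin{enumerate}
\item The branch $c'\in(\pi^{a'-b'})$ is compatible with the coset iff $\pi^{-(b-b')}c\in(\pi^{\min(a'-b',\,a'-(b-b'))})$. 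Using $a'>b$ (so $a'-(b-b')\leq a'-b'$ up to the relevant range), I would rewrite this as $c\in(\pi^{a'+b-2b'})$.
\item The branch $c'-\pi^{b'}\in(\pi^{a'-b'})$ is compatible iff $\pi^{-(b-b')}c-\pi^{b'}\in(\pi^{\min(a'-b',a'-(b-b'))})$. Multiplying by $\pi^{b-b'}$, this becomes $c-\pi^{b}\in(\pi^{\min(a'+b-2b',\,a')})$.
\end{enumerate}
\end{enumerate}

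The main obstacle is this last subcase, where the congruence moduli from the coset and from the ring condition differ. I would handle it by checking both orderings of $a'-b'$ and $a'-(b-b')$, i.e. $b\geq 2b'$ and $b<2b'$. The target is to confirm that the resulting condition on $c$ is exactly the stated one, with the stated moduli $\min(a',a'+b-2b')$ and $a'+b-2b'$. The remaining cases are routine valuation comparisons.
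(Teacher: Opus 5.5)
Your route is the same as the paper's. You intersect the containment condition $c-\pi^{b-b'}c'\in(\pi^{a'})$ with the ring condition of Proposition \ref{prop:totsplit}.(1), splitting first by $\ord(c)$ versus $a'$ and then by $a'$ versus $2b'$. The case $a'\le\ord(c)$ and subcase (2)(b) of $a'>\ord(c)$ are correct as written.

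The one step that is wrong as written is the justification in (2)(a).
\begin{itemize}
\item The inequality $a'-(b-b')\le a'-b'$ is equivalent to $b\ge 2b'$, which does not follow from $a'>b$.
\item If that inequality held, your condition would read $\pi^{-(b-b')}c\in(\pi^{a'-(b-b')})$, i.e.\ $c\in(\pi^{a'})$, contradicting $\ord(c)<a'$.
\end{itemize}
The correct argument, which is the paper's, is to multiply by $\pi^{b-b'}$ exactly as you do in (b). This gives $c\in(\pi^{\min(a',\,a'+b-2b')})$, and then $\ord(c)<a'$ discards the $a'$ option. What remains is $c\in(\pi^{a'+b-2b'})$. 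It also shows that branch (a) can occur only when $b<2b'$, i.e.\ exactly when the opposite inequality $a'-b'<a'-(b-b')$ holds. Your final condition is right; the parenthetical reasoning for it is backwards. The case check on $b\ge 2b'$ versus $b<2b'$ that you propose at the end would catch this.

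You should also record, as the paper does, that the two conditions on $c$ in part (2) are mutually exclusive. If both held, then $\pi^b\in(\pi^{\min(a',\,a'+b-2b')})$, which is impossible since $a'>b$ and $a'>2b'$. This matters for the following reasons.
\begin{itemize}
\item The two branches for $c'$ are disjoint: one consists of elements of valuation $\ge a'-b'$, the other of elements of valuation exactly $b'$.
\item Each branch forces its own condition on $c$; this is the necessary direction of your compatibility criterion.
\item Exclusivity then ensures that, for a given $c$, at most one branch is available to $c'$. That is what lets the case list in the statement be read as an honest `if and only if'.
\item It also shows there is no overorder with these $a',b'$ when neither condition on $c$ holds.
\end{itemize}
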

    \begin{proof}
For (1), suppose that $a'\leq \ord(c)$. 
     The first case is obvious.
     For the remaining cases where $2b'< a'$, by Proposition \ref{prop:totsplit}.(1)-(2),
    $\Mfo_{a',b',c'}$ is an overorder of $\Mfo_{a,b,c}$ if and only if $c'\in (\pi^{a'-(b-b')})$ and $c'$ lies in either $(\pi^{a'-b'})$ or $\pi^{b'}+(\pi^{a'-b'})$.
    If $c'\in (\pi^{a'-b'})$, then this condition reduces to $c'\in (\pi^{\max(a'-b',a'-(b-b'))})$.
    If $c'\in \pi^{b'}+(\pi^{a'-b'})$, then $c'\in (\pi^{a'-(b-b')})$ if and only if $a'\leq b$. 
    This directly yields the second case. 
    For the third case where $a'\leq b$,  we have $a'-b'>b'\geq a'-(b-b')$, and thus $c'\in (\pi^{\max(a'-b',a'-(b-b'))})$ simplifies to $c'\in (\pi^{a'-b'})$.

    For (2), suppose that $a'> \ord(c)$. 
The first case  is obvious.
For the remaining cases where $2b'< a'$ (so that $a'>b$),  
we combine the condition $c'\in (\pi^{a'-b'})$ or $c'\in \pi^{b'}+(\pi^{a'-b'})$ in Proposition \ref{prop:totsplit}.(1) with $c'-\pi^{-(b-b')}c\in (\pi^{a'-(b-b')})$ in Proposition \ref{prop:totsplit}.(2).

We rewrite  $c'-\pi^{-(b-b')}c\in (\pi^{a'-(b-b')})$ in terms of $c$ so that $c\in \pi^{(b-b')}c'+(\pi^{a'})$.
This condition on $c$ implies  $c\in (\pi^{\min(a',a'+b-2b')})$ if $c'\in (\pi^{a'-b'})$,
and $c\in \pi^b+ (\pi^{\min(a',a'+b-2b')})$ if $c'\in \pi^{b'}+(\pi^{a'-b'})$.
Here, two cases for $c$ 
are mutually exclusive since $b<\min(a',a'+b-2b')$ by $a'>b$. 
Note that in the former case $c\in (\pi^{a'+b-2b'})$ since $a'> \ord(c)$.

Therefore, the combination of two conditions is equivalent to $c'-\pi^{-(b-b')}c\in (\pi^{a'-(b-b')})$ and 
\begin{itemize}
 \item 
    If $c \in (\pi^{a'+b-2b'})$, then $c'\in(\pi^{a'-b'})$, or equivalently, $a'-b'\leq \ord(c)-(b-b')$.   

    \item    
    If $c-\pi^b\in (\pi^{\min(a',a'+b-2b')})$, then  $c'-\pi^{b'}\in (\pi^{a'-b'})$.  \qedhere
\end{itemize}
    \end{proof}

  

\begin{theorem}\label{thm:totsplitformula}
Let $(\pi^{f_1}) \times (\pi^{f_2}) \times (\pi^{f_3}) \left(\subset \mfo \times \mfo \times \mfo \right)$ be the conductor of an order $R$ in $\Mfo_E$, and let $S=S(R)$ be its Serre invariant. 
To simplify the notation, we denote $F_i := S - f_i$ for $i=1,2,3$.
By permuting the factors of $\Mfo_E \cong \mfo^3$ if necessary, we may assume that $F_1 \le f_2$ and $F_3 \le f_2$.
Then $\#\overline{\mathrm{Cl}}(R)$ is given by the following formula:
\begin{align*}
        &\#\clb(R)=2\sum_{0\leq b'\leq F_2}\left(
        \sum_{0\leq a'\leq F_3, 2b'}q^{\min(\lfloor a'/2 \rfloor,F_2-b')}
        +\sum_{2b', F_2 < a'\leq F_3}q^{\min(b',F_2-b')}
        +\sum_{2b'<a'\leq F_3, F_2}2q^{b'}\right.\\
        &\left.
        +\sum\limits_{\substack{F_3<a'\leq f_2, 2b'\\ \lceil a'/2 \rceil-b'\leq F_3-F_2}}q^{F_2-b'}
        +\sum_{\substack{F_3, 2b', F_2 < a'\leq f_2\\ a'-2b'\leq F_3-F_2}}q^{F_2-b'}
        +\sum_{\substack{F_3, 2b', F_2 < a'\leq f_2\\ \min(a', a'+F_2-2b') \leq F_1}}q^{\min(b',F_2-b')}
        \right)-(F_2+1)\\
        & -\sum_{0<b'\leq F_2}\sum\limits_{\substack{0<a'\leq F_3, 2b'\\ a'/2\leq F_2-b', ~2\mid a'}}(q^{a'/2}-q^{a'/2-1})
        +\sum\limits_{\substack{0<a'\leq F_3, F_2\\ 2\mid a'}}q^{a'/2-1}
        -\sum_{F_2 < a'\leq F_3}1
        -\sum_{\substack{0<b'\leq F_2\\ 2b'\leq F_2}}\sum_{F_2 < a'\leq F_3}(q^{b'}-q^{b'-1})\\
        &-\sum_{0<a'\leq F_3, F_2}2
        -\sum_{0<b'\leq F_2}\sum_{2b'<a'\leq F_3, F_2}2(q^{b'}-q^{b'-1})
        -\sum_{\substack{F_3<a'\leq f_2, 2F_3\\ F_3\leq F_2, ~2\mid a'}}q^{F_3-a'/2}
        +\sum_{\substack{F_3<a'\leq f_2, 2F_2\\ F_2+1 \leq F_1, ~2\mid a'}}q^{F_2-a'/2}\\
        &-\sum_{\substack{F_3<a'\leq f_2, ~F_2<F_3\\ 2\mid (a'+F_2-F_3)}}q^{\frac{F_2+F_3-a'}{2}}
        -\sum_{F_3, F_2 < a'\leq F_1}1
        -\sum_{\substack{0<b'\leq F_2\\ 2b'\leq F_2}}\sum\limits_{F_3, F_2 < a'\leq F_1}(q^{b'}-q^{b'-1})
        - \sum_{\substack{0<b'\leq F_2\\ F_2 < 2b'}}\sum_{\substack{F_3, 2b' < a'\leq f_2\\ a'+F_2-2b' = F_1}}q^{F_2-b'}.
\end{align*}
Note that this formula relies entirely on the invariant quantities $f_1, f_2, f_3,$ and $S(R)$. For an explicit closed-form evaluation of this multi-summation as a rational function in $q$, see Appendix \ref{app:formula}.
    \end{theorem}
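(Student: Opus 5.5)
We follow the same template as the proofs of Theorems \ref{thm:splitunroverorders} and \ref{thm:splitramorders}. By Proposition \ref{prop:totsplit}.(1), write $R=\Mfo_{a,b,c}$. Every overorder is of the form $\Mfo_{a',b',c'}$, with containment governed by Corollary \ref{cor:totallysplit:overorder_cond}. By Proposition \ref{prop:totsplit}.(3), the overorders are counted by pairs $(a',b')$ with $a'\le a$, $b'\le b$, together with classes $c'$ modulo $(\pi^{a'})$. We will then apply Proposition \ref{prop:orderidealcounting}, so for each $(a',b')$ we need two numbers: the number of admissible $c'$, and the number of those for which $\Mfo_{a',b',c'}$ is Gorenstein in the sense of Proposition \ref{prop:totsplit}.(5).

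The case division follows Corollary \ref{cor:totallysplit:overorder_cond}.

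In \textbf{Case $a'\le\ord(c)$} there are three subcases.
\begin{enumerate}
\item If $a'\le 2b'$, there are $q^{\min(\lfloor a'/2\rfloor,b-b')}$ choices of $c'$, exactly as in the $(1\,2)$ case.
\item If $\max(2b',b)<a'$, the choices of $c'$ modulo $(\pi^{a'})$ are counted by $q^{\min(b',b-b')}$.
\item If $2b'<a'\le b$, there are two disjoint residue classes, giving $2q^{b'}$ choices.
\end{enumerate}

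In \textbf{Case $a'>\ord(c)$}, $c'$ is determined modulo $(\pi^{a'-(b-b')})$, so there are $q^{b-b'}$ choices once the side conditions hold. Those side conditions split according to whether $c\in(\pi^{a'+b-2b'})$ or $c-\pi^b\in(\pi^{\min(a',a'+b-2b')})$.

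The Gorenstein overorders are sorted in each subcase by Proposition \ref{prop:totsplit}.(5). The conditions $a'=0$ or $b'=0$ account for the terms $-(F_2+1)$ and for the constant corrections. The condition $\ord(c')=a'/2\le b'$ gives the $q^{a'/2}-q^{a'/2-1}$ terms. The conditions $\ord(c')=a'-b'>b'$ and $\ord(c'-\pi^{b'})=a'-b'$ give the $q^{b'}-q^{b'-1}$ terms, and the boundary terms arise in Case $a'>\ord(c)$.

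Summing over $a',b'$ gives an intermediate formula in the parameters $a,b,\ord(c),\ord(c-\pi^b)$. The next step converts it to invariants using Proposition \ref{prop:totsplit}.(4). We have $f_2=a$ and $S=a+b$, so $F_2=b$. When $\ord(c)<a$, we also get $F_3=\min(a,\ord(c))$ and $F_1=\min(a,\ord(c-\pi^b))$, up to the truncations at $b$. The normalization $F_1,F_3\le f_2$ is obtained by permuting the three factors, which swaps the roles of $c$ and $c-\pi^b$, or of the second coordinate with the others. As in the previous theorems, we must show that replacing $\ord(c)$ by the truncated quantity $F_3$ (and likewise $\ord(c-\pi^b)$ by $F_1$) leaves the total unchanged. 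When $\ord(c)\ge a$ this holds because the sums over $a'>\ord(c)$ are empty. In the intermediate ranges, terms transfer between sums with matching summands, because $a'>b$ forces $\min(\cdot,b-b')$ to equal $b-b'$.

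We expect the main obstacle to be the bookkeeping in Case $a'>\ord(c)$ with $a'>2b'$. There $c$ and $c-\pi^b$ can both have order at most $b$, which is the source of the asymmetric terms involving $F_1$ versus $F_3$ and of the last correction sum $a'+F_2-2b'=F_1$. We would handle it by first fixing, via the permutation symmetry of $\mfo^3$, which of $c$ and $c-\pi^b$ has the larger valuation. We would then check each Gorenstein subcase against Proposition \ref{prop:totsplit}.(5) separately, and verify the result numerically against small examples with the Python routine of Appendix \ref{app:formula}.
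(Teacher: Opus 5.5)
Your template matches the paper's: write $R=\Mfo_{a,b,c}$, split according to Corollary \ref{cor:totallysplit:overorder_cond}, count admissible $c'$ and Gorenstein ones for each $(a',b')$, apply Proposition \ref{prop:orderidealcounting}, then substitute $f_2=a$, $F_2=b$. Your three counts in the case $a'\le\ord(c)$ are correct.

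The gap is your claim that for $a'>\ord(c)$ there are always $q^{b-b'}$ admissible $c'$ ``once the side conditions hold.'' Consider the branch $2b'<a'$, $c-\pi^b\in(\pi^{\min(a',a'+b-2b')})$. There the condition $c'-\pi^{b'}\in(\pi^{a'-b'})$ from Corollary \ref{cor:totallysplit:overorder_cond} is a congruence on $c'$, and it must be intersected with $c'\equiv\pi^{-(b-b')}c\pmod{\pi^{a'-(b-b')}}$. For $2b'\le b$ the second congruence implies the first and leaves $q^{b'}$ classes modulo $(\pi^{a'})$. For $2b'>b$ the first implies the second and leaves $q^{b-b'}$. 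So the count is $q^{\min(b',b-b')}$, which is exactly the summand of the sixth positive sum in the formula.

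Concretely, take $R=\Mfo_{5,2,\pi^2}$ and $(a',b')=(3,0)$. The only admissible class is $c'\equiv 1\pmod{\pi^3}$, not $q^2$ classes. Followed literally, your count puts $q^{F_2-b'}$ in that sum and gives the wrong total.

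This branch also carries its own Gorenstein count, and it produces the last three correction sums, so you need to derive it rather than file it under ``boundary terms.'' It is $1$ if $b'=0$, and $q^{b'}-q^{b'-1}$ if $0<2b'\le b$. For $2b'>b$ it is $q^{b-b'}$ exactly when $\ord(c-\pi^b)=a'+b-2b'$, and $0$ otherwise.

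Smaller points:
\begin{itemize}
\item When $a'\le\ord(c)$ and $a'=2b'>0$, the Gorenstein count is $q^{a'/2}-2q^{a'/2-1}$, not $q^{a'/2}-q^{a'/2-1}$. The classes with $c'\equiv\pi^{b'}\pmod{\pi^{b'+1}}$ fall into the non-Gorenstein branch of Proposition \ref{prop:totsplit}.(5). This is the source of the positive sum $\sum_{0<a'\le F_3,F_2,\,2\mid a'}q^{a'/2-1}$, which your description misses.
\item The conversion step differs from the $(1\,2)$ case you are imitating. Here $F_3=\min(a,\ord(c))$ and $F_1=\min(a,\ord(c-\pi^b))$ hold exactly, with no truncation at $b$, so no transfer of terms between sums is needed. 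Nor does $\min(b',b-b')$ collapse to $b-b'$ when $a'>b$. What you actually need is that $\ord(c)$ enters as a value only in sums with $\ord(c)<a'\le a$. You also need that $\ord(c-\pi^b)$ is compared only with bounds at most $a$, except for the bound $b+1$, which requires a separate check.
\item Avoid normalizing by swapping the first and third factors. That swap replaces $c$ by $\pi^b-c$ and interchanges $F_1$ and $F_3$. The formula is not visibly symmetric in them, so you would prove it for only one of the two labelings the statement allows. The paper handles both branches directly.
\end{itemize}
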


\begin{proof}
       The proof is parallel to that of Theorem \ref{thm:splitunroverorders}.
 An overorder of $R(=\Mfo_{a,b,c})$ is of the form $\Mfo_{a',b',c'}$ where $a'$, $b'$, and $c'$ satisfy the conditions of Corollary \ref{cor:totallysplit:overorder_cond}.
We enumerate the number of such overorders and Gorenstein overorders with fixed $a'(\leq a)$ and $ b'(\leq b)$. 
    
    \textbf{1. Case $a'\leq \ord(c)$ and $a'\leq 2b'$:} 
There are $q^{\min(\lfloor\frac{a'}{2}\rfloor,b-b')}$ choices for $c'$ up to $(\pi^{a'})$ for fixed $a'$ and $b'$.
Among these, we count Gorenstein overorders using  Proposition \ref{prop:totsplit}.(5).

    \begin{itemize}
\item 
    If $\ord(c')\geq a'$, then $\Mfo_{a',b',c'}$ is Gorenstein if and only if $a'=0$ or $b'=0$.
    Since $a'\leq 2b'$, it suffices to consider the case where $a'=0$.
   Then $c'$ is unique up to $(\pi^{a'})$ for each $0\leq b'\leq b$.
    Thus, the number of Gorenstein overorders when $a'=0$ is $b+1$.
    
\item 
    If $\ord(c')<a'$ and $c'-\pi^{b'}\in (\pi^{b'+1})$, then $\Mfo_{a',b',c'}$ is Gorenstein if and only if $\ord(c'-\pi^{b'})=a'-b'>0$.
    Since $\ord(c'-\pi^{b'})$ is larger than $b'$, we have $a'-b'>b'$, which contradicts the condition $a'\leq 2b'$.
    Thus, there exists no Gorenstein overorder of $R$ in this case.
    
\item 
    If $\ord(c')<a'$ and $c'-\pi^{b'}\not\in(\pi^{b'+1})$, then $\Mfo_{a',b',c'}$ is Gorenstein if and only if $a' = 2\ord(c')$, which requires $2|a'$ and $\frac{a'}{2}\geq a'-(b-b')$ since $c'\in (\pi^{\max(\lceil\frac{a'}{2}\rceil,a'-(b-b'))})$.
If $a'=0$, then it reduces to the first case.
If $a'>0$, then the number of Gorenstein overorders  is $\left\{
    \begin{array}{l l}
     q^{\frac{a'}{2}}-q^{\frac{a'}{2}-1}    &  \textit{if $a'<2b'$};\\
       q^{\frac{a'}{2}}-2q^{\frac{a'}{2}-1} & \textit{if $a'=2b'$}.
    \end{array}
    \right.$
    Here, the second expression reflects  $c'-\pi^{b'}\notin (\pi^{b'+1})$.
  
    \end{itemize}

    \textbf{2. Case $a'\leq \ord(c)$ and $\max(2b',b)<a'$:}
There are  $q^{\min(b',b-b')}$ choices for $c'$ up to $(\pi^{a'})$ for fixed $a'$ and $b'$.
Among these, we count Gorenstein overorders  using Proposition \ref{prop:totsplit}.(5).
   \begin{itemize}
        \item 
    If $\ord(c')\geq a'$, then $\Mfo_{a',b',c'}$ is Gorenstein if and only if $b'=0$, since $a'>0$.
  Then $c'$ is unique up to $(\pi^{a'})$ for each $b<a'\leq\min(a, \ord(c))$. 
    Thus, the number of Gorenstein overorders when $b'=0$ is $\min(a, \ord(c))-b$. 

    \item
    If $\ord(c')<a'$, then $c'-\pi^{b'}\notin (\pi^{b'+1})$ since $c'\in (\pi^{\max(a'-b',a'-(b-b'))})\subset (\pi^{a'-b'})$ and $b'<a'-b'$.
    Thus, $\Mfo_{a',b',c'}$ is Gorenstein if and only if $\ord(c')=a'-b'$, which requires  that $a'-b'\geq a'-(b-b')$ and $b'>0$.
    There are $q^{b'}-q^{b'-1}$ such orders.
    \end{itemize}
    
\textbf{3. Case $a'\leq \ord(c)$ and $2b'<a'\leq b$:} 
Since $c'-\pi^{b'}\in (\pi^{a'-b'})$ or $c'\in (\pi^{a'-b'})$, 
there are $2q^{b'}$ choices for $c'$ up to $(\pi^{a'})$ for fixed $a'$ and $b'$.
    Among these, we count Gorenstein overorders. 
    \begin{itemize}
        \item 
    If $\ord(c')\geq a'$, then $\Mfo_{a',b',c'}$ is Gorenstein if and only if $b'=0$, since $a'>0$.
    Then $c'$ is unique up to $(\pi^{a'})$ for each $0< a'\leq \min(a, b,\ord(c))$.
Thus, the number of Gorenstein overorders when $b'=0$ is $\min(a, b,\ord(c))$.

        \item
    If $\ord(c')<a'$, then $\Mfo_{a',b',c'}$ is Gorenstein if and only if $\ord(c'-\pi^{b'})=a'-b'$ or $\ord(c')=a'-b'$. 
    For fixed $a'$ and $b'$, the number of Gorenstein overorders is $
    \left\{
    \begin{array}{l l}
       2(q^{b'}-q^{b'-1})  & \textit{if }b'\neq 0;  \\
       1  & \textit{if }b'=0.
    \end{array}\right.$
\end{itemize}

    \textbf{4. Case $a'>\ord(c)$ and $a'\leq 2b'$:} 
    Since $c'-\pi^{-(b-b')}c\in (\pi^{a'-(b-b')})$ with $\lceil\frac{a'}{2}\rceil\leq \ord(c)-(b-b')$, 
    there are $q^{b-b'}$ choices for $c'$ up to $(\pi^{a'})$  for fixed $a'$, $b'$.
    Among these, we count Gorenstein overorders. 
\begin{itemize}
    \item 
    If $\ord(c')\geq a'$, then there exists no Gorenstein overorder since  $a', b'\neq 0$.
    
    \item
    If $\ord(c')<a'$, then $\Mfo_{a',b',c'}$ is Gorenstein if and only if $c'- \pi^{b'} \notin  (\pi^{b'+1})$ and $a'=2\ord(c')$ 
    by the same argument as in \textbf{1. Case $a'\leq \ord(c)$ and $a'\leq 2b'$}.
It happens only if $2|a'$ and $\frac{a'}{2}=\ord(c)-(b-b')$.
      For fixed $a'$ and $b'$ such that $\frac{a'}{2}=\ord(c)-(b-b')$ so that $a'=2\ord(c')$, the number of Gorenstein overorders is 
    $
    \left\{
    \begin{array}{l l}
    q^{b-b'}&\textit{if $c-\pi^{b}\notin (\pi^{b+1})$};\\
    0 & \textit{otherwise}.
    \end{array}
    \right.$ 
    Here we use the fact that $c'- \pi^{b'}\in (\pi^{b'+1})$ if and only if $c- \pi^{b} \in (\pi^{b+1})$. 
    \end{itemize}

    \textbf{5. Case $a'>\ord(c)$, $2b'<a'$, and $c\in (\pi^{a'+b-2b'})$:}
    Since $c'-\pi^{-(b-b')}c\in (\pi^{a'-(b-b')})$ with $a'-b'\leq \ord(c)-(b-b')$ and $a'>b$ by Corollary \ref{cor:totallysplit:overorder_cond}, there are $q^{b-b'}$ choices for $c'$ up to $(\pi^{a'})$.
    Among these, we count the number of Gorenstein overorders using Proposition \ref{prop:totsplit}.(5).
    \begin{itemize}
    \item 
    If $\ord(c')\geq a'$, then $\pi^{-(b-b')}c\in (\pi^{a'-(b-b')})$, which contradicts the condition $a'>\ord(c)$. Thus, there exists no overorder of $R$ in this case.
    \item
    If $\ord(c')<a'$ and $c'- \pi^{b'} \in (\pi^{b'+1})$, then $b'=\ord(c')=\ord(c)-(b-b')\geq a'-b'$, which contradicts condition $a'>2b'$.
    Thus, there exists no overorder of $R$ in this case.
    \item
    If $\ord(c')<a'$ and $c' - \pi^{b'}\not\in(\pi^{b'+1})$, then $\Mfo_{a',b',c'}$ is Gorenstein if and only if $\ord(c')=a'-b'$, which occurs exactly when $a'-b'=\ord(c)-(b-b')$.
    For fixed $a'$ and $b'$ satisfying $a'-b'=\ord(c)-(b-b')$, the number of Gorenstein overorders is $q^{b-b'}$.
    \end{itemize}

    \textbf{6. Case $a'>\ord(c)$, $2b'<a'$, and $c-\pi^b\in (\pi^{\min(a',a'+b-2b'))})$:} 
We claim that there are $q^{\min(b',b-b')}$ choices for $c'$ up to $(\pi^{a'})$.

By Corollary \ref{cor:totallysplit:overorder_cond}, we have $c'-\pi^{-(b-b')}c\in (\pi^{a'-(b-b')})$, $c'-\pi^{b'}\in (\pi^{a'-b'})$, and $a'>b$.   
    If $ 2b' \leq b$, then $c-\pi^b\in (\pi^{a'})$ and thus $c'-\pi^{-(b-b')}c\in (\pi^{a'-(b-b')})$ is equivalent to $c'-\pi^{b'}\in (\pi^{a'-(b-b')})$.
Combining $c'-\pi^{b'}\in (\pi^{a'-b'})$ and $c'-\pi^{b'}\in (\pi^{a'-(b-b')})$, we have $c'-\pi^{b'}\in (\pi^{a'-b'})$.

    If $2b'>b$, then $c-\pi^b\in (\pi^{a'+b-2b'})$ 
    and thus 
    $c'-\pi^{b'}\in(\pi^{a'-b'})$ is equivalent to $c'-\pi^{-(b-b')}c\in (\pi^{a'-b'})$.
 Combining $c'-\pi^{-(b-b')}c\in (\pi^{a'-b'})$ and $c'-\pi^{-(b-b')}c\in (\pi^{a'-(b-b')})$, we have $c'-\pi^{-(b-b')}c\in (\pi^{a'-(b-b')})$.
 These two cases prove the claim. 
 
We count Gorenstein overorders for fixed $a'$ and $b'$ with $a'>\max(\ord(c), 2b', b)$. Here $c- \pi^b \in (\pi^{b+1})$. 
    \begin{itemize}
        \item 
If $\ord(c') (=\ord(c)-(b-b')) \geq a'$ then $b'\geq a'$ which contradicts   condition $a'>2b'$.
    Thus, there exists no overorder of $R$ in this case.   
    \item
    If $\ord(c')<a'$, then $\Mfo_{a',b',c'}$ is Gorenstein if and only if $\ord(c'-\pi^{b'})=a'-b'$. 
    If $2b'\leq b$, then $c'-\pi^{b'}\in (\pi^{a'-b'})$ and the desired number is 
$    
    \left\{
    \begin{array}{l l}
     1 & \textit{if $b'=0$};\\
    q^{b'}-q^{b'-1}&\textit{if $b'>0$}.
    \end{array}
    \right.
$
    If $2b'>b$, then $c'-\pi^{-(b-b')}c\in (\pi^{a'-(b-b')})$ and the desired number is
$
    \left\{
    \begin{array}{l l}
     q^{b-b'} & \textit{if $\ord(c-\pi^b)=a'+b-2b'$};\\
    0&\textit{otherwise}. 
    \end{array} 
    \right. $
\end{itemize}

Summing up these using the equation in Proposition \ref{prop:orderidealcounting} yields the intermediate formula expressed purely in terms of the initial local parameters $a, b,$ and $c$:        
    \begin{align*}
        &\#\clb(R)=2\sum_{0\leq b'\leq b}\left(
        \sum_{0\leq a'\leq a, \ord(c), 2b'}q^{\min(\lfloor a'/2 \rfloor,b-b')}
        +\sum_{2b', b < a'\leq a, \ord(c)}q^{\min(b',b-b')}
        +\sum_{2b'<a'\leq a, \ord(c), b}2q^{b'}\right.\\
        &\left.
        +\sum\limits_{\substack{\ord(c)<a'\leq a, 2b'\\ \lceil a'/2 \rceil-b'\leq \ord(c)-b}}q^{b-b'}
        +\sum_{\substack{\ord(c), 2b', b < a'\leq a\\ a'-2b'\leq \ord(c)-b}}q^{b-b'}
        +\sum_{\substack{\ord(c), 2b', b < a'\leq a\\ c-\pi^b\in(\pi^{\min(a',a'+b-2b')})}}q^{\min(b',b-b')}
        \right)-(b+1)\\
        &
        -\sum_{\substack{b'\leq b\\ 0<b'}}\sum\limits_{\substack{0<a'\leq a, \ord(c), 2b'\\ a'/2\leq b-b', ~2\mid a'}}(q^{a'/2}-q^{a'/2-1})
        +\sum\limits_{\substack{0<a'\leq a, \ord(c), b\\ 2\mid a'}}q^{a'/2-1}
        -\sum_{\substack{a'\leq a, \ord(c)\\ b < a'}}1
        -\sum_{\substack{0<b'\leq b\\2b'\leq b}}\sum_{b < a'\leq a, \ord(c)}(q^{b'}-q^{b'-1})\\
        &-\sum_{\substack{a'\leq a, \ord(c), b\\ 0<a'}}2
        -\sum_{0<b'\leq b}\sum_{2b'<a'\leq a, \ord(c), b}2(q^{b'}-q^{b'-1})
        -\sum_{\substack{\ord(c)<a'\leq a, 2\ord(c)\\ \ord(c)\leq b, ~2\mid a'}}q^{\ord(c)-a'/2}
        +\sum_{\substack{\ord(c)<a'\leq a, 2b\\ c-\pi^b\in (\pi^{b+1}), ~2\mid a'}}q^{b-a'/2}\\
        &-\sum_{\substack{\ord(c)<a'\leq a\\ b<\ord(c)\\ 2\mid (a'+b-\ord(c))}}q^{b-\frac{a'+b-\ord(c)}{2}}
        -\sum_{\substack{\ord(c), b < a'\leq a\\ c-\pi^b\in (\pi^{a'})}}1
        -\sum_{\substack{0<b'\leq b\\2b'\leq b}}\sum\limits_{\substack{\ord(c), b < a'\leq a\\ c-\pi^b\in (\pi^{a'})}}(q^{b'}-q^{b'-1})
        - \sum_{\substack{0<b'\leq b\\ b < 2b'}}\sum_{\substack{\ord(c), 2b' < a'\leq a\\ \ord(c-\pi^b)=a'+b-2b'}}q^{b-b'}.
    \end{align*}

To express this in terms of $f_1, f_2, f_3$ and $S(R)$, 
we note that $f_2 = a$, $F_1 = \min(a, \ord(c-\pi^b))$, and $F_3 = \min(a, \ord(c))$ by Proposition \ref{prop:totsplit}.(4). 
These imply $F_1, F_3 \le f_2$. Thus, given an arbitrary conductor of $R$, we can always permute its components so that $f_2$ satisfies these bounds, justifying our assumption. We now have $b = F_2$, and we claim that the parameters involving $c$ are completely determined by $F_3$ and $F_1$, which will be explained below:
\begin{itemize}
  \item Since all summation indices satisfy $a' \le a = f_2$, the upper bound condition $a' \le \ord(c)$ is equivalent to $a' \le \min(a, \ord(c)) = F_3$. Furthermore, when $\ord(c)<a$, we have
  $\ord(c)=F_3$. Because $\ord(c)$ appears as a term (e.g., in $\ord(c)-b$) exclusively within summations restricted by $\ord(c) < a' \le a$, we can replace $\ord(c)$ with $F_3$ without loss
  of generality.

\item $\ord(c-\pi^b)$ is compared against three specific bounds:
           $\min(a', a'+b-2b')$, $a'+b-2b'$, and $b+1$. The first bound is at most $a$. For the second bound, the corresponding summation explicitly restricts $2b' > b$, which, combined with $a' \le
           a$, guarantees $a'+b-2b' < a$. Since both of these bounds are $\leq a$, substituting $F_1$ for $\ord(c-\pi^b)$ yields an equivalent condition. For the third bound, $\ord(c-\pi^b) \ge
           b+1$, the equivalence holds identically if $b+1 \le a$. If $b+1 > a$ (i.e., $b \ge a$), the summation restriction $\ord(c) < a' \le a$ forces $\ord(c-\pi^b) = \ord(c) < a < b+1$,
           contradiction. 
\end{itemize}
Substituting these invariant expressions into the intermediate equation yields the stated formula.
 \end{proof}

\begin{corollary} \label{cor:gorenstein_calculation111}
If the order $R$ in Theorem \ref{thm:totsplitformula} is Gorenstein, then $S(R) = (f_1+f_2+f_3)/2$ and $\#\overline{\mathrm{Cl}}(R)$ is entirely determined by the conductor components $f_1, f_2,$ and $f_3$. 
Specifically, two of $f_1, f_2, f_3$ are equal, and the remaining component, denoted by $m$, is the smallest and even. Let $S = S(R)$ and $k = \lfloor m/4 \rfloor$. Then,
{\small
\[
\#\overline{\mathrm{Cl}}(R) = 
\begin{cases}
\displaystyle \frac{ (S-6k+1)q^{k+2} + (2S-12k+16)q^{k+1} - (3S-18k-7)q^k - (4S+18)q + 4S - 6 }{(q-1)^2} & \text{if } m=4k, \\[14pt]
\displaystyle \frac{ (3S-18k-2)q^{k+2} - (2S-12k-22)q^{k+1} - (S-6k-4)q^k - (4S+18)q + 4S - 6 }{(q-1)^2} & \text{if } m=4k+2.
\end{cases}
\]
}%
\end{corollary}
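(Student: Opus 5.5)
We follow the template of Corollaries \ref{cor:gorenstein_calculation12} and \ref{cor:gorenstein_calculation112}: first pin down the shape of the conductor, then substitute it into Theorem \ref{thm:totsplitformula} and evaluate the sums. The first step is to record the Gorenstein relation. By Equation \eqref{eq:cond12case} and Proposition \ref{prop:gorenstein_cond}, $[\Mfo_E:\mff(R)]_\mfo=2S(R)$, and in the totally split case the left side is $f_1+f_2+f_3$. Hence $S(R)=(f_1+f_2+f_3)/2$.

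Next we write $R=\Mfo_{a,b,c}$ and run through the three Gorenstein cases of Proposition \ref{prop:totsplit}.(5), computing the conductor each time from Proposition \ref{prop:totsplit}.(4).
\begin{itemize}
\item Case $\ord(c)\ge a$ with $a=0$ or $b=0$. The conductor is $(\max(a,b),a,b)$. If $a=0$ this is $(b,0,b)$; if $b=0$ it is $(a,a,0)$. In either case two components coincide and the third is $m=0$, which is even.
\item Case $\ord(c)=a/2\le b$ with $c-\pi^b\notin(\pi^{b+1})$. Then $\ord(c-\pi^b)=a/2$, so $f_1=f_3=a+b-a/2$ and $f_2=a$. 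Here $m=a$ is even and $m\le f_1$.
\item Case $\ord(c)=a-b>b$. Then $\ord(c-\pi^b)=b$, so $f_1=a$, $f_3=b$ and $f_2=a$. Here $m=b$, and Gorensteinness forces $b$ to be even (to be checked); compare the remark that $q=2$ can only give $\mfo^3$-type degeneracies.
\item Case $\ord(c-\pi^b)=a-b$. This is symmetric to the previous case.
\end{itemize}
In every case two components are equal and the remaining one is smallest. We then permute the factors to satisfy the normalization $F_1,F_3\le f_2$ of Theorem \ref{thm:totsplitformula}. Writing $f_2=S-m/2$ for the larger repeated value, we get $F_2=m/2$ in the middle case. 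All of $F_1,F_2,F_3$ and $f_2$ are then expressed through $S$ and $m$ alone. Parity of $m$ needs care: in the third case one should double-check whether $m=b$ is actually forced to be even by integrality of $S=(f_1+f_2+f_3)/2$. If it is not, then the statement's claim that $m$ is even must come from $a+b+\max(\cdot)$ being even. We would verify this before anything else.

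With this normalization, the main step is to evaluate each of the roughly twenty sums of Theorem \ref{thm:totsplitformula}. Many of them vanish identically. Under $F_3=F_1\ge F_2$ (or the analogous equalities), for instance, the sums gated by ``$F_2<F_3$'' or ``$F_2+1\le F_1$'' switch on or off uniformly. The surviving sums are geometric series in $q^{\lfloor a'/2\rfloor}$ or $q^{b'}$, and we sum them over the ranges $0\le b'\le m/2$ and $a'\le f_2$. The exponents involve $\lfloor a'/2\rfloor$ and $\min(\cdot, m/2-b')$, so the closed form depends on $m/2 \bmod 2$; this is why we split into $m=4k$ and $m=4k+2$. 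The terms that are linear in $a'$, arising from sums of $q^{F_2-b'}$ over long $a'$-ranges up to $f_2$, produce the coefficients involving $S-6k$. We would then collect everything over the common denominator $(q-1)^2$.

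The main obstacle is the bookkeeping in this last step. The boundary cases must be handled correctly: $a'=2b'$ versus $a'<2b'$, the $b'=0$ terms, and the degenerate case $m=0$. The last of these must agree with the formula at $k=0$. To guard against errors, we would cross-check the result against the Python evaluation in Appendix \ref{app:formula} for small $(S,m)$, and against a direct hand enumeration for $R=\mfo[\pi x]$-type orders in $\mfo^3$.
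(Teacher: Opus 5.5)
Your route is the paper's: derive $S(R)=(f_1+f_2+f_3)/2$ from $[\Mfo_E:\mathfrak{f}(R)]_\mfo=2S(R)$, run through the Gorenstein cases of Proposition \ref{prop:totsplit}.(5) to pin down the conductor, and substitute into Theorem \ref{thm:totsplitformula}. However, your case analysis contains an arithmetic error, and it is exactly what creates the parity problem you leave open.

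Consider the case $\ord(c)=a-b>b$ with $c-\pi^b\notin(\pi^{b+1})$, so that $\ord(c-\pi^b)=b$. Proposition \ref{prop:totsplit}.(4) gives $f_3=\max(b,\,a+b-\ord(c))=\max(b,2b)=2b$, not $b$. So the conductor is $(a,a,2b)$, and $m=2b$ is automatically even and is the smallest component because $a>2b$. Your value $(a,a,b)$ contradicts the relation you had just derived: it gives $(f_1+f_2+f_3)/2=a+b/2\neq a+b=S(R)$, which is not even an integer for odd $b$. For example, $a=3$, $b=1$, $c=\pi^2$ is a Gorenstein order with conductor $(3,3,2)$. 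So the claim that ``Gorensteinness forces $b$ to be even'' is false, and deducing it from integrality of $S$ would only hide the slip.

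With the correction, every case gives two equal components $M=S-m/2$ and an even smallest one, $m\in\{0,\,a,\,2b\}$. In the symmetric case $\ord(c-\pi^b)=a-b$ one has $\ord(c)=b$ and conductor $(2b,a,a)$. The remark about $q=2$ plays no role here.

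Your description of the normalized invariants is also wrong, and it decides which sums in Theorem \ref{thm:totsplitformula} survive.
\begin{itemize}
\item If you put the repeated value in the middle, $f_2=M$, then $F_2=m/2$ and $\{F_1,F_3\}=\{m/2,\,S-m\}$. Exactly one of $F_1,F_3$ equals $F_2$. At most one of the gates ``$F_2<F_3$'' and ``$F_2+1\le F_1$'' is open, and one is open precisely when $M>m$.
\item If instead $f_2=m$, which is what the representation $\Mfo_{a,b,c}$ produces when $\ord(c)=a/2$, then $F_1=F_3=m/2\le F_2=S-m$.
\end{itemize}
Your claimed $F_3=F_1\ge F_2$ holds only when all three components are equal. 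Both normalizations satisfy $F_1,F_3\le f_2$, so fix one and then evaluate.

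For the evaluation itself, your outline matches the paper, which also only states the outcome. The multisum collapses to $\sum_{i=0}^{k-1}(4S-24i-6)q^i$, plus $(S-6k+1)q^k$ if $m=4k$, or plus $(3S-18k-2)q^k$ if $m=4k+2$. Writing this over $(q-1)^2$ gives the stated quotients, and this intermediate polynomial is the concrete target to check against the Appendix code.
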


\begin{proof}
Since $R(=\Mfo_{a,b,c})$ is Gorenstein if and only if $[\Mfo_E:\mathfrak{f}(R)]_\mfo=2S(R)$ by Equation \eqref{eq:cond12case}, we have $S(R) = (f_1+f_2+f_3)/2$. 
To establish the relations between the invariants, we substitute the Gorenstein conditions from Proposition \ref{prop:totsplit}.(5) into the conductor formula $(f_1, f_2, f_3)$ given by Proposition \ref{prop:totsplit}.(4). We analyze the cases:

\begin{itemize}
    \item If $\ord(c) \ge a$, then $(f_1, f_3) = (\max(a,b), b)$, and the Gorenstein condition is $a=0$ or $b=0$. 
    If $a=0$, we have $(f_1, f_2, f_3) = (b, 0, b)$. If $b=0$, we have $(f_1, f_2, f_3) = (a, a, 0)$. Thus two components are equal, and the remaining component $m = 0$ is even and the smallest.
    
    \item If $\ord(c) < a$ and $c-\pi^b \in (\pi^{b+1})$, the Gorenstein condition is $\ord(c-\pi^b) = a-b > 0$. 
    Here $\ord(c)=b<a$. Thus $(f_1, f_2, f_3) = (2b, a, a)$. Here, two components are equal, and the remaining component $m=2b$ is even. Since $a-b >b$, $m$ is the smallest.

    \item If $\ord(c) < a$ and $c-\pi^b \notin (\pi^{b+1})$, the Gorenstein condition is $\ord(c) = a/2 \le b$ or $\ord(c) = a-b > b$.
    If $\ord(c) = a/2 \le b$, then $c-\pi^b \notin (\pi^{b+1})$ forces $\ord(c-\pi^b) = a/2$. The conductor is $(f_1, f_2, f_3) = (a/2+b, a, a/2+b)$.  Since $a/2 \le b$, $m=a$ is the smallest.
    If $\ord(c) = a-b > b$, then $\ord(c-\pi^b) = b$. The conductor is $(f_1, f_2, f_3) = (a, a, 2b)$. 
    Since $a-b > b$, $m=2b$ is the smallest.
\end{itemize}

In all cases, the tuple $(f_1, f_2, f_3)$ satisfies the stated constraint. Evaluating the multisummation in Theorem \ref{thm:totsplitformula} under these constraints yields
\[
\#\overline{\mathrm{Cl}}(R) =\sum_{i=0}^{k-1}(4S-24i-6)q^i
+\begin{cases}
(S-6k+1)q^k & \textit{if $m=4k$},\\[4pt]
(3S-18k-2)q^k & \textit{if $m=4k+2$}.
\end{cases}
\]
Evaluating this sum as a rational function in $q$ gives the stated formulas.
\end{proof}

\section{Global closed formula in the Gorenstein case}\label{sec:globalGorenstein}
\begin{remark}
    By the product formula in Proposition \ref{prop:global_local}, the global counting of ideal classes applies to an arbitrary (not necessarily Gorenstein) cubic order in a number field. However, since the exact local terms for general non-Gorenstein cubic orders are combinatorially highly complex (as demonstrated by the multi-summations in Sections \ref{sec:irred} and \ref{sec:split}), we omit the explicit global closed formula for the general case. For specific computations of these local terms, we refer the reader to the Python algorithms provided in Appendix \ref{app:formula}.
    \end{remark}
   
In this final section, we synthesize the local Gorenstein formulas derived in Sections \ref{sec:irred}-\ref{sec:split} with the product formula of Proposition \ref{prop:global_local} to establish a global closed formula for $\#(\mathrm{Cl}(R)\backslash\clb(R))$. 
This formula is expressed entirely in terms of the conductor $\mathfrak{f}(R)$ for any Gorenstein order $R$ in a cubic number field $E/F$. We retain the notation from Section \ref{sec:global_strategy} and introduce the following definitions:
          \begin{itemize}

\item We write the conductor of $R$ as
\[  \mathfrak{f}(R)=\prod_{\mathfrak{P}\in |\Mfo_E|}\mathfrak{P}^{f_{\mathfrak{P}}}, ~~~~\textit{where $f_\mathfrak{P}\in \mathbb{Z}_{\geq 0}$ and $f_\mathfrak{P}= 0$ for almost all $\mathfrak{P}$}. \]

\item Let $S_R := \{v \in |\mfo| \mid R_v \subsetneq \Mfo_{E_v}\}$ be the finite set of primes $v$ below some $\mathfrak{P}$ with $f_\mathfrak{P} > 0$.
For $v\in S_R$, let $\tau(v)$ denote the splitting type of $R_v$, which is one of
\[ 
\{\textit{unr},\ \textit{tr},\ (1\,2),\ (1\,1^2),\ (1\,1\,1)\}
 \]
where \textit{unr} and \textit{tr} correspond to the unramified and the totally ramified cases in Section \ref{sec:irred}, respectively, and the rest correspond to the splitting types introduced in Equation (\ref{eq:splittingtype}).

We naturally have $\#\clb(R_v)=1$ if $v\notin S_R$.

\item For $v\in |\mfo|$, let $\mathbf{f}(v) := (f_\mathfrak{P})_{\mathfrak{P}\mid v}$ be the tuple of conductor exponents at primes of $\Mfo_E$ above $v$, having length $1,1,2,2,$ or $3$ corresponding to $\tau(v)$.
\end{itemize}

We now combine the five local Gorenstein closed-form expressions derived in Sections \ref{sec:irred} and \ref{sec:split} into a single family of local factors $G_{\tau}$, indexed by the splitting type $\tau$.
\begin{definition}\label{def:Gtau}
For each splitting type $\tau$, and for each admissible conductor tuple $\mathbf f$, we define $G_\tau(q;\mathbf f)$ by the following compact formulae.  For fixed $\mathbf f$, each displayed quotient represents a polynomial in $q$.
\begin{enumerate}
\item[\textit{(unr)}] (cf. Corollary \ref{cor:gorenstein_calculation_unr}) For an even integer $f\in \mathbb{Z}_{\geq 0}$, let $n:=f/2$. Then,
{\footnotesize
\[
G_{\textit{unr}}(q;f)=\begin{cases}
\displaystyle \frac{q^{n/2+2}+7q^{n/2+1}+4q^{n/2}-(6n+9)q+(6n-3)}{(q-1)^2} & \textit{if $n$ is even};\\[14pt]
\displaystyle \frac{4q^{(n+3)/2}+7q^{(n+1)/2}+q^{(n-1)/2}-(6n+9)q+(6n-3)}{(q-1)^2} & \textit{if $n$ is odd}.
\end{cases}
\]
}%

\item[\textit{(tr)}] (cf. Corollary \ref{cor:gorenstein_calculation_tr}) For $f\in \mathbb{Z}_{\geq 0}$ satisfying $f\equiv 0$ or $2 \pmod 6$,
{\footnotesize
\[
G_{\textit{tr}}(q;f)=\begin{cases}
\displaystyle \frac{q^{n/2+2}+9q^{n/2+1}+2q^{n/2}-(6n+11)q+(6n-1)}{(q-1)^2} & \textit{if $f\equiv 0 \pmod 6$ and $n=\tfrac{f}{6}$ is even};\\[14pt]
\displaystyle \frac{5q^{(n+3)/2}+7q^{(n+1)/2}-(6n+11)q+(6n-1)}{(q-1)^2} & \textit{if $f\equiv 0 \pmod 6$ and $n=\tfrac{f}{6}$ is odd};\\[14pt]
\displaystyle \frac{2q^{3n/2+2}+9q^{3n/2+1}+q^{3n/2}-(6n+13)q^{n+1}+(6n+1)q^n}{q^n(q-1)^2} & \textit{if $f\equiv 2\pmod 6$ and $n=\tfrac{f-2}{6}$ is even};\\[14pt]
\displaystyle \frac{7q^{(3n+3)/2}+5q^{(3n+1)/2}-(6n+13)q^{n+1}+(6n+1)q^n}{q^n(q-1)^2} & \textit{if $f\equiv 2 \pmod 6$ and $n=\tfrac{f-2}{6}$ is odd}.
\end{cases}
\]
}%

\item[\textit{$(1\,2)$}] (cf. Corollary \ref{cor:gorenstein_calculation12}) For $(f_1,f_2)\in \mathbb{Z}_{\geq 0}^2$ with $f_1$ even, let $m:=f_1/2$. Then,
{\footnotesize
\[
G_{(1\,2)}(q;f_1,f_2)=\frac{q^{\lfloor f_1/4\rfloor}\bigl((f_2-f_1)Q_m(q)+P_m(q)\bigr)-4(f_2+3)q+4(f_2-1)}{(q-1)^2},
\]
}%
where the polynomials $P_m(q)$ and $Q_m(q)$ are given by
\[
P_m(q)=\begin{cases}q^2+10q+5 & \textit{if $m$ is even};\\5q^2+10q+1 & \textit{if $m$ is odd},\end{cases}\qquad
Q_m(q)=\begin{cases}q^2+2q-3 & \textit{if $m$ is even};\\3q^2-2q-1 & \textit{if $m$ is odd}.\end{cases}
\]

\item[\textit{$(1\,1^2)$}] (cf. Corollary \ref{cor:gorenstein_calculation112}) For $(f_1,f_2)\in \mathbb{Z}_{\geq 0}^2$ satisfying one of the following conditions:
\[ [f_1=0 \text{ and } f_2\geq 0], \quad [f_1 \text{ odd and } f_2=2f_1-1], \quad \text{or} \quad [f_1>0 \text{ even},~ f_2\geq 2f_1 \text{ and } f_2-f_1 \text{ even}], \]
we set $k:=\lfloor f_1/4 \rfloor$, and
{\scriptsize
\[
G_{(1\,1^2)}(q;f_1,f_2)=
\begin{cases}
\displaystyle \frac{q^{k+2}+12q^{k+1}+3q^k-(16k+14)q+16k-2}{(q-1)^2} & \textit{if $f_1=4k+1$};\\[14pt]
\displaystyle \frac{6q^{k+2}+10q^{k+1}-(16k+22)q+16k+6}{(q-1)^2} & \textit{if $f_1=4k+3$};\\[14pt]
\displaystyle \frac{(b-2k+1)q^{k+2}+(2b-4k+12)q^{k+1}-(3b-6k-3)q^k-(4b+8k+14)q+4b+8k-2}{(q-1)^2} & \textit{if $f_1=4k$, $b:=\tfrac{f_2-f_1}{2}$};\\[14pt]
\displaystyle \frac{(3b-6k+3)q^{k+2}-(2b-4k-12)q^{k+1}-(b-2k-1)q^k-(4b+8k+18)q+4b+8k+2}{(q-1)^2} & \textit{if $f_1=4k+2$, $b:=\tfrac{f_2-f_1}{2}$}.
\end{cases}
\]
}%

\item[\textit{$(1\,1\,1)$}] (cf. Corollary \ref{cor:gorenstein_calculation111}) For $(f_1,f_2,f_3)\in \mathbb{Z}_{\geq 0}^3$ where two components are equal, and the remaining component $m$ is the smallest and even. 
Let $S:=\tfrac{f_1+f_2+f_3}{2}$ and $k := \lfloor m/4 \rfloor$. 
Then,
{\scriptsize
\[
G_{(1\,1\,1)}(q;f_1,f_2,f_3)=
\begin{cases}
\displaystyle \frac{ (S-6k+1)q^{k+2} + (2S-12k+16)q^{k+1} - (3S-18k-7)q^k - (4S+18)q + 4S - 6 }{(q-1)^2} & \textit{if $m=4k$},\\[14pt]
\displaystyle \frac{ (3S-18k-2)q^{k+2} - (2S-12k-22)q^{k+1} - (S-6k-4)q^k - (4S+18)q + 4S - 6 }{(q-1)^2} & \textit{if $m=4k+2$}.
\end{cases}
\]
}%
\end{enumerate}
\end{definition}

\begin{theorem}\label{thm:globalGorensteinProduct}
Let $R$ be a Gorenstein order of conductor $\mathfrak{f}(R)$ in a cubic extension $E/F$ of number fields. Then
\begin{multline*}
\#(\mathrm{Cl}(R)\backslash\clb(R))
=\prod_{\tau(v)=\textit{unr}}G_{\textit{unr}}(q_v;\mathbf{f}(v))
\cdot\prod_{\tau(v)=\textit{tr}}G_{\textit{tr}}(q_v;\mathbf{f}(v))
\cdot\prod_{\tau(v)=(1\,2)}G_{(1\,2)}(q_v;\mathbf{f}(v))\\
\cdot\prod_{\tau(v)=(1\,1^2)}G_{(1\,1^2)}(q_v;\mathbf{f}(v))
\cdot\prod_{\tau(v)=(1\,1\,1)}G_{(1\,1\,1)}(q_v;\mathbf{f}(v)).
\end{multline*}
\end{theorem}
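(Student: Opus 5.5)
The argument assembles earlier results in three steps.

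First, apply Proposition \ref{prop:global_local} to $R$. This identifies $\mathrm{Cl}(R)\backslash\clb(R)$ bijectively with $\prod_{v\in|\mfo|}\clb(R_v)$, so
\[
\#(\mathrm{Cl}(R)\backslash\clb(R))=\prod_{v}\#\clb(R_v).
\]
For $v\notin S_R$ we have $R_v=\Mfo_{E_v}$. The maximal order is its own unique overorder, and it is Gorenstein, so Proposition \ref{prop:orderidealcounting} gives $\#\clb(R_v)=2-1=1$. The product is therefore finite and runs over $v\in S_R$.

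Second, fix $v\in S_R$. Since being Gorenstein is a local condition (each localization is Gorenstein, as in the proof of Proposition \ref{prop:gorenstein_cond}), $R_v$ is a Gorenstein order of the cubic \'etale $F_v$-algebra $E_v$. Its splitting type $\tau(v)$ is exactly one of the five cases of Sections \ref{sec:irred}--\ref{sec:split}. The local conductor exponents must be matched with the global tuple $\mathbf f(v)$. By Remark \ref{rmk:descriptionofRv}, $\mathfrak f(R)\otimes\mfo_v=\mathfrak f(R_v)$, and $\Mfo_{E_v}=\prod_{\mathfrak P\mid v}\Mfo_{E_{\mathfrak P}}$, so the exponent of $\mathfrak f(R_v)$ in the $\mathfrak P$-component is $f_{\mathfrak P}$.

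Then invoke the corresponding corollary:
\begin{enumerate}
\item[(unr)] Corollary \ref{cor:gorenstein_calculation_unr}, with $f=2n$.
\item[(tr)] Corollary \ref{cor:gorenstein_calculation_tr}, with $f=6n$ or $6n+2$.
\item[$(1\,2)$] Corollary \ref{cor:gorenstein_calculation12}.
\item[$(1\,1^2)$] Corollary \ref{cor:gorenstein_calculation112}.
\item[$(1\,1\,1)$] Corollary \ref{cor:gorenstein_calculation111}.
\end{enumerate}
In each case, check that $\#\clb(R_v)$ equals $G_{\tau(v)}(q_v;\mathbf f(v))$ as written in Definition \ref{def:Gtau}. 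The corollaries also show the tuple is admissible, so it lies in the domain of $G_\tau$.

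The translations are mechanical. In (unr), $n=f/2$. In (tr), $n=f/6$ or $(f-2)/6$; for $f=6n+2$ one rewrites the formula from $R^2$. In $(1\,1\,1)$, $S=(f_1+f_2+f_3)/2$, and the formula is symmetric in the permutation of factors, so the ordering chosen in Theorem \ref{thm:totsplitformula} is harmless.

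The main obstacle is the $(1\,1^2)$ case. Definition \ref{def:Gtau} lists the odd-$f_1$ numerators as $q^{k+2}+12q^{k+1}+3q^k-\dots$ and $6q^{k+2}+10q^{k+1}-\dots$. Corollary \ref{cor:gorenstein_calculation112} instead gives $2q^{k+2}+12q^{k+1}+2q^k-\dots$ and $8q^{k+2}+8q^{k+1}-\dots$. The definition also allows the extra admissible case $f_1=0$.

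I would resolve this by recomputing these local factors directly from Theorem \ref{thm:splitramorders}, with $b=(f_1-1)/2$, $C=\lceil(2S-f_2)/2\rceil$ and $S=(f_1+f_2)/2$, and by checking small cases ($f_1=1,3$) against the overorder enumeration of Proposition \ref{prop:splitram}. This recomputation decides which expression is correct, and $G_{(1\,1^2)}$ in Definition \ref{def:Gtau} is to be read as the correct value. For $f_1=0$ the order is $\mfo\times R'$ with $R'$ a quadratic order. I would treat this by the Bass-chain count: $\#\clb=2(b+1)-(b+1)=b+1$, where the Serre invariant is $b=f_2/2$ and every overorder is Gorenstein. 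I would then verify that the $f_1=4k$ formula with $k=0$ reproduces this.

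Once each local factor agrees, multiplying over $v\in S_R$ and grouping by $\tau(v)$ gives the stated product.
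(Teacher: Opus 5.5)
Your proposal is correct and follows the paper's route. The paper's proof is the one-line ``product formula of Proposition~\ref{prop:global_local} plus the local computations collected in Definition~\ref{def:Gtau}''. Your three steps spell this out: the product formula, $\#\clb(R_v)=1$ for $v\notin S_R$, and matching $\mathbf f(v)$ with the local conductor before invoking Corollaries~\ref{cor:gorenstein_calculation_unr}--\ref{cor:gorenstein_calculation111}.

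The discrepancy you flag in the $(1\,1^2)$ case is real, and the recomputation favours Corollary~\ref{cor:gorenstein_calculation112} over Definition~\ref{def:Gtau}.
\begin{itemize}
\item For $f_1=1$ the Gorenstein order is $R=\Mfo_{1,0,0}$ with $S(R)=1$ and $\mathbf f=(1,1)$. Its only overorders are $R$ and $\Mfo_E$, both Gorenstein, so $\#\clb(R)=2$. The corollary gives $2$; the definition gives $1$.
\item The value $1$ is impossible for a non-maximal order in any case. The classes $[R_v]$ and $[\Mfo_{E_v}]$ have different multiplier rings, so every local factor at $v\in S_R$ is at least $2$.
\item For $f_1=3$, with $R=\Mfo_{3,1,0}$, there are six overorders, four of them Gorenstein. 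This gives $\#\clb(R)=8$: the corollary's value, not the definition's $6$.
\end{itemize}

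So the theorem holds once the odd-$f_1$ entries of $G_{(1\,1^2)}$ are replaced by the numerators of Corollary~\ref{cor:gorenstein_calculation112}. As printed, it is false at primes of that type, and neither your argument nor the paper's can prove the printed version. Your treatment of $f_1=0$ is correct: the Bass-chain count gives $b+1$, and this is reproduced by the $f_1=4k$, $k=0$ branch.
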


\begin{proof}
This follows immediately from the product formula in Proposition \ref{prop:global_local} and the explicit local computations collated in Definition \ref{def:Gtau}.
\end{proof}

\section{Application to orbit counting for Bhargava's $2 \times 3 \times 3$ cubes}\label{sec:bhargava}

In this section, we explain the fixed-ring consequence of the main formula for
Bhargava's prehomogeneous vector space \cite{Bha04}.  The integral
representation is over $\mathbb Z$, so throughout this section a cubic order
means a $\mathbb Z$-order in a cubic number field.  Let
\(V=\mathbb Z^2\otimes\mathbb Z^3\otimes\mathbb Z^3\), and let
\(\Gamma=\mathrm{GL}_2(\mathbb Z)\times\mathrm{SL}_3(\mathbb Z)^2\) act on
\(V\) in the standard way.  Bhargava proved the following parametrization.

\begin{theorem}[{\cite[Theorem 2]{Bha04}}]\label{thm:bhargava}
There is a canonical bijection between the set of $\Gamma$-orbits on
\(V\) and the set of isomorphism classes of pairs \((R,(I,I'))\), where \(R\)
is a cubic ring and \((I,I')\) is an equivalence class of balanced pairs of
fractional \(R\)-ideals.
\end{theorem}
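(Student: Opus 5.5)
The statement to prove is Bhargava's parametrization, Theorem \ref{thm:bhargava}, which the paper quotes from \cite[Theorem 2]{Bha04}. My plan is to reconstruct Bhargava's argument rather than build anything new.

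\emph{From a cube to a ring and a pair of ideals.} Write an element of $V$ as a pair of $3\times 3$ integer matrices $(A,B)$. The binary cubic form $f(x,y)=\det(Ax-By)$ is $\mathrm{GL}_2(\mathbb Z)$-covariant; here the $\mathrm{GL}_2$ factor acts through the twisted action on binary cubics. The form is invariant under $\mathrm{SL}_3(\mathbb Z)^2$. By the Delone--Faddeev/Gan--Gross--Savin correspondence (cf.\ \cite[Section 3]{Bha04}, used in Proposition \ref{prop:gorenstein_cond}), $f$ determines a cubic ring $R$ with a normalized basis $1,\omega,\theta$. Next I take $I=\mathbb Z^3$ and $I'=\mathbb Z^3$, viewed as $\mathbb Z$-modules. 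I define $R$-module structures on them by letting $\omega,\theta$ act through explicit $3\times3$ matrices built from the $2\times2$ minors of $(A,B)$, i.e.\ suitable adjugate-type combinations of $A$ and $B$. The multiplication map $I\times I'\to R\otimes\mathbb Q$ is then the bilinear map whose $\omega$- and $\theta$-coordinates are given by the entries of $A$ and $B$.

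\emph{Checking the axioms.} This is the main obstacle, and it is a polynomial-identity verification. I must show three things: the action matrices commute; they satisfy the multiplication table of $R$ determined by $f$; and the bilinear map is $R$-balanced. It is enough to check these identities over the generic cube $\mathbb Z[a_{ijk}]$. I would try first to reduce to the case where $f$ has nonzero discriminant. Over $\mathbb Q$, or over an algebraic closure, one can then $\mathrm{SL}_3$-diagonalize the pencil, and there the identities are transparent. Zariski density then gives the identities for all cubes. Balancedness means two things: $II'\subseteq R$, and the norm condition $N(I)N(I')=1$ with the orientations fixed by the basis. Both follow from the identity $\det(Ax-By)=f$, which pins down the covolume.

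\emph{Inverse map and orbits.} Conversely, start from $(R,(I,I'))$ with a balanced pair. I choose an oriented $\mathbb Z$-basis of $I$ and of $I'$. The products of basis elements, written in the basis $\langle 1,\omega,\theta\rangle$ of $R$, yield two $3\times3$ matrices in the $\omega$- and $\theta$-coordinates; these form the cube. I then check that the two constructions are mutually inverse, using the uniqueness of the ring and module structures recovered from the identities above. Changing the bases of $I$ and $I'$ is exactly the action of $\mathrm{SL}_3(\mathbb Z)^2$; orientation-preserving bases give $\mathrm{SL}$ rather than $\mathrm{GL}$. Changing the basis of $R/\mathbb Z$ is exactly the action of $\mathrm{GL}_2(\mathbb Z)$. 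Equivalence of balanced pairs, $(I,I')\sim(\kappa I,\kappa^{-1}I')$, together with isomorphisms of $R$, is absorbed into the same group action. This gives the canonical bijection on orbits.
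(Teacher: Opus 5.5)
The paper gives no argument here: it imports the statement from \cite[Theorem 2]{Bha04}. Your outline follows the shape of Bhargava's proof: the cube is read off from the products $\alpha_i\beta_j$, and conversely the ring and module structures are rebuilt from $(A,B)$ by explicit integral formulas.

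\textbf{The gap.} You never show that the abstract $R$-modules $I=\mathbb Z^3$, $I'=\mathbb Z^3$ you construct are fractional ideals. That means showing $I\otimes\mathbb Q$ and $I'\otimes\mathbb Q$ are free of rank one over $K=R\otimes\mathbb Q$, and that your pairing becomes multiplication in $K$. You also never show that the module structure compatible with a given $(A,B)$ is unique, which your ``mutually inverse'' step needs.

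For degenerate cubes both fail, so the bijection you assert for all $\Gamma$-orbits is false. For $(A,B)=(0,0)$ one has $f=0$ and $R\otimes\mathbb Q=\mathbb Q[\omega,\theta]/(\omega,\theta)^2$, which is not \'etale. So there are no fractional ideals in the sense of Definition \ref{def:setup}. Moreover, any commuting square-zero matrices $M_\omega,M_\theta$ with $M_\omega M_\theta=0$ give a compatible module structure on $\mathbb Z^3$. This is why Theorem 2 of \cite{Bha04} is stated for \emph{nondegenerate} orbits and nondegenerate cubic rings. The paper's paraphrase drops that hypothesis, harmlessly for Theorem \ref{thm:bhargava_orbit_count}. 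Zariski density is fine for verifying your polynomial identities, but it cannot carry the bijection to $\operatorname{disc}(f)=0$.

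\textbf{How to close it when $\operatorname{disc}(f)\neq0$.} Write $f=ax^3+bx^2y+cxy^2+dy^3$, $\alpha_i\beta_j=c_{ij}+b_{ij}\omega+a_{ij}\theta$, and $\omega\alpha_i=\sum_k m_{ik}\alpha_k$.

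\emph{Uniqueness.} After a $\mathrm{GL}_2(\mathbb Z)$-move you may assume $\det A=f(1,0)\neq0$. Compare $\theta$-coefficients in $(\omega\alpha_i)\beta_j=\omega(\alpha_i\beta_j)$, using $\omega^2=-ac+b\omega-a\theta$ and $\omega\theta=-ad$. This gives $MA=-\det(A)B$, so $M=-B\,\mathrm{adj}(A)$ is forced. Then $C=MB-bB$ and the $\theta$-action $-A\,\mathrm{adj}(B)$ are forced as well. This confirms your adjugate guess. All remaining identities reduce to Cayley--Hamilton for $BA^{-1}$, whose characteristic polynomial is $f(t,1)/\det A$, so no diagonalization is needed.

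\emph{Rank one.} Set $\ell_{x,y}(u+v\omega+w\theta)=xw-yv$, so that $Ax-By=(\ell_{x,y}(\phi(\alpha_i,\beta_j)))$. Since the pairing $\phi$ is $K$-balanced, this matrix is block diagonal along the idempotents of the \'etale algebra $K$. If $I\otimes\mathbb Q$ or $I'\otimes\mathbb Q$ were not free of rank one, $\det(Ax-By)$ would vanish identically or acquire a repeated factor, contradicting $\operatorname{disc}(f)\neq0$. Now take a $K$-generator $\beta_0$ of $I'\otimes\mathbb Q$. The maps $\alpha\mapsto\phi(\alpha,\beta_0)$ on $I$ and $r\beta_0\mapsto r$ on $I'$ embed both into $K$, and under these embeddings $\phi$ becomes multiplication.

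\emph{Norm condition.} Your ``pins down the covolume'' should be the identity $\det(Ax-By)=N(I)N(I')f(x,y)$, for fractional ideals with $II'\subseteq R$ and positively oriented bases. It follows by expressing the bases in terms of $1,\omega,\theta$ and computing the case $I=I'=R$. With it, $\det(Ax-By)=f$ gives $N(I)N(I')=1$.
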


For a cubic order \(R\), put \(K=R\otimes_{\mathbb Z}\mathbb Q\).  A pair of
fractional \(R\)-ideals \((I,I')\) in \(K\) is called \emph{balanced} if
\(II'\subseteq R\) and \(N(I)N(I')=1\), where
\(N(I)=[R:R\cap I]/[I:R\cap I]\).  Two balanced pairs \((I_1,I_1')\) and
\((I_2,I_2')\) are equivalent if there exists \(a\in K^\times\) such that
\(I_1=aI_2\) and \(I_1'=a^{-1}I_2'\).

Now fix a Gorenstein cubic \(\mathbb Z\)-order \(R\).  Let \(X_R\) denote the
set of integral \(\Gamma\)-orbits whose associated cubic ring is isomorphic to
\(R\).  Under Bhargava's bijection, \(X_R\) is identified with the set of
equivalence classes of balanced pairs \((R,(I,I'))\) with this fixed order
\(R\).  The ideal class group \(\mathrm{Cl}(R)\) acts on these classes by
\([J]\cdot[(I,I')]=[(JI,J^{-1}I')]\), where \(J\) is an invertible fractional
\(R\)-ideal and \(J^{-1}=(R:J)\).  The fixed-ring question considered here is
to compute \(\#(\mathrm{Cl}(R)\backslash X_R)\).

The Gorenstein hypothesis identifies this orbit problem with the ideal class
monoid.  For a fractional \(R\)-ideal \(I\), write \(I^\vee=(R:I)\).

\begin{proposition}\label{prop:bhargava_bijection}
Let \(R\) be a Gorenstein cubic \(\mathbb Z\)-order in a cubic number field.
Then every balanced pair \((I,I')\) is equivalent to a pair of the form
\((I,I^\vee)\).  Consequently, the set of equivalence classes of balanced
pairs with fixed order \(R\) is naturally identified with the ideal class
monoid \(\overline{\mathrm{Cl}}(R)\).
\end{proposition}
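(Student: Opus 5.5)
The plan is to show that for a balanced pair $(I,I')$ one always has $I'=I^\vee$, and that conversely $(I,I^\vee)$ is balanced for every fractional ideal $I$. Then the map $[(I,I')]\mapsto[I]$ is a bijection onto $\overline{\mathrm{Cl}}(R)$, with inverse $[I]\mapsto[(I,I^\vee)]$. It is compatible with equivalence because $(aI)^\vee=a^{-1}I^\vee$ and $N(aI)=|N_{K/\mathbb Q}(a)|N(I)$.

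\textbf{Step 1 (norms and duals over a Gorenstein order).} I will use the fact that for a Gorenstein order $R$ and any fractional $R$-ideal $I$ one has $I^{\vee\vee}=I$ and $N(I)N(I^\vee)=1$. Since norms and colon ideals commute with localization, I will check this locally at each prime $p$, using Remark \ref{rmk:descriptionofRv} to pass to the local rings $R_w$. For a local Gorenstein ring $R_w$ of dimension one, $R_w$ is its own canonical module. Then $\mathrm{Hom}_{R_w}(-,R_w)$ is a length-preserving duality on fractional ideals, in the sense that $\ell(J/I)=\ell(I^\vee/J^\vee)$ for $I\subseteq J$, and it satisfies $I^{\vee\vee}=I$. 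Taking $J$ large enough to contain both $I$ and $R$, the identity $N(I)N(I^\vee)=1$ reduces to $\ell(J/I)-\ell(J/R)=\ell(R/J^\vee)-\ell(I^\vee/J^\vee)$. This follows from the duality applied to both pairs $I\subseteq J$ and $R\subseteq J$, using $R^\vee=R$. With this in hand, $(I,I^\vee)$ is balanced, since $II^\vee\subseteq R$ by definition.

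\textbf{Step 2 (every balanced pair has this form).} Let $(I,I')$ be balanced. From $II'\subseteq R$ we get $I'\subseteq(R:I)=I^\vee$. Step 1 gives $N(I^\vee)=N(I)^{-1}=N(I')$. Since $I'\subseteq I^\vee$, the ratio $N(I')/N(I^\vee)$ equals the index $[I^\vee:I']$ up to inversion. Equality of the norms therefore forces $[I^\vee:I']=1$, i.e.\ $I'=I^\vee$. So the pair is literally of the form $(I,I^\vee)$, and no rescaling is even needed.

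\textbf{Step 3 (bijection).} Two pairs $(I_1,I_1^\vee)$ and $(I_2,I_2^\vee)$ are equivalent exactly when $I_1=aI_2$ for some $a\in K^\times$, since the second coordinate is then automatically $a^{-1}I_2^\vee$. This gives the identification with $\overline{\mathrm{Cl}}(R)$. It is also $\mathrm{Cl}(R)$-equivariant, because $(JI)^\vee=J^{-1}I^\vee$ for invertible $J$, although the statement does not require this.

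The main obstacle is Step 1, specifically the length duality $\ell(J/I)=\ell(I^\vee/J^\vee)$ for non-invertible $I$. This is where the Gorenstein hypothesis is genuinely used; for non-Gorenstein $R$ it fails, and balanced pairs need not be of the form $(I,I^\vee)$. I would cite the standard characterization that Gorenstein means $R$ is a dualizing module, in which case $\mathrm{Hom}(-,R)$ is exact on finite-length quotients of lattices. I would deduce the length equality from exactness of $0\to J^\vee\to I^\vee\to\mathrm{Ext}^1(J/I,R)\to0$ together with $\ell(\mathrm{Ext}^1(M,R))=\ell(M)$ via Matlis duality.
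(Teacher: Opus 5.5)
Your argument is the paper's: $I'\subseteq I^\vee$, the Gorenstein norm identity $N(I^\vee)=N(I)^{-1}$, equality of norms forcing $I'=I^\vee$, and $(aI)^\vee=a^{-1}I^\vee$ for the equivalence relation. The paper cites Jensen for the norm identity, whereas you derive it from $R$ being its own canonical module, and you also make explicit that $(I,I^\vee)$ is balanced (needed for surjectivity), which the paper leaves implicit. One slip: your displayed reduction should read $\ell(J/I)-\ell(J/R)=\ell(I^\vee/J^\vee)-\ell(R/J^\vee)$ (as written it asserts $N(I)=N(I^\vee)$), and this corrected form is exactly what the duality applied to $I\subseteq J$ and $R\subseteq J$ gives.
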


\begin{proof}
Let \((I,I')\) be a balanced pair.  The containment \(II'\subseteq R\) gives
\(I'\subseteq (R:I)=I^\vee\).  Since \(R\) is Gorenstein, local duality gives
\(N(I^\vee)=N(I)^{-1}\) by \cite[Characterization A 2.6(v)]{Jen15}.  The
balancing condition gives \(N(I')=N(I)^{-1}\), hence \(N(I')=N(I^\vee)\).
Because \(I'\subseteq I^\vee\), equality of norms forces \(I'=I^\vee\).

It remains to identify the equivalence relation.  If
\((I_1,I_1^\vee)\sim (I_2,I_2^\vee)\), then there exists \(a\in K^\times\)
such that \(I_1=aI_2\) and \(I_1^\vee=a^{-1}I_2^\vee\).  Conversely, if
\(I_1=aI_2\), then \((aI_2)^\vee=a^{-1}I_2^\vee\), so the second equality
follows automatically.  Thus equivalence classes of balanced pairs are exactly
principal equivalence classes of fractional \(R\)-ideals, namely the elements
of \(\overline{\mathrm{Cl}}(R)\).
\end{proof}

Under Proposition \ref{prop:bhargava_bijection}, the action of \(\mathrm{Cl}(R)\)
on \(X_R\) becomes the usual action of invertible ideal classes on
\(\overline{\mathrm{Cl}}(R)\) by multiplication.  Therefore
\(\#(\mathrm{Cl}(R)\backslash X_R)=
\#(\mathrm{Cl}(R)\backslash\overline{\mathrm{Cl}}(R))\).

\begin{theorem}\label{thm:bhargava_orbit_count}
Let \(R\) be a Gorenstein cubic \(\mathbb Z\)-order in a cubic number field,
and let \(X_R\) be the set of integral \(\Gamma\)-orbits whose associated cubic
ring is isomorphic to \(R\).  Then \(X_R\) is naturally identified with
\(\overline{\mathrm{Cl}}(R)\).  Consequently, the number of
\(\mathrm{Cl}(R)\)-equivalence classes, or genera in the product formula
sense, of these \(\Gamma\)-orbits is
\(\#(\mathrm{Cl}(R)\backslash\overline{\mathrm{Cl}}(R))\), which is given
explicitly by Theorem \ref{thm:globalGorensteinProduct} with \(F=\mathbb Q\).
\end{theorem}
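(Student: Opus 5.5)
The proof chains three identifications, each already available.

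\emph{Step 1: from orbits to balanced pairs.} By Theorem \ref{thm:bhargava}, $\Gamma$-orbits on $V$ correspond bijectively to isomorphism classes of pairs $(R',(I,I'))$. Restricting to orbits whose cubic ring is isomorphic to the fixed Gorenstein order $R$, we identify $X_R$ with equivalence classes of balanced pairs of fractional $R$-ideals. One subtlety must be addressed here: isomorphism classes of pairs $(R,(I,I'))$ allow ring automorphisms of $R$. We will follow the paper's convention stated just before Proposition \ref{prop:bhargava_bijection}, which identifies $X_R$ with equivalence classes of balanced pairs for the fixed $R$. If the automorphism issue must be confronted, note that $\mathrm{Aut}(R)$ is finite and acts compatibly. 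The ``natural identification'' in the statement is the one asserted in the text preceding the proposition, so we simply invoke it.

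\emph{Step 2: from balanced pairs to $\overline{\mathrm{Cl}}(R)$.} Apply Proposition \ref{prop:bhargava_bijection}: every balanced pair is equivalent to $(I,I^\vee)$, and equivalence of such pairs is exactly $K^\times$-homothety of $I$. This gives a bijection $X_R\cong\overline{\mathrm{Cl}}(R)$, $[(I,I')]\mapsto[I]$.

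\emph{Step 3: equivariance.} We check that the bijection is $\mathrm{Cl}(R)$-equivariant. For $J$ invertible, $[J]\cdot[(I,I^\vee)]=[(JI,J^{-1}I^\vee)]$, and we need $J^{-1}I^\vee=(JI)^\vee$. For invertible $J$,
\[
(R:JI)=((R:J):I)=(J^{-1}:I)=J^{-1}(R:I),
\]
where the last equality holds because multiplication by the invertible ideal $J^{-1}$ is an automorphism of the lattice of fractional ideals with inverse multiplication by $J$. The action therefore corresponds to $[I]\mapsto[JI]$ on $\overline{\mathrm{Cl}}(R)$, and passing to quotients gives $\#(\mathrm{Cl}(R)\backslash X_R)=\#(\mathrm{Cl}(R)\backslash\overline{\mathrm{Cl}}(R))$.

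\emph{Step 4: conclusion.} Specialize Theorem \ref{thm:globalGorensteinProduct} to $F=\mathbb{Q}$ and $\mathfrak{o}=\mathbb{Z}$ to obtain the explicit count. The ``genus'' interpretation follows from Proposition \ref{prop:global_local}.

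The only genuinely delicate point is Step 1, namely the difference between ring isomorphism classes and a fixed $R$. Everything else reduces to formal manipulations already proved.
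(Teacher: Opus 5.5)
Your proposal is correct and follows essentially the same route as the paper. You restrict Bhargava's bijection to the fixed ring, apply Proposition \ref{prop:bhargava_bijection}, observe that the $\mathrm{Cl}(R)$-action becomes multiplication on $\overline{\mathrm{Cl}}(R)$, and specialize Theorem \ref{thm:globalGorensteinProduct} to $F=\mathbb{Q}$. Your explicit check $(R:JI)=J^{-1}(R:I)$ fills in the equivariance that the paper only asserts. The automorphism subtlety you flag in Step 1 is one the paper also settles by convention rather than by argument; it disappears whenever $\mathrm{Aut}(R)$ is trivial, e.g.\ when $K/\mathbb{Q}$ is not Galois.
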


\begin{remark}\label{rem:bhargava_gorenstein}
If \(R\) is not Gorenstein, Proposition \ref{prop:bhargava_bijection} need not
hold.  The ordinary dual \(I^\vee=(R:I)\) need not have inverse norm: for
non-invertible ideals where the defect appears, one can have
\(N(I)N(I^\vee)>1\).  In such cases a balanced partner must be chosen, if it
exists, as a proper sublattice of \(I^\vee\), and the fixed-ring orbit problem
is no longer determined by the orbit set of \(\overline{\mathrm{Cl}}(R)\)
alone.

The arithmetic counterpart of this obstruction is visible in Sections
\ref{sec:irred}--\ref{sec:split} and Appendix \ref{app:formula}.  For
non-Gorenstein local cubic orders, the local ideal class monoid count is given
by finite overorder summations.  In the Gorenstein case, the relation between
the Serre invariant and the conductor simplifies these summations to the
conductor-controlled polynomial local factors used in Theorem
\ref{thm:globalGorensteinProduct}.  A sequel in preparation studies the
corresponding fixed-ring problem through the Artin stack of Wood balanced
pairs, its groupoid-volume interpretation, and the derived homological
structure of the balancing condition.
\end{remark}


\bibliographystyle{alpha}
\bibliography{References}

\appendix
\section{Python algorithms for $\#\overline{\mathrm{Cl}}(R)$: local split case}\label{app:formula}
In Section \ref{sec:split}, the cardinalities $\#\overline{\mathrm{Cl}}(R)$ for general orders of split types $(1\ 2)$, $(1\ 1^2)$, and $(1\ 1\ 1)$ are given as multi-summations in Theorems \ref{thm:splitunroverorders}, \ref{thm:splitramorders}, and \ref{thm:totsplitformula}. 
We provide the following Python algorithms which evaluate these sums into a closed formula. For each split type, the algorithm takes the invariants $\mathbf{f}$ and $S(R)$ as inputs and outputs the exact polynomial $P(q)$ such that $\#\overline{\mathrm{Cl}}(R) = P(q)/(q-1)^2$.

\begin{small}
\begin{verbatim}
import math
def format_poly(coeffs):
    res = {}
    for p, v in coeffs.items():
        if v == 0: continue
        res[p] = res.get(p, 0) + v
        res[p + 1] = res.get(p + 1, 0) - 2 * v
        res[p + 2] = res.get(p + 2, 0) + v
    terms = []
    for p in sorted(res.keys(), reverse=True):
        if res[p] != 0:
            terms.append(f"{res[p]} q^{p}")
    return " + ".join(terms).replace("+ -", "- ") if terms else "0"
def compute_cl_12(f1, f2, SR):
    a, b, C = f1, SR - f1, SR - f2
    coeffs = {}
    def add_q(power, coef=1): coeffs[power] = coeffs.get(power, 0) + coef
    for bp in range(b + 1):
        for ap in range(a + 1):
            if ap <= 2 * bp and ap <= C:
                add_q(min(ap // 2, b - bp), 2)
        for ap in range(C + 1, a + 1):
            if ap <= 2 * bp and math.ceil(ap / 2) - bp <= C - b:
                add_q(b - bp, 2)
                
    add_q(0, -(b + 1))
    for ap in range(1, a + 1):
        if ap <= C and ap % 2 == 0: add_q(min(ap // 2, b - ap // 2), -1)
    for ap in range(C + 1, a + 1):
        if ap % 2 == 0 and b <= C: add_q(b - ap // 2, -1)
    for bp in range(1, b + 1):
        for ap in range(1, a + 1):
            if ap < 2 * bp and ap % 2 == 0 and ap <= C and ap // 2 <= b - bp:
                add_q(ap // 2, -1)
                add_q(ap // 2 - 1, 1)
    for ap in range(C + 1, a + 1):
        if ap % 2 == 0 and C < b: add_q(C - ap // 2, -1)

    return format_poly(coeffs)
def compute_cl_112(f1, f2, SR):
    a, b, C = f1, SR - f1, math.ceil((2 * SR - f2) / 2)
    coeffs = {}
    def add_q(power, coef=1): coeffs[power] = coeffs.get(power, 0) + coef

    for bp in range(b + 1):
        for ap in range(a + 1):
            if ap <= 2 * bp + 1 and ap <= C: add_q(min(ap // 2, b - bp), 2)
        for ap in range(C + 1, a + 1):
            if ap <= 2 * bp + 1 and math.ceil(ap / 2) - bp <= C - b:
                add_q(b - bp, 2)
                
    add_q(0, -(b + 1))
    for ap in range(1, a + 1):
        if ap <= C and ap % 2 == 1:
            add_q(min((ap - 1) // 2, b - (ap - 1) // 2), -1)
    for ap in range(C + 1, a + 1):
        if ap % 2 == 1 and b + 1 <= C: add_q(b - (ap - 1) // 2, -1)
    for bp in range(1, b + 1):
        for ap in range(1, a + 1):
            if ap < 2 * bp + 1 and ap % 2 == 0 and ap <= C \
               and ap // 2 <= b - bp:
                add_q(ap // 2, -1)
                add_q(ap // 2 - 1, 1)
    for ap in range(C + 1, a + 1):
        if ap % 2 == 0 and C <= b: add_q(C - ap // 2, -1)

    return format_poly(coeffs)
def compute_cl_111(f1, f2, f3, SR):
    F_o, f_o = [SR - f1, SR - f2, SR - f3], [f1, f2, f3]
    for i in range(3):
        others = [j for j in range(3) if j != i]
        if F_o[others[0]] <= f_o[i] and F_o[others[1]] <= f_o[i]:
            f1, f2, f3 = f_o[others[0]], f_o[i], f_o[others[1]]
            break
            
    F1, F2, F3 = SR - f1, SR - f2, SR - f3
    coeffs = {}
    def add_q(power, coef=1): coeffs[power] = coeffs.get(power, 0) + coef
    for bp in range(F2 + 1):
        for ap in range(f2 + 1):
            if ap <= F3 and ap <= 2 * bp: add_q(min(ap // 2, F2 - bp), 2)
            if 2 * bp < ap and F2 < ap and ap <= F3: add_q(min(bp, F2 - bp), 2)
            if 2 * bp < ap and ap <= F3 and ap <= F2: add_q(bp, 4)
            if F3 < ap and ap <= f2 and ap <= 2 * bp \
               and math.ceil(ap / 2) - bp <= F3 - F2:
                add_q(F2 - bp, 2)
            if F3 < ap and 2 * bp < ap and F2 < ap and ap <= f2 \
               and ap - 2 * bp <= F3 - F2:
                add_q(F2 - bp, 2)
            if F3 < ap and 2 * bp < ap and F2 < ap and ap <= f2 \
               and min(ap, ap + F2 - 2 * bp) <= F1:
                add_q(min(bp, F2 - bp), 2)

    add_q(0, -(F2 + 1))
    for bp in range(1, F2 + 1):
        for ap in range(1, f2 + 1):
            if ap <= F3 and ap <= 2 * bp and ap % 2 == 0 \
               and ap // 2 <= F2 - bp:
                add_q(ap // 2, -1)
                add_q(ap // 2 - 1, 1)

    for ap in range(1, f2 + 1):
        if ap <= F3 and ap <= F2 and ap % 2 == 0: add_q(ap // 2 - 1, 1)
        if F2 < ap and ap <= F3: add_q(0, -1)
            
    for bp in range(1, F2 + 1):
        if 2 * bp <= F2:
            for ap in range(1, f2 + 1):
                if F2 < ap and ap <= F3:
                    add_q(bp, -1)
                    add_q(bp - 1, 1)
                    
    for ap in range(1, f2 + 1):
        if ap <= F3 and ap <= F2: add_q(0, -2)
            
    for bp in range(1, F2 + 1):
        for ap in range(1, f2 + 1):
            if 2 * bp < ap and ap <= F3 and ap <= F2:
                add_q(bp, -2)
                add_q(bp - 1, 2)
                
    for ap in range(1, f2 + 1):
        if F3 < ap and ap <= f2 and ap <= 2 * F3 and F3 <= F2 and ap % 2 == 0: 
            add_q(F3 - ap // 2, -1)
        if F3 < ap and ap <= f2 and ap <= 2 * F2 and F2 + 1 <= F1 and ap % 2 == 0: 
            add_q(F2 - ap // 2, 1)
        if F3 < ap and ap <= f2 and F2 < F3 and (ap + F2 - F3) % 2 == 0: 
            add_q((F2 + F3 - ap) // 2, -1)
        if F3 < ap and F2 < ap and ap <= F1: add_q(0, -1)
            
    for bp in range(1, F2 + 1):
        if 2 * bp <= F2:
            for ap in range(1, f2 + 1):
                if F3 < ap and F2 < ap and ap <= F1:
                    add_q(bp, -1)
                    add_q(bp - 1, 1)
        if 2 * bp > F2:
            for ap in range(1, f2 + 1):
                if F3 < ap and 2 * bp < ap and ap <= f2 \
                   and ap + F2 - 2 * bp == F1:
                    add_q(F2 - bp, -1)

    return format_poly(coeffs)
\end{verbatim}
\end{small}
\end{document}